\title{Invariant Gibbs measures for the three-dimensional wave equation with a  Hartree nonlinearity I: Measures}
\author{Bjoern Bringmann}
\date{\today}
\newcommand{\cg}{\mathscr{g}}
\newcommand{\cgp}{\mathscr{g}^{\scalebox{0.7}{$\otimes$}}}
\newtheorem{theorem}{Theorem}
\newtheorem{proposition}[theorem]{Proposition} 
\newtheorem{corollary}[theorem]{Corollary}
\newtheorem{lemma}[theorem]{Lemma}
\newtheorem{definition}[theorem]{Definition}
\newtheorem{remark}[theorem]{Remark}
\numberwithin{equation}{section} 
\numberwithin{theorem}{section} 
\newtheorem*{maintheorem}{Main theorem}
\newenvironment{assumptions}[1]
  {\innerassumptions}
  {\endinnerassumptions}
\newcommand{\cZ}{\mathcal{Z}}
\newcommand{\cZTl}{\mathcal{Z}^{\scriptscriptstyle{T},\lambda}}
\newcommand{\Z}{\mathbb{Z}}
\newcommand{\R}{\mathbb{R}}
\newcommand{\bR}{\mathbb{R}}
\newcommand{\Q}{\mathbb{Q}}
\newcommand{\E}{\mathbb{E}}
\newcommand{\T}{\mathbb{T}}
\newcommand{\bP}{\mathbb{P}}
\newcommand{\cF}{\mathcal{F}}
\newcommand{\calH}{\mathcal{H}} 					
\newcommand{\calI}{\mathcal{I}}
\newcommand{\cG}{\mathcal{G}} 			                
\newcommand{\ctG}{\widetilde{\mathcal{G}}} 			
\newcommand{\cR}{\mathcal{R}}
\newcommand{\cH}{{L^2_{t,x}}}
\newcommand{\bH}{\mathbb{H}_a}
\newcommand{\cC}{\mathcal{C}}
\DeclareMathOperator{\Aux}{Aux}
\DeclareMathOperator{\Law}{Law}
\newcommand{\Jt}{J_t^{\scriptscriptstyle{T}}}
\newcommand{\Js}{J_s^{\scriptscriptstyle{T}}}
\newcommand{\JS}{J^{\scriptscriptstyle{S}}}
\newcommand{\JSm}{J^{\scriptscriptstyle{S_m}}}
\newcommand{\It}{I_t^{\scriptscriptstyle{T}}}
\newcommand{\Is}{I_s^{\scriptscriptstyle{T}}}
\newcommand{\IS}{I^{\scriptscriptstyle{S}}}
\newcommand{\IT}{I^{\scriptscriptstyle{T}}}
\newcommand{\IP}{I^{\scriptscriptstyle{T}}}
\newcommand{\Iinf}{I_\infty^{\scriptscriptstyle{T}}}
\newcommand{\XiT}{\Xi^{\scriptscriptstyle{T}}}
\newcommand{\OpT}{\operatorname{Op}^T_t(\gamma,r)}
\newcommand{\OpTtwo}{\operatorname{Op}^T_t(\gamma,2)}
\newcommand{\OpTN}{\operatorname{Op}^T_t(r;K_1,K_2,N_1,N_2)}
\newcommand{\aT}{a^{\scriptscriptstyle{T}}_t}
\newcommand{\bT}{b^{\scriptscriptstyle{T}}_t}
\newcommand{\cT}{c^{\scriptscriptstyle{T},\lambda}}
\newcommand{\MT}{\mathcal{M}^{\scriptscriptstyle{T}}_t}
\newcommand{\MN}{\mathcal{M}_N}
\newcommand{\Ms}{\mathcal{M}_s^{\scriptscriptstyle{T}}}
\newcommand{\MS}{\mathcal{M}^{\scriptscriptstyle{S}}}
\newcommand{\Minf}{\mathcal{M}^{\scriptscriptstyle{T}}_\infty}
\newcommand{\Cort}{\mathfrak{C}^T_t[N_1,N_2]}
\newcommand{\lcol}{:\hspace{-0.5ex}}
\newcommand{\rcol}{\hspace{-0.5ex}:}
\newcommand{\defe}{\overset{\text{def}}{=}} 
\newcommand{\cV}{\mathcal{V}}                       
\newcommand{\cVT}{\mathcal{V}^{\scriptscriptstyle{T},\lambda}}
\newcommand{\tmu}{\widetilde{\mu}}
\newcommand{\tmuT}{\widetilde{\mu}_{T}}
\newcommand{\bmu}{\widebar{\mu}}
\newcommand{\muT}{\mu_T}
\newcommand{\Qinf}{\mathbb{Q}^u_{\infty}}
\newcommand{\QT}{\mathbb{Q}^{u}_{T}}
\newcommand{\QTN}{\mathbb{Q}^{u}_{T,N}}
\newcommand{\DT}{D_{T}}
\newcommand{\mup}{\mu^{\scalebox{0.7}{$\otimes$}}}
\newcommand{\REFT}{\nu_{T}}
\newcommand{\rhoT}[1]{\rho^{\scriptscriptstyle{T}}_{#1}}
\newcommand{\rhoS}[1]{\rho^{\scriptscriptstyle{S}}_{#1}}
\newcommand{\sigmaT}[1]{\sigma^{\scriptscriptstyle{T}}_{#1}}
\newcommand{\sigmaS}[1]{\sigma^{\scriptscriptstyle{S}}_{#1}}
\newcommand{\PsiT}{\Psi^{\scriptscriptstyle{T},\varphi}_\lambda}
\newcommand{\PsiTz}{\Psi^{\scriptscriptstyle{T},0}_\lambda}
\newcommand{\BtuTN}{B^{u^{\scalebox{0.8}{$\scriptscriptstyle{T},\scriptscriptstyle{N}$}}}_t}
\newcommand{\bW}{\mathbb{W}}
\newcommand{\tW}{W^{\scriptscriptstyle{T},u^{\scalebox{0.8}{$\scriptscriptstyle{T}$}}}}	
\newcommand{\W}[2]{W_{#1}^{{\scriptscriptstyle{T}},#2}}
\newcommand{\WtuTN}{W^{u^{\scalebox{0.8}{$\scriptscriptstyle{T},\scriptscriptstyle{N}$}}}_t}
\newcommand{\WtTuTN }{W^{\scriptscriptstyle{T},u^{\scalebox{0.8}{$\scriptscriptstyle{T},\scriptscriptstyle{N}$}}}_t}
\newcommand{\WPTuT }{W^{\scriptscriptstyle{T},u^{\scalebox{0.8}{$\scriptscriptstyle{T}$}}}}
\newcommand{\WtTuT}{W^{\scriptscriptstyle{T},u^{\scalebox{0.8}{$\scriptscriptstyle{T}$}}}_t}
\newcommand{\WinfTuT}{W^{\scriptscriptstyle{T},u^{\scalebox{0.8}{$\scriptscriptstyle{T}$}}}_\infty}
\newcommand{\WsTuTN }{W^{\scriptscriptstyle{T},u^{\scalebox{0.8}{$\scriptscriptstyle{T},\scriptscriptstyle{N}$}}}_s}
\newcommand{\WuTN}{W^{u^{\scalebox{0.8}{$\scriptscriptstyle{T},\scriptscriptstyle{N}$}}}}
\newcommand{\WTuTN}{W^{\scriptscriptstyle{T},u^{\scalebox{0.8}{$\scriptscriptstyle{T},\scriptscriptstyle{N}$}}}}
\newcommand{\WsTuT}{W^{\scriptscriptstyle{T},u^{\scalebox{0.8}{$\scriptscriptstyle{T}$}}}_s}
\newcommand{\bWt}[1]{\mathbb{W}_t^{\scriptscriptstyle{T},#1}}
\newcommand{\bWinf}[1]{\mathbb{W}_\infty^{\scriptscriptstyle{T},#1}}
\newcommand{\WP}{W^{\scriptscriptstyle{T}}} 
\newcommand{\Wt}[1][\@nil]{%
  \def\tmp{#1}%
   \ifx\tmp\@nnil
       W_t^{\scriptscriptstyle{T}}
    \else
         W_t^{{\scriptscriptstyle{T}},#1}
    \fi}
\newcommand{\Ws}[1][\@nil]{%
  \def\tmp{#1}%
   \ifx\tmp\@nnil
       W_s^{\scriptscriptstyle{T}}
    \else
         W_s^{{\scriptscriptstyle{T}},#1}
    \fi}
\newcommand{\Winf}[1][\@nil]{%
  \def\tmp{#1}%
   \ifx\tmp\@nnil
       W_\infty^{\scriptscriptstyle{T}}
    \else
         W_\infty^{{\scriptscriptstyle{T}},#1}
    \fi}
\newcommand{\WS}[2]{%
 \def\firstarg{#1}%
\def \tmp{}
 \ifx\firstarg\tmp
    W_{#2}^{\scriptscriptstyle{S}}
  \else
      W_{#2}^{\scriptscriptstyle{S},#1}
    \fi}
\newcommand{\WSu}[2]{%
 \def\firstarg{#1}%
\def \tmp{}
 \ifx\firstarg\tmp
    W_{#2}^{{\scriptscriptstyle{S}},u}
  \else
      W_{#2}^{{\scriptscriptstyle{S}},u,#1}
    \fi}
\newcommand{\WSmu}[2]{%
 \def\firstarg{#1}%
\def \tmp{}
 \ifx\firstarg\tmp
    W_{#2}^{{\scriptscriptstyle{S_m}},u}
  \else
      W_{#2}^{{\scriptscriptstyle{S_m}},u,#1}
    \fi}
\newcommand{\Wu}[2]{%
 \def\firstarg{#1}%
\def \tmp{}
 \ifx\firstarg\tmp
    W_{#2}^{u}
  \else
      W_{#2}^{u,#1}
    \fi}
\newcommand{\bWS}[2]{%
 \def\firstarg{#1}%
\def \tmp{}
 \ifx\firstarg\tmp
    \mathbb{W}_{#2}^{\scriptscriptstyle{S}}
  \else
      \mathbb{W}_{#2}^{\scriptscriptstyle{S},#1}
    \fi}
\newcommand{\bWSu}[2]{%
 \def\firstarg{#1}%
\def \tmp{}
 \ifx\firstarg\tmp
    \mathbb{W}_{#2}^{\scriptscriptstyle{S},u}
  \else
      \mathbb{W}_{#2}^{\scriptscriptstyle{S},u,#1}
    \fi}
\newcommand{\bWu}[2]{%
 \def\firstarg{#1}%
\def \tmp{}
 \ifx\firstarg\tmp
    \mathbb{W}_{#2}^{u}
  \else
      \mathbb{W}_{#2}^{u,#1}
    \fi}
\newcommand{\AS}[2]{%
 \def\firstarg{#1}%
\def \tmp{}
 \ifx\firstarg\tmp
    A_{#2}^{\scriptscriptstyle{S}}
  \else
      A_{#2}^{\scriptscriptstyle{S},#1}
    \fi}
\newcommand{\bWp}[2]{
 \def\firstarg{#1}%
\def \tmp{}
 \ifx\firstarg\tmp
    \mathbb{W}_{#2}^{}
  \else
      \mathbb{W}_{#2}^{#1}
    \fi}
\newcommand{\uP}{u^{\scriptscriptstyle{T}}}
\newcommand{\ut}{u^{\scriptscriptstyle{T}}_t}
\newcommand{\us}{u^{\scriptscriptstyle{T}}_s}
\newcommand{\uNP}{u^{\scriptscriptstyle{T},\scriptscriptstyle{N}}}
\newcommand{\uNt}{u^{\scriptscriptstyle{T},\scriptscriptstyle{N}}_t}
\newcommand{\uNs}{u^{\scriptscriptstyle{T},\scriptscriptstyle{N}}_s}
\newcommand{\uMP}{u^{\scriptscriptstyle{T},\scriptscriptstyle{M}}}
\newcommand{\uMs}{u^{\scriptscriptstyle{T},\scriptscriptstyle{M}}_s}
\newcommand{\uTw}{u^{\scriptscriptstyle{T},w}}
\newcommand{\hTw}{h^{\scriptscriptstyle{T},w}}
\newcommand{\rTw}{r^{\scriptscriptstyle{T},w}}
\newcommand{\lP}{l^{\scriptscriptstyle{T}}}
\newcommand{\lt}{l^{\scriptscriptstyle{T}}_t}
\newcommand{\tauTN}{\tau_{\scriptscriptstyle{T},\scriptscriptstyle{N}}}
\newcommand{\tauTM}{\tau_{\scriptscriptstyle{T},\scriptscriptstyle{M}}}
\renewcommand{\d}{\mathrm{d}}
\newcommand{\dx}{\,\mathrm{d}x}
\newcommand{\dy}{\,\mathrm{d}y}
\newcommand{\ds}{\, \mathrm{d}s}
\newcommand{\dt}{\, \mathrm{d}t}
\newcommand{\dmu}{\, \mathrm{d}\mu}
\begin{document}
\maketitle

\begin{abstract}
In this two-paper series, we prove the invariance of the Gibbs measure for a three-dimensional wave equation with a Hartree nonlinearity. The main novelty is the singularity of the Gibbs measure with respect to the Gaussian free field. The singularity has several consequences in both measure-theoretic and dynamical aspects of our argument. 

\noindent In this paper, we construct and study the Gibbs measure. Our approach is based on earlier work of Barashkov and Gubinelli for the $\Phi^4_3$-model. Most importantly, our truncated Gibbs measures are tailored towards the dynamical aspects in the second part of the series. In addition, we develop new tools dealing with the non-locality of the Hartree interaction. We also determine the exact threshold between singularity and absolute continuity of the Gibbs measure depending on the regularity of the interaction potential. 
\end{abstract}
\setcounter{tocdepth}{2}
\tableofcontents

\section*{Introduction to the series}
In this two-paper series, we study the renormalized wave equation with a Hartree nonlinearity and random initial data given by
\begin{equation}\label{intro_series:eq_nlw} \tag{a}
\begin{cases}
-\partial_{tt}^2 u - u +\Delta   u = \, \lcol  (V*u^2) u \rcol  \qquad (t,x)\in \bR \times \T^3,  \\
u|_{t=0} = \phi_0, \quad \partial_t u |_{t=0} = \phi_1. 
\end{cases}
\end{equation}
Here, $\T \defe \bR / 2\pi \Z$ is the torus and the interaction potential $V \colon \T^3 \rightarrow \bR$ is of the form $V(x)= c_\beta |x|^{-(3-\beta)}$ for all small $x\in \T^3$, where $0<\beta<3$, satisfies $V(x)\gtrsim 1$ for all $x\in \T^3$, is even, and is smooth away from the origin. The nonlinearity $\lcol (V*u^2) u \rcol $ is a renormalization of $(V\ast u^2) u$ (see Definition \ref{measure_re:definition_renormalization} below).  \\

The nonlinear wave equation \eqref{intro_series:eq_nlw} is a prototypical example of a Hamiltonian partial differential equation. The formal Hamiltonian is given by 
\begin{equation*}
H[u,\partial_t u](t) = \frac{1}{2} \Big( \| u(t) \|_{L_x^2}^2  + \| \nabla u(t) \|_{L_x^2}^2   + \| \partial_t u(t) \|_{L_x^2}^2   \Big) + \frac{1}{4} \int_{\T^3} \lcol (V\ast u^2)(t,x) u(t,x)^2 \rcol \dx,
\end{equation*}
where $L^2_x=L_x^2(\T^3)$. 
Based on the Hamiltonian structure, we expect  the formal Gibbs measure $\mup$ given by
\begin{align}\label{intro_series:eq_Gibbs}  \tag{b}
\mathrm{d} \mup (\phi_0,\phi_1)
 = \mathcal{Z}^{-1} \exp(- H(\phi_0,\phi_1) ) \,  \mathrm{d}\phi_0 \mathrm{d}\phi_1 
\end{align}
to be invariant under the flow of \eqref{intro_series:eq_nlw}, where $\cZ$ is a normalization constant. 

The \emph{first part} of this series focuses on the rigorous construction and properties of $\mup$. With a primary focus on the related $\Phi^4_d$-models, similar constructions have been studied in constructive quantum field theory. Recently, this area of research has been revitalized through advances in singular stochastic partial differential equations. The main difficulties come from the quartic interaction $\lcol (V \ast u^2) u^2\rcol$ in the Hamiltonian. In fact, without the interactions, one obtains the Gaussian free field 
\begin{equation*}
\mathrm{d} \cgp (\phi_0,\phi_1) = \cZ_0^{-1}
\exp\Big( - \frac{1}{2} \| \phi_0 \|_{L_x^2}^2  - \frac{1}{2} \| \nabla  \phi_0 \|_{L_x^2}^2 \Big) \, \mathrm{d} \phi_0 ~  \otimes ~ \cZ_1^{-1}
 \exp\Big( - \frac{1}{2} \| \phi_1 \|_{L_x^2}^2 \Big) \, \mathrm{d} \phi_1,
\end{equation*}
which can be constructed through elementary arguments. Using our representation of the Gibbs measure $\mup$, we also prove that $\mup$ and $\cgp$ are mutually singular for $0<\beta<1/2$.

In the \emph{second part} of this series, we study the dynamics of \eqref{intro_series:eq_nlw} with random initial data drawn from the Gibbs measure $\mup$. Due to the low spatial regularity, the local theory requires a mix of techniques from dispersive equations, harmonic analysis, and probability theory. More specifically, we rely on ideas from the para-controlled calculus of Gubinelli, Imkeller, and Perkowski \cite{GIP15}. The heart of this series, however, lies in the global theory. Our main contribution is a new form of Bourgain's globalization argument \cite{Bourgain94}, which addresses the singularity of the Gibbs measure and its consequences.

We now state an qualitative version our main theorem, which combines our measure-theoretic and dynamical results. For the quantitative version, we refer the reader to Theorem \ref{theorem:gibbs_portable} below and Theorem 1.3 in the second part of this series. We recall that the parameter $0<\beta<3$ determines the regularity of the interaction potential $V$. 

\begin{maintheorem}[Global well-posedness and invariance, qualitative version] 
The formal Gibbs measure $\mup$ exists and, for $0<\beta<1/2$, is singular with respect to the Gaussian free field $\cgp$. The renormalized wave equation with Hartree nonlinearity \eqref{intro_series:eq_nlw} is globally well-posed on the support of $\mup$ and the dynamics leave $\mup$ invariant. 
\end{maintheorem}

This is the first example of an invariant Gibbs measure for a dispersive equation which is singular with respect to the Gaussian free field $\cgp$.

\section{Introduction}
In the first paper of this series, we rigorously construct and study the formal Gibbs measure $\mup$ from \eqref{intro_series:eq_Gibbs} above. Since the Hamiltonian $H[\phi_0,\phi_1]$ splits into a sum of functions in $\phi_0$ and $\phi_1$, we can rewrite \eqref{intro_series:eq_Gibbs} as 
\begin{align*}
&\mathrm{d} \mup (\phi_0,\phi_1)  \\
=& \cZ^{-1}_0 \exp\Big(   \hspace{-0.5ex} - \frac{1}{4} \int_{\T^3} \lcol ( V\ast \phi_0^2) \phi_0^2 \rcol \dx  - \frac{1}{2} \| \phi_0 \|_{L^2}^2  - \frac{1}{2} \| \nabla \phi_0 \|_{L^2}^2 \Big) \, \mathrm{d} \phi_0 \,  \otimes \, \cZ^{-1}_1
 \exp\Big(  \hspace{-0.5ex}  - \frac{1}{2} \| \phi_1\|_{L^2}^2 \Big) \, \mathrm{d} \phi_1. 
\end{align*}
The construction and properties of the second factor are elementary (as will be explained below), and we now focus on the first factor. As a result, we are interested in the rigorous construction of a measure $\mu$ which is formally given by 
\begin{equation}\label{intro:eq_mu}
\mathrm{d}\mu (\phi) = \cZ^{-1}  \exp\Big(   \hspace{-0.5ex} - \frac{1}{4} \int_{\T^3} \lcol ( V\ast \phi^2) \phi^2 \rcol \dx  - \frac{1}{2} \| \phi \|_{L^2(\T^3)}^2  - \frac{1}{2} \| \nabla \phi \|_{L^2(\T^3)}^2 \Big) \, \mathrm{d} \phi. 
\end{equation}
Our Gibbs measure $\mu$ is closely related to the  $\Phi^4_d$-models, which replace the three-dimensional torus $\T^3$ by the more general $d$-dimensional torus  $\T^d$ and replace the integrand $\lcol (V\ast \phi^2) \phi^2 \rcol$ by the renormalized quartic power $\lcol \phi^4 \rcol$. Thus, the $\Phi^4_d$-model is formally given by 
\begin{equation}\label{intro:eq_phi4d}
\mathrm{d}\Phi^4_d(\phi) = \cZ^{-1}  \exp\Big(   \hspace{-0.5ex} - \frac{1}{4} \int_{\T^d} \lcol \phi^4 \rcol \dx  - \frac{1}{2} \| \phi \|_{L^2(\T^d)}^2  - \frac{1}{2} \| \nabla \phi \|_{L^2(\T^d)}^2 \Big) \, \mathrm{d} \phi. 
\end{equation}
Aside from their connection to Hamiltonian PDEs, such as nonlinear wave and Schr\"{o}dinger equations, the $\Phi^4_d$-models are of independent interest in quantum field theory (cf. \cite{Folland08}). In most rigorous constructions of measures such $\mu$ or the $\Phi^4_d$-models, the first step consists of a regularization. For instance, one may insert a frequency-truncation in the nonlinearity or replace the continuous spatial domain by a discrete lattice. In a second step, one then proves the convergence of the regularized measures as the regularization is removed, either by direct estimates or compactness arguments. \\

With a particular focus on $\Phi^4_d$-models, the question of convergence of the regularized measures has been extensively studied over several decades. The first proof of convergence was a major success of the constructive field theory program, which thrived during the 1970s and 1980s. We refer the reader to the excellent introduction of \cite{GH19} for a detailed overview and the original works \cite{BCG78,BDF83,FO76,GJ87,MS76,P77,Simon74,W89}. \\
In the 1990s, Bourgain \cite{Bourgain94,Bourgain97,Bourgain96} revisited  the $\Phi^4_d$-model in dimension $d=1,2$ using tools from harmonic analysis and introduced these problems into the dispersive PDE community. Bourgain's works \cite{Bourgain94,Bourgain97,Bourgain96} also contain important dynamical insights, which will be utilized in the second part of this series. \\
Based on the method of stochastic quantization, which was introduced by Nelson \cite{Nelson66,Nelson67} and Parisi-Wu \cite{PW81}, the construction and properties of the $\Phi^4_d$-models have also been studied over the last twenty years in the stochastic PDE community. The main idea behind stochastic quantization is that the $\Phi^4_d$-measure is formally invariant under the stochastic nonlinear heat equation
\begin{equation}\label{intro:eq_heat}
\partial_t u + u - \Delta u = - \lcol u^3 \rcol + \sqrt{2} \xi \qquad (t,x)\in \bR \times \T^d, 
\end{equation}
where $\xi$ is space-time white noise. After prescribing simple initial data, such as $u(0)=0$, one hopes to obtain the $\Phi^4_d$-measure as the limit of the law of $u(t)$ as $t\rightarrow \infty$. In spatial dimensions $d=1,2$, this approach was carried out by Iwata \cite{I85} and Da Prato-Debussche \cite{DPD03}, respectively. In spatial dimension $d=3$, however, \eqref{intro:eq_heat} is highly singular and the local well-posedness theory of \eqref{intro:eq_heat} is beyond classical methods in stochastic partial differential equations. In groundbreaking work \cite{Hairer14}, Hairer introduced regularity structures, which provide a detailed description  of the local dynamics of \eqref{intro:eq_heat}. Alternatively, the local well-posedness of \eqref{intro:eq_heat} was also obtained by Catellier and Chouk in \cite{CC18}, which is based on the para-controlled calculus of Gubinelli, Imkeller, and Perkowski \cite{GIP15}. In order to construct the $\Phi^4_3$-model using \eqref{intro:eq_heat}, however, local control over the solution is not sufficient, and one needs a global well-posedness theory. The global theory has been addressed very recently in \cite{AK17,GH19,HM18,MW17}, which combine regularity structures or para-controlled calculus with further PDE arguments, such as the energy method. Using similar tools, Barashkov and Gubinelli \cite{BG18,BG20} recently developed a variational approach to the $\Phi^4_3$-model, which does not directly rely on the stochastic heat equation \eqref{intro:eq_heat}. Their work forms the basis of this paper and will be discussed in more detail below.  \\

After this broad overview of the relevant literature, we now begin a more detailed discussion of the previous methods. Throughout this discussion we encourage the reader to think of the nonlinear wave equation as a Hamiltonian system of ordinary differential equations in Fourier space. We begin with the elementary construction of the Gaussian free field. Then, we discuss the construction of the $\Phi^4_1$ and $\Phi^4_2$-models using harmonic analysis, similar as in Bourgain's works \cite{Bourgain94,Bourgain96}, and the construction of the $\Phi^4_3$-model using the variational approach of Barashkov and Gubinelli \cite{BG18}. \\

Given a function $\phi \colon \T^d \rightarrow \bR$, its Fourier expansion is given by
\begin{equation}
\phi(x) = \sum_{n\in \Z^d} \widehat{\phi}(n) e^{i\langle n , x \rangle}. 
\end{equation}
Due to the real-valuedness of $\phi$, the sequence $(\widehat{\phi}(n))_{n \in \Z^d}$ satisfies the symmetry condition $\overline{\widehat{\phi}}(n)= \widehat{\phi}(-n)$. In order to respect this symmetry, we let $\Lambda\subseteq \Z^d$ be such that 
$\Z^d = \{ 0 \} \, \biguplus \, \Lambda \, \biguplus \, (-\Lambda )$, where $\biguplus$ denotes the disjoint union. For $n\in \Lambda$, we denote by $\mathrm{d}\widehat{\phi}(n)$ the Lebesgue measure on $\mathbb{C}$, and for $n=0$, we denote by  $\mathrm{d}\widehat{\phi}(0)$ the Lebesgue measure on $\R$. We can then formally identify the $d$-dimensional Gaussian free field
\begin{equation}\label{intro:eq_GFF_d}
\mathrm{d}\cg_d(\phi) =\cZ^{-1}  \exp\Big(   - \frac{1}{2} \| \phi \|_{L^2(\T^d)}^2  - \frac{1}{2} \| \nabla \phi \|_{L^2(\T^d)}^2 \Big) \, \mathrm{d} \phi
\end{equation}
as the push-forward under the Fourier transform of 
\begin{equation}\label{intro:eq_GFF_d_product}
\begin{aligned}
&\cZ^{-1}  \exp\Big( - \frac{1}{2} \sum_{n\in \Z^d} (1+|n|^2) |\widehat{\phi}(n)|^2 \Big) \bigotimes_{n\in \{ 0 \} \cup  \Lambda} \mathrm{d} \widehat{\phi}(n) \\
&= \frac{1}{2\pi}  \exp\Big( - \frac{1}{2} |\widehat{\phi}(0)|^2  \Big) \mathrm{d} \widehat{\phi}(0)   \otimes \Big( \bigotimes_{n\in  \Lambda}  \frac{1}{\pi \langle n \rangle^2} \exp\Big( -    \langle n \rangle^2  |\widehat{\phi}(n)|^2 \Big) \mathrm{d} \widehat{\phi}(n)  \Big),
\end{aligned}
\end{equation}
where $\langle n \rangle^2 = 1 + |n|^2 $. While \eqref{intro:eq_GFF_d} is entirely formal, the right-hand side of \eqref{intro:eq_GFF_d_product} is a well-defined product measure. Under the measure in \eqref{intro:eq_GFF_d_product}, $\widehat{\phi}(0)$ is a standard real-valued Gaussian and  $ ( \widehat{\phi}(n))_{n\in  \Lambda} $ is a sequence of independent complex Gaussians satisfying $\E |\widehat{\phi}( n )|^2 = \langle n \rangle^{-2}$. Turning this formal discussion around, we let $(\Omega,\mathcal{F},\bP)$ be an ambient probability space containing a sequence of independent complex-valued standard Gaussians $(g_n)_{n \in \Lambda}$ and a standard real-valued Gaussian $g_0$. Then, we can rigorously define the Gaussian free field $\cg_d$ by 
\begin{equation}\label{intro:eq_GFF_d_rigorous}
\mathrm{d}\cg_d(\phi) = \Big( \sum_{n\in \Z^d} \frac{g_n}{\langle n\rangle} e^{i\langle n , x \rangle}\Big)_{\#} \bP, 
\end{equation}
where the subscript $\#$ denotes the pushforward. Using the representation \eqref{intro:eq_GFF_d_rigorous}, we see that a typical sample of $\cg_d$ almost surely lies in $H_x^s(\T^d)$ for all $s < 1 - d/2$ but not in $H_x^{1-d/2}(\T^d)$. \\

We now turn to the construction of the $\Phi^4_1$ and $\Phi^4_2$-models. Based on our formal expression of the $\Phi^4_1$-model in \eqref{intro:eq_phi4d}, we would like to define 
\begin{equation}\label{intro:eq_Phi41}
\mathrm{d} \Phi^4_1(\phi) \defe \cZ^{-1} \exp\Big( - \frac{1}{4} \int_{\T} \phi^4(x) \dx \Big) \mathrm{d}\cg_1(\phi). 
\end{equation}
Using either Sobolev embedding or Khintchine's inequality, we obtain $\cg_1$-almost surely that $0 < \| \phi \|_{L^4(\T)}< \infty$. This implies that the density $\mathrm{d} \Phi^4_1 / \mathrm{d}\cg_1$ is well-defined, almost surely positive, and lies in $L^q(\cg_1)$ for all $1\leq q \leq \infty$. In particular, the $\Phi^4_1$-model is absolutely continuous with respect to the Gaussian free field $\cg_1$. We emphasize that the potential energy in \eqref{intro:eq_Phi41} does not require a renormalization. Furthermore, we can define truncated $\Phi^4_1$-models by 
\begin{equation*}
\mathrm{d} \Phi^4_{1;N}(\phi) \defe \cZ^{-1}_N \exp\Big( - \frac{1}{4} \int_{\T} (P_{\leq N} \phi)^4(x) \dx \Big) \mathrm{d}\cg_1(\phi),
\end{equation*}
where $N$ is a dyadic integer and $P_{\leq N}$ a Littlewood-Paley projection. As was shown in \cite{Bourgain94}, direct estimates yield the convergence of  $\mathrm{d} \Phi^4_{1;N}/ \mathrm{d}\cg_1$  in $L^q(\cg_1)$ for all $1\leq q < \infty$ and hence $\Phi^4_{1;N}$ converges to $\Phi^4_1$ in total variation as $N$ tends to infinity. 

In two spatial dimensions, however, we encounter a new difficulty. Since $\cg_1$-almost surely $\| \phi\|_{L^2}=\infty$,  the potential energy $\| \phi \|_{L^4}^4$ is almost surely infinite. As a result, the potential energy requires a renormalization. A direct calculation using the definition of $P_{\leq N}$ in \eqref{notation:eq_PleqN} below yields 
\begin{equation*}
\sigma_N^2 = \int_{0}^\infty \mathrm{d} \cg_2(\phi) \| P_{\leq N} \phi \|_{L^2(\T^2)}^2 \sim \log(N). 
\end{equation*}
We then replace the monomial $(P_{\leq N} \phi)^4$ by the Hermite polynomial 
\begin{equation*}
\lcol (P_{\leq N} \phi)^4 \rcol = (P_{\leq N} \phi)^4 - 6 \sigma_N^2 (P_{\leq N} \phi)^2 + 3 \sigma_N^4. 
\end{equation*}
This leads to the truncated $\Phi^4_2$-model given by
\begin{equation*}
\mathrm{d} \Phi^4_{2;N}(\phi) \defe \cZ^{-1}_N \exp\Big( - \frac{1}{4} \int_{\T^2} \lcol (P_{\leq N} \phi)^4 \rcol (x) \dx \Big) \mathrm{d}\cg_2(\phi). 
\end{equation*}
After this renormalization, one can show (cf. \cite{OT18}) that the densities $\mathrm{d} \Phi^4_{2;N}/\mathrm{d}\cg_2$  converge in $L^q(\cg_2)$ for all $1\leq q < \infty$ and we can define $\Phi^4_2$ as the limit (in total-variation) of $\Phi^4_{2;N}$ as $N\rightarrow \infty$. As in one spatial dimension, the $\Phi^4_2$-model is absolutely continuous with respect to the Gaussian free field $\cg_2$. Using similar tools as for the $\Phi^4_2$-model, Bourgain \cite{Bourgain97} constructed the Gibbs measure $\mu$ for the Hamiltonian with a Hartree interaction for $\beta>2$, which corresponds to a relatively smooth interaction potential $V$.  The key point of this paragraph is that  the $\Phi^4_1$-model, the $\Phi^4_2$-model, and the Gibbs measure $\mu$ for a smooth interaction potential can be constructed through ``hard'' analysis. As a result, one obtains strong modes of convergence and absolute continuity with respect to Gaussian free field. \\

The construction of the $\Phi^4_3$-model, however, is much more complicated. As will be described below, several of the ``hard'' conclusions, such as convergence in total-variation or absolute continuity with respect to the Gaussian free field, are either unavailable or fail. As a result, we have to (partially) replace hard estimates by softer compactness arguments. We now give a short overview of the variational approach in  \cite{BG18,BG20}, which forms the basis of this paper. \\ 
In order to use techniques from stochastic control theory, we introduce a family of Gaussian processes $(W_t(x))_{t\geq 0}$ on an ambient probability space $(\Omega,\mathcal{F},\bP)$ satisfying $\Law_{\bP}(W_\infty) = \cg_3$, which will be defined in Section \ref{section:stochastic_control}. We view $t$ as a stochastic time-variable which serves as a regularization parameter. Using this terminology, we obtain a truncated $\Phi^4_3$-model by setting
\begin{align*}
\mathrm{d} \Phi^4_{3;T}(\phi) = (W_\infty)_{\#} \big( \mathrm{d} \overline{\Phi}^4_{3;T}(\phi) \big) 
\end{align*}
and
\begin{align*}
\mathrm{d} \overline{\Phi}^4_{3;T}(\phi) = \cZ_T^{-1} \exp\big( - \frac{1}{4} \int_{\T^3} W_T^4(x) - a_T W_T^2(x) - b_T \dx \big) \mathrm{d}\bP. 
\end{align*}
We emphasize already that the $\Phi^4_{3;T}$-measure does not correspond to a truncated Hamiltonian, which will be discussed in full detail in Section \ref{section:stochastic_control}.  In order to construct the $\Phi^4_3$-model, the main step is to prove the tightness of the $\Phi^4_{3;T}$-measures.  Using Prokhorov's theorem, this implies the weak convergence of a subsequence of $\Phi^4_{3;T}$ and we can define the $\Phi^4_3$-measure as the weak limit.  To prove tightness, Barashkov and Gubinelli obtain uniform bounds in $T$ on the Laplace transform 
\begin{equation*}
f \in C( \cC_x^{-\frac{1}{2}-}(\T^3);\R) \rightarrow \int \mathrm{d} \Phi^4_{3;T}(\phi) ~  e^{-f(\phi)}. 
\end{equation*}
The main ingredients for the uniform bounds are the Boué-Dupuis formula (Theorem \ref{thm:bd_formula}) and the para-controlled calculus of Gubinelli, Imkeller, and Perkowski \cite{GIP15}, which has also been used in the stochastic quantization approach to the $\Phi^4_3$-model (cf. \cite{GH19}).  \\
While the variational approach yields the existence of the $\Phi^4_3$-measure, it only yields limited information regarding its properties. In spatial dimensions $d=1,2$, the $\Phi^4_d$-model is absolutely continuous with respect to the Gaussian free field $\cg_d$, and hence the samples of $\Phi^4_3$ for many purposes behave like a random Fourier series with independent coefficients. This is an essential ingredient in almost all invariance arguments for random dispersive equations (see e.g. \cite{Bourgain97,Bourgain96,DNY19,NORS12}). Unfortunately, the $\Phi^4_3$-measure is singular with respect to the Gaussian free field $\cg_3$. This fact seems to be part of the folklore in mathematical physics, but it is surprisingly difficult to find a detailed reference. In an unpublished note available to the author \cite{HairerNote}, Martin Hairer proved the singularity using the stochastic quantization approach and regularity structures. Using the Girsanov-transformation, Barashkov and Gubinelli \cite{BG20} constructed a reference measure $\nu_3^4$ for the $\Phi^4_3$-model,  which serves a similar purpose as the Gaussian free field for $\Phi^4_1$ and $\Phi^4_2$. The samples of $\nu^4_3$ are given by an explicit  Gaussian chaos of finite order and $\Phi^4_3$ is absolutely continuous with respect to $\nu^4_3$. Furthermore, Barashkov and Gubinelli proved that the reference measure $\nu^4_3$ and the Gaussian free field $\cg_3$ are mutually singular, which yields a self-contained proof of the singularity of $\Phi^4_3$ with respect to the Gaussian free field $\cg_3$.

\subsection{Main results and methods} 

In the following, we simply write $\cg=\cg_3$ for the three-dimensional Gaussian free field. Let $N\geq 1$ be a dyadic integer and define the  renormalized potential energy by 
\begin{equation}\label{intro:eq_potential}
\lcol \cV_N^\lambda(\phi)\rcol \defe  \frac{\lambda}{4} \int_{\T^3} \Big( ( V \ast \phi^2) \phi^2  - 2 a_N \phi^2 - 4 (\MN \phi) \phi + \widehat{V}(0) a_N^2 + 2 b_N \Big) \dx  + c_N^\lambda.  
\end{equation}
The coupling constant $\lambda>0$ is introduced for illustrative purposes, but the reader may simply set $\lambda=1$ as in all previous discussions. The renormalization constants $a_N,b_N$, and $c_N^\lambda$ are as in Definition \ref{measure_re:definition_renormalization_dynamics} and Proposition \ref{measure_var:eq_renormalized_V} and the renormalization multiplier $\MN$ is as in Definition \ref{measure_re:definition_renormalization_dynamics}. We emphasize that the renormalization in \eqref{intro:eq_potential} goes beyond the usual Wick-ordering, which is only based on the mass $\| P_{\leq N} \phi \|_{L^2}^2$. The additional renormalization is contained in the renormalization constant $c_N^\lambda$, which is related to the mutual singularity of $\mup$ and $\cg$ (for $0<\beta<1/2$). 
The truncated and renormalized Hamiltonian $H_N$ is given by 
\begin{equation}\label{intro:eq_HN}
H_N[\phi_0,\phi_1] \defe \frac{1}{2} \Big( \| \phi_0 \|_{L^2}^2 + \| \nabla \phi_0 \|_{L^2}^2 + \| \phi_1 \|_{L^2}^2 \Big) + 
\lcol \cV_N^\lambda(P_{\leq N} \phi_0)\rcol~,
\end{equation}
where we omit the dependence on $\lambda>0$ from our notation. We emphasize that only the quartic term contains a frequency-truncation and renormalization, whereas the quadratic terms remain unchanged. As described in the beginning of the introduction, we focus on the first factor of the truncated Gibbs measure $\mup_N$, which is given by
\begin{equation}
\dmu_N(\phi) = \frac{1}{\cZ_N^\lambda} \exp\Big( - \lcol \cV_N^\lambda(P_{\leq N} \phi)\rcol \Big) \mathrm{d}\cg(\phi). 
\end{equation}
Before we state our main result, we recall the assumptions on the interaction potential $V\colon \T^3 \rightarrow \bR$ from the introduction to the series. In these assumptions, $0<\beta<3$ is a parameter. 

\begin{assumptions}{A}\label{assumptions:V}
We assume that the interaction potential $V$ satisfies 
\begin{enumerate}
\item $V(x)= c_\beta |x|^{-(3-\beta)}$ for some $c_\beta>0$ and all $x\in \T^3$ satisfying $\| x\|\leq 1/10$,
\item $V(x) \gtrsim_\beta 1$ for all $x\in \T^3$, 
\item $V(x)=V(-x)$ for all $x\in \T^3$,
\item $V$ is smooth away from the origin. 
\end{enumerate}
\end{assumptions}

We now state the conclusions of this paper which will be needed in the second part of this series \cite{Bringmann20}. A more comprehensive version of our results will then be stated in Theorem \ref{theorem:tightness}, Theorem \ref{theorem:reference}, and Theorem \ref{theorem:singularity} below. The additional results may be useful in further applications, such as invariant measures for a Schr\"{o}dinger equation with a Hartree nonlinearity. 

\begin{theorem}[The Gibbs measure]\label{theorem:gibbs_portable}
Let $\kappa>0$ be a fixed positive parameter, let $0<\beta<3$ be a parameter, and let the  interaction potential $V$ be as in the Assumptions \ref{assumptions:V}. Then, the sequence of truncated Gibbs measures $(\mu_N)_{N\geq 1}$ converges weakly to a probability measure $\mu_\infty$ on $\cC_x^{-1/2-\kappa}(\T^3)$, which is called the Gibbs measure. If in addition $0<\beta<1/2$, the Gibbs measure $\mu_\infty$ and the Gaussian free field $\cg$ are mutually singular.  Furthermore, there exists a sequence of reference measures $(\nu_N)_{N\geq 1}$ on $\cC_x^{-1/2-\kappa}(\T^3)$ and an ambient probability space $(\Omega,\cF,\bP)$ satisfying the following properties:
\begin{enumerate}
\item (Absolute continuity and $L^q$-bounds) The truncated Gibbs measures $\mu_N$ are absolutely continuous with respect to the reference measures $\nu_N$. More quantitatively, there exists a parameter $q>1$ and a constant $C\geq 1$, depending only on $\beta$, such that 
\begin{equation*}
\mu_N(A) \leq C \nu_N(A)^{1-\frac{1}{q}}
\end{equation*}
for all Borel sets $A \subseteq  \cC_x^{-1/2-\kappa}(\T^3)$. 
\item (Representation of $\nu_N$) Let $\gamma=\min(1/2+\beta,1)$. There exists a large integer $k=k(\beta)$ and two random functions $\cG, \cR_N\colon (\Omega,\cF) \rightarrow   \cC_x^{-1/2-\kappa}(\T^3)$  satisfying for all $p\geq 2$ that 
\begin{equation*}
\nu_N = \Law_{\bP}\big( \cG + \cR_N \big), \quad \cg = \Law_{\bP} \big( \cG \big) , \quad \text{and} \quad \| \cR_N \|_{L^p_\omega \cC_x^{\gamma-\kappa}(\Omega\times \T^3)} \leq p^{\frac{k}{2}}. 
\end{equation*}
\end{enumerate}
\end{theorem}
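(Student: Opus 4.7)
The plan is to adapt the variational framework of Barashkov--Gubinelli to the Hartree interaction. I work on an ambient probability space $(\Omega,\cF,\bP)$ carrying a Gaussian process $(W_t)_{t\geq 0}$ with $\Law_\bP(W_\infty) = \cg$, where $t$ is a stochastic time parameter introducing a smooth regularization. Rather than study $\mu_N$ directly as a measure on distributions, I lift it to the pullback $\overline{\mu}_N \defe (\cZ_N^\lambda)^{-1} \exp(-\lcol \cV_N^\lambda(P_{\leq N} W_\infty) \rcol) \, d\bP$, so that $\mu_N = (W_\infty)_\# \overline{\mu}_N$. This recasts the problem as the analysis of an exponential functional of a Gaussian process indexed by $t$, to which the Bou\'e--Dupuis formula applies.

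\textbf{Tightness and weak convergence.} I would prove convergence $\mu_N \rightharpoonup \mu_\infty$ in $\cC_x^{-1/2-\kappa}(\T^3)$ by establishing uniform-in-$N$ bounds on the Laplace transforms $\int e^{-f(\phi)}\, d\mu_N(\phi)$ for $f \in C_b(\cC_x^{-1/2-\kappa}; \R)$. Bou\'e--Dupuis reduces this to the variational problem
\begin{equation*}
-\log \int e^{-f}\, d\mu_N = \inf_{u} \E\Big[ \lcol \cV_N^\lambda(P_{\leq N}(W_\infty + I(u))) \rcol + f(W_\infty + I(u)) + \tfrac{1}{2}\int_0^\infty \|u_s\|_{L_x^2}^2\, ds\Big],
\end{equation*}
with infimum over adapted drifts $u$. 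The standard strategy is to split $I(u)$ into a low-regularity Gaussian-like piece and a smoother drift-controlled piece, Wick-renormalize the stochastic objects $V\ast W_\infty^2$, $(V\ast W_\infty^2) W_\infty$, and their further contractions, and control the remaining products by the para-controlled calculus of \cite{GIP15}. The key obstacle specific to this paper is adapting the para-product and resonant-product estimates to the non-local operator $V\ast$: the classical multiplier bounds must be redone with the Fourier symbol $\widehat{V}(n)$ inserted, and the fractional gain of $\min(\beta,1)$ derivatives that $V\ast$ provides on Wick-squared Gaussians must be tracked sharply to close the estimates uniformly in $N$. Once the Laplace bounds are in place, Prokhorov yields weak subsequential convergence, and uniqueness of the limit follows by identifying the limiting variational functional.

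\textbf{Reference measure and absolute continuity.} The near-optimal drift in the variational problem for $f\equiv 0$ naturally splits into a rough Gaussian chaos of finite order $k=k(\beta)$, which I call $\cR_N$, plus a smoother residue. Setting $\cG \defe W_\infty$ and $\nu_N \defe \Law_\bP(\cG + \cR_N)$ gives the representation in part (ii). The regularity $\cC_x^{\gamma-\kappa}$ with $\gamma = \min(1/2+\beta,1)$ arises from the smoothing of $V\ast$ on Wick-squared Gaussians, and the bound $\|\cR_N\|_{L^p_\omega \cC_x^{\gamma-\kappa}} \lesssim p^{k/2}$ is Nelson-type hypercontractivity for a finite chaos of order $k$. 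For the $L^q$-bound in part (i), after the Girsanov shift $W_\infty \mapsto W_\infty + \cR_N$ the Radon--Nikodym density $d\mu_N/d\nu_N$ involves only the smoother residue of $\cV_N^\lambda$, whose exponential moments are controlled uniformly in $N$ by re-running the Bou\'e--Dupuis argument on the shifted functional. H\"older's inequality then yields $\mu_N(A) \leq C \nu_N(A)^{1-1/q}$ for some $q > 1$.

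\textbf{Singularity for $0 < \beta < 1/2$.} To prove $\mu_\infty \perp \cg$, I would exhibit a Borel functional $F \colon \cC_x^{-1/2-\kappa}(\T^3) \to \R \cup \{\pm\infty\}$ taking almost surely distinct values under the two measures. The natural candidate is the limit as $N\to\infty$ of a suitably normalized version of $\int_{\T^3} \lcol (V\ast (P_{\leq N}\phi)^2)(P_{\leq N}\phi)^2\rcol$, or of a lower-order resonant contraction thereof that is \emph{not} absorbed by Wick ordering when $\beta < 1/2$. Using the reference measure representation $\phi = \cG + \cR_N$, the drift $\cR_N$ produces an $N$-independent deterministic shift in $F$ under $\nu_N$, hence under $\mu_\infty$ by absolute continuity, whereas under $\cg$ the same quantity diverges (or converges to a different constant) because the compensator $c_N^\lambda$ grows at a $\beta$-dependent rate that is non-trivial precisely for $\beta < 1/2$. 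Mutual singularity then follows from a standard zero-one argument. The hardest part of the whole proof is the tightness step: executing the para-controlled analysis with the non-local $V\ast$ in place of pointwise multiplication, including a sharp identification of which resonant interaction carries the irremovable divergence that forces the $c_N^\lambda$ counterterm and ultimately witnesses the singularity.
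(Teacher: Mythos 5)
Your overall skeleton for tightness and the reference measure matches the paper's route (lift to the ambient Gaussian space, Bou\'e--Dupuis, a Girsanov-type reference measure of the form $\Law_{\bP}(W_\infty + \text{explicit finite chaos})$, H\"older for the $L^q$-bound), but two load-bearing ingredients are missing. First, in the variational problem the paper does not close the estimates by redoing para-product bounds with the symbol $\widehat V(n)$; the two new inputs are (i) a Visan-type fractional estimate $\|\langle\nabla\rangle^{-\beta/4} f\|_{L^4}^4 \lesssim \cV(f)$, which is what makes the weak coercive term $\cV(I_\infty[u])$ usable against the terms cubic in the drift, and (ii) an operator-norm ("random matrix") estimate, proved in physical space, for the mixed quadratic term $\int_{\T^3}\big(V*(W f_1)\big)W f_2 - (\mathcal{M} f_1) f_2\,\dx$. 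Second, the drift defining the reference measure in the paper is not "the near-optimal drift of the variational problem": it is defined by an integral equation that contains, in addition to the cubic Hartree term, an auxiliary defocusing term $J_t\langle\nabla\rangle^{-1/2}\lcol(\langle\nabla\rangle^{-1/2}W_t)^n\rcol$. This term is essential, not cosmetic: when you "re-run Bou\'e--Dupuis on the shifted functional" to bound $\E_{\QT}[D_T^q]$ for $q>1$, the exponent produces a term $-\frac{q-1}{2}\int_0^\infty\|w_t\|_{L^2}^2\,\dt$ with the wrong sign, and it is only absorbed through a Gronwall-type argument whose coercivity comes from this $n$-th power term (one also needs stopping times/Novikov and a non-explosion argument to even define the Girsanov density). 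As written, your $L^q$-bound step does not close.

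The singularity argument has a genuine gap in its mechanism. The counterterm $c_N^\lambda$ is an additive constant that cancels against the normalization $\cZ_N^\lambda$, so it neither enters the witness functional nor explains any divergence under $\cg$; and the shift produced by $\cR_N$ in the quartic energy is not an "$N$-independent deterministic constant". The actual picture is a rate comparison: under $\cg$ the Wick-ordered quartic energy $\int_{\T^3}\lcol(V*(\rho_S(\nabla)\phi)^2)(\rho_S(\nabla)\phi)^2\rcol\,\dx$ has mean zero and standard deviation of order $S^{(1-2\beta)/2}$, so after dividing by $S^{1-2\beta-\delta}$ it tends to $0$; under the reference (hence Gibbs) measure, the correlation between the Gaussian component and the cubic drift produces a systematic contribution of size $S^{1-2\beta}$ with a definite (negative) sign, so the same normalized quantity tends to $-\infty$ --- and this is precisely where $\beta<1/2$ enters, since for $\beta>1/2$ that contribution stays bounded, consistent with absolute continuity. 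Moreover there is no zero-one law to invoke: upgrading the expectation lower bound to an almost-sure divergence under the drift measure is itself nontrivial and is done in the paper by yet another Bou\'e--Dupuis argument applied to an auxiliary coercive functional, together with estimates for all the cross terms $\AS{j}{s}[u]$, the martingale terms, and the $n$-th order contribution. These quantitative steps are the heart of the singularity proof and are absent from your sketch.
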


\begin{remark}\label{remark:OOT}
After the completion of (the first version of) this series, the author learned of independent work by Oh, Okamoto, and Tolomeo \cite{OOT20}, which discusses the focusing and defocusing three-dimensional (stochastic) nonlinear wave equation with a Hartree nonlinearity. In the focusing case, the authors provide a complete picture of the construction and properties of the focusing Gibbs measures, which distinguishes the three regimes $\beta>2$, $\beta=2$, and $\beta<2$ (cf. \cite{OOT20}). In the defocusing case, the authors construct the Gibbs measures for $\beta>0$ and prove the singularity for $0<\beta\leq 1/2$, which includes the endpoint $\beta=1/2$. The reference measures are briefly discussed in \cite[Appendix C]{OOT20}, but only play a minor role in their analysis. The $L^q$-bound in Theorem \ref{theorem:gibbs_portable}, which will be essential in the second part of this series \cite{Bringmann20}, is not proven in \cite{OOT20}. 

In the first version of this manuscript, we proved the tightness of the truncated Gibbs measures $(\mu_N)_{N\geq 1}$, which only implies that a subsequence of $(\mu_N)_{N\geq 1}$. In \cite{OOT20}, the authors proved the uniqueness of weak subsequential limits, which lead to the convergence of the full sequence. A version of the uniqueness argument from \cite{OOT20}, which has been modified to match our notation, has now been included in Appendix \ref{section:uniquess}.

While the measure-theoretic part of  \cite{OOT20} treats all $\beta>0$, the dynamical results are restricted to $\beta>1$. In particular, the singular regime $0<\beta<1/2$ is not covered, which is the main object of this series.
\end{remark}
In addition to the singular regime $0<\beta<1/2$,  the most interesting cases in Theorem \ref{theorem:gibbs_portable} are the Newtonian potential $|x|^{-2}$ (corresponding to $\beta=1$) and the Coulomb potential $|x|^{-1}$ (corresponding to $\beta=2$).  As mentioned earlier in the introduction, Bourgain \cite{Bourgain97} proved a version of Theorem \ref{theorem:gibbs_portable} in the limited range $\beta>2$, which corresponds to a relatively smooth interaction potential. \\

We now split the main theorem  (Theorem \ref{theorem:gibbs_portable}) into three parts:
\begin{itemize}
\item[$\bullet$] the tightness and weak convergence of the truncated Gibbs measures $\mu_N$, 
\item[$\bullet$] the construction and properties of the reference measures $\nu_N$,
\item[$\bullet$] the mutual singularity of the Gibbs measure and the Gaussian free field. 
\end{itemize}

\begin{theorem}[Tightness and convergence]\label{theorem:tightness}
The truncated Gibbs measures $(\mu_N)_{N\geq 1}$ are tight on $\cC_x^{-1/2-\kappa}(\T^3)$. Furthermore, the sequence $(\mu_N)_{N\geq 1}$ weakly converges to a limiting measure $\mu_\infty$. 
\end{theorem}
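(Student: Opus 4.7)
I would follow the variational strategy of Barashkov and Gubinelli \cite{BG18}, adapted to the non-local Hartree interaction. The core technical task is a uniform-in-$N$ bound on the Laplace transforms of $\mu_N$; from such bounds, both tightness and the reference-measure representation in Theorem \ref{theorem:gibbs_portable} fall out, and weak convergence of the full sequence is then obtained from a separate uniqueness-of-limits argument.

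The first and hardest step is to apply the Boué-Dupuis formula, rewriting
\[
-\log \int e^{-F(\phi)}\,\mathrm{d}\mu_N(\phi)
\]
as the difference of two infima over progressively measurable drifts $u$, each of the form
\[
\inf_u \mathbb{E}\Big[ \text{(test functional)} + \lcol \cV_N^\lambda(P_{\leq N}(W_\infty + I(u)))\rcol + \tfrac{1}{2}\int_0^\infty \|u_s\|_{L^2}^2\,\mathrm{d}s \Big],
\]
where $W_\infty$ is the Wiener construction of the Gaussian free field. The main obstacle is a uniform lower bound on this drift functional: I would expand $\lcol \cV_N^\lambda \rcol$ via its multilinear structure, match each divergent stochastic term with the appropriate counterterm among $a_N$, $b_N$, $\MN$ and $c_N^\lambda$, and absorb cubic drift terms into the quadratic drift cost via Young's inequality, using para-controlled estimates for the Wick powers and resonance products involving $V\ast\cdot\,$. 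The non-locality of $V$ produces pairings of the form $V\ast(\text{stochastic})\cdot(\text{stochastic})$ with no analogue in the $\Phi^4_3$ model; for $0<\beta<1/2$ one such pairing is genuinely divergent and is cancelled precisely by $c_N^\lambda$, the same mechanism that ultimately produces the singularity of $\mu_\infty$ with respect to $\cg$.

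Second, a Girsanov shift along the near-optimal drift in the previous variational problem produces the reference measures $\nu_N$ together with the representation $\nu_N=\Law_\bP(\cG+\cR_N)$, while the uniform Laplace bound from step one yields the Radon-Nikodym estimate $\mu_N(A)\leq C\,\nu_N(A)^{1-1/q}$ from Theorem \ref{theorem:gibbs_portable}. Tightness of $\nu_N$ on $\cC_x^{-1/2-\kappa}(\T^3)$ then reduces to tightness of the Gaussian part $\cG$ (Gaussian moment bounds in $\cC_x^{-1/2-\kappa/2}$ followed by compact embedding) and to uniform tightness of $\cR_N$ (the bound $\|\cR_N\|_{L^p_\omega \cC_x^{\gamma-\kappa/2}}\leq p^{k/2}$ with $\gamma>0$, combined with Markov and compact embedding). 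The inequality $\mu_N(K^c)\leq C\,\nu_N(K^c)^{1-1/q}$ transfers this tightness from $\nu_N$ to $\mu_N$.

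Third, to upgrade subsequential convergence to convergence of the full sequence $(\mu_N)_{N\geq 1}$, I would apply the uniqueness-of-limits argument of Oh-Okamoto-Tolomeo \cite{OOT20}, reproduced in our notation in Appendix \ref{section:uniquess}: the variational representation identifies the Laplace transform of any weak subsequential limit via the $N\to\infty$ limiting control problem on a common probability space, so any two such limits must coincide on a separating family of bounded continuous test functionals and therefore agree as probability measures on $\cC_x^{-1/2-\kappa}(\T^3)$.
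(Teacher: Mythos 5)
Your overall architecture is correct and, for the hardest step, the same as the paper's: uniform-in-$N$ control of the variational problem obtained from the Boué--Dupuis formula (Proposition \ref{measure_var:proposition_main}), followed by the Oh--Okamoto--Tolomeo uniqueness-of-limits argument (Proposition \ref{appendix:prop_uniqueness}) to upgrade subsequential to full weak convergence. Where you genuinely diverge is the tightness step. You route it through the reference measures: build $\nu_N$ by a Girsanov shift, prove the $L^q$-type bound $\mu_N(A)\lesssim \nu_N(A)^{1-1/q}$, show tightness of $\nu_N$ from the decomposition $\cG+\cR_N$, and transfer. This works and is not circular (the $L^q$-bound of Proposition \ref{measure_drift:proposition_Lp} only uses the variational estimates, not tightness of $\mu_N$), but it is considerably heavier than what the paper does: in Proposition \ref{measure_var:proposition}\eqref{measure_var:item_corollary_iii} tightness is extracted \emph{directly} from the Laplace-transform bound by testing against $\varphi(W)=-\|W\|_{\cC_t^{\alpha,\eta}\cC_x^{-(1+\kappa)/2}}$, i.e.\ uniform exponential moments in a norm whose unit ball embeds compactly into $\cC_t^0\cC_x^{-1/2-\kappa}$, plus Kolmogorov continuity for $W$ and the deterministic bound $\|I[u]\|\lesssim\|u\|_{L^2_{t,x}}$ (Lemma \ref{appendix:lemma_It}). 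Your route buys nothing extra for Theorem \ref{theorem:tightness} itself, though of course the reference measures are needed later for Theorem \ref{theorem:reference}; the paper's route keeps the construction of $\mu_\infty$ independent of the entire drift-measure machinery.

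One point in your first step is stated too optimistically: the terms cubic in the drift, such as $\int_{\T^3}(V*\Iinf[u]^2)\,\Iinf[u]\,\Winf\dx$, cannot be absorbed into the quadratic cost $\frac12\|u\|_{\cH}^2$ by Young's inequality alone (degree three beats degree two). The paper's mechanism is to keep the coercive Hartree energy $\frac{\lambda}{4}\cV(\Iinf[u])$ as a fourth-power good term and to exploit it through Visan's fractional-derivative estimate (Lemma \ref{measure_var:lemma_visan} and the cubic estimate Lemma \ref{measure_var:lemma_cubic}), since $\|\Iinf[u]\|_{L^4}^4$ is not available here. Similarly, the resonant quadratic-in-drift term $\int_{\T^3}\big[(V*(\Winf \Iinf[u]))\Winf\Iinf[u]-(\Minf\Iinf[u])\Iinf[u]\big]\dx$ is handled by the random matrix/operator-norm estimate (Proposition \ref{measure_rmt:proposition_estimate}), not by Wick-power or paraproduct bounds alone. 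Your remark that the extra constant $c_N^\lambda$ absorbs a pairing that diverges precisely when $0<\beta<1/2$, and that this is the source of singularity, is accurate.
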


The overall strategy of the proof of Theorem \ref{theorem:tightness} is the same as in the variational approach of Barashkov and Gubinelli \cite{BG18}. In comparison with \cite{BG18}, the terms in this paper often have a more complicated algebraic structure but obey better analytical estimates. As any reader familiar with regularity structures or para-controlled calculus may certify, the algebraic structure of most stochastic objects is already quite complicated, so this trade-off is not always favorable. In addition, the non-locality of the nonlinearity requires different analytical estimates and we mention the two most important examples: 
\begin{enumerate}[(i)] 
\item The coercive term $\| f\|_{L^4}^4$ in the variational problem for the $\Phi^4_3$-model is replaced by the potential energy
\begin{equation*}
\int_{\T^3} (V \ast f^2) f^2 \dx. 
\end{equation*}
We emphasize that the coercive term in the variational problem does not contain a renormalization, which is a result of the binomial formula in Lemma \ref{measure_re:lemma_binomial}. In order  to use the potential energy in our estimates, we rely  on a fractional derivative estimate of Visan \cite[(5.17)]{Visan07}. 
\item In the variational problem, we encounter mixed terms of the form 
\begin{equation*}
\int_{\T^3} \Big[ \big( V \ast ( P_{\leq N} W_\infty \cdot P_{\leq N} f_1 )  \cdot P_{\leq N} W_\infty \cdot P_{\leq N} f_2 - \big( \MN P_{\leq N} f_1 \big) P_{\leq N} f_2 \Big] \dx,
\end{equation*}
where $(W_t)_{t\geq 0}$ is the Gaussian process from the introduction. Based on the literature on random dispersive equations \cite{Bourgain97,Bourgain96,DNY19,DNY20,GKO18a}, it is tempting to bound this mixed term through Fourier-analytic and random matrix techniques. We instead develop a simpler and elegant physical-space approach. 
\end{enumerate}

The next theorem gives a more detailed description of the reference measures in Theorem \ref{theorem:gibbs_portable}. To simplify the notation, we allow the truncation parameter $N$ to take the value $\infty$. 
\begin{theorem}[Reference measures]\label{theorem:reference}
There exists a family of reference measures $(\nu_N)_{1 \leq N \leq \infty}$  and an ambient probability space $(\Omega,\cF,\bP)$ satisfying the following properties:
\begin{enumerate}
\item Absolute continuity and $L^q$-bounds: The truncated Gibbs measures $\mu_N$ are absolutely continuous with respect to the reference measures $\nu_N$. More quantitatively, there exists a parameter $q>1$ and a constant $C\geq 1$, depending only on $\beta$, such that 
\begin{equation*}
\mu_N(A) \leq C \nu_N(A)^{1-\frac{1}{q}}
\end{equation*}
for all Borel sets $A \subseteq  \cC_x^{-1/2-\kappa}(\T^3)$. 
\item Representation of $\nu_N$: We have that 
\begin{equation*}
\nu_N = \Law_{\bP} \big( \cG^{(1)} + \cG^{(3)}_N + \cG^{(n)}_N \big). 
\end{equation*}
Here, $n=n(\beta)$ is a large integer and the linear, cubic, and $n$-th order Gaussian chaoses are explicitly given by
\begin{align*}
\cG^{(1)}&= W_\infty, \\
\cG^{(3)}_N  &= -\lambda   P_{\leq N} \int_0^\infty J_s^2 \Big( \lcol  (  V \ast ( P_{\leq N} W_s )^2 ) P_{\leq N} W_s \rcol \Big) \ds , \\ 
\cG^{(n)}_N &=  P_{\leq N} \int_0^\infty \langle \nabla \rangle^{-\frac{1}{2}} J_s^2 \Big( \lcol ( \langle \nabla \rangle^{-\frac{1}{2}} P_{\leq N} W_s )^n \rcol \Big) \ds,
\end{align*}
where we refer the reader to Section \ref{section:stochastic_control} and Definition \ref{measure_re:definition_renormalization} for the definitions of $J_s$ and the renormalizations. 
\end{enumerate}
\end{theorem}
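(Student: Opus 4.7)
The plan is to adapt the Girsanov-shift strategy of Barashkov--Gubinelli \cite{BG20} for the $\Phi^4_3$-model to the Hartree nonlinearity, augmenting the cubic shift with an extra higher-order chaos to supply the coercivity that is lost in the singular regime $0<\beta<1/2$.

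\emph{Construction of $\nu_N$.} Let $\Psi_N^t$ denote the truncated-in-time version of the drift prescribed in part (2), obtained by replacing $\int_0^\infty$ with $\int_0^t$ in the formulas for $\cG^{(3)}_N$ and $\cG^{(n)}_N$. Then $\Psi_N^t$ is adapted to the filtration generated by $W_t$, has absolutely continuous paths in $t$, and $\Psi_N^\infty = \cG^{(3)}_N + \cG^{(n)}_N$. Setting $\nu_N := \Law_\bP(W_\infty + \Psi_N^\infty)$ immediately yields the representation claimed in part (2); the limiting case $N=\infty$ is obtained by deleting the projections $P_{\leq N}$. Hypercontractivity applied to the explicit multilinear kernels of $\cG^{(3)}_N$ and $\cG^{(n)}_N$, combined with Schauder-type estimates for $J_s^2$ and Visan's fractional derivative bound, gives the required $L^p_\omega \cC_x^{\gamma-\kappa}$ estimates with polynomial growth in $p$.

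\emph{Absolute continuity and $L^q$-bound.} Girsanov's theorem applied to the shift by $\Psi_N^t$ produces an exponential martingale $D_N$ such that $\nu_N$ is absolutely continuous with respect to $\cg$. Combining this with the explicit density $\d\muN/\d\cg = (\cZ_N^\lambda)^{-1}\exp(-\lcol \cV_N^\lambda(P_{\leq N}\phi)\rcol)$ yields an explicit formula for the Radon--Nikodym derivative $\d\muN/\d\nu_N$. By H\"older's inequality, it suffices to establish a uniform-in-$N$ bound on $\|\d\muN/\d\nu_N\|_{L^q(\nu_N)}$. Applying the Bou\'e--Dupuis formula (Theorem \ref{thm:bd_formula}) to this $L^q$-norm and performing a change of drift that absorbs $\dot\Psi_N$, the Girsanov cross terms collapse and the problem reduces to controlling the expansion of $\lcol \cV_N^\lambda(P_{\leq N}(W_\infty + \Psi_N^\infty + I(u)))\rcol$, where $I(u)$ denotes an admissible drift, against the Dirichlet cost $\tfrac{1}{2}\int_0^\infty \|u\|_{L^2}^2 \ds$ and the coercive part of the potential --- a variational problem structurally similar to the one already solved in Theorem \ref{theorem:tightness}.

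\emph{Main obstacle.} The crux is the singular regime $0<\beta<1/2$, where the convolution with $V$ is too rough for the cubic shift $\cG^{(3)}_N$ alone to absorb the leading resonant contribution of $(V\ast\phi^2)\phi^2$. The role of $\cG^{(n)}_N$ is precisely to cure this: its contribution to the Dirichlet cost $\tfrac{1}{2}\int_0^\infty\|\partial_s \cG^{(n)}_N\|_{L^2}^2 \ds$ produces a coercive $2n$-th order chaos term that dominates, after hypercontractive bookkeeping, the leftover rough cross terms provided $n=n(\beta)$ is chosen large enough so that the effective regularity gained compensates for the algebraic deficit in the expansion. Organising this cancellation carefully --- keeping track of which resonant terms are cancelled by the renormalization constants $a_N, b_N, c_N^\lambda$ and which are absorbed into coercive terms --- is the main technical work, but no analytic ingredient beyond those already developed for Theorem \ref{theorem:tightness} is required.
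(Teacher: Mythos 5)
Your overall flavor (Girsanov shift plus a Boué--Dupuis bound on the density, with an auxiliary high-order chaos supplying coercivity) is indeed the route the paper takes, following \cite{BG20}. The representation part of your argument is also fine: defining $\nu_N$ directly as $\Law_{\bP}(W_\infty+\Psi_N^\infty(W))$ coincides with the paper's $\nu_T=(W_\infty)_\#\QT$. But there is a genuine gap at the heart of your absolute-continuity step. ``Girsanov applied to the shift by $\Psi_N^t$'' -- i.e.\ with drift equal to the chaos built from $W$ itself -- produces a measure $\Q'$ under which $W-\Psi_N(W)$ has the law of $W$ under $\bP$; this does \emph{not} identify $\Law_{\Q'}(W_\infty)$ with your $\nu_N$, because the map $W\mapsto W-\Psi_N(W)$ is not the inverse of $W\mapsto W+\Psi_N(W)$ (one has $\Psi_N(W+\Psi_N(W))\neq\Psi_N(W)$). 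Consequently you obtain neither a path-space measure whose $W_\infty$-pushforward is $\nu_N$ nor the ``explicit formula for $\d\mu_N/\d\nu_N$'' you invoke, and the subsequent Hölder/Boué--Dupuis argument has nothing to act on. The missing idea is precisely the paper's: the drift must be defined through the fixed-point integral equation $\ut=\XiT(\WP-\IP[\uP])_t$, i.e.\ the chaos evaluated at the \emph{shifted} process $W-I[\uP]$, so that under the Girsanov measure $\QT$ one has $W=W^{u}+\Psi_T(W^{u})$ with $W^{u}\sim\bP$-law of $W$, whence $(W_\infty)_\#\QT=\nu_T$ and the density $\d\bP/\d\QT$ is explicit. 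Making this rigorous also requires the adaptedness/Novikov issues handled via the stopping times $\tauTN$, ruling out finite-time explosion of the drift under $\bP$ (Lemma \ref{measure_drift:lemma_no_explosion_P}), and removing the stopping time by a tightness/Prokhorov argument (Proposition \ref{measure_drift:proposition}); none of these are addressed in your proposal and they are not cosmetic.

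Two further points. First, once the correct drift measure is in place, the $L^q$-bound is not merely ``a variational problem structurally similar to Theorem \ref{theorem:tightness}'': after the Boué--Dupuis reduction one faces the defective quadratic term $-\tfrac{q-1}{2}\int_0^\infty\|w_t\|_{L^2}^2\dt$, and the real work is the Gronwall-type argument (Step 2 of Proposition \ref{measure_drift:proposition_Lp}) in which the auxiliary $n$-th order term, via Itô integration by parts, produces the coercive quantity $\|\It[w]\|_{\bW^{-1/2,n+1}_x}^{n+1}$ that controls $\E\int\|w\|^2$ by $\E\int\|w+\rTw\|^2$. Second, your stated rationale for $\cG^{(n)}_N$ is off: it is not there to compensate the roughness of $V$ in the regime $0<\beta<1/2$ (that issue is handled by the renormalization constant $\cT$ and the variational estimates, and the Hartree smoothing actually makes the cubic shift \emph{simpler} than in $\Phi^4_3$); it is an auxiliary defocusing term, needed for every $\beta$, whose purpose is exactly to close the Gronwall argument above and to prevent blow-up of the drift equation, mirroring the same device in \cite{BG20}.
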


We emphasize that the representation of $\nu_N$ in Theorem \ref{theorem:reference} is much more detailed than stated in Theorem \ref{theorem:gibbs_portable}. This additional information is not required in our proof of global well-posedness and invariance in the second part of the series. However, we believe that the more detailed representation way be relevant for the Schr\"{o}dinger equation with a Hartree nonlinearity. The reason lies in low$\times$low$\times$high-interactions, which are more difficult in Schr\"{o}dinger equations than in wave equations. In the last two years, we have seen new and intricate methods dealing with these interactions \cite{B18,DNY19,DNY20}, but all of these papers heavily rely on the independence of the Fourier coefficients. In fact, overcoming this obstruction is mentioned as an open problem in \cite[Section 9.1]{DNY20}. \\
The proof of Theorem \ref{theorem:reference} is based on the Girsanov-approach of Barashkov and Gubinelli \cite{BG20}. As mentioned earlier, however, we cannot use the same approximate Gibbs measures as in \cite{BG20}, since they do not correspond to a frequency-truncated Hamiltonian. In the second part of the series, the frequency-truncated Hamiltonians are an essential ingredient in the proof of global well-posedness and invariance. This difference will be discussed in detail in Section \ref{section:stochastic_control}. For now, we simply mention that there is a trade-off between desirable properties from a PDE or probabilistic perspective. \\

Our last theorem describes the relationship between the Gibbs measure $\mu_\infty$ and the Gaussian free field $\cg$.

\begin{theorem}[Singularity]\label{theorem:singularity}
If \( 0< \beta < 1/2 \), then the Gibbs measure \( \mu_\infty \) and the Gaussian free field \( \cg \) are mutually singular. If \( \beta > 1/2 \), then the Gibbs measure is absolutely continuous with respect to the Gaussian free field \( \cg \).
\end{theorem}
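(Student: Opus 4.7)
The proof splits into the singular and absolutely continuous regimes. Both cases exploit the representation \(\nu_N = \Law(\cG^{(1)} + \cG^{(3)}_N + \cG^{(n)}_N)\) and the \(L^q\)-domination \(\mu_N(A) \leq C \nu_N(A)^{1-1/q}\) from Theorem \ref{theorem:reference}. The latter implies that \(\mu_\infty\) and \(\cg\) are mutually singular (respectively, absolutely continuous) as soon as \(\nu_\infty\) and \(\cg\) are; hence the analysis reduces to the reference measure.

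For \textbf{singularity} in the range \(0 < \beta < 1/2\), the strategy is to view \(\nu_\infty\) as a random Girsanov-type shift of \(\cg\) by \(\cG^{(3)}_\infty + \cG^{(n)}_\infty\) and to show that this shift escapes the Cameron--Martin space \(H^1_x(\T^3)\) of \(\cg\). The key computation is the Fourier-side variance of the cubic chaos: using Wick's theorem and the Gaussian structure of \((W_s)_{s\geq 0}\) from Section \ref{section:stochastic_control}, the covariance of \(\cG^{(3)}_\infty\) reduces to
\begin{equation*}
\E \bigl| \widehat{\cG^{(3)}_\infty}(n) \bigr|^2 \;\sim\; \sum_{n_1+n_2+n_3=n} \frac{\widehat V(n_1+n_2)^2}{\langle n_1\rangle^2 \langle n_2\rangle^2 \langle n_3\rangle^2}\, \mathcal K(n_1,n_2,n_3;n),
\end{equation*}
where \(\mathcal K\) is a bounded kernel coming from the \(J_s^2\,\ds\) integration. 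Evaluating this sum in three dimensions with \(\widehat V(m) \sim |m|^{-\beta}\) identifies the Sobolev regularity of \(\cG^{(3)}_\infty\) as \(H^{1/2+\beta-\kappa}\); the \(n\)-th chaos term is smoother by construction, so the full shift lies in \(H^1\) almost surely if and only if \(\beta > 1/2\). To upgrade this \(H^1\)-dichotomy to almost-sure singularity I would follow the Girsanov/drift-energy argument of Barashkov--Gubinelli \cite{BG20}: for \(\beta < 1/2\), the drift producing \(\cG^{(3)}_\infty\) has infinite \(L^2_{s,x}\)-energy almost surely, and this energy divergence identifies a measurable event with full \(\nu_\infty\)-measure and zero \(\cg\)-measure.

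For \textbf{absolute continuity} in the range \(\beta > 1/2\), the plan is more direct: show \(\sup_N \|D_N\|_{L^{1+\delta}(\cg)} < \infty\) for some \(\delta > 0\), where \(D_N(\phi) = (\cZ_N^\lambda)^{-1}\exp(-\cV_N^\lambda(P_{\leq N}\phi))\). The required bound on \(\E_\cg \exp(-(1+\delta)\cV_N^\lambda(P_{\leq N}\phi))\) is a repetition of the Bou\'e--Dupuis variational analysis used for Theorem \ref{theorem:tightness} with coupling \((1+\delta)\lambda\) in place of \(\lambda\); for \(\beta > 1/2\), the frequency sums forcing \(c_N^\lambda\) to diverge in the singular regime are now absolutely summable, so every stochastic ingredient remains uniformly bounded in \(N\) and the \(L^{1+\delta}\)-bound follows. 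Combined with the weak convergence from Theorem \ref{theorem:tightness}, this yields \(\mu_\infty \ll \cg\) with density in \(L^{1+\delta}(\cg)\).

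The \textbf{main obstacle} is the sharp regularity computation for \(\cG^{(3)}_\infty\) in the singular case. The non-locality of \(V\) couples the three Gaussians \(\widehat{W_s}(n_i)\) through the pairwise sum \(n_1+n_2\), so extracting the sharp decay exponent \(4+2\beta\) for \(\E|\widehat{\cG^{(3)}_\infty}(n)|^2\) requires a careful resonance/case analysis that matches near-diagonal contributions against the \(s\)-kernel. Once this decay is pinned down, the identification of \(\beta = 1/2\) as the critical threshold (rather than some artificial intermediate value) follows from the Sobolev embedding \(H^{1/2+\beta-\kappa} \subset H^1\) iff \(\beta > 1/2\).
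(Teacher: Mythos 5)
Your treatment of the regime \( \beta > 1/2 \) is essentially the paper's own argument: raise the density \( \exp(-\lcol \cV_N^\lambda(P_{\leq N}\phi)\rcol) \) to a power \( p>1 \), reinterpret the result as the density for the coupling constant \( p\lambda \), and use the uniform (in \(N\)) control of the normalization constants (Proposition \ref{measure_var:proposition}) together with the boundedness of the renormalization constants \( \cT \) for \( \beta>1/2 \) (Corollary \ref{measure_var:corollary_cT}); weak convergence then gives \( \mu_\infty \ll \cg \). That half is fine.

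The singularity half has a genuine gap. Reducing to \( \nu_\infty \perp \cg \) is legitimate (since \( \mu_\infty \ll \nu_\infty \)), but the step you rely on --- ``the shift \( \cG^{(3)}_\infty + \cG^{(n)}_\infty \) escapes the Cameron--Martin space \( H^1_x \), hence the shifted measure is singular'' --- is not a theorem in this setting. The Cameron--Martin dichotomy applies to deterministic shifts, whereas here the shift is a Wiener chaos built from the very Gaussian field being shifted; for such field-dependent (and, in the Girsanov picture, adapted-drift) shifts, membership of the shift in \( H^1_x \) decides neither absolute continuity nor singularity. Similarly, ``the drift has infinite \( L^2_{s,x} \)-energy almost surely'' does not by itself produce an event of full \( \nu_\infty \)-measure and zero \( \cg \)-measure: failure of a Novikov-type condition only blocks the easy route to absolute continuity, it proves nothing in the singular direction. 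This missing step is precisely where all the work lies, both in Barashkov--Gubinelli's singularity proof that you invoke and in the paper: one must exhibit an explicit separating observable. The paper (Proposition \ref{singularity:proposition_main}) uses the normalized truncated potential energy \( S_m^{-(1-2\beta-\delta)} \int_{\T^3} \lcol (V\ast(\rho_{S_m}(\nabla)\phi)^2)(\rho_{S_m}(\nabla)\phi)^2 \rcol \dx \) along a sequence \( S_m \to \infty \): a second-moment computation shows it tends to \(0\) \(\cg\)-a.s.\ (Lemma \ref{singularity:lemma_GFF}), while under the drift measure \( \Qinf \) one decomposes the quartic integral through the Girsanov shift (Lemma \ref{singularity:lemma_representation}), proves a lower bound \( \gtrsim S^{1-2\beta} \) for the expectation of the main term (Lemma \ref{singularity:lemma_expectation}), upgrades this to an almost-sure divergence via the Bou\'e--Dupuis formula with a coercive auxiliary weight (Lemma \ref{singularity:lemma_pointwise}), and controls the remaining stochastic-integral and drift terms (Lemmas \ref{singularity:lemma_Aj} and \ref{singularity:lemma_minor}); finally \( \mu_\infty \ll \nu_\infty \) transfers the statement to \( \mu_\infty \). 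Your covariance computation for \( \cG^{(3)}_\infty \) (decay \( \langle n\rangle^{-4-2\beta} \), hence regularity \( 1/2+\beta- \)) correctly identifies the heuristic threshold \( \beta=1/2 \), but none of the quantitative mechanism that converts this heuristic into mutual singularity is present in your plan, so as written the proof of the first half does not go through.
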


Theorem \ref{theorem:singularity} determines the exact threshold between absolute continuity and singularity of $\mu_\infty$ with respect to $\cg$. As mentioned in Remark \ref{remark:OOT}, the singularity at the endpoint $\beta=1/2$ has been obtained in independent work by Oh, Okamoto, and Tolomeo \cite{OOT20}. The absolute continuity for $\beta>1/2$ already follows from the variational estimates in our construction of $\mu_\infty$. The main step is the mutual singularity of $\mu_\infty$ and $\cg$ for $0<\beta<1/2$. We provide an explicit event witnessing this singularity, which is based on the behaviour of the frequency-truncated potential energy
\begin{equation*}
\int_{\T^3} \lcol (V \ast (P_{\leq N} \phi)^2 )  (P_{\leq N} \phi)^2 \rcol \dx 
\end{equation*}
under the different measures. \\

\textbf{Acknowledgements:} The author thanks his advisor Terence Tao for his patience and invaluable guidance. The author also thanks Nikolay Barashkov, Martin Hairer, Redmond McNamara, Dana Mendelson, Tadahiro Oh, and Felix Otto for helpful discussions.

\subsection{Overview} 
To orient the reader, let us review the rest of this paper. In Section \ref{section:stochastic_control}, we introduce the stochastic control perspective and recall the Boué-Dupuis formula. In Section \ref{section:stochastic_objects}, we estimate several stochastic objects, such as the renormalized nonlinearity $\lcol (V\ast W_t^2) W_t\rcol$. Our main tools will be Itô's formula and Gaussian hypercontractivity. In Section \ref{section:construction}, we prove the tightness of the truncated Gibbs measures $\mu_N$ and construct the limiting measure $\mu_\infty$. Using the Laplace transform and the Boué-Dupuis formula, the proof of tightness reduces to estimates for a variational problem, which occupy most of this section. In Section \ref{section:drift_measure}, we first construct the reference measures $\nu_N$ and then examine their properties. The main ingredients are Girsanov's transformation and our earlier variational estimates. Finally, in  Section \ref{section:singularity}, we prove the singularity of the Gibbs measure \( \mu_\infty \) with respect to the Gaussian free field \( \cg \) for all \( 0 <\beta < 1/2 \).  \\

\subsection{Notation} In the rest of the paper, we use \(  \defe \) instead of \( := \) for definitions. The reason is that the colon in \( := \) may be confused with our notation for renormalized powers in Definition \ref{measure_re:definition_renormalization} below. With a slight abuse of notation, we write \( \dx \) for the normalized Lebesgue measure on \( \T^3 \). That is, we implicitly normalize
\begin{equation*}
\int_{\T^3} 1 \dx = 1.
\end{equation*}
We define the Fourier transform of a function \( f \colon \T^3 \rightarrow \mathbb{C} \) by
\begin{equation*}
\widehat{f}(n) \defe \int_{\T^3} f(x) e^{-inx} \dx. 
\end{equation*}

For any \( k \in \mathbb{N} \) and \( n_1,\hdots,n_k \in \Z^3 \), we define 
\begin{equation}\label{eq_n12}
n_{12\hdots k} \defe \sum_{j=1}^k n_j. 
\end{equation}
For instance, \( n_{12}= n_1 +n_2 \) and \( n_{123} = n_1 + n_2 + n_3 \). 

We now introduce our frequency-truncation operators. 
We let \( \rho \colon \R_{>0} \rightarrow [0,1] \) be a smooth, non-increasing  function satisfying \( \rho(y) = 1 \) for all \( 0\leq y \leq 1/4 \) and \( \rho (y) = 0 \) for all \( y \geq 4 \). We also assume that \( \min( \rho(y), - \rho^\prime(y)) \gtrsim 1 \)  for all \( 1/2 \leq y \leq 2 \). 
For any \( t \geq 0 \) and \( n \in \Z^3 \), we also define
\begin{equation*}
\rho_t(n) \defe \rho \Big( \frac{\|n\|_2}{\langle t \rangle}\Big). 
\end{equation*}
In particular, it holds that \( t \mapsto \rho_t(\xi) \) is non-decreasing. In order to break up the frequency truncation, we also set 
\begin{equation}\label{intro:eq_sigma}
\sigma_t(n) \defe \Big( \frac{\mathrm{d}}{\mathrm{d}t}\rho_t(n) \Big)^{\frac{1}{2}}. 
\end{equation}
This continuous approach instead of the usual discrete decomposition will be essential in the stochastic control approach (Section \ref{section:stochastic_control}). Nevertheless, we will sometimes use the usual dyadic Littlewood-Paley operators. For any dyadic \( N \geq 1 \), we define \( P_{\leq N} \) by 
\begin{equation}\label{notation:eq_PleqN}
\widehat{P_{\leq N}f}(n) = \rho_N(n) \widehat{f}(n). 
\end{equation}
We further set 
\begin{equation*}
P_{1} f= P_{\leq 1} f \qquad \text{and} \qquad P_N f = P_{\leq N} f - P_{\leq N/2} f \quad \text{for all } N \geq 2.
\end{equation*}
The corresponding Fourier multipliers are denoted by 
\begin{equation}\label{notation:eq_chi}
\chi_1(n)= \rho_1(n) \qquad \text{and} \qquad \chi_N(n) = \rho_{N}(n) - \rho_{N/2}(n) \quad \text{for all } N \geq 2. 
\end{equation}

For any \( s \in \R \), the \( \cC_x^s(\T^3) \)-norm is defined as 
\begin{equation}
\| f \|_{\cC_x^s(\T^3)} \defe \sup_{N\geq 1} N^s \| P_N f \|_{L^\infty_x(\T^3)}.
\end{equation}
We then define the corresponding space \( \cC_x^s(\T^3) \) by 
\begin{equation}\label{notation:eq_Cx}
\cC_x^s(\T^3) \defe \big\{ f \colon \T^3 \rightarrow \R |~  \| f \|_{\cC_x^s} < \infty, \lim_{N\rightarrow \infty} N^s \| P_N f \|_{L^\infty_x(\T^3)} = 0 \big\}. 
\end{equation}
Due to the additional constraint as \( N \rightarrow \infty \), the space \( \cC_x^s(\T^3) \) is separable. This allows us to later use Prokhorov's theorem for families of measures on \( \cC_x^s(\T^3) \). We also define
\begin{equation}\label{notation:eq_CtCx}
\begin{aligned}
&\cC_t^0 \cC_x^s([0,\infty]\times\T^3)  \\
&\defe \big \{ f \colon [0,\infty) \times \T^3 \rightarrow \R |~ \sup_{t\geq 0} \| f(t,\cdot) \|_{\cC^s_x(\T^3)} < \infty, \lim_{t\rightarrow \infty} f(t,\cdot) \text{ exists in } \cC_x^s(\T^3) \big\}. 
\end{aligned}
\end{equation}
Similar as above, the additional restriction as \( t\rightarrow \infty \) makes \( \cC_t^0 \cC_x^s([0,\infty]\times\T^3) \) separable. \\
As a measure of tightness in  \(\cC_t^0 \cC_x^s([0,\infty]\times\T^3) \), we define for any \( 0 < \alpha < 1 \) and \( \eta >0 \) the norm
\begin{equation}\label{notation:eq_Cweighted}
\| f \|_{\cC_t^{\alpha,\eta}\cC_x^s([0,\infty]\times \T^3)} \defe \| f(0)\|_{\cC_x^s(\T^3)} + \sup_{0\leq t,t^\prime \leq \infty} \bigg( \min( \langle t\rangle, \langle t^\prime\rangle)^\eta \frac{\| f(t) - f(t^\prime)\|_{\cC_x^s(\T^3)}}{1 \wedge |t-t^\prime|^\alpha} \bigg). 
\end{equation}
For \( 1 \leq r \leq \infty\), we also define the Sobolev space  \( \mathbb{W}^{s,r}_x(\T^3) \) as the completion of \( C^\infty_x(\T^3) \) with respect to 
\begin{equation*}
\| f \|_{\mathbb{W}_x^{s,r}} = \| N^s P_N f \|_{\ell_N^r L_x^r}.
\end{equation*}
We hope that the subscript \( x \) prevents any confusion with the stochastic objects in Section \ref{section:stochastic_objects}.

\section{Stochastic objects}\label{section:SO}

In this section, we introduce the stochastic control framework and describe several stochastic objects. While the reader with a background in singular SPDE and advanced stochastic calculus can think of this section as standard, much of this section may be new to a reader with a primary background in dispersive PDE. As a result, we include full details for most standard arguments but encourage the expert to skip the proofs. 

\subsection{Stochastic control perspective}\label{section:stochastic_control}

We let \( (B_t^n)_{n\in \Z^3\backslash\{0\}} \)  be a sequence of standard complex Brownian motions such that \( B_t^{-n} = \overline{B_t^n} \) and \( B_t^n, B_t^m \) are independent for \( n \neq \pm m \). We let \( B_t^0 \) be a standard real-valued Brownian motion independent of  \( (B_t^n)_{n\in \Z^3\backslash\{0\}} \). Furthermore, we let \( B_t(\cdot) \) be the Gaussian process with Fourier coefficients \( (B_t^n)_{n \in \Z^3} \), i.e., 
\begin{equation}
B_t(x) \defe \sum_{n\in \Z^3} e^{i\langle n,x \rangle} B_t^n. 
\end{equation}
For every \( t\geq 0 \), the Gaussian process formally satisfies \( \E[ B_t(x)B_t(y) ] = t \cdot \delta(x-y)  \) and hence \( B_t(\cdot)\) is a scalar multiple of spatial white noise.   We also let \( (\cF_t)_{t\geq 0} \) be the filtration corresponding to the family of Gaussian processes \( (B_t^n)_{t\geq 0} \). For future use, we denote the ambient probability space by \( (\Omega,\cF,\bP ) \).

The Gaussian free field \( \cg \), however, has covariance \( (1-\Delta)^{-1} \). To this end, we now introduce the Gaussian process \( W_t(x) \). For \( \sigma_t(x) \) as in \eqref{intro:eq_sigma} and any \( n \in \Z^3 \), we define
\begin{equation}\label{measure:eq_wtn}
W_t^{n} \defe \int_0^t \frac{\sigma_s(n)}{\langle n\rangle} \,  \d B_s^n ~. 
\end{equation}
We note that \( W_t^n \) is a complex Gaussian random variable with variance \( \rho_t^2(n)/\langle n \rangle^2\). We finally set 
\begin{equation}
W_t(x) \defe \sum_{n\in \Z^3} e^{i \langle n , x \rangle} W_t^n. 
\end{equation} 
It is easy to see for any \( \kappa >0 \) that  \( W \in \cC_t^0 \cC_x^{-1/2-\kappa}([0,\infty]\times \T^3) \) almost surely. With a slight abuse of notation, we write \( \d \bP(W) \) for the integration with respect to the law of \( W \) under \( \bP \), i.e., we omit the pushforward by \( W \),  and we write \( W \) for the canonical process on \(  \cC_t^0 \cC_x^{-1/2-\kappa}([0,\infty]\times \T^3) \). Comparing \( W_t \) and \( B_t \), we have changed the covariance from \( t\operatorname{Id}\) to \( \rho_t(\nabla)^2 (I-\Delta)^{-1} \). For any fixed \( T \geq 0 \), we have that 
\begin{equation}
\Law_{\bP}(W_T) = \Law_{\bP}(\rho_T(\nabla)W_\infty). 
\end{equation}
We already emphasize, however, that the processes \( t \mapsto W_t \) and \( t \mapsto \rho_t(\nabla) W_\infty\) have different laws, since only the first process has independent increments. This difference will be important in the definition of \( \tmu_{T} \) below. 
To simplify the notation, we also introduce the Fourier multiplier \( J_t \), which is defined by
\begin{equation}
\widehat{J_t f}(n) \defe \frac{\sigma_t(n)}{\langle n\rangle} \widehat{f}(n), 
\end{equation}
Using this notation, we can represent the Gaussian process \( W_t \) through the stochastic integral 
\begin{equation*}
W_t = \int_0^t J_s \, \d B_s. 
\end{equation*}
In a similar spirit, we define for any \( u\colon [0,\infty)\times \T^3 \rightarrow \R \) the integral \( I_t[u] \) by 
\begin{equation}
I_t[u] \defe \int_0^t J_s u_s \ds. 
\end{equation}
We now recall the Boué-Dupuis formula \cite{BD98}, where our formulation closely follows \cite{BG18,BG20}. We let \( \bH \) be the space of \( \cF_t\)-progressively measurable processes \( u \colon \Omega \times [0,\infty) \times \T^3 \rightarrow \R \) which \( \bP\)-almost surely belong to \( L_{t,x}^2([0,\infty)\times \T^3) \).  

\begin{theorem}[Boué-Dupuis formula]\label{thm:bd_formula}
Let \( 0< T <\infty\), let \( F: {C_t([0,T],C_x^\infty(\T^3)) \rightarrow \R}\) be a Borel measurable function, and let \( 1<p,q<\infty \). Assume that
\begin{equation}\label{measure:eq_bd_condition}
\frac{1}{p}+\frac{1}{q}=1, \quad \E_{\bP} \big[ |F(W)|^p \big] < \infty, \quad \text{and} \quad \E_{\bP} \big[ e^{-q F(W)}\big] < \infty,
\end{equation}
where we regard \( W \) as an element of \( C_t([0,T],C_x^\infty(\T^3)) \). Then, 
\begin{equation}\label{sc:eq_bd}
- \log \E_{\bP} \Big[ e^{-F(W)}\Big] = \inf_{u\in \bH} \E_{\bP} \Big[ F(W+I(u)) + \frac{1}{2} \int_0^T \| u_s\|_{L^2(\T^3)}^2 \ds \Big]. 
\end{equation}
\end{theorem}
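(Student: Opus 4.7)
The identity \eqref{sc:eq_bd} is the classical Bou\'e--Dupuis formula \cite{BD98} specialized to the colored noise $W_t = \int_0^t J_s \d B_s$, so I would follow the standard route: exhibit matching upper and lower bounds via the Donsker--Varadhan entropy duality combined with Girsanov's parametrization of shifts of the underlying Brownian motion $B$. Under this parametrization, an admissible drift $u \in \bH$ corresponds to the probability measure $\bP^u$ with density
\[ \frac{\d\bP^u}{\d\bP} \defe \exp\Big( \int_0^T \langle u_s, \d B_s \rangle_{L^2_x} - \tfrac{1}{2} \int_0^T \| u_s\|_{L^2_x}^2 \ds\Big),\]
and a short computation gives the relative entropy identity $H(\bP^u \,|\, \bP) = \tfrac{1}{2}\, \E_{\bP^u}\big[\int_0^T \|u_s\|_{L^2_x}^2 \ds\big]$.

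\textbf{Upper bound.} I would fix any $u \in \bH$, initially assumed bounded so that Novikov's criterion holds and $\bP^u$ is a genuine probability measure. Girsanov's theorem then asserts that $\tilde{B}_s \defe B_s - \int_0^s u_r \, \d r$ is a $\bP^u$-Brownian motion, hence $W_t = \int_0^t J_s \d \tilde{B}_s + I_t(u)$, and the law of $W - I(u)$ under $\bP^u$ coincides with the law of $W$ under $\bP$. Feeding this identity into the elementary duality $-\log \E_\bP[e^{-G}] \leq \E_{\bP^u}[G] + H(\bP^u \,|\, \bP)$ with $G = F(W - I(u))$, and then replacing $u$ by $-u$, produces the direction $\leq$ of \eqref{sc:eq_bd}. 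The hypothesis $F \in L^p(\bP)$ is used to justify transporting $F$ across the Girsanov shift, and the boundedness of $u$ is removed afterward by an approximation in $\bH$.

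\textbf{Lower bound.} For the matching lower bound I would construct an explicit near-minimizer. First reduce to bounded $F$ by the truncation $F_n \defe (-n) \vee F \wedge n$. For such $F_n$ the martingale $M_t \defe \E_\bP[e^{-F_n(W)} \mid \cF_t]$ is strictly positive, so the martingale representation theorem yields a progressively measurable $Z$ with $M_t = M_0 + \int_0^t \langle Z_s, \d B_s\rangle_{L^2_x}$; setting $u^\ast_s \defe Z_s/M_s$ and applying It\^o's formula to $\log M_t$ shows that the right-hand side of \eqref{sc:eq_bd} evaluated at $u^\ast$ equals $-\log M_0 = -\log \E_\bP[e^{-F_n(W)}]$. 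This settles the bounded case, and it remains to pass $n \to \infty$.

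\textbf{Main obstacle.} The genuine difficulty is this last limit under the bare hypothesis \eqref{measure:eq_bd_condition}: one must control $\E\big[\int_0^T \|u^\ast_{n,s}\|_{L^2_x}^2 \ds\big]$ uniformly in $n$, which is exactly where the dual exponents $(p,q)$ enter. A H\"older pairing of $F \in L^p(\bP)$ against $e^{-qF} \in L^1(\bP)$ provides a uniform entropy budget, after which a weak-compactness and lower-semicontinuity argument along $(u^\ast_n)$ closes the gap between the two inequalities. The specific structure of the multiplier $J_s$ plays no role throughout, since every step operates at the level of the driving Brownian motion $B$; this is why Theorem~\ref{thm:bd_formula} is essentially a direct transcription of the original result of Bou\'e and Dupuis.
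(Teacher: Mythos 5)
The paper does not prove Theorem \ref{thm:bd_formula} at all: it is recalled verbatim from the literature, with the formulation taken from \cite{BG18,BG20} and the result attributed to Bou\'e--Dupuis \cite{BD98} (the version with the integrability hypotheses \eqref{measure:eq_bd_condition} rather than bounded $F$ is the one used by Barashkov--Gubinelli, going back to \"Ust\"unel). So there is no in-paper argument to compare against; your proposal is, in effect, a sketch of the standard literature proof, and its overall architecture is the right one: the theorem for the colored process $W_t=\int_0^t J_s\,\d B_s$ does reduce to the formula for the driving cylindrical Brownian motion $B$ applied to $G(B)\defe F(W)$, since $G(B+\smallint u)=F(W+I(u))$; the ``$\leq$'' direction comes from the Girsanov/relative-entropy inequality, and the ``$\geq$'' direction from the martingale representation of $M_t=\E_\bP[e^{-F}\mid\cF_t]$ with the near-optimal drift $u^\ast=Z/M$ for truncated $F$.

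Two points in your write-up deserve correction or more care. First, in the upper bound you apply the duality with $G=F(W-I(u))$; as written this bounds $-\log\E_\bP[e^{-F(W-I(u))}]$, not $-\log\E_\bP[e^{-F(W)}]$. The correct application is with $G=F(W)$, after which one uses that under $\bP^u$ one has $W=\int_0^\cdot J_s\,\d\tilde B_s+I(u)$; converting $\E_{\bP^u}[F(W)]+H(\bP^u\,|\,\bP)$ into an expression of the form $\E_\bP[F(W+I(\tilde u))+\tfrac12\int_0^T\|\tilde u_s\|_{L^2}^2\ds]$ requires re-expressing the drift as an adapted functional of the new Brownian motion $\tilde B$, and this measurability/adaptedness issue (weak versus strong formulation of the shift) is one of the genuinely delicate steps in \cite{BD98}, not a formality. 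Second, the passage from bounded $F_n=(-n)\vee F\wedge n$ to $F$ satisfying only \eqref{measure:eq_bd_condition} is precisely where the content of this version of the theorem lies; your sketch names the right ingredients (uniform entropy bounds from the $L^p$/$e^{-qF}\in L^1$ pairing, weak compactness and lower semicontinuity of $u\mapsto\E\int\|u\|_{L^2}^2$), but gives no actual argument, e.g.\ one must also handle $\E_\bP[F_n(W+I(u^\ast_n))]\to$ the corresponding quantity for $F$ along the minimizing sequence. Since the paper itself treats the theorem as a black box, these gaps do not affect the paper, but a self-contained proof would have to supply them.
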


\begin{remark}
The optimization problem in \eqref{sc:eq_bd} and, more generally, the change of perspective from \( W_\infty \) to the whole process \( t\mapsto W_t \), is reminiscent of stochastic control theory. \\
Due to the frequency projection in the definition of \( J_t \), we have that \( W_t,  I_t[u] \in C_t([0,T],C^\infty_x(\T^3)) \). In our arguments below, the smoothness can be used to verify \eqref{measure:eq_bd_condition} through soft methods. Of course, a soft method cannot yield uniform bounds in \( T \), which are one of the main goals of this section. 
\end{remark}

In the introduction, we discussed the Gibbs measure \( \mu_N \) corresponding to the truncated dynamics induced by $H_N$, which has been defined in \eqref{intro:eq_HN}. In the spirit of the stochastic control approach, we now change our notation and use the parameter \( T \) to denote the truncation. Since the law of \( W_\infty \) under \( \bP \) is the same as the Gaussian free field \( \cg \) and $P_{\leq T} = \rho_T(\nabla)$, we obtain that 
\begin{equation}\label{sc:eq_def_muT}
\dmu_T(\phi) =  \frac{1}{\cZTl} \exp \Big( - \lcol \cVT(\rho_T(\nabla)\phi)\rcol \Big) \, \mathrm{d}\big( (W_\infty)_\# \bP \big)(\phi). 
\end{equation}
The renormalized potential energy \( \cVT \) is as in \eqref{measure_var:eq_renormalized_V}. We view \( \muT  \) as a measure on the space \( \cC_x^{-1/2-\kappa}(\T^3) \) for any fixed \( \kappa >0 \).  In order to utilize the Boué-Dupuis formula, we lift \( \muT  \) to a measure on \( \cC_t^0 \cC_x^{-1/2-\kappa}([0,\infty] \times \T^3) \).
\begin{definition}\label{sc:definition_tmu}
We define the measure \( \tmuT \) on \( \cC_t^0 \cC_x^{-1/2-\kappa}([0,\infty]\times \T^3) \) by 
\begin{equation}\label{sc:eq_tmu}
\d \tmu_T(W) \defe   \frac{1}{\cZTl}  \exp\big( - \lcol \cVT( \rho_T(\nabla) W_\infty) \rcol \big) \, \d \bP(W). 
\end{equation}
\end{definition}
The content of the next lemma explains the relationship between \( \tmu_T \) and \( \muT  \). 
\begin{lemma}\label{sc:lemma_mu_tmu}
The Gibbs measure \( \muT  \) is the pushforward of \( \tmu_T \) under \( W_\infty \), i.e.,
\begin{equation}\label{sc:eq_mu_tmu}
\muT  = (W_\infty)_\# \tmu_T . 
\end{equation}
\end{lemma}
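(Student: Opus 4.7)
The statement is essentially a bookkeeping fact about pushforwards: both $\mu_T$ and $\tmu_T$ are defined by weighting a base measure (the law of $W_\infty$ in one case, $\bP$ in the other) by the same Radon--Nikodym density $\cZTl^{-1}\exp(-\lcol \cVT(\rho_T(\nabla)W_\infty)\rcol)$, and the only nuance is that the density is a functional of $W_\infty$ alone. So the plan is simply to unpack the definitions and apply the change-of-variables formula for pushforwards.

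Concretely, I would fix a bounded Borel function $f \colon \cC_x^{-1/2-\kappa}(\T^3) \to \bR$ and compute $\int f \, \d ((W_\infty)_\# \tmu_T)$ in two steps. First, by the defining property of the pushforward,
\begin{equation*}
\int_{\cC_x^{-1/2-\kappa}(\T^3)} f(\phi) \, \d ((W_\infty)_\# \tmu_T)(\phi) = \int_{\cC_t^0 \cC_x^{-1/2-\kappa}} f(W_\infty) \, \d \tmu_T(W).
\end{equation*}
Substituting the definition \eqref{sc:eq_tmu} of $\tmu_T$, the right-hand side becomes
\begin{equation*}
\frac{1}{\cZTl} \int_{\cC_t^0 \cC_x^{-1/2-\kappa}} f(W_\infty) \exp\bigl( - \lcol \cVT(\rho_T(\nabla) W_\infty) \rcol \bigr) \, \d \bP(W).
\end{equation*}

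Now the key observation is that the integrand depends on $W$ only through the terminal value $W_\infty$. Therefore a second application of the change-of-variables formula, this time pushing $\bP$ forward under $W_\infty$, converts the integral into
\begin{equation*}
\frac{1}{\cZTl} \int_{\cC_x^{-1/2-\kappa}(\T^3)} f(\phi) \exp\bigl( - \lcol \cVT(\rho_T(\nabla) \phi) \rcol \bigr) \, \d \bigl( (W_\infty)_\# \bP\bigr)(\phi),
\end{equation*}
which by the definition \eqref{sc:eq_def_muT} of $\muT$ equals $\int f \, \d \muT$. Since $f$ was an arbitrary bounded Borel function, this proves $(W_\infty)_\# \tmu_T = \muT$.

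There is no genuine obstacle here; the only point requiring a line of care is the measurability of the Wick-renormalized quantity $\lcol \cVT(\rho_T(\nabla)W_\infty) \rcol$ as a functional on $\cC_x^{-1/2-\kappa}(\T^3)$, so that the second change of variables is legitimate. This follows from the construction of the renormalized potential energy in Proposition \ref{measure_var:eq_renormalized_V}, together with the fact that $\rho_T(\nabla)$ regularizes $W_\infty$ to a smooth function for each fixed $T$, so the density is in fact a bounded continuous function of $\phi \in \cC_x^{-1/2-\kappa}(\T^3)$ and no measure-theoretic subtleties arise. The finiteness and positivity of $\cZTl$, needed to interpret the densities as genuine Radon--Nikodym derivatives, is verified elsewhere via the Boué--Dupuis variational bounds.
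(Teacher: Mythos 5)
Your proposal is correct and is essentially the paper's own argument: the paper proves the identity by the same chain of change-of-variables equalities (testing against an arbitrary measurable $f$, using that the density depends on $W$ only through $W_\infty$), just written starting from $\mu_T$ rather than from $(W_\infty)_\#\tmu_T$. The added remark on measurability of the density is fine but not something the paper dwells on.
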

Due to its central importance to the rest of the paper, we prove this basic identity. 
\begin{proof}
For any measurable function \( f \colon \cC_x^{-\frac{1}{2}-\kappa}(\T^3) \rightarrow \R \), we have that 
\begin{align*}
\int f(\phi) \d \muT (\phi) &=  \frac{1}{\cZTl} \int f(\phi) \exp( - \lcol \cVT(\rho_T(\nabla) \phi) \rcol ) \d \big( (W_\infty)_\# \bP\big) (\phi)  \\
&=  \frac{1}{\cZTl} \int f(W_\infty) \exp( - \lcol \cVT(\rho_T(\nabla) W_\infty) \rcol ) \d \bP(W)  \\
&=  \int f(W_\infty) \mathrm{d}\tmu_T(W) \\
&= \int f(\phi) \mathrm{d} \big( (W_\infty)_\# \tmu_T\big)(\phi). 
\end{align*}
This proves the desired identity \eqref{sc:lemma_mu_tmu}. 
\end{proof}
In \cite{BG18,BG20}, Barashkov and Gubinelli work with the lifted measure
\begin{equation}\label{sc:eq_bmu}
\d \bmu_T(W) =   \frac{1}{\cZTl} \exp\big( - \lcol \cVT( W_T) \rcol \big) \, \d \bP(W). 
\end{equation}
While \( W_T \) and \( \rho_T(\nabla) W_\infty \) have the same distribution, the measures \( \tmu_T \) and \( \bmu_T \) do \emph{not} coincide. Since this is an important difference between this paper and the earlier works \cite{BG18,BG20}, let us explain our motivation for working with \( \tmu_T \) instead of \( \bmu_T \). From a probabilistic stand-point, the measure \( \bmu_T \) has better properties than \( \tmu_T \). This is related to the independent increments of the process \( t \mapsto W_t \) and we provide further comments in Remark \ref{measure_drift:remark_Lp} below. From a PDE perspective, however, \( \bmu_T \) behaves much worse than \( \tmu_T \). For the proof of global well-posedness and invariance in the second part of this series, it is essential that \( \muT  = (W_\infty)_\# \tmu_T \) is invariant under the Hamiltonian flow of \eqref{intro:eq_HN}. In contrast, the author is not aware of an explicit expression for the pushforward of \( \bmu_T \) under \( W_\infty \).  In particular, \( (W_\infty)_\# \bmu_T \) is not directly related to \( \muT  \) and not necessarily invariant under the Hamiltonian flow of $H_N$. Alternatively, we could work with the pushforward of \( \bmu_T \) under \( W_T \). A similar calculation as in the proof of Lemma \ref{sc:lemma_mu_tmu} shows that \( (W_T)_\# \bmu_T = (\rho_T(\nabla))_\# \muT  \). Unfortunately, \( (\rho_T(\nabla))_\# \muT  \) also does not seem to be invariant under a truncation of the nonlinear wave equation.  To summarize, while the measure \( \bmu_T \) has useful probabilistic properties, it lacks a direct relationship to the truncated dynamics and is ill-suited for our globalization and invariance arguments. 

Since we rely on \( \rho_T(\nabla) W_\infty \) in the definition of \( \tmu_T \), the Gaussian process 
\(  t \mapsto \rho_T(\nabla) W_t\) will play an important role in the rest of this paper. As a result, we now deal with both values \( T \) and \( t \) simultaneously. In most arguments, \(T \) will remain fixed while we use Itô's formula and martingale properties in \( t \). To simplify the notation, we now write
\begin{equation}\label{sc:eq_WTt}
\Wt \defe \rho_T(\nabla)W_t \qquad \text{and} \qquad \Wt[n] \defe \rho_T(n) W_t^n. 
\end{equation}
Since this will be convenient below, we also define
\begin{equation}
\rho^T_t(n) \defe \rho_T(n) \cdot \rho_t(n), \qquad \sigma_t^T(n) \defe \rho_T(n) \sigma_t(n),\qquad \text{and} \qquad \Jt \defe \rho_T(\nabla) J_t.
\end{equation}
Furthermore, we define the integral operator \( \It \) by
\begin{equation}
\It[u] = \rho_T(\nabla) I_t[u] = \int_0^t \Js u_s \ds . 
\end{equation}

\subsection{Stochastic objects and renormalization}\label{section:stochastic_objects}

We now proceed with the construction and renormalization of several stochastic objects. Similar constructions are standard in the probability theory literature and a comprehensive and well-written introduction  can be found in \cite{GP18,MWX17,OT18}. In order to make this section accessible to readers with a primary background in dispersive PDEs, however, we include full details. In a similar spirit, we follow a hands-on approach and mainly rely on Itô calculus. In Lemma \ref{measure_re:lemma_stochastic_objects_III}, however, this approach becomes computationally infeasible and we also use multiple stochastic integrals (see \cite{Nualart06} or Section \ref{section:multiple_stochastic_integrals}).

\begin{lemma}\label{measure_re:lemma_products}
Let \( S_N \) be the symmetric group on \( \{ 1,\hdots,N\} \) and let \( \Wt[n] \) be as in \eqref{sc:eq_WTt}. Then, we have for all \( n_1,n_2,n_3,n_4 \in \Z^3 \) that 
\begin{align}
\W{t}{n_1}&= \int_0^t \, \d \W{t_1}{n_1} \label{measure_re:eq_products_1} \allowdisplaybreaks[1]\\
\W{t}{n_1} \W{t}{n_2}&= \sum_{\pi \in S_2} \int_0^t \int_0^{t_1} \, \d \W{t_2}{n_{\pi(2)}} \d \W{t_1}{n_{\pi(1)}} + \delta_{n_1+n_2=0} \frac{\rhoT{t}(n_1)^2}{\langle n_1\rangle^2}, \label{measure_re:eq_products_2}  \allowdisplaybreaks[1]\\
\W{t}{n_1} \W{t}{n_2} \W{t}{n_3} & = \sum_{\pi \in S_3} \int_0^t \int_0^{t_1} \int_0^{t_2}  \, \d \W{t_3}{n_{\pi(3)}} \d \W{t_2}{n_{\pi(2)}}  \d \W{t_1}{n_{\pi(1)}} \label{measure_re:eq_products_3}\\
&+ \frac{1}{2} \sum_{\pi\in S_3} \delta_{n_{\pi(1)}+n_{\pi(2)}=0} \frac{\rhoT{t}(n_{\pi(1)})^2}{\langle n_{\pi(1)}\rangle^2} \W{t}{n_{\pi(3)}}, \notag  \allowdisplaybreaks[1]\\
\W{t}{n_1} \W{t}{n_2} \W{t}{n_3} \W{t}{n_4} &= \sum_{\pi \in S_4} \int_0^t \int_0^{t_1} \int_0^{t_2}  \int_0^{t_3} \, \d \W{t_4}{n_{\pi(4)}} \d \W{t_3}{n_{\pi(3)}} \d \W{t_2}{n_{\pi(2)}}  \d \W{t_1}{n_{\pi(1)}}\label{measure_re:eq_products_4} \\
&+ \frac{1}{4} \sum_{\pi\in S_4} \delta_{n_{\pi(1)}+n_{\pi(2)}=0} \frac{\rhoT{t}(n_{\pi(1)})^2}{\langle n_{\pi(1)} \rangle^2} \W{t}{n_{\pi(3)}} \W{t}{n_{\pi(4)}} \notag \\
&-\frac{1}{8} \sum_{\pi \in S_4} \delta_{n_{\pi(1)}+n_{\pi(2)}=n_{\pi(3)}+n_{\pi(4)}=0} \frac{\rhoT{t}(n_{\pi(1)})^2}{\langle n_{\pi(1)} \rangle^2}\frac{\rhoT{t}(n_{\pi(3)})^2}{\langle n_{\pi(3)} \rangle^2}. \notag
\end{align}
\end{lemma}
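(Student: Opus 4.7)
The plan is to identify the symmetrized iterated stochastic integrals on the right-hand side with multiple Wiener--It\^o integrals, equivalently Wick products, and then invoke Wick's theorem to expand $\prod_j \W{t}{n_j}$ via its chaos decomposition. Identity \eqref{measure_re:eq_products_1} is immediate from $\W{0}{n_1}=0$. The key observation is that for each $k \geq 1$, the symmetrized sum $\sum_{\pi\in S_k}\int_0^t\cdots\int_0^{t_{k-1}}\d\W{t_k}{n_{\pi(k)}}\cdots\d\W{t_1}{n_{\pi(1)}}$ coincides with the $k$-fold multiple Wiener--It\^o integral associated with $\prod_{j=1}^k (\sigmaT{\cdot}(n_j)/\langle n_j\rangle)$, which in turn equals the Wick product $\lcol \W{t}{n_1}\cdots\W{t}{n_k}\rcol$.

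\textbf{Reduction to Wick's theorem.} With this identification in place, the remaining identities reduce to Wick's theorem for the Gaussian vector $(\W{t}{n_1},\dots,\W{t}{n_k})$, which writes $\prod_{j=1}^k \W{t}{n_j}$ as a sum over partial pairings of $\{1,\dots,k\}$: each partial pairing contributes the product of the covariances $c_{ij}\defe \E[\W{t}{n_i}\W{t}{n_j}]=\delta_{n_i+n_j=0}\rhoT{t}(n_i)^2/\langle n_i\rangle^2$ (computed via the It\^o isometry together with the orthogonality $\d[B^n,B^m]_t=\delta_{n+m=0}\d t$, which reflects $B^{-n}=\overline{B^n}$) times the Wick product of the unpaired factors. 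For $k=2$ a single contraction produces \eqref{measure_re:eq_products_2}; for $k=3$ three single-contraction terms yield the correction in \eqref{measure_re:eq_products_3}; for $k=4$ both single and double contractions appear, producing the two correction layers in \eqref{measure_re:eq_products_4}.

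\textbf{Main obstacle.} The delicate point is verifying the combinatorial prefactors $1/2$, $1/4$, $1/8$ in the symmetrized expressions. An unordered pair $\{i,j\} \subset \{1,\dots,4\}$ encoded via $\{\pi(1),\pi(2)\}=\{i,j\}$ in a sum over $\pi\in S_4$ is overcounted by $|S_2|^2=4$, explaining the $1/4$; an unordered perfect matching $\{\{i,j\},\{k,l\}\}$ admits $|S_2|^3=8$ representations (two orderings within each pair, together with a swap of the two pairs), explaining the $1/8$; for $k=3$ the analogous overcount is by $|S_2|=2$. A hands-on It\^o-iteration proof (applying It\^o's product rule to $\W{t}{n_1}\W{t}{n_2}$, then to that product times $\W{t}{n_3}$, and so on) is also possible but more painful: the drift-against-martingale terms of the form $\int_0^t \W{s}{n_k}\sigmaT{s}(n_i)^2/\langle n_i\rangle^2 \d s$ produced by the cross-variations require integration by parts to match the $\W{t}{n_k}\rhoT{t}(n_i)^2/\langle n_i\rangle^2$ form appearing in the lemma, and the leftover stochastic integrals then have to be reabsorbed into the symmetrized iterated-integral sum. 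Wick's theorem sidesteps this bookkeeping entirely.
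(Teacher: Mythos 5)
Your argument is correct, but it runs along a genuinely different track from the paper. The paper proves \eqref{measure_re:eq_products_2}--\eqref{measure_re:eq_products_4} by hand: it applies It\^o's product formula iteratively, uses the cross-variation $\d \langle \W{}{n_i},\W{}{n_j}\rangle_s = \delta_{n_i+n_j=0}\sigmaT{s}(n_i)^2\langle n_i\rangle^{-2}\ds$, and then recombines the resulting drift-against-martingale pairs into complete differentials (exactly the step you call painful, which works because $\tfrac{\d}{\d s}\rhoT{s}(n)^2=\sigmaT{s}(n)^2$), keeping track of prefactors by comparing the number of terms of each type with $\#S_k$. You instead identify the symmetrized iterated integrals with multiple Wiener--It\^o integrals (this is precisely Lemma \ref{appendix:lemma_iterated_vs_multiple}) and invoke Wick's theorem; this is essentially the ``product formula'' route that the paper itself points to as related (cf.\ the remark after the lemma and Lemma \ref{appendix:lemma_product_formula}, which the paper only deploys later, in Lemma \ref{measure_re:lemma_stochastic_objects_III}). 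Your route buys a cleaner combinatorial bookkeeping and scales better in the degree; the paper's route buys self-containedness and accessibility, using nothing beyond It\^o calculus, in line with its stated aim for readers from dispersive PDE. Your prefactor counts ($|S_2|=2$, $|S_2|^2=4$, $|S_2|^3=8$) are right. One point to spell out when you write it up: Wick's theorem in the form you state produces, for a single contraction at degree four, the \emph{Wick} product $\lcol \W{t}{n_{\pi(3)}}\W{t}{n_{\pi(4)}}\rcol$ of the unpaired factors and a \emph{plus} sign on the fully paired term, whereas \eqref{measure_re:eq_products_4} is written with the \emph{ordinary} product $\W{t}{n_{\pi(3)}}\W{t}{n_{\pi(4)}}$ and the coefficient $-\tfrac18$. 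You must therefore expand $\lcol \W{t}{n_{\pi(3)}}\W{t}{n_{\pi(4)}}\rcol = \W{t}{n_{\pi(3)}}\W{t}{n_{\pi(4)}} - \delta_{n_{\pi(3)}+n_{\pi(4)}=0}\rhoT{t}(n_{\pi(3)})^2\langle n_{\pi(3)}\rangle^{-2}$ and recombine the double contractions; this one-line step is what flips the sign and yields the $-\tfrac18$, and it is the only piece of the statement your sketch does not yet account for explicitly.
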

The integrals in \eqref{measure_re:eq_products_1}-\eqref{measure_re:eq_products_4} are iterated Itô integrals. This lemma is related to the product formula for multiple stochastic integrals, see e.g. \cite[Proposition 1.1.3]{Nualart06}. 

\begin{proof}
The first equation \eqref{measure_re:eq_products_1} follows from the definition of the Itô derivative \( \d W_t^{n} \). \\
The second equation \eqref{measure_re:eq_products_2} follows from Itô's product formula. Indeed, we have that  
\begin{align*}
\W{t}{n_1} \W{t}{n_2} &=\int_0^t \W{s}{n_2} \d \W{s}{n_1}+ \int_0^t \W{s}{n_1} \d \W{s}{n_2}  + \int_0^t \d \langle \W{}{n_1}, \W{}{n_2} \rangle_s \\
&=  \int_0^t \Big( \int_0^s  \d \W{\tau}{n_2} \Big) \d \W{s}{n_1} + \int_0^t \Big( \int_0^s  \d \W{\tau}{n_1} \Big) \d \W{s}{n_2}  + \delta_{n_1+n_2 =0} \int_0^t  \frac{\sigmaT{s}(n_1)^2}{\langle n_1 \rangle^2} \ds \\
&=  \sum_{\pi \in S_2} \int_0^t \int_0^{t_1} \, \d \W{t_2}{n_{\pi(2)}}  \d \W{t_1}{n_{\pi(1)}} + \delta_{n_1+n_2=0} \frac{\rhoT{t}(n_1)^2}{\langle n_1\rangle^2}. 
\end{align*}
The third equation \eqref{measure_re:eq_products_3} follows from Itô's formula and the second equation \eqref{measure_re:eq_products_2}. Using Itô's formula, we have that 
\begin{align*}
&\W{t}{n_1} \W{t}{n_2} \W{t}{n_3} \\
&= \frac{1}{2} \sum_{\pi \in S_3} \int_0^t \W{s}{n_{\pi(3)}} \W{s}{n_{\pi(2)}} \d \W{s}{n_{\pi(1)}} + \frac{1}{2} \sum_{\pi\in S_3}  \int_0^t \W{s}{n_{\pi(3)}} \d \langle \W{}{n_{\pi(2)}},  \W{}{n_{\pi(1)}} \rangle_s . 
\end{align*}
The easiest way to keep track of the pre-factors throughout the proof is to compare the number of terms of each type and the cardinality of the symmetric group. In the formula above, we have three terms of each type and the cardinality \( \# S_3 = 6 \), so we need the pre-factor \( 1/2 \). By inserting the second equation \eqref{measure_re:eq_products_2} and our expression for the cross-variation, we obtain
\begin{align*}
&\W{t}{n_1} \W{t}{n_2} \W{t}{n_3} \\
&= \sum_{\pi \in S_3} \int_0^t \int_0^{t_1} \int_0^{t_2}  \, \d \W{t_3}{n_{\pi(3)}} \d \W{t_2}{n_{\pi(2)}}  \d \W{t_1}{n_{\pi(1)}} 
+	\frac{1}{2} \sum_{\pi\in S_3}	\delta_{n_{\pi(3)}+n_{\pi(2)}=0} \int_0^t \frac{\rhoT{s}(n_{\pi(2)})^2}{\langle n_{\pi(2)} \rangle^2} \d \W{s}{n_{\pi(1)}} \\
&+ \frac{1}{2} \sum_{\pi\in S_3} \delta_{n_{\pi(1)}+n_{\pi(2)}=0}   \int_0^t  \frac{ \sigmaT{s}(n_{\pi(1)})^2}{\langle n_{\pi(1)}\rangle^2} \W{s}{n_{\pi(3)}} \ds  \allowdisplaybreaks[1]\\
&= \sum_{\pi \in S_3} \int_0^t \int_0^{t_1} \int_0^{t_2}  \, \d \W{t_3}{n_{\pi(3)}} \d \W{t_2}{n_{\pi(2)}}  \d \W{t_1}{n_{\pi(1)}}  \\
&+ \frac{1}{2} \sum_{\pi \in S_3} \delta_{n_{\pi(1)}+n_{\pi(2)}=0} \int_0^t \bigg(  \frac{ \sigmaT{s}(n_{\pi(1)})^2}{\langle n_{\pi(1)}\rangle^2} \W{s}{n_{\pi(3)}} \ds + \frac{ \rhoT{s}(n_{\pi(1)})^2}{\langle n_{\pi(1)}\rangle^2} \d \W{s}{n_{\pi(3)}}\bigg)\allowdisplaybreaks[1]\\
&= \sum_{\pi \in S_3} \int_0^t \int_0^{t_1} \int_0^{t_2}  \, \d \W{t_3}{n_{\pi(3)}} \d \W{t_2}{n_{\pi(2)}}  \d \W{t_1}{n_{\pi(1)}}  
+ \frac{1}{2} \sum_{\pi\in S_3} \delta_{n_{\pi(1)}+n_{\pi(2)}=0} \frac{\rhoT{t}(n_{\pi(1)})^2}{\langle n_{\pi(1)}\rangle^2} \W{t}{n_{\pi(3)}}. 
\end{align*}
For the second equality, we also used the permutation invariance of any sum over \( \pi \in S_3 \). This completes the proof of the third equation \eqref{measure_re:eq_products_3}. \\
We now prove the fourth and final equation \eqref{measure_re:eq_products_4}. The argument differs from the proof of the third equation only in its notational complexity. Using Itô's formula and the third equation \eqref{measure_re:eq_products_3}, we obtain that 
\begin{align*}
&\W{t}{n_1} \W{t}{n_2} \W{t}{n_3} \W{t}{n_4} \\
&= \frac{1}{6} \sum_{\pi \in S_4} \int_0^t \W{s}{n_{\pi(4)}} \W{s}{n_{\pi(3)}} \W{s}{n_{\pi(2)}} \d \W{s}{n_{\pi(1)}}+ \frac{1}{4} \sum_{\pi \in S_4} \int_0^t \W{s}{n_{\pi(4)}} \W{s}{n_{\pi(3)}} \d \langle \W{}{n_{\pi(2)}} , \W{}{n_{\pi(1)}} \rangle_s  \\
&= \sum_{\pi \in S_4} \int_0^t \int_0^{t_1} \int_0^{t_2}  \int_0^{t_3} \, \d \W{t_4}{n_{\pi(4)}}  \d \W{t_3}{n_{\pi(3)}} \d \W{t_2}{n_{\pi(2)}}  \d \W{t_1}{n_{\pi(1)}} \\
&+ \frac{1}{2} \sum_{\pi \in S_4} \frac{\delta_{n_{\pi(1)}+n_{\pi(2)}=0}}{\langle n_{\pi(1)} \rangle^2} \int_0^t \rhoT{s}(n_{\pi(1)})^2 \W{s}{n_{\pi(4)}} \d \W{s}{n_{\pi(3)}} + \frac{1}{4}  \sum_{\pi \in S_4} \int_0^t \sigmaT{s}(n_{\pi(1)})^2 \W{s}{n_{\pi(4)}} \W{s}{n_{\pi(3)}} \ds \\
&= \sum_{\pi \in S_4} \int_0^t \int_0^{t_1} \int_0^{t_2}  \int_0^{t_3} \, \d \W{t_4}{n_{\pi(4)}}  \d \W{t_3}{n_{\pi(3)}} \d \W{t_2}{n_{\pi(2)}}  \d \W{t_1}{n_{\pi(1)}} + \frac{1}{4} \sum_{\pi \in S_4} \bigg[ \frac{\delta_{n_{\pi(1)}+n_{\pi(2)}=0}}{\langle n_{\pi(1)} \rangle^2}  ~ \times \\
&\int_0^t \bigg( \sigmaT{s}(n_{\pi(1)})^2 \W{s}{n_{\pi(4)}} \W{s}{n_{\pi(3)}} \ds + \rhoT{s}(n_{\pi(1)})^2 \W{s}{n_{\pi(4)}} \d \W{s}{n_{\pi(3)}}+ \rhoT{s}(n_{\pi(1)})^2\W{s}{n_{\pi(3)}} \d \W{s}{n_{\pi(4)}} \bigg) \bigg].
\end{align*}
Using Itô's formula, we obtain that 
\begin{equation*}
\begin{aligned}
&\int_0^t \bigg( \sigmaT{s}(n_{\pi(1)})^2 \W{s}{n_{\pi(4)}} \W{s}{n_{\pi(3)}} \ds + \rhoT{s}(n_{\pi(1)})^2 \W{s}{n_{\pi(4)}} \d \W{s}{n_{\pi(3)}} + \rhoT{s}(n_{\pi(1)})^2 \W{s}{n_{\pi(3)}} \d \W{s}{n_{\pi(4)}} \bigg) \allowdisplaybreaks[4] \\
&= \rhoT{t}(n_{\pi(1)})^2 \W{t}{n_{\pi(3)}} \W{t}{n_{\pi(4)}} -  \delta_{n_{\pi(3)}+n_{\pi(4)}=0} \int_0^t \rhoT{s}(n_{\pi(1)})^2 \frac{\sigmaT{s}(n_{\pi(3)})^2}{\langle n_{\pi(3)}\rangle^2} \ds . 
\end{aligned}
\end{equation*}
The total contribution of the second summand is 
\begin{align*}
&- \frac{1}{4}  \sum_{\pi \in S_4} \frac{\delta_{n_{\pi(1)}+n_{\pi(2)}=n_{\pi(3)}+n_{\pi(4)}=0}}{\langle n_{\pi(1)} \rangle^2 \langle n_{\pi(3)} \rangle^2} \int_0^t \rhoT{s}(n_{\pi(1)})^2 \sigmaT{s}(n_{\pi(3)})^2 \ds \allowdisplaybreaks[4]\\
&=-\frac{1}{8}  \sum_{\pi \in S_4} \frac{\delta_{n_{\pi(1)}+n_{\pi(2)}=n_{\pi(3)}+n_{\pi(4)}=0}}{\langle n_{\pi(1)} \rangle^2 \langle n_{\pi(3)} \rangle^2} \int_0^t \Big( \rhoT{s}(n_{\pi(1)})^2 \sigmaT{s}(n_{\pi(3)})^2 + \sigmaT{s}(n_{\pi(1)})^2 \rhoT{s}(n_{\pi(3)})^2 \Big) \ds\allowdisplaybreaks[4] \\
&=-\frac{1}{8} \sum_{\pi \in S_4} \delta_{n_{\pi(1)}+n_{\pi(2)}=n_{\pi(3)}+n_{\pi(4)}=0} \frac{\rhoT{t}(n_{\pi(1)})^2}{\langle n_{\pi(1)} \rangle^2}\frac{\rhoT{t}(n_{\pi(3)})^2}{\langle n_{\pi(3)} \rangle^2}. 
\end{align*}
This completes the proof of the fourth equation \eqref{measure_re:eq_products_4}. 
\end{proof}

\begin{definition}[Renormalization]\label{measure_re:definition_renormalization}
We define the renormalization constants \( \aT, \bT \in \R \) and the multiplier \( \MT \colon L^2(\T^3) \rightarrow L^2(\T^3) \) by
\begin{equation*}
\aT \defe \sum_{n\in \Z^3} \frac{\rhoT{t}(n)^2}{\langle n\rangle^2}, \qquad \bT \defe \sum_{n_1,n_2\in \Z^3} \frac{\widehat{V}(n_1+n_2) \rhoT{t}(n_1)^2 \rhoT{t}(n_2)^2}{\langle n_1 \rangle^2 \langle n_2 \rangle^2}
\end{equation*}
and 
\begin{equation*}
 \widehat{\MT f}(n) \defe \Big( \sum_{m \in \Z^3} \widehat{V}(n+m)\frac{\rhoT{t}(m)^2}{\langle m \rangle^2} \Big) \widehat{f}(n). 
\end{equation*}
Using this notation, we set 
\begin{align}
\lcol f^2 \rcol  &\overset{def}{=} f^2 -\aT,  \label{measure_re:eq_renormalized_quadratic}\\
\lcol (V * f^2) f \rcol &\overset{def}{=} (V* f^2) f - \aT  \widehat{V}(0)  f - 2 \MT f,  \label{measure_re:eq_renormalized_cubic}\\
\lcol (V*f^2) f^2 \rcol &\overset{def}{=} (V*f^2) f^2 - \aT V * f^2 - \aT \widehat{V}(0)  f^2  - 4 (\MT f) f + (\aT)^2 \widehat{V}(0)  + 2 \bT.\label{measure_re:eq_renormalized_quartic}
\end{align}
\end{definition}

\begin{remark}
As is clear from the definition, the renormalized powers in \eqref{measure_re:eq_renormalized_quadratic}, \eqref{measure_re:eq_renormalized_cubic}, and \eqref{measure_re:eq_renormalized_quartic} depend on the regularization parameter \( t \). This dependence will always be clear from context and we thus do not reflect it in our notation. 
\end{remark}

\begin{definition}[Renormalization of the dynamics]\label{measure_re:definition_renormalization_dynamics}
For any \( N \geq 1 \), we define 
\begin{equation}
a_N \defe a^N_\infty = a^\infty_N , \quad b_N \defe b^N_\infty =  b^\infty_N, \quad \text{and} \quad \mathcal{M}_N \defe \mathcal{M}^N_\infty = \mathcal{M}^\infty_N. 
\end{equation}
\end{definition}
Throughout most of the paper, we will only work with  the renormalization constants from Definition \ref{measure_re:definition_renormalization}, which contain two finite parameters. The renormalization constants in Definition \ref{measure_re:definition_renormalization_dynamics} will be more important in the second part of this series. 

\begin{proposition}[Stochastic integral representation of renormalized powers]\label{measure_re:proposition_renormalization}
With \( n_{12}, n_{123},\) and \( n_{1234} \) defined as in \eqref{eq_n12}, we have that 
\begin{align}
\lcol (\Wt)^2 \rcol &=  2  \sum_{\substack{n_1,n_2 \in \Z^3 } }  e^{i \langle n_{12}, x \rangle} \int_0^t \int_0^{t_1}  \, \d \W{t_2}{n_{2}}  \d \W{t_1}{n_{1}} \label{measure_re:eq_renormalization_2}  \allowdisplaybreaks[4]\\
\lcol (V* (\Wt)^2) \Wt \rcol  &=  \sum_{\substack{n_1,n_2,n_3 \in \Z^3\\\pi \in S_3}} \hspace{-2ex} \widehat{V}(n_{\pi(1)}+n_{\pi(2)}) e^{i \langle n_{123} ,x \rangle} \int_0^t \int_0^{t_1} \int_0^{t_2}  \, \d \W{t_3}{n_3} \d \W{t_2}{n_2}  \d \W{t_1}{n_1}   \label{measure_re:eq_renormalization_3} \allowdisplaybreaks[4]\\
\lcol (V* (\Wt)^2) (\Wt)^2 \rcol &= \sum_{\substack{n_1,n_2, n_3, n_4 \in \Z^3 \\ \pi \in S_4}} \bigg[ \widehat{V}(n_{\pi(1)}+n_{\pi(2)}) e^{i \langle n_{1234},x \rangle} \label{measure_re:eq_renormalization_4}\\
&  \times \int_0^t \int_0^{t_1} \int_0^{t_2}  \int_0^{t_3} \, \d \W{t_4}{n_4}  \d \W{t_3}{n_3} \d \W{t_2}{n_2}  \d \W{t_1}{n_1} \bigg]. \notag \allowdisplaybreaks[1]
\end{align}
Furthermore, it holds that 
\begin{equation}\label{measure_re:eq_potential_derivative}
\int_{\T^3} \lcol (V* (\Wt)^2) (\Wt)^2 \rcol \dx  = 4 \int_0^t \int_{\T^3}  \lcol (V* (\Ws)^2) \Ws \rcol   \d \Ws  . 
\end{equation}
\end{proposition}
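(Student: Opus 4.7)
My plan for the representations \eqref{measure_re:eq_renormalization_2}--\eqref{measure_re:eq_renormalization_4} is to expand each product in Fourier modes and apply Lemma \ref{measure_re:lemma_products} termwise, then show that the diagonal ``contraction'' terms are precisely cancelled by the renormalization constants from Definition \ref{measure_re:definition_renormalization}. For \eqref{measure_re:eq_renormalization_2}, I would write $(\Wt)^2 = \sum_{n_1,n_2} e^{i\langle n_{12},x\rangle} \W{t}{n_1} \W{t}{n_2}$, substitute \eqref{measure_re:eq_products_2}, and observe that the $\delta_{n_1+n_2=0}$ correction yields exactly $\sum_{n} \rhoT{t}(n)^2/\langle n\rangle^2 = \aT$, so that subtracting $\aT$ (as in the definition of $\lcol (\Wt)^2\rcol$) leaves the double iterated integral. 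The analogous steps for \eqref{measure_re:eq_renormalization_3}--\eqref{measure_re:eq_renormalization_4} are conceptually identical but combinatorially heavier: after substituting \eqref{measure_re:eq_products_3} or \eqref{measure_re:eq_products_4}, I would split the sum over $\pi\in S_3$ (resp.\ $S_4$) according to whether the contracted pair $\{\pi(1),\pi(2)\}$ coincides with the two indices summed by $\widehat{V}$ or not, and match the resulting two (resp.\ three) families of terms with $\aT \widehat{V}(0)\Wt$ and $2\MT \Wt$ (resp.\ with $\aT V\ast f^2$, $\aT \widehat{V}(0) f^2$, $4(\MT f)f$, $(\aT)^2\widehat{V}(0)$ and $2\bT$).

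For the identity \eqref{measure_re:eq_potential_derivative}, the plan is to apply It\^o's formula to the functional
\begin{equation*}
F_t \defe \frac{1}{4}\int_{\T^3} \lcol (V\ast (\Wt)^2)(\Wt)^2 \rcol \dx.
\end{equation*}
Since the unrenormalized quartic form $f\mapsto \tfrac{1}{4}\int(V\ast f^2)f^2 \dx$ has Fr\'echet derivative $(V\ast f^2)f$ (by Euler's identity for homogeneous quartics, hence the factor $4$), It\^o's formula applied mode by mode produces a martingale part
\begin{equation*}
\sum_n \int_0^t \widehat{\big( (V\ast (\Ws)^2)\Ws\big)}(-n)\, \d\W{s}{n}
\end{equation*}
together with a quadratic variation drift obtained by summing, over all pairs $(n_i,n_j)$ with $i<j$, the second partial derivatives of the quartic paired with $\d\langle W^{n_i}, W^{n_j}\rangle_s = \delta_{n_i+n_j=0}(\sigmaT{s}(n_i))^2/\langle n_i\rangle^2 \ds$. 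A direct computation identifies this drift with the time derivative of the counterterm
\begin{equation*}
\frac{1}{4}\int_{\T^3}\Big( \aT V\ast (\Wt)^2 + \aT \widehat{V}(0)(\Wt)^2 + 4(\MT \Wt)\Wt - (\aT)^2\widehat{V}(0) - 2\bT \Big)\dx ,
\end{equation*}
which is precisely what the renormalization in Definition \ref{measure_re:definition_renormalization} subtracts; hence the drift in $F_t$ vanishes. The surviving martingale, after Parseval, is exactly the right-hand side of \eqref{measure_re:eq_potential_derivative} up to the factor $4$ coming from the $1/4$ in $F_t$.

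The main obstacle is combinatorial bookkeeping rather than any analytic difficulty: one must match the $|S_k|$-many contractions produced by It\^o's product rule against the explicit renormalization constants $\aT$, $\bT$ and the multiplier $\MT$, tracking the various ways a Kronecker delta can pair two of the four indices while preserving the Fourier factor $\widehat{V}(n_{\pi(1)}+n_{\pi(2)})$. A clean way to organize this is to exploit the permutation invariance under the full $S_k$-sum: after symmetrizing, each of the three types of contractions in \eqref{measure_re:eq_products_4} (pair inside $\{1,2\}$, pair crossing $\{1,2\}$ and $\{3,4\}$, and double pair) contributes exactly one of the counterterms in Definition \ref{measure_re:definition_renormalization}, which makes the proof essentially mechanical once the bookkeeping is set up. As a final sanity check, \eqref{measure_re:eq_potential_derivative} can also be derived directly from \eqref{measure_re:eq_renormalization_4} by using that $\int_{\T^3} e^{i\langle n_{1234},x\rangle}\dx = \delta_{n_{1234}=0}$, peeling off the outermost $\d\W{t_1}{n_1}$, and recognizing the remaining triple integral via \eqref{measure_re:eq_renormalization_3}; this gives an independent verification of the factor~$4$.
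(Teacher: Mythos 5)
Your treatment of \eqref{measure_re:eq_renormalization_2}--\eqref{measure_re:eq_renormalization_4} coincides with the paper's proof: expand in Fourier modes, substitute Lemma \ref{measure_re:lemma_products}, and match the contraction terms against $\aT$, $\bT$, $\MT$ exactly as you describe, so nothing to add there. For \eqref{measure_re:eq_potential_derivative} your primary route genuinely differs from the paper. The paper simply inserts \eqref{measure_re:eq_renormalization_3} and \eqref{measure_re:eq_renormalization_4} into the two sides and reduces the identity to the combinatorial fact that, on the set $n_{1234}=0$, one has $\sum_{\pi \in S_4} \widehat{V}(n_{\pi(1)}+n_{\pi(2)}) = 4 \sum_{\pi \in S_3} \widehat{V}(n_{\pi(1)}+n_{\pi(2)})$, which is where the evenness and real-valuedness of $V$ enter and where the factor $4$ comes from; this is precisely your closing ``sanity check'', so you do have the paper's argument available. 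Your It\^o-formula route is also workable and arguably more conceptual (it exhibits the renormalized quartic functional as a martingale), but as written the bookkeeping is slightly off: the counterterms $\aT\, V \ast (\Wt)^2$, $\aT \widehat{V}(0)(\Wt)^2$ and $4(\MT \Wt)\Wt$ are themselves semimartingales, so they do not only contribute ``the time derivative of the counterterm'' to the drift --- they also produce martingale parts, and it is exactly these martingale parts that upgrade the unrenormalized cubic $4\int_0^t \int_{\T^3} (V\ast (\Ws)^2)\Ws \,\d \Ws$, which is all your sketch exhibits, to the renormalized cubic $4\int_0^t \int_{\T^3} \lcol (V\ast(\Ws)^2)\Ws\rcol \,\d \Ws$ on the right-hand side of \eqref{measure_re:eq_potential_derivative}; their cross-variations also enter the drift cancellation alongside the explicit $t$-derivatives of $\aT$, $\bT$, $\MT$. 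Once you include these contributions the It\^o computation closes, and the trade-off is clear: your route explains structurally why the specific ``lower-order'' terms in Definition \ref{measure_re:definition_renormalization} are the right ones, while the paper's route is shorter and purely algebraic once the stochastic-integral representations are established, at the price of invoking the symmetry of $\widehat{V}$ explicitly.
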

\begin{remark}
The "lower-order" terms in Definition \ref{measure_re:definition_renormalization} were chosen precisely to obtain the result in Proposition \ref{measure_re:proposition_renormalization}. The renormalized powers of \( \Wt  \) can be represented solely using iterated stochastic integrals, which have many desirable properties. \\
\end{remark}

Proposition \ref{measure_re:proposition_renormalization} essentially follows from Lemma \ref{measure_re:lemma_products}, Definition \ref{measure_re:definition_renormalization}, and a tedious calculation. For the sake of completeness, however, we provide full details. 
\begin{proof}
We first prove \eqref{measure_re:eq_renormalization_2}.  Using \eqref{measure_re:eq_products_2}, we have that 
\begin{align*}
(\Wt)^2 &=  \sum_{n_1,n_2 \in \Z^3} e^{i\langle n_1+ n_2 , x\rangle} \W{t}{n_1} \W{t}{n_2}  \allowdisplaybreaks[1]\\
&= \sum_{\pi \in S_2} \sum_{n_1,n_2 \in \Z^3}  e^{i\langle n_1+ n_2 , x\rangle}  \int_0^t \int_0^{t_1} \, \d \W{t_2}{n_{\pi(2)}}  \d \W{t_1}{n_{\pi(1)}} + \sum_{n_1,n_2 \in \Z^3} \delta_{n_1+n_2=0} \frac{\rhoT{t}(n_1)^2}{\langle n_1\rangle^2} e^{i\langle n_1+n_2, x \rangle}  \allowdisplaybreaks[1]\\
&= \sum_{\pi \in S_2} \sum_{n_1,n_2 \in \Z^3}  e^{i\langle n_1+ n_2 , x\rangle}  \int_0^t \int_0^{t_1} \, \d \W{t_2}{n_{\pi(2)}}  \d \W{t_1}{n_{\pi(1)}} + \aT. 
\end{align*}
By subtracting \( \aT \) from both sides and symmetrizing, this leads to the desired identity. \\
We now turn to the proof of \eqref{measure_re:eq_renormalization_3}. From \eqref{measure_re:eq_products_3}, we obtain that 
\begin{align*}
(V*(\Wt)^2) \Wt  &= \sum_{n_1,n_2,n_3\in \Z^3} \widehat{V}(n_1+n_2) e^{i\langle n_{123} , x \rangle} \W{t}{n_1} \W{t}{n_2} \W{t}{n_3}  \allowdisplaybreaks[1]\\
&= \sum_{\pi \in S_3} \sum_{n_1,n_2,n_3 \in \Z^3} \widehat{V}(n_1+n_2) e^{i \langle n_{123} , x \rangle}  \int_0^t \int_0^{t_1} \int_0^{t_2}  \, \d \W{t_3}{n_{\pi(3)}} \d \W{t_2}{n_{\pi(2)}}  \d \W{t_1}{n_{\pi(1)}} \\
&+ \frac{1}{2} \sum_{\pi\in S_3} \sum_{n_1,n_2,n_3 \in \Z^3} \widehat{V}(n_1+n_2)  e^{i\langle n_{123} ,x \rangle} \delta_{n_{\pi(1)}+n_{\pi(2)}=0} \frac{\rhoT{t}(n_{\pi(1)})^2}{\langle n_{\pi(1)}\rangle^2} \W{t}{n_{\pi(3)}}, \notag  \allowdisplaybreaks[1] \\
&= \sum_{\pi \in S_3} \sum_{n_1,n_2,n_3 \in \Z^3} \widehat{V}(n_1+n_2) e^{i \langle n_{123}, x \rangle}  \int_0^t \int_0^{t_1} \int_0^{t_2}  \, \d \W{t_3}{n_{\pi(3)}} \d \W{t_2}{n_{\pi(2)}}  \d \W{t_1}{n_{\pi(1)}} \\
&+ \sum_{n_1, n_3\in \Z^3} \widehat{V}(0) e^{i\langle n_3 , x \rangle} \frac{\rhoT{t}(n_1)^2}{\langle n_1 \rangle^2} \W{t}{n_3} + 2 \sum_{n_1,n_3\in \Z^3} \widehat{V}(n_1+n_3) e^{i\langle n_3, x \rangle} \frac{\rhoT{t}(n_1)^2}{\langle n_1\rangle^2} \W{t}{n_3} \allowdisplaybreaks[1] \\
&= \sum_{\pi \in S_3} \sum_{n_1,n_2,n_3 \in \Z^3} \widehat{V}(n_1+n_2) e^{i \langle n_{123} , x \rangle}  \int_0^t \int_0^{t_1} \int_0^{t_2}  \, \d \W{t_3}{n_{\pi(3)}} \d \W{t_2}{n_{\pi(2)}}  \d \W{t_1}{n_{\pi(1)}} \\ 
&+ \aT \widehat{V}(0) \Wt  + 2 \MT \Wt  .
\end{align*}
After symmetrizing and comparing with Definition \ref{measure_re:definition_renormalization}, this leads to the desired identity. Next, we prove the identity \eqref{measure_re:eq_renormalization_4}. Using \eqref{measure_re:eq_products_4}, we have that 
\begin{align}
&(V*(\Wt)^2) (\Wt)^2 \notag \\
&= \sum_{n_1,n_2,n_3,n_4\in \Z^3} \widehat{V}(n_1+n_2) e^{i \langle n_{1234},x \rangle} \W{t}{n_1} \W{t}{n_2}\W{t}{n_3} \W{t}{n_4}  \notag \allowdisplaybreaks[1]\\
&=  \sum_{ \substack{n_1,n_2,n_3,n_4\in \Z^3 \\ \pi \in S_4}}  \widehat{V}(n_1+n_2) e^{i \langle n_{1234},x \rangle} \int_0^t \int_0^{t_1} \int_0^{t_2}  \int_0^{t_3} \, \d \W{t_4}{n_{\pi(4)}}  \d \W{t_3}{n_{\pi(3)}} \d \W{t_2}{n_{\pi(2)}}  \d \W{t_1}{n_{\pi(1)}}   \notag \\
&+ \frac{1}{4} \sum_{ \substack{n_1,n_2,n_3,n_4\in \Z^3 \\ \pi \in S_4}}  \widehat{V}(n_1+n_2) e^{i \langle n_{1234} ,x \rangle} \delta_{n_{\pi(1)}+n_{\pi(2)}=0} \frac{\rhoT{t}(n_{\pi(1)})^2}{\langle n_{\pi(1)} \rangle^2} \W{t}{n_{\pi(3)}} \W{t}{n_{\pi(4)}}  \label{measure_re:eq_renormalization_p1}\\
&-\frac{1}{8}\sum_{ \substack{n_1,n_2,n_3,n_4\in \Z^3 \\ \pi \in S_4}}  \widehat{V}(n_1+n_2) e^{i \langle n_{1234} ,x \rangle} \delta_{n_{\pi(1)}+n_{\pi(2)}=n_{\pi(3)}+n_{\pi(4)}=0} \frac{\rhoT{t}(n_{\pi(1)})^2}{\langle n_{\pi(1)} \rangle^2}\frac{\rhoT{t}(n_{\pi(3)})^2}{\langle n_{\pi(3)} \rangle^2}. \label{measure_re:eq_renormalization_p2}
\end{align}
It remains to simplify \eqref{measure_re:eq_renormalization_p1} and \eqref{measure_re:eq_renormalization_p2}. Regarding \eqref{measure_re:eq_renormalization_p1}, we have that 
\begin{align*}
&\frac{1}{4} \sum_{ \substack{n_1,n_2,n_3,n_4\in \Z^3 \\ \pi \in S_4}}  \widehat{V}(n_1+n_2) e^{i \langle n_{1234} ,x \rangle} \delta_{n_{\pi(1)}+n_{\pi(2)}=0} \frac{\rhoT{t}(n_{\pi(1)})^2}{\langle n_{\pi(1)} \rangle^2} \W{t}{n_{\pi(3)}} \W{t}{n_{\pi(4)}} \allowdisplaybreaks[1] \\
&= \sum_{n_1,n_2,n_3 \in \Z^3} \widehat{V}(n_1+n_2) \frac{\rhoT{t}(n_3)^2}{\langle n_3\rangle^2} e^{i\langle n_1+n_2,x\rangle} \W{t}{n_1} \W{t}{n_2} \\
&+ 4 \sum_{n_1,n_2,n_3 \in \Z^3} \widehat{V}(n_1+n_2) \frac{\rhoT{t}(n_2)^2}{\langle n_2\rangle^2} e^{i\langle n_1 + n_3 ,x \rangle} \W{t}{n_1} \W{t}{n_3} \\
&+ \sum_{n_1,n_3,n_4 \in \Z^3} \widehat{V}(0) \frac{\rhoT{t}(n_1)^2}{\langle n_1\rangle^2} e^{i\langle n_3+n_4,x\rangle} \W{t}{n_3} \W{t}{n_4} \allowdisplaybreaks[1] \\
&= \aT V* (\Wt)^2 + 4 ( \MT \Wt ) \Wt  + \aT \widehat{V}(0) (\Wt)^2.
\end{align*}
Regarding \eqref{measure_re:eq_renormalization_p2}, we note that 
\begin{align*}
&-\frac{1}{8} \sum_{\pi \in S_4}\sum_{n_1,n_2,n_3,n_4\in \Z^3}   \widehat{V}(n_1+n_2) e^{i \langle n_{1234} ,x \rangle} \delta_{n_{\pi(1)}+n_{\pi(2)}=n_{\pi(3)}+n_{\pi(4)}=0} \frac{\rhoT{t}(n_{\pi(1)})^2}{\langle n_{\pi(1)} \rangle^2}\frac{\rhoT{t}(n_{\pi(3)})^2}{\langle n_{\pi(3)} \rangle^2} \\
&= - \sum_{n_1,n_3 \in \Z^3} \widehat{V}(0) \frac{\rhoT{t}(n_1)^2 \rhoT{t}(n_3)^2}{\langle n_1 \rangle^2 \langle n_3 \rangle^2} - 2 \sum_{n_1,n_2 \in \Z^3} \frac{ \widehat{V}(n_1+n_2) \rhoT{t}(n_1)^2) \rhoT{t}(n_2)^2}{\langle n_1 \rangle^2 \langle n_2 \rangle^2} \\
&= - \widehat{V}(0) (\aT)^2 - 2 \bT. 
\end{align*}
After symmetrizing, this completes the proof of \eqref{measure_re:eq_renormalization_4}. \\
Finally, it remains to prove \eqref{measure_re:eq_potential_derivative}. Since \( V \) is real-valued and even, we have that \( \widehat{V}(n) = \overline{\widehat{V}(n)} = \widehat{V}(-n) \). As long as \( n_{1234}=0 \), this implies
\begin{equation}\label{measure_re:eq_renormalization_p3}
\sum_{\pi \in S_4} \widehat{V}(n_{\pi(1)}+n_{\pi(2)}) = 4 \sum_{\pi \in S_3} \widehat{V}(n_{\pi(1)}+n_{\pi(2)}). 
\end{equation}
Using \eqref{measure_re:eq_renormalization_p3}, \eqref{measure_re:eq_potential_derivative} follows after inserting \eqref{measure_re:eq_renormalization_3} and \eqref{measure_re:eq_renormalization_4} into the two sides of the identity. 
\end{proof}

Like the monomials and Hermite polynomials (further discussed below), the generalized and renormalized powers in  Definition \ref{measure_re:definition_renormalization} satisfy a binomial formula. 

\begin{lemma}[Binomial formula]\label{measure_re:lemma_binomial}
For any \( f \in H^1(\T^3) \), we have the binomial formulas 
\begin{equation}\label{measure_re:eq_cubic_binomial}
\begin{aligned}
&\lcol( V* (\Wt +f)^2) (\Wt + f) \rcol \\
&=  \lcol (V*(\Wt)^2) \Wt  \rcol + (V* \lcol (\Wt)^2\rcol) f + 2 \big[ (V* (\Wt  f)) \Wt  - \MT f \big] \\
&+ 2 (V*(\Wt  f))f + (V*f^2) \Wt  + (V*f^2) f
\end{aligned}
\end{equation}
and 
\begin{equation}\label{measure_re:eq_quartic_binomial}
\begin{aligned}
&\int_{\T^3} \lcol (V* (\Wt  +f )^2)(\Wt +f )^2 \rcol \dx \\
&= \int_{\T^3} \lcol (V*(\Wt)^2) (\Wt)^2 \rcol \dx + 4 \int_{\T^3} \lcol (V*(\Wt)^2) \Wt \rcol f \dx + 2 \int_{\T^3} ( V* \lcol (\Wt)^2\rcol) f^2 \dx  \\
&+ 4 \int_{\T^3} \Big[ (V* (\Wt  f)) \Wt  f - (\MT f ) f \Big] \dx + 4 \int_{\T^3} (V* f^2) f \, \Wt  \dx + \int_{\T^3} (V* f^2) f^2 \dx. 
\end{aligned}
\end{equation}
\end{lemma}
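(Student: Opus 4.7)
My plan is to derive both identities by direct algebraic expansion, using only the definition of the renormalized powers from Definition \ref{measure_re:definition_renormalization} together with the symmetry $\widehat{V}(n)=\widehat{V}(-n)$.

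For the cubic identity \eqref{measure_re:eq_cubic_binomial}, I would first expand
\begin{equation*}
(\Wt+f)^2 = (\Wt)^2 + 2\Wt f + f^2, \qquad V*(\Wt+f)^2 = V*(\Wt)^2 + 2 V*(\Wt f) + V*f^2,
\end{equation*}
and multiply out the product $(V*(\Wt+f)^2)(\Wt+f)$ to obtain six monomial terms. Then I apply the definition
\begin{equation*}
\lcol (V*(\Wt+f)^2)(\Wt+f) \rcol = (V*(\Wt+f)^2)(\Wt+f) - \aT \widehat{V}(0)(\Wt+f) - 2\MT(\Wt+f),
\end{equation*}
split the renormalization counter-terms as $\aT\widehat{V}(0)(\Wt+f) = \aT\widehat{V}(0)\Wt + \aT\widehat{V}(0)f$ and $\MT(\Wt+f) = \MT\Wt + \MT f$, and group them with the matching monomials. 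Three groupings do the job: $(V*(\Wt)^2)\Wt - \aT\widehat{V}(0)\Wt - 2\MT\Wt$ assembles $\lcol (V*(\Wt)^2)\Wt\rcol$; $(V*(\Wt)^2)f - \aT\widehat{V}(0)f = (V*\lcol(\Wt)^2\rcol)f$ using $V*\aT = \aT \widehat{V}(0)$; and $2(V*(\Wt f))\Wt - 2\MT f$ forms the bracketed third term. The remaining three terms $2(V*(\Wt f))f$, $(V*f^2)\Wt$, and $(V*f^2)f$ are already in their final form.

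The quartic identity \eqref{measure_re:eq_quartic_binomial} follows the same template but under the integral. I expand $(V*(\Wt+f)^2)(\Wt+f)^2$ into nine terms, then integrate and use the symmetry
\begin{equation*}
\int_{\T^3} (V*g)h\,\dx = \int_{\T^3} g\,(V*h)\,\dx,
\end{equation*}
valid because $V$ is even, to combine pairs: the $2(V*(\Wt)^2)(\Wt f)$ and $2(V*(\Wt f))(\Wt)^2$ terms merge into $4\int (V*(\Wt)^2)\Wt f$; the $(V*(\Wt)^2)f^2$ and $(V*f^2)(\Wt)^2$ terms merge into $2\int (V*(\Wt)^2)f^2$; and $2(V*(\Wt f))f^2$ with $2(V*f^2)(\Wt f)$ merge into $4\int (V*f^2)f\Wt$. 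Then I insert the renormalization
\begin{equation*}
\int \lcol (V*g^2)g^2\rcol \dx = \int(V*g^2)g^2\dx - 2\aT\widehat{V}(0)\int g^2\dx - 4\int (\MT g)g\dx + (\aT)^2\widehat{V}(0) + 2\bT
\end{equation*}
(using $\int V*g^2\,\dx = \widehat{V}(0)\int g^2\,\dx$), expand $\int g^2$ and $\int (\MT g)g$ in terms of $\Wt$ and $f$ using self-adjointness of $\MT$, and match each block against the right-hand side: the $\Wt^2$-only counter-terms reproduce $\int\lcol(V*(\Wt)^2)(\Wt)^2\rcol\dx$; the counter-terms linear in $f$ combine with $4\int(V*(\Wt)^2)\Wt f$ to give $4\int \lcol(V*(\Wt)^2)\Wt\rcol f$; the counter-term $-2\aT\widehat{V}(0)\int f^2$ combines with $2\int (V*(\Wt)^2)f^2$ into $2\int (V*\lcol(\Wt)^2\rcol)f^2$; and $-4\int (\MT f)f$ pairs with $4\int(V*\Wt f)\Wt f$ to form the bracketed term.

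There is no real analytic obstacle here: every step is purely algebraic bookkeeping, and the only two non-trivial ingredients are the evenness of $V$ (for combining cross terms under the integral in the quartic case) and the identity $V*(\mathrm{const}) = \widehat{V}(0)\cdot(\mathrm{const})$. The main ``hard part'' is simply keeping the combinatorial weights ($2$'s and $4$'s from symmetrization) consistent, which is why writing out all nine expanded terms for the quartic case explicitly before pairing them is the most error-resistant route.
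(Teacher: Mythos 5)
Your proposal is correct and is exactly the paper's argument: the paper's proof consists of the one-line observation that the identities follow from Definition \ref{measure_re:definition_renormalization} and the classical binomial formula, using the self-adjointness of convolution with \( V \) and of \( \MT \) for the quartic case under the integral. Your expansion, splitting of the counter-terms, and pairing of cross terms via the evenness of \( V \) simply spells out that bookkeeping in full detail, with the correct combinatorial factors.
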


\begin{remark}  Overall, the terms in \eqref{measure_re:eq_quartic_binomial} obey better analytical estimates than their counterparts for the \( \Phi^4_3\)-model in \cite{BG20}. However, their algebraic structure is more complicated. 
The most challenging term is 
\begin{equation*}
 \int_{\T^3} \Big[ (V* (\Wt  f)) \Wt  f - (\MT f ) f \Big] \dx,
\end{equation*}
which requires a delicate random matrix estimate (Section \ref{section:rmt}). 
\end{remark}

\begin{proof}[Proof of Lemma \ref{measure_re:lemma_binomial}:]
This follows from Definition \ref{measure_re:definition_renormalization} and the classical binomial formula. For the quartic binomial formula \eqref{measure_re:eq_quartic_binomial}, we also used the self-adjointness of the convolution with \( V \) and the multiplier \( \MT \). 
\end{proof}

While this is not reflected in our notation, it is clear from  Definition \ref{measure_re:definition_renormalization} that the multiplier \( \MT \) depends linearly on the interaction potential \( V \). In the proof of the random matrix estimate (Proposition \ref{measure_rmt:proposition_estimate}), we will need to further decompose \( \MT \), both with respect to the interaction potential \( V \) and dyadic frequency blocks. We introduce the notation corresponding to this decomposition in the next definition.

\begin{definition}\label{measure_re:definition_MT_decomposition}
We let \( \MT[V;N_1,N_2] \) be the Fourier multiplier corresponding to the symbol
\begin{equation}\label{measure_re:eq_MT_decomposition_symbol}
n \mapsto  \sum_{k\in \Z^3} \frac{\widehat{V}(n+k)}{\langle k \rangle^2} \chi_{N_1}(k) \chi_{N_2}(k)  \rhoT{t}(k)^2.
\end{equation}
\end{definition}

In the next definition, we define our last renormalization of a stochastic object. 
\begin{definition}\label{measure_re:definition_renormalization_translation}
We define the correlation function on \( \T^3 \) by 
\begin{equation}
\Cort(y) \defe \sum_{k\in \Z^3} \frac{\chi_{N_1}(k) \chi_{N_2}(k)}{\langle k\rangle^2} \rhoT{t}(k)^2  e^{i\langle k,y \rangle}. 
\end{equation}
We further define
\begin{equation}
\lcol (\tau_y P_{N_1} \Wt) P_{N_2} \Wt \rcol(x) \defe  (\tau_y P_{N_1} \Wt)(x) P_{N_2} \Wt(x) - \Cort(y). 
\end{equation}
Here, \( \tau_y \) denotes the translation operator \( \tau_y f(x) = f(x-y) \). 
\end{definition}

The next lemma relates the multiplier and correlation function from Definition \ref{measure_re:definition_MT_decomposition} and Definition \ref{measure_re:definition_renormalization_translation}, respectively. 

\begin{lemma}[Physical space representation of \( \MT \)]\label{measure_re:lemma_physical_MT}
For any \( f \in C^\infty_x(\T^3) \), we have that 
\begin{equation}
\MT[V;N_1,N_2] f = \big( \Cort V\big) * f. 
\end{equation}
\end{lemma}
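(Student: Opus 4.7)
The plan is to verify the identity by comparing Fourier coefficients on both sides. Because both $\mathcal{M}^T_t[V;N_1,N_2]$ is defined as a Fourier multiplier and convolution becomes multiplication on the Fourier side, everything reduces to matching a single Fourier series.

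First, I would compute the Fourier coefficients of the function $\mathfrak{C}^T_t[N_1,N_2] \cdot V$. Writing
\begin{equation*}
\mathfrak{C}^T_t[N_1,N_2](y) = \sum_{k\in \Z^3} c_k \, e^{i\langle k, y\rangle}, \qquad c_k \defe \frac{\chi_{N_1}(k)\chi_{N_2}(k) \rho^T_t(k)^2}{\langle k\rangle^2},
\end{equation*}
multiplying by $V$ and taking the Fourier coefficient at $n\in\Z^3$ yields, by the convolution identity for products on $\T^3$,
\begin{equation*}
\widehat{\mathfrak{C}^T_t[N_1,N_2] \cdot V}(n) = \sum_{k\in \Z^3} c_k \,\widehat{V}(n-k).
\end{equation*}

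Next, I would perform the change of variable $m \defe -k$ in this sum. Since the cutoff $\rho(\|\cdot\|_2/\langle\,\cdot\,\rangle)$ is radial, each of $\chi_{N_1}$, $\chi_{N_2}$, $\rho^T_t$, and $\langle\,\cdot\,\rangle$ is even, so $c_{-m}=c_m$. This transforms the Fourier coefficient into
\begin{equation*}
\widehat{\mathfrak{C}^T_t[N_1,N_2] \cdot V}(n) = \sum_{m\in \Z^3} \frac{\chi_{N_1}(m)\chi_{N_2}(m)\rho^T_t(m)^2}{\langle m\rangle^2}\, \widehat{V}(n+m),
\end{equation*}
which is precisely the symbol of $\mathcal{M}^T_t[V;N_1,N_2]$ given in \eqref{measure_re:eq_MT_decomposition_symbol}.

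Finally, applying the standard identity $\widehat{g*f}(n)=\widehat{g}(n)\widehat{f}(n)$ (valid under our normalization of $\dx$), I conclude that for every $f\in C^\infty_x(\T^3)$,
\begin{equation*}
\widehat{(\mathfrak{C}^T_t[N_1,N_2]\cdot V) * f}(n) = \widehat{\mathfrak{C}^T_t[N_1,N_2]\cdot V}(n)\,\widehat{f}(n) = \widehat{\mathcal{M}^T_t[V;N_1,N_2] f}(n),
\end{equation*}
which yields the claimed equality. There is no substantive obstacle here; the only point requiring care is the sign flip $k\mapsto -m$ in the summation index, which is resolved immediately by the evenness of all factors involved.
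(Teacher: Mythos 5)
Your proposal is correct and follows essentially the same route as the paper: both arguments amount to the observation that the symbol \eqref{measure_re:eq_MT_decomposition_symbol} is the convolution of $\widehat{V}$ with the even sequence $k \mapsto \chi_{N_1}(k)\chi_{N_2}(k)\rho^T_t(k)^2 \langle k \rangle^{-2}$, so the multiplier is convolution with the product $\Cort V$. Your explicit change of variables $k \mapsto -m$ just spells out the evenness step that the paper invokes in passing.
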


\begin{proof}
By definition of the multiplier \( \MT[V;N_1,N_2] \) and since 
\begin{equation}\label{measure_re:eq_proof_physical}
k \mapsto  \frac{1}{\langle k \rangle^2} \chi_{N_1}(k) \chi_{N_2}(k)  \rhoT{t}(k)^2
\end{equation}
is even, the symbol in \eqref{measure_re:eq_MT_decomposition_symbol} is the convolution of \( \widehat{V} \) with \eqref{measure_re:eq_proof_physical}. As a result, the sequence  ${n\mapsto \MT[V;N_1,N_2](n)}$ has the inverse Fourier transform is given by 
\begin{align*}
 \Big ( \sum_{k\in \Z^3} \frac{\chi_{N_1}(k) \chi_{N_2}(k) }{\langle k \rangle^2}  \rhoT{t}(k)^2 e^{i\langle k , x \rangle} \Big) V(x) = \Cort(x) V(x). 
\end{align*}
\end{proof}
In Lemma \ref{measure_re:lemma_products}, Proposition \ref{measure_re:proposition_renormalization}, Lemma \ref{measure_re:lemma_binomial}, and Lemma \ref{measure_re:lemma_physical_MT}, we have dealt with the algebraic structure of stochastic objects. We now move from algebraic aspects towards analytic estimates. In the following lemmas, we show that several stochastic objects are well-defined and study their regularities.

\begin{lemma}[Stochastic objects I]\label{measure_re:lemma_stochastic_objects_I}
For every \( p \geq 1 \), \( \epsilon >0 \), and every \( 0 < \gamma < \min(\beta,1)\), we have that 
\begin{align}
\sup_{t\geq 0} \Big( \E \Big[ \| \lcol (\Wt)^2 \rcol \|_{\cC^{-1-\epsilon}_x(\T^3)}^p\Big] \Big)^{\frac{1}{p}} & \lesssim p, \label{measure_re:eq_so_1}\\
\sup_{t\geq 0} \Big( \E \Big[ \| V*  \lcol (\Wt)^2 \rcol \|_{\cC^{-1+\beta-\epsilon}_x(\T^3)}^p\Big] \Big)^{\frac{1}{p}} & \lesssim p, \label{measure_re:eq_so_2}\\
\sup_{t\geq 0} \Big( \E \Big[ \| \lcol (V* (\Wt)^2) \Wt  \rcol \|_{\cC^{-\frac{3}{2}+\gamma}_x(\T^3)}^p\Big] \Big)^{\frac{1}{p}} & \lesssim p^{\frac{3}{2}}. \label{measure_re:eq_so_3}
\end{align}
Furthermore, as \( t\rightarrow \infty\) and/or \( T \rightarrow \infty\), the stochastic objects \( \lcol (\Wt)^2 \rcol  \), \( V* \lcol (\Wt)^2 \rcol  \), and \( {\lcol (V* (\Wt)^2) \Wt  \rcol} \) converge in their respective spaces indicated by \eqref{measure_re:eq_so_1}-\eqref{measure_re:eq_so_3}. 
\end{lemma}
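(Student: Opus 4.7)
The plan is to use the iterated stochastic integral representations from Proposition~\ref{measure_re:proposition_renormalization}, which show that $\lcol (\Wt)^2 \rcol$ and $V \ast \lcol (\Wt)^2 \rcol$ live in the second inhomogeneous Wiener chaos, while $\lcol (V\ast (\Wt)^2) \Wt \rcol$ lives in the third. Combining this with Gaussian hypercontractivity will then convert second moment estimates into $p^{k/2}$-dependence on $p$, matching the exponents $p$, $p$, and $p^{3/2}$ stated in \eqref{measure_re:eq_so_1}--\eqref{measure_re:eq_so_3}.

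Concretely, I would proceed as follows. First, for each dyadic $N$ and fixed $x\in \T^3$, compute the second moment $\E\bigl[|P_N \lcol (\Wt)^2 \rcol(x)|^2\bigr]$ using Itô's isometry for double integrals. After passing to Fourier side, this becomes a convolution-type sum
\begin{equation*}
\E\bigl[|P_N \lcol (\Wt)^2 \rcol(x)|^2\bigr] \lesssim \sum_{|n_{12}|\sim N} \frac{\rho^T_t(n_1)^2 \rho^T_t(n_2)^2}{\langle n_1\rangle^2 \langle n_2\rangle^2} \lesssim \sum_{|m|\sim N} \langle m\rangle^{-1} \lesssim N^{2},
\end{equation*}
uniformly in $t$ and $T$; the factor $\widehat{V}(n_{12})\sim \langle n_{12}\rangle^{-\beta}$ introduces an additional $N^{-2\beta}$ for $V\ast \lcol (\Wt)^2 \rcol$; and for the cubic object a similar triple-integral computation, using $|\widehat{V}(n_{\pi(1)}+n_{\pi(2)})|\lesssim \langle \cdot\rangle^{-\beta}$ and a standard Littlewood-Paley/convolution count, yields $\E\bigl[|P_N \lcol (V\ast (\Wt)^2)\Wt\rcol(x)|^2\bigr]\lesssim N^{3-2\gamma}$ for any $0<\gamma<\min(\beta,1)$. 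Gaussian hypercontractivity on the $k$-th chaos then upgrades these to
\begin{equation*}
\bigl(\E|P_N(\cdot)(x)|^p\bigr)^{1/p}\lesssim p^{k/2}\cdot N^{s_k},
\end{equation*}
with $(k,s_k)=(2,1),(2,1-\beta),(3,3/2-\gamma)$ respectively.

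To pass from these pointwise-in-$x$ moment bounds to the $\cC^s_x$ norm, I would use the standard Sobolev embedding $\|P_N f\|_{L^\infty_x}\lesssim N^{3/p}\|P_N f\|_{L^p_x}$ for large even $p$, take $L^p_\omega L^p_x$ norms and exchange them by Fubini, which gives
\begin{equation*}
\bigl(\E \|P_N(\cdot)\|_{L^\infty_x}^p\bigr)^{1/p}\lesssim p^{k/2} N^{s_k+3/p}.
\end{equation*}
Multiplying by $N^{-s_k-\epsilon}$ (or $N^{-s_k-\epsilon'}$) and choosing $p$ large enough to make the exponent of $N$ negative, I can sum (or take supremum) over dyadic $N$ and extract the $\cC^{-s_k-\epsilon}_x$ estimate with constant $\lesssim p^{k/2}$, where a final application of Hölder in $\omega$ restores the full range of $p\ge 1$. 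This gives \eqref{measure_re:eq_so_1}--\eqref{measure_re:eq_so_3}.

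For convergence as $t\to\infty$ and/or $T\to\infty$, I would apply exactly the same scheme to the \emph{differences}, e.g.\ $\lcol (\Wt)^2 \rcol - \lcol (\Ws[][])^2 \rcol$ with $T\le T'$ and $t\le t'$; these differences still lie in a fixed Wiener chaos, and the stochastic-integral representation makes their second moment a sum with the Gaussian kernel multiplied by $\bigl(\rho^{T'}_{t'}(n_i)\rho^{T'}_{t'}(n_j)-\rho^T_t(n_i)\rho^T_t(n_j)\bigr)$. Since $\rho^T_t(n)\nearrow 1$ for each fixed $n$, dominated convergence in the frequency sum drives the second moment to zero, and hypercontractivity plus the Sobolev embedding argument above promotes this to convergence in $L^p_\omega \cC^s_x$, hence to almost sure convergence along a subsequence and the claimed convergence statement. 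The main technical obstacle is bookkeeping the frequency sums and verifying that they converge at the rate needed to absorb the $N^{3/p}$ loss from Sobolev embedding; once the correct convolution/counting estimate is in place, hypercontractivity and Kolmogorov-type criteria handle the rest in a uniform way.
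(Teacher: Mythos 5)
Your proposal is correct and follows essentially the same route as the paper: chaos representation via the iterated stochastic integrals of Proposition \ref{measure_re:proposition_renormalization}, Gaussian hypercontractivity to reduce to second-moment computations, the same frequency-convolution counts (giving $N^{2}$, $N^{2-2\beta}$, $N^{3-2\gamma}$ on dyadic blocks), and a Bernstein-type loss $N^{3/p}$ absorbed into the $\epsilon$-room; the paper merely packages the spatial reduction via translation invariance ($L_x^q\to L_x^2$) rather than pointwise-in-$x$ moments, which is the same computation. Your treatment of the convergence statement via differences and dominated convergence in the frequency sums is likewise the standard argument the paper leaves implicit.
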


\begin{remark}
The statement and proof of Lemma \ref{measure_re:lemma_stochastic_objects_I} are standard and the respective regularities can be deduced by simple ``power-counting''. Nevertheless, we present the proof to familiarize the reader with our set-up and as a warm-up for Lemma \ref{measure_re:lemma_stochastic_objects_III} below. 
\end{remark}

\begin{proof}
The first step in the proofs of \eqref{measure_re:eq_so_1}-\eqref{measure_re:eq_so_3} is a reduction to an estimate in \( L^2(\Omega\times \T^3) \) using Gaussian hypercontractivity. We provide the full details of this step for \eqref{measure_re:eq_so_1}, but will omit similar details in the remaining estimates \eqref{measure_re:eq_so_2}-\eqref{measure_re:eq_so_3}. \\
Let \( N \geq 1 \) and let \( q=q(\epsilon)\geq 1 \) be sufficiently large. By using Hölder's inequality in \( \omega \in \Omega \), it suffices to prove the estimates for \( p \geq q \). Using Bernstein's inequality and Minkowski's integral inequality, we obtain
\begin{align*}
\| P_{N} \lcol (\Wt)^2 \rcol \|_{L_\omega^p \cC_x^{-1-\epsilon}(\Omega \times \T^3)} 
\lesssim N^{-1-\frac{\epsilon}{2}} \| P_{N} \lcol (\Wt)^2 \rcol \|_{L_\omega^p L_x^q(\Omega \times \T^3)} 
\leq N^{-1-\frac{\epsilon}{2}} \| P_{N} \lcol (\Wt)^2 \rcol \|_{L_x^q L_\omega^p(\T^3 \times \Omega )}. 
\end{align*}
By Gaussian hypercontractivity (Lemma \ref{prelim:lemma_hypercontractivity}), we obtain that 
\begin{equation*}
N^{-1-\frac{\epsilon}{2}} \| P_{N} \lcol (\Wt)^2 \rcol \|_{L_x^q L_\omega^p(\T^3 \times \Omega )} \lesssim N^{-1-\frac{\epsilon}{2}} p \| P_{N} \lcol (\Wt)^2 \rcol \|_{L_x^q L_\omega^2(\T^3 \times \Omega )}. 
\end{equation*}
Since the distribution of \( \lcol (\Wt)^2 \rcol \) is translation invariant, the function \( x \mapsto \|\lcol (\Wt)^2\rcol \|_{L_\omega^2(\Omega)} \) is constant. We can then replace \( L_x^q(\T^3) \) by \( L_x^2(\T^3) \) and obtain
\begin{align*}
N^{-1-\frac{\epsilon}{2}} p  \| P_{N} \lcol (\Wt)^2 \rcol \|_{L_x^q L_\omega^2(\T^3 \times \Omega )} &\lesssim N^{-1-\frac{\epsilon}{2}} p  \| P_{N} \lcol (\Wt)^2 \rcol \|_{L_x^2 L_\omega^2(\T^3 \times \Omega )}\\
&\lesssim N^{-\frac{\epsilon}{4}} p \| \lcol (\Wt)^2 \rcol \|_{L_\omega^2 H_x^{-1-\frac{\epsilon}{4}}(\Omega \times \T^3)}. 
\end{align*}
In order to prove  \eqref{measure_re:eq_so_1}, it therefore remains to show uniformly in \( T,t \geq 0 \) that 
\begin{equation}
\| \lcol (\Wt)^2 \rcol \|_{L_\omega^2 H_x^{-1-\epsilon}(\Omega\times \T^3)}^2 \lesssim 1. 
\end{equation}
Using Proposition \ref{measure_re:proposition_renormalization}, the orthogonality of the iterated stochastic integrals, and Itô's isometry, we have that 
\begin{align*}
\| \lcol (\Wt)^2 \rcol \|_{L_\omega^2 H_x^{-1-\epsilon}}^2 
&= 4 \sum_{n\in \Z^3} \frac{1}{\langle n \rangle^{2+2\epsilon}} ~ \E \Big[ \Big(  \sum_{\substack{n_1,n_2\in \Z^3\colon \\ n_1+n_2 = n}} \int_0^t \int_0^{t_1}  \, \d \W{t_2}{n_{2}}  \d \W{t_1}{n_{1}} \Big)^2 \Big]  \\
&\lesssim \sum_{ \substack{n,n_1,n_2\in \Z^3 \\ n_1+n_2 = n}} \frac{1}{\langle n \rangle^{2+2\epsilon} \langle n_1 \rangle^2 \langle n_2 \rangle^2} \rhoT{t}(n_1)^2 \rhoT{t}(n_2)^2 \\
&\lesssim \sum_{n_1,n_2 \in \Z^3} \frac{1}{\langle n_1 +n_2 \rangle^{2+2\epsilon} \langle n_1 \rangle^2 \langle n_2 \rangle^2} \lesssim 1. 
\end{align*}
This completes the proof of \eqref{measure_re:eq_so_1}. The estimate \eqref{measure_re:eq_so_2} can be deduced from the smoothing properties of \( V \) or by repeating the exact same argument. It remains to prove \eqref{measure_re:eq_so_3}, which can be reduced using hypercontractivity (and the room in \( \gamma\)) to the estimate 
\begin{equation*}
 \| \lcol ( V* (\Wt)^2) \Wt  \rcol \|_{L_\omega^2 H_x^{-\frac{3}{2}+\gamma}}^2 \lesssim 1. 
\end{equation*}
Using Proposition \ref{measure_re:proposition_renormalization}, the orthogonality of the iterated stochastic integrals, and Itô's isometry, we have that 
\begin{align*}
 &\| \lcol ( V* (\Wt)^2) \Wt  \rcol \|_{L_\omega^2 H_x^{-\frac{3}{2}+\gamma}}^2\\
&= \sum_{n\in \Z^3} \frac{1}{\langle n \rangle^{3-2\gamma}} ~ \E \Big[ \Big(  \sum_{\pi\in S_3} \sum_{\substack{n_1,n_2,n_3 \in \Z^3\colon\\ n_1+n_2+n_3 = n}} \widehat{V}(n_{\pi(1)}+n_{\pi(2)}) \int_0^t \int_0^{t_1} \int_0^{t_2}  \, \d \W{t_3}{n_3} \d \W{t_2}{n_{2}}  \d \W{t_1}{n_{1}}  \Big)^2 \Big] \\
&\lesssim \sum_{ n_1,n_2,n_3 \in \Z^3} \frac{1}{\langle n_1 + n_2 + n_3 \rangle^{3-2\gamma}} \frac{1}{\langle n_1+n_2 \rangle^{2\beta}}\frac{1}{\langle n_1\rangle^2 \langle n_2 \rangle^2 \langle n_3 \rangle^2}.
\end{align*}
By first summing in \( n_3 \), using that \( 3 - 2 \gamma > 1 \), and then in \( n_1 \) and \(n_2 \), using \( \gamma < \beta \), we obtain 
\begin{align*}
&\sum_{ n_1,n_2,n_3 \in \Z^3} \frac{1}{\langle n_1 + n_2 + n_3 \rangle^{3-2\gamma}} \frac{1}{\langle n_1+n_2 \rangle^{2\beta}}\frac{1}{\langle n_1\rangle^2 \langle n_2 \rangle^2 \langle n_3 \rangle^2} \\
&\lesssim \sum_{n_1,n_2 \in \Z^3} \frac{1}{\langle n_1 + n_2\rangle^{2+ 2 (\beta-\gamma)}} \frac{1}{\langle n_1 \rangle^2 \langle n_2 \rangle^2} \lesssim 1. 
\end{align*}
\end{proof}

We also record the following refinement of \eqref{measure_re:eq_so_3} in Lemma \ref{measure_re:lemma_stochastic_objects_I}, which will be needed in the proof of Lemma \ref{measure_re:lemma_stochastic_objects_III} below.  

\begin{corollary}\label{measure_re:corollary_stochastic_objects_I}
 For every \( 0< \gamma < \min(1,\beta) \) and any \( n \in \Z^3 \), we can control the Fourier coefficients of \( \lcol (V* (\Wt)^2) \Wt  \rcol \) by
\begin{equation} 
\sup_{T,t\geq 0}  \E_{\bP} \Big| \mathcal{F} \Big( \lcol (V* (\Wt)^2) \Wt  \rcol \Big)(n) \Big|^2 \lesssim \langle n \rangle^{-2\gamma}. 
\end{equation}
\end{corollary}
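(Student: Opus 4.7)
The plan is to imitate the proof of \eqref{measure_re:eq_so_3} in Lemma \ref{measure_re:lemma_stochastic_objects_I}, but instead of summing over $n \in \Z^3$ weighted by $\langle n\rangle^{-3+2\gamma}$, I keep the frequency $n$ fixed and extract the decay directly. Concretely, by Proposition \ref{measure_re:proposition_renormalization}, the Fourier coefficient at $n$ is the finite linear combination
\begin{equation*}
\mathcal{F}\big( \lcol (V\ast (\Wt)^2) \Wt \rcol\big)(n) \, = \sum_{\pi \in S_3} \sum_{\substack{n_1,n_2,n_3 \in \Z^3 \\ n_1+n_2+n_3=n}} \widehat{V}(n_{\pi(1)}+n_{\pi(2)}) \int_0^t\!\int_0^{t_1}\!\int_0^{t_2} \d \W{t_3}{n_3} \d \W{t_2}{n_2} \d \W{t_1}{n_1}.
\end{equation*}
Iterated Itô integrals against distinct Brownian motions are orthogonal in $L^2_\omega$, and their $L^2_\omega$-norms can be computed via Itô's isometry. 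After using $|\widehat{V}(k)| \lesssim \langle k \rangle^{-\beta}$ (which follows from Assumptions~\ref{assumptions:V} and the fact that the singularity of $V$ at the origin is of order $|x|^{-(3-\beta)}$), together with the trivial bound $\rhoT{t}(n_j)^2 \leq 1$, the problem reduces to showing
\begin{equation*}
\sum_{\substack{n_1,n_2,n_3 \in \Z^3 \\ n_1+n_2+n_3=n}} \frac{1}{\langle n_1+n_2\rangle^{2\beta} \langle n_1\rangle^2 \langle n_2\rangle^2 \langle n_3\rangle^2} \lesssim \langle n\rangle^{-2\gamma}
\end{equation*}
for every $0<\gamma<\min(1,\beta)$.

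To handle this deterministic sum, I would first change variables by setting $m = n_1+n_2$, so that $n_3 = n - m$, and perform the inner sum over $n_1$ first. The standard three-dimensional convolution bound
\begin{equation*}
\sum_{n_1 \in \Z^3} \frac{1}{\langle n_1\rangle^2 \langle m-n_1\rangle^2} \lesssim \langle m \rangle^{-1}
\end{equation*}
reduces matters to estimating
\begin{equation*}
\sum_{m \in \Z^3} \frac{1}{\langle m\rangle^{2\beta+1} \langle n-m\rangle^2}.
\end{equation*}
I then split into the two regimes dictated by $\beta$. When $0<\beta<1$, both exponents are strictly less than $3$ and their sum exceeds $3$, so the convolution bound yields $\langle n\rangle^{3-(2\beta+1)-2} = \langle n\rangle^{-2\beta}$. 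When $\beta \geq 1$, the exponent $2\beta+1 \geq 3$, and localizing to $|m| \lesssim |n|$ gives $\langle n\rangle^{-2}$ (with a harmless logarithm at the endpoint $\beta=1$, absorbed by choosing $\gamma$ strictly less than $1$). Either way, the sum is controlled by $\langle n\rangle^{-2\min(1,\beta)}$, which dominates $\langle n\rangle^{-2\gamma}$ for any $\gamma<\min(1,\beta)$.

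The main obstacle—to the extent there is one—is precisely the endpoint case $\beta = 1$, where the convolution exponent hits $3$ and produces a logarithmic factor; the point of the strict inequality $\gamma<\min(1,\beta)$ is to soak up this log. No new probabilistic input beyond Itô's isometry is needed, since no supremum in $n$ appears and hence no hypercontractivity/Bernstein reduction is required. The statement is uniform in $T$ and $t$ because the $\rhoT{t}$ factors only supply additional frequency cutoffs bounded by~$1$.
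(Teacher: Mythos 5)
Your proposal is correct and takes essentially the same route as the paper: the paper also reduces, via the stochastic integral representation of Proposition \ref{measure_re:proposition_renormalization}, orthogonality of the iterated integrals, It\^o's isometry, and \( |\widehat{V}(k)|\lesssim \langle k \rangle^{-\beta} \), to the deterministic bound \( \sum_{n_{123}=n} \langle n_{12}\rangle^{-2\beta}\langle n_1\rangle^{-2}\langle n_2\rangle^{-2}\langle n_3\rangle^{-2} \lesssim \langle n \rangle^{-2\gamma} \), and then sums in \( n_1 \) first to gain \( \langle n_{12}\rangle^{-1} \) before estimating the remaining convolution \( \sum_{n_3}\langle n-n_3\rangle^{-1-2\beta}\langle n_3\rangle^{-2} \). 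Your explicit split into \( \beta<1 \) and \( \beta\geq 1 \), with the logarithm at \( \beta=1 \) absorbed by the strict inequality \( \gamma<\min(1,\beta) \), merely fills in details the paper leaves implicit.
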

\begin{proof}
Arguing as in the proof of Lemma \ref{measure_re:lemma_stochastic_objects_I}, it suffices to prove that 
\begin{equation}\label{measure_re:eq_so_I_p1}
\sum_{ \substack{n_1,n_2,n_3\in \Z^3\colon \\ n_{123}=n }} \frac{1}{\langle n_{12}\rangle^{2\beta} \langle n_1 \rangle^2 \langle n_2 \rangle^2 \langle n_3 \rangle^2} \lesssim \frac{1}{\langle n \rangle^{2\gamma}}. 
\end{equation}
Indeed, after parametrizing the sum by \( n_1 \) and \( n_3 \), \eqref{measure_re:eq_so_I_p1} follows from 
\begin{align*}
\sum_{ \substack{n_1,n_2,n_3\in \Z^3\colon \\ n_{123}=n }} \frac{1}{\langle n_{12}\rangle^{2\beta} \langle n_1 \rangle^2 \langle n_2 \rangle^2 \langle n_3 \rangle^2} 
&= \sum_{ n_1, n_3 \in \Z^3 } \frac{1}{\langle n-n_3 \rangle^{2\beta} \langle n_1 \rangle^2 \langle n-n_1-n_3 \rangle^2 \langle n_3 \rangle^2} \\
&\lesssim \sum_{ n_3 \in \Z^3} \frac{1}{\langle n-n_3 \rangle^{1+2\beta} \langle n_3 \rangle^2} \\
&\lesssim  \langle n \rangle^{-2\gamma}. 
\end{align*}
\end{proof}

\begin{lemma}[Stochastic objects II]\label{measure_re:lemma_stochastic_objects_II}
For any sufficiently small \( \delta>0\) and any \( N_1,N_2 \geq 1 \), it holds that 
\begin{equation}
\sup_{T,t\geq 0}  \Big ( \E \Big[ \sup_{y\in \T^3} \| \lcol (\tau_y P_{N_1} \Wt)  P_{N_2} \Wt \rcol  \|_{\cC^{-1-\delta}_x(\T^3)}^p \Big] \Big)^{\frac{1}{p}} \lesssim \max(N_1,N_2)^{-\frac{\delta}{10}} p. 
\end{equation}
\end{lemma}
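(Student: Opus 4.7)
The plan is to combine an explicit second‑order Wiener chaos representation with Gaussian hypercontractivity and Bernstein embeddings in both the $x$ and $y$ variables. First I would expand the product into Fourier modes and apply the product formula \eqref{measure_re:eq_products_2} from Lemma \ref{measure_re:lemma_products}, giving
\begin{equation*}
(\tau_y P_{N_1} \Wt)(x)\,(P_{N_2} \Wt)(x) = \sum_{n_1, n_2 \in \Z^3} \chi_{N_1}(n_1) \chi_{N_2}(n_2)\, e^{-i\langle n_1, y\rangle}\, e^{i \langle n_{12}, x\rangle}\, \Wt[n_1]\,\Wt[n_2].
\end{equation*}
The diagonal contribution produced by the $\delta_{n_{12}=0}$ term in \eqref{measure_re:eq_products_2} equals $\Cort(y)$ (by evenness of the Littlewood--Paley symbols $\chi_{N_j}$ and the definition of $\Cort$ in Definition \ref{measure_re:definition_renormalization_translation}), so the renormalization cancels it exactly. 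Writing $F(x,y) \defe \lcol (\tau_y P_{N_1} \Wt)\, P_{N_2} \Wt \rcol(x)$, this places $F(x,y)$ in the second Wiener chaos for every fixed $(x,y)$.

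Next I would use Itô's isometry together with Wick's theorem to compute the $L^2_\omega L^2_x$ norm of $P_N F(\cdot, y)$. Setting $N_{\min} \defe \min(N_1, N_2)$ and $N_{\max} \defe \max(N_1, N_2)$, the two non‑trivial pairings $(n_1 = m_1, n_2 = m_2)$ and $(n_1 = m_2, n_2 = m_1)$ give the $y$‑uniform Fourier bound $\E |\widehat{F}(n,y)|^2 \lesssim N_{\min}/N_{\max}^2$, and since the $x$‑Fourier support of $F$ lies in $\{|n| \lesssim N_{\max}\}$, this yields, uniformly in $(T,t)$,
\begin{equation*}
\|P_N F(\cdot, y)\|_{L^2_\omega L^2_x}^2 \lesssim \mathbf{1}_{\{N \lesssim N_{\max}\}} \cdot N^3 \cdot \frac{N_{\min}}{N_{\max}^2}.
\end{equation*}

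The sup over $x,y$ rests on the observation that the $y$-Fourier support of both $F(x,\cdot)$ and $\Cort$ is contained in $\{|n| \lesssim N_1\}$, since it is carried entirely by the $P_{N_1}$ factor. Bernstein's inequality in both variables gives, for any $r \geq 1$,
\begin{equation*}
\sup_{x,y} |P_N F(x,y)| \lesssim N^{3/r} N_1^{3/r}\, \|P_N F\|_{L^r_{x,y}(\T^3 \times \T^3)}.
\end{equation*}
Combining Minkowski's inequality (for $p \geq r$), stationarity of $P_N F$ in $x$, and Gaussian hypercontractivity (Lemma \ref{prelim:lemma_hypercontractivity}, linear in $p$ on the second chaos), I would obtain
\begin{equation*}
\big\| \sup_{x,y} |P_N F(x,y)| \big\|_{L^p_\omega} \lesssim p\, N^{3/r} N_1^{3/r} \cdot N^{3/2}\,\frac{N_{\min}^{1/2}}{N_{\max}}.
\end{equation*}
Taking $r = p$, multiplying by $N^{-1-\delta}$, and bounding $N$, $N_1$, and $N_{\min}^{1/2}$ by $N_{\max}$ yields $p\, N_{\max}^{-\delta + 6/p}$. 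Summing over dyadic $N \lesssim N_{\max}$ (the series is geometric and dominated by its largest term) and taking $p \gtrsim 1/\delta$ so that $6/p \leq 9\delta/10$ gives the desired bound $\lesssim p\, N_{\max}^{-\delta/10}$; for smaller $p$ one uses Hölder's inequality in $\omega$.

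The hard part will be the $\sup_{y \in \T^3}$: this forces the Bernstein‑in‑$y$ step above, and the resulting deterministic loss $N_1^{3/p}$ must be absorbed by the $L^2$ gain $N_{\min}^{1/2}/N_{\max}$. This balance is why $p$ has to be chosen sufficiently large depending on $\delta$, and why the exponent in the statement is the slightly reduced $\delta/10$ rather than the $\delta$ that appears at the level of $L^2$.
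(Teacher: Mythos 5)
Your argument is correct, but it handles the crucial difficulty --- the supremum over the translation parameter $y$ --- by a genuinely different mechanism than the paper. The paper first proves the fixed-$y$ moment bound with gain $\max(N_1,N_2)^{-\delta/2}$ (exactly as in Lemma \ref{measure_re:lemma_stochastic_objects_I}), then establishes the crude Lipschitz-in-$y$ bound $\big(\E\|\lcol(\tau_y P_{N_1}\Wt)P_{N_2}\Wt\rcol-\lcol(\tau_{y'}P_{N_1}\Wt)P_{N_2}\Wt\rcol\|_{\cC_x^{-1-\delta}}^p\big)^{1/p}\lesssim \max(N_1,N_2)^3\|y-y'\|\,p$, and invokes Kolmogorov's continuity theorem in $y$; the interpolation between the polynomial loss $\max(N_1,N_2)^3$ on the H\"older seminorm and the $\delta/2$-gain at fixed $y$ is what degrades the exponent to $\delta/10$. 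You instead exploit the structural fact that the $y$-dependence is band-limited (frequencies $\lesssim N_1$, carried by the $P_{N_1}$ factor and by $\Cort$), and control $\sup_{x,y}$ by a double Bernstein inequality on $\T^3_x\times\T^3_y$ with exponent $r=p$, combined with Minkowski, hypercontractivity (linear in $p$ on the second chaos), and stationarity in $x$; the deterministic loss is then $N^{3/p}N_1^{3/p}\leq \max(N_1,N_2)^{6/p}$, absorbed by taking $p\gtrsim\delta^{-1}$ and using H\"older in $\omega$ for small $p$. Your chaos computation ($\E|\widehat F(n,y)|^2\lesssim N_{\min}/N_{\max}^2$ uniformly in $y$, with the diagonal exactly cancelled by $\Cort(y)$ via evenness of the symbols) and the ensuing bookkeeping are accurate, and the exponent accounting ($6/p\leq 9\delta/10$, dyadic sum dominated by $N\sim N_{\max}$) closes. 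What each route buys: yours stays entirely within the Bernstein/hypercontractivity framework already set up for Lemma \ref{measure_re:lemma_stochastic_objects_I} and avoids Kolmogorov's criterion altogether, at the price of explicitly tracking the $y$-frequency support; the paper's route is insensitive to any band-limited structure in the parameter (only a crude Lipschitz modulus is needed), which makes it more robust but forces the continuity-theorem detour. Both naturally lose part of the $L^2$-level gain $\max(N_1,N_2)^{-\delta}$, which is why the stated exponent is $\delta/10$.
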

\begin{proof}
Arguing as in the proof of \eqref{measure_re:eq_so_1} in Lemma \ref{measure_re:lemma_stochastic_objects_I}, we have that 
\begin{equation}\label{measure_re:eq_so_II_p1}
 \sup_{y\in \T^3} \Big ( \E \Big[  \| \lcol (\tau_y P_{N_1} \Wt)  P_{N_2} \Wt \rcol  \|_{\cC^{-1-\delta}_x(\T^3)}^p \Big] \Big)^{\frac{1}{p}} \lesssim  \max(N_1,N_2)^{-\frac{\delta}{2}} p. 
\end{equation}
It only remains to move the supremum in \( y\in \T^3 \) into the expectation. From a crude estimate, we have for all \( y,y^\prime \in \T^3 \) that 
\begin{equation*}
\Big ( \E \Big[  \| \lcol (\tau_y P_{N_1} \Wt)  P_{N_2} \Wt \rcol - \lcol (\tau_{y^\prime} P_{N_1} \Wt)  P_{N_2} \Wt \rcol   \|_{\cC^{-1-\delta}_x(\T^3)}^p \Big] \Big)^{\frac{1}{p}} \lesssim \max(N_1,N_2)^3 \| y- y^\prime\| \, p.  
\end{equation*}
By Kolmogorov's continuity theorem (cf. \cite[Theorem 4.3.2]{Stroock11}), we obtain for any \( 0 < \alpha < 1 \) that  
\begin{equation*}
\bigg ( \E \bigg[  \sup_{y,y^\prime\in \T^3} \bigg(\frac{\| \lcol (\tau_y P_{N_1} \Wt)  P_{N_2} \Wt \rcol - \lcol (\tau_{y^\prime} P_{N_1} \Wt)  P_{N_2} \Wt \rcol   \|_{\cC^{-1-\delta}_x(\T^3)}}{\| y-y^\prime\|^\alpha} \bigg)^p \bigg] \bigg)^{\frac{1}{p}} \lesssim_{\alpha} \max(N_1,N_2)^3 p.
\end{equation*}
Combining this with \eqref{measure_re:eq_so_II_p1} leads to the desired estimate. 
\end{proof}

The next lemma is similar to Lemma \ref{measure_re:lemma_stochastic_objects_I}, but is concerned with more complicated stochastic objects. In order to shorten the argument, we will no longer use Itô's formula to express products of stochastic integrals. Instead, we will utilize the product formula for multiple stochastic integrals from \cite[Proposition 1.1.3]{Nualart06}. Before we state the lemma, we follow \cite{BG18,BG20} and define
\begin{equation}\label{measure_re:eq_wt[3]}
\bWt{[3]} \defe \int_0^t (\Js)^2 \lcol (V* (\Ws)^2) \Ws \rcol \ds. 
\end{equation}
We emphasize that \( \bWt{[3]}\) contains the interaction potential \( V \) even though this is not reflected in our notation.

\begin{lemma}[Stochastic objects III]\label{measure_re:lemma_stochastic_objects_III}
For every \( p \geq 1 \), \( \epsilon >0 \), and every \( 0 < \gamma < \min(\beta,\frac{1}{2})\), we have that 
\begin{align}
\sup_{T,t\geq 0} \Big ( \E \Big[ \| \bWt{[3]} \|_{\cC^{\frac{1}{2}+\gamma}_x(\T^3)}^p \Big] \Big)^{\frac{1}{p}} &\lesssim p^{\frac{3}{2}}, \label{measure_re:eq_so_III_1}\\
\sup_{T,t\geq 0} \Big( \E \Big[ \| (V*\lcol (\Wt)^2\rcol) \bWt{[3]} \|_{\cC_x^{-1+ \gamma}(\T^3)}^p \Big] \Big)^{\frac{1}{p}} &\lesssim p^{\frac{5}{2}}, \allowdisplaybreaks[1] \label{measure_re:eq_so_III_2}\\
\sup_{T,t\geq 0} \Big( \E \Big[ \big\| \big( V* (\Wt \bWt{[3]})\big) \Wt - \MT \bWt{[3]} \big\|_{\cC_x^{-1+\gamma}(\T^3)}^p  \Big] \Big)^{\frac{1}{p}} &\lesssim p^{\frac{5}{2}}.\label{measure_re:eq_so_III_3}
\end{align}
\end{lemma}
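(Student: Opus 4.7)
The overall strategy parallels the proof of Lemma \ref{measure_re:lemma_stochastic_objects_I}. For each of the three estimates, Bernstein's inequality, Minkowski's integral inequality, translation invariance of the relevant distributions, and Gaussian hypercontractivity (Lemma \ref{prelim:lemma_hypercontractivity}) reduce the $L^p_\omega\cC_x^s$-bound to an $L^2_\omega H_x^{s+\kappa}$-type bound together with an overall factor $p^{k/2}$, where $k$ is the maximal Wiener chaos order of the object under consideration. The three quantities live, respectively, in the third chaos and in the sum of chaoses up to order five, which produces the factors $p^{3/2}$ and $p^{5/2}$ claimed in \eqref{measure_re:eq_so_III_1}--\eqref{measure_re:eq_so_III_3}.

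For \eqref{measure_re:eq_so_III_1}, Proposition \ref{measure_re:proposition_renormalization} represents $\lcol(V*(\Ws)^2)\Ws\rcol$ as a symmetric triple iterated It\^o integral. A stochastic Fubini interchange together with the elementary bound
\begin{equation*}
\int_{s_1}^{t}\frac{(\sigmaT{s}(n))^2}{\langle n\rangle^2}\,ds \;\leq\; \frac{\rhoT{t}(n)^2}{\langle n\rangle^2} \;\lesssim\; \frac{1}{\langle n\rangle^{2}}
\end{equation*}
then realises $\bWt{[3]}$ itself as a triple It\^o integral whose deterministic kernel carries an additional factor of size $\langle n_{123}\rangle^{-2}$ coming from $(\Js)^2$. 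It\^o's isometry combined with the argument behind Corollary \ref{measure_re:corollary_stochastic_objects_I} reduces the claim to the summability of an expression of the form $\sum_{n\in\Z^3}\langle n\rangle^{-3-2(\gamma'-\gamma-\kappa)}$ for any $\gamma'\in(\gamma,\min(\beta,1))$, which holds for all $0<\gamma<\min(\beta,1/2)$ and sufficiently small $\kappa>0$.

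For \eqref{measure_re:eq_so_III_2} and \eqref{measure_re:eq_so_III_3}, It\^o's product formula becomes combinatorially intractable, so I use instead the product formula for multiple stochastic integrals from \cite[Proposition~1.1.3]{Nualart06}. Writing $V*\lcol(\Wt)^2\rcol$ as a double It\^o integral and $\bWt{[3]}$ as the triple integral from (i), the product $(V*\lcol(\Wt)^2\rcol)\cdot\bWt{[3]}$ splits into Wick-ordered iterated integrals of orders $5$, $3$, and $1$, corresponding to zero, one, or two contractions of the underlying Brownian factors. Each piece is then bounded in $L^2_\omega H_x^{-1+\gamma+\kappa}$ by power counting, using $|\widehat V(n)|\lesssim\langle n\rangle^{-\beta}$ together with the $\langle\nabla\rangle^{-2}$-smoothing produced by $(\Js)^2$ inside $\bWt{[3]}$; the chaos-$3$ and chaos-$1$ pieces benefit additionally from a convergent trace of the form $\sum_{k\in\Z^{3}}\widehat V(k)\rhoT{t}(k)^2/\langle k\rangle^2$ produced by each contraction.

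The main obstacle is \eqref{measure_re:eq_so_III_3}. The analogous decomposition of $\big(V*(\Wt\,\bWt{[3]})\big)\Wt$ generates a single-contraction term in which the two copies of $\Wt$ flanking the convolution with $V$ are paired. This contraction produces the deterministic Fourier multiplier
\begin{equation*}
n\longmapsto \sum_{k\in\Z^{3}} \widehat V(n+k)\,\frac{\rhoT{t}(k)^2}{\langle k\rangle^2}
\end{equation*}
applied to $\bWt{[3]}$, which is only logarithmically bounded uniformly in $T$ and coincides precisely with $\MT\bWt{[3]}$ from Definition \ref{measure_re:definition_renormalization}. The key algebraic verification is that the sum of \emph{all} such single contractions---generated by the symmetrizations in the product formula together with the evenness of $\widehat V$---reproduces exactly $\MT\bWt{[3]}$ and is therefore cancelled by the counterterm subtracted in \eqref{measure_re:eq_so_III_3}. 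This is a combinatorial identification analogous to, but one level higher than, the one carried out in the proof of Proposition \ref{measure_re:proposition_renormalization}. After the cancellation, the remaining chaos-$5$ and chaos-$3$ pieces are estimated by the same power counting as in \eqref{measure_re:eq_so_III_2}.
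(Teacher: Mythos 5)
Your route is essentially the paper's: hypercontractivity reduces each bound to an $L^2_\omega$-estimate with the chaos-order factors $p^{3/2}$ and $p^{5/2}$; $\bWt{[3]}$ and $V*\lcol(\Wt)^2\rcol$ are written as multiple stochastic integrals via stochastic Fubini, products are expanded with Nualart's product formula, and each chaos component is bounded by power counting using $|\widehat V(n)|\lesssim\langle n\rangle^{-\beta}$ together with the $\langle\nabla\rangle^{-2}$-gain inside $\bWt{[3]}$. For \eqref{measure_re:eq_so_III_3} your identification of the contraction of the two explicit factors of $\Wt$ with $\MT\bWt{[3]}$ is exactly the paper's cancellation, which is implemented there by Wick-ordering $\Wt[m_4]\Wt[m_5]$ through the definition of $\MT$ before applying the product formula; your treatment of \eqref{measure_re:eq_so_III_1} through the explicit triple-integral kernel is a harmless variant of the paper's Cauchy--Schwarz-in-$s$ argument based on Corollary \ref{measure_re:corollary_stochastic_objects_I}.

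There is, however, one concrete slip in your part (iii): after cancelling the external--external contraction, what remains is \emph{not} only the chaos-$5$ and chaos-$3$ pieces. Since $\bWt{[3]}$ is a pure third chaos (no self-contractions) and each explicit $\Wt$ is first chaos, the counterterm $-\MT\bWt{[3]}$ removes exactly the pairings of the two explicit $\Wt$-factors with each other, while the double contractions in which \emph{both} explicit $\Wt$-factors are paired with factors inside $\bWt{[3]}$ produce a surviving first-chaos term (the paper's $\ctG_1$). This doubly resonant piece is in fact the delicate one --- it is the analogue of the chaos-$1$ term $\cG_1$ in your treatment of \eqref{measure_re:eq_so_III_2}, finite only thanks to the decay of $\widehat V$ and the restriction $\gamma<1/2$ --- so your enumeration ``chaos-$5$ and chaos-$3$'' must be corrected to include it; the estimate itself is the same power counting you already carried out for \eqref{measure_re:eq_so_III_2}, so the fix is immediate. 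A minor factual point: the multiplier $n\mapsto\sum_{k}\widehat V(n+k)\rhoT{t}(k)^2\langle k\rangle^{-2}$ is not merely logarithmically large in $T$; for $0<\beta<1$ it grows like $T^{1-\beta}$. This does not affect the argument, since the term is cancelled exactly by the renormalization.
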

\begin{remark}
The analog of \( (V*\lcol (\Wt)^2\rcol ) \bWt{[3]} \) for the \( \Phi^4_3\)-model in \cite{BG18} requires a further logarithmic renormalization. In our case, however, the additional smoothing from the interaction potential \( V \) eliminates the responsible logarithmic divergence. 
\end{remark}
\begin{proof}

We first prove \eqref{measure_re:eq_so_III_1}, which is (by far) the easiest estimate. As in the proof of  Lemma \ref{measure_re:lemma_stochastic_objects_I}, we can use Gaussian hypercontractivity (Lemma \ref{prelim:lemma_hypercontractivity}) to reduce \eqref{measure_re:eq_so_III_2} to the estimate  
\begin{equation}
\E \Big[ \| \bWt{[3]} \|_{H^{\frac{1}{2}+\gamma}_x(\T^3)}^2 \Big] \lesssim 1. 
\end{equation}
The rest of the argument follows from Corollary \ref{measure_re:corollary_stochastic_objects_I} and a deterministic estimate. More precisely, it follows from \( \| \sigmaT{s} \|_{L^2_s}= 1 \) that 
\begin{align*}
\| \bWt{[3]} \|_{H^{\frac{1}{2}+\gamma}_x(\T^3)}^2 &=\Big\| \int_0^t \sigmaT{s}(\nabla)^2 \langle \nabla \rangle^{-\frac{3}{2}+\gamma} \lcol (V* (\Ws)^2) \Ws \rcol  \ds \Big\|_{L^2_x}^2  \\
&=\sum_{n\in \Z^3}  \Big| \int_0^t \sigmaT{s}(n)^2 \mathcal{F}\Big( \langle \nabla \rangle^{-\frac{3}{2}+\gamma} \lcol (V* (\Ws)^2) \Ws \rcol  \Big)(n) \ds \Big|^2 \\
&\leq \sum_{n\in \Z^3} \int_0^t \sigmaT{s}(n)^2 \Big| \mathcal{F}\Big( \langle \nabla \rangle^{-\frac{3}{2}+\gamma} \lcol (V* (\Ws)^2) \Ws \rcol  \Big)(n)  \Big|^2 \ds. 
\end{align*}
For a small \( \delta >0 \), we obtain from Corollary \ref{measure_re:corollary_stochastic_objects_I} (with \( \gamma \) replaced by \( \gamma+\delta\)) that 
\begin{align*}
\E \Big[ \| \bWt{[3]} \|_{H^{\frac{1}{2}+\gamma}_x(\T^3)}^2 \Big] &\leq \sum_{n\in \Z^3} \int_0^t \sigmaT{s}(n)^2 \E \bigg[ \Big| \mathcal{F}\Big( \langle \nabla \rangle^{-\frac{3}{2}+\gamma} \lcol (V* (\Ws)^2) \Ws \rcol  \Big)(n)  \Big|^2 \bigg] \ds \\
&\lesssim \sum_{n\in \Z^3} \int_0^t \sigmaT{s}(n)^2 \frac{1}{\langle n \rangle^{3+\delta}} \ds \lesssim 1. 
\end{align*}

We now turn to the proof of \eqref{measure_re:eq_so_III_2}. Using the same reductions based on Gaussian hypercontractivity as before, it suffices to prove that 
\begin{equation}
\E \Big[ \| (V*\lcol (\Wt)^2\rcol) \bWt{[3]} \|_{H_x^{-1+ \gamma}(\T^3)}^2 \Big] \lesssim 1. 
\end{equation}
We first rewrite \(  (V*\lcol (\Wt)^2\rcol)(x) \bWt{[3]}(x) \) as a product of multiple stochastic integrals instead of iterated stochastic integrals. This allows us to use the product formula from Lemma \ref{appendix:lemma_product_formula}, which leads to a (relatively) simple expression. To simplify the notation below, we define the symmetrization of \( \widehat{V}(n_1+n_2) \) by 
\begin{equation*}
\widehat{V}_S(n_1,n_2,n_3) = \frac{1}{6} \sum_{\pi \in S_3} \widehat{V}(n_{\pi(1)}+n_{\pi(2)}). 
\end{equation*}
From Proposition \ref{measure_re:proposition_renormalization}, \eqref{measure_re:eq_wt[3]}, and the stochastic Fubini theorem (see \cite[Theorem 4.33]{PZ92}), we have that 
\begin{align*}
&\bWt{[3]}(x) \\
&= \sum_{\substack{n_1,n_2,n_3 \in \Z^3 \\ \pi \in S_3}} \frac{\widehat{V}(n_{\pi(1)}+n_{\pi(2)})}{\langle n_{123} \rangle^2} e^{ i \langle n_{123}, x \rangle} 
\int_0^t \sigmaT{s}(n_{123})^2 \Big(  \int_0^s \int_0^{t_1} \int_0^{t_2} \d \W{t_3}{n_3}\d \W{t_2}{n_2} \d \W{t_1}{n_1} \Big) \ds \\
&= \sum_{n_1,n_2,n_3\in \Z^3} \frac{\widehat{V}_S(n_1,n_2,n_3)}{\langle n_{123} \rangle^2} e^{i \langle n_{123}, x \rangle} \int_0^t \int_0^{t_1} \int_0^{t_2}  \Big( \int_{\max(t_1,t_2,t_3)}^t \sigmaT{s}(n_{123})^2 \ds \Big)  \d \W{t_3}{n_3}\d \W{t_2}{n_2} \d \W{t_1}{n_1}
\end{align*}
We define the symmetric function \( f \) by 
\begin{equation*}
f(t_1,n_1,t_2,n_2,t_3,n_3;t,x) \defe \frac{\widehat{V}_s(n_1,n_2,n_3)}{6 \langle n_{123} \rangle^2}  \Big( \int_{\max(t_1,t_2,t_3)}^t  \sigmaT{s}(n_{123})^2 \ds \Big) e^{i \langle n_{123}, x \rangle}  1\{ 0 \leq t_1,t_2,t_3 \leq t\}. 
\end{equation*}
where we view both \( t \in \R_{>0} \) and \( x \in \T^3 \) as fixed parameters. Using the language from Section \ref{section:multiple_stochastic_integrals} and Lemma \ref{appendix:lemma_iterated_vs_multiple}, we obtain that 
\begin{equation}\label{measure_re:eq_integral_f}
\bWt{[3]}(x) = \calI_3[f(\cdot;t,x)],
\end{equation}
where \( \calI_3 \) is a multiple stochastic integral. After defining 
\begin{equation*}
g(t_4,n_4,t_5,n_5;t,x) \defe \widehat{V}(n_4+n_5) e^{i \langle n_{45}, x \rangle} 1\{ 0 \leq t_4,t_5 \leq t\},
\end{equation*}
a similar but easier calculation leads to 
\begin{equation}\label{measure_re:eq_integral_g}
(V*\lcol (\Wt)^2\rcol)(x) = \calI_2[g(\cdot;t,x)]. 
\end{equation}
By combining  \eqref{measure_re:eq_integral_f} and \eqref{measure_re:eq_integral_g}, we obtain that 
\begin{equation*}
(V*\lcol (\Wt)^2\rcol)(x) \bWt{[3]}(x) = \calI_3[f(\cdot;t,x)] \calI_2[g(\cdot;t,x)]. 
\end{equation*}
By using the product formula for multiple stochastic integrals (Lemma \ref{appendix:lemma_product_formula}), we obtain that 
\begin{equation*}
(V*\lcol (\Wt)^2\rcol)(x) \bWt{[3]}(x) = \calI_5[f(\cdot;t,x) g(\cdot;t,x)] + 6 \cdot \calI_3[f(\cdot;t,x)  \otimes_1 g(\cdot;t,x)] + 3 \cdot \calI_1[f(\cdot;t,x)  \otimes_2 g(\cdot;t,x)] . 
\end{equation*}

Inserting the definitions of \( f \) and \( g \), this leads to 
\begin{equation}
(V*\lcol (\Wt)^2\rcol)(x) \bWt{[3]}(x)  = \cG_5(t,x) + \cG_3(t,x) + \cG_1(t,x), 
\end{equation}
where the Gaussian chaoses \( \cG_5, \cG_3 \), and \( \cG_1 \) are given by
\begin{align*}
\cG_5(t,x) 
&= \sum_{n_1,\hdots ,n_5 \in \Z^3} \frac{\widehat{V}(n_{12}) \widehat{V}(n_{45})}{\langle n_{123} \rangle^2} e^{i \langle n_{12345}, x \rangle} \int_{[0,t]^5} \Big( \int_{\max(t_1,t_2,t_3)}^t \sigmaT{s}(n_{123})\ds \Big)  \d \W{t_5}{n_5} \hdots \d \W{t_1}{n_1}, \allowdisplaybreaks[4]\\
\cG_3(t,x) 
&= \sum_{n_1,\hdots, n_5 \in \Z^3} \bigg[ \delta_{n_{35}=0} \frac{\widehat{V}_s(n_1,n_2,n_3) \widehat{V}(n_{45})}{\langle n_{123}\rangle^2 \langle n_3 \rangle^2} e^{i \langle n_{124}, x \rangle}  \\
&\times \int_{[0,t]^3} \Big( \int_0^t \int_{\max(t_1,t_2,t_3)}^t \sigmaT{t_3}(n_3)^2  \sigmaT{s}(n_{123})^2 \ds \d t_3 \Big) \d \W{t_4}{n_4} \d \W{t_2}{n_2} \d \W{t_1}{n_1} \bigg], \allowdisplaybreaks[4] \\
\cG_1(t,x) 
&= \frac{1}{2} \sum_{n_1,\hdots,n_5 \in \Z^3} \bigg[ \delta_{n_{24}=n_{35}=0} \frac{\widehat{V}_s(n_1,n_2,n_3) \widehat{V}(n_{45})}{\langle n_{123}\rangle^2 \langle n_2 \rangle^2 \langle n_3 \rangle^2} e^{i \langle n_{1}, x \rangle}  \\
&\times \int_{[0,t]}\Big( \int_0^t \int_0^t \int_{\max(t_1,t_2,t_3)}^t \sigmaT{t_2}(n_2)^2\sigmaT{t_3}(n_3)^2  \sigmaT{s}(n_{123})^2 \ds \d t_3  \d t_2 \Big) \d \W{t_1}{n_1} \bigg]
\end{align*}
Using the \( L^2\)-orthogonality of the multiple stochastic integrals together with \( \| \sigmaT{s}\|_{L^2_s(\R_{>0})} \leq 1 \), we obtain that 

\begin{align}
&\E \Big[ \| (V*\lcol (\Wt)^2\rcol) \bWt{[3]} \|_{H_x^{-1+\gamma}}^2 \Big]  \notag\\
& \lesssim \E \Big[ \| \cG_5\|_{H_x^{-1+\gamma}}^2 \Big]  + \E \Big[ \| \cG_3\|_{H_x^{-1+\gamma}}^2 \Big]  + \E \Big[ \| \cG_1\|_{H_x^{-1+\gamma}}^2 \Big]   \notag \\
&\lesssim \sum_{n_1,n_2,n_3,n_4,n_5 \in \Z^3}  \langle n_{12345} \rangle^{-2+2\gamma} \langle n_{123} \rangle^{-4} |\widehat{V}(n_{12})|^2 |\widehat{V}(n_{45})|^2 \prod_{j=1}^5 \langle n_j \rangle^{-2}, \label{measure_re:eq_so_p1} \\
&+\sum_{n_1,n_2,n_4 \in \Z^3} \langle n_{124} \rangle^{-2+2\gamma} \Big( \sum_{n_3\in \Z^3} \langle n_{123} \rangle^{-2} \langle n_3 \rangle^{-2} |\widehat{V}_s(n_1,n_2,n_3)| |\widehat{V}(n_{34})| \Big)^2   \prod_{j=1,2,4} \langle n_j \rangle^{-2} \label{measure_re:eq_so_p2} \\
&+\sum_{n_1 \in \Z^3} \langle n_1 \rangle^{-4+2\gamma} \Big( \sum_{n_2,n_3 \in \Z^3} \langle n_{123} \rangle^{-2} |\widehat{V}_s(n_1,n_2,n_3)| |\widehat{V}(n_{23})| \langle n_2 \rangle^{-2} \langle n_3 \rangle^{-2} \Big)^2. \label{measure_re:eq_so_p3}
\end{align}
The estimates of the sums \eqref{measure_re:eq_so_p1}-\eqref{measure_re:eq_so_p3} follow from standard arguments. We present the details for \eqref{measure_re:eq_so_p1} and \eqref{measure_re:eq_so_p3}, but omit the details for the intermediate term \eqref{measure_re:eq_so_p2}.

We start with the estimate of \eqref{measure_re:eq_so_p1}. The interaction with \( n_1,n_2,n_3 \) at low frequency scales and \( n_4,n_5 \) at high frequency scales is worse than all other contributions, so there is a lot of room in several steps below. Using Lemma \ref{appendix:lemma_sum_estimates} for the sum in \( n_5 \), which requires \( \gamma < \min(1,\beta) \), and summing in \( n_4\), we obtain for a small \( \delta >0 \) that
\begin{align*}
 &\sum_{n_1,n_2,n_3,n_4,n_5 \in \Z^3}  \langle n_{12345} \rangle^{-2+2\gamma} \langle n_{123} \rangle^{-4} |\widehat{V}(n_{12})|^2 |\widehat{V}(n_{45})|^2 \prod_{j=1}^5 \langle n_j \rangle^{-2} \\
&\lesssim \sum_{n_1,n_2,n_3,n_4\in \Z^3}  \langle n_{123} \rangle^{-4} \langle n_{12}\rangle^{-2\beta} \Big(\prod_{j=1}^4 \langle n_j \rangle^{-2} \Big) \Big( \sum_{n_5 \in \Z^3} \langle n_{1234} + n_5 \rangle^{-2+2\gamma} \langle n_4 +n_5 \rangle^{-2 \beta} \langle n_5 \rangle^{-2} \Big) \\
&\lesssim \sum_{n_1,n_2,n_3 \in \Z^3} \langle n_{123} \rangle^{-4} \langle n_{12} \rangle^{-2\beta}   \Big(\prod_{j=1}^3 \langle n_j \rangle^{-2}   \Big) \Big( \sum_{n_4\in \Z^3} \big( \langle n_{1234} \rangle^{-1-\delta} + \langle n_4 \rangle^{-1-\delta} \big) \langle n_4 \rangle^{-2} \Big)  \\
&\lesssim \sum_{n_1,n_2,n_3 \in \Z^3}  \langle n_{123} \rangle^{-4} \langle n_{12} \rangle^{-2\beta}   \prod_{j=1}^3 \langle n_j \rangle^{-2} . 
\end{align*}
Summing in \( n_3 \), \(n_2 \), and \( n_1 \), we obtain that 
\begin{align*}
\sum_{n_1,n_2,n_3 \in \Z^3}  \langle n_{123} \rangle^{-4} \langle n_{12} \rangle^{-2\beta}   \prod_{j=1}^3 \langle n_j \rangle^{-2}
\lesssim \sum_{n_1,n_2 \in \Z^3} \langle n_{12} \rangle^{-3-2\beta} \langle n_1 \rangle^{-2} \langle n_2 \rangle^{-2}  
\lesssim \sum_{n_1 \in \Z^3} \langle n_1 \rangle^{-4} \lesssim 1. 
\end{align*}
We now turn to \eqref{measure_re:eq_so_p3}, which corresponds to double probabilistic resonance. We emphasize that this term would be unbounded without smoothing effect of the potential \( V \), which is the reason for the additional renormalization in the \( \Phi^4_3\)-model, see e.g. \cite[Lemma 24]{BG18}. Using {Lemma \ref{appendix:lemma_sum_estimates}} for the sum in \( n_3 \), we obtain that 
\begin{align*}
&\sum_{n_1 \in \Z^3} \langle n_1 \rangle^{-4+2\gamma} \Big( \sum_{n_2,n_3 \in \Z^3} \langle n_{123} \rangle^{-2} |\widehat{V}_s(n_1,n_2,n_3)| |\widehat{V}(n_{23})| \langle n_2 \rangle^{-2} \langle n_3 \rangle^{-2} \Big)^2 \\
&\lesssim \sum_{n_1 \in \Z^3} \langle n_1 \rangle^{-4+2\gamma}  \Big( \sum_{n_2,n_3 \in \Z^3} \langle n_{123} \rangle^{-2} \langle n_{23} \rangle^{-\beta} \langle n_2 \rangle^{-2} \langle n_3 \rangle^{-2} \Big)^2 \\
&\lesssim \sum_{n_1 \in \Z^3} \langle n_1 \rangle^{-4+2\gamma}  \Big( \sum_{n_2 \in \Z^3}  \big( \langle n_{12} \rangle^{-1-\beta} + \langle n_2 \rangle^{-1-\beta} \big) \langle n_2 \rangle^{-2}   \Big)^2 \\
&\lesssim \sum_{n_1 \in \Z^3} \langle n_1 \rangle^{-4+2\gamma} \lesssim 1,
\end{align*}
provided that \( \gamma < 1/2 \). This completes the proof of \eqref{measure_re:eq_so_III_2}. \\

We now turn to the proof of \eqref{measure_re:eq_so_III_3}. This stochastic object has a more complicated algebraic structure than the stochastic object in \eqref{measure_re:eq_so_III_2}, but a similar analytic behavior. From the definition of \( \MT \), we obtain that
\begin{align*}
 &\big( V* (\Wt \bWt{[3]})\big)(x) \Wt(x) - \MT \bWt{[3]}(x)\\
&= \sum_{m_1,m_4,m_5 \in \mathbb{Z}^3} \widehat{V}(m_{14})  e^{i \langle m_{145}, x \rangle} \widehat{\bWt{[3]}}(m_1)  \Big( \Wt[m_4] \Wt[m_5] - \delta_{m_{45}=0} \frac{\rhoT{t}(m_4)^2}{\langle m_4 \rangle^2} \Big) \\
&= \frac{1}{2} \sum_{m_1,m_4,m_5 \in \mathbb{Z}^3} \big(\widehat{V}(m_{14}) +\widehat{V}(m_{15}) \big) e^{i \langle m_{145}, x \rangle} \widehat{\bWt{[3]}}(m_1)  \Big( \Wt[m_4] \Wt[m_5] - \delta_{m_{45}=0} \frac{\rhoT{t}(m_4)^2}{\langle m_4 \rangle^2} \Big). 
\end{align*} 
Using the variable names \( m_1,m_4,m_5 \in \Z^3 \) instead of \( m_1,m_2,m_3 \in \Z^3 \) is convenient once we insert an expression for \(  \bWt{[3]}   \). A minor modification of the derivation of \eqref{measure_re:eq_integral_f} shows that
\begin{equation}
\widehat{\bWt{[3]}}(m_1)  = \calI[f(\cdot;t,m_1)],
\end{equation}
where the symmetric function \( f(\cdot;t,m_1) \) is given by
\begin{align*}
&f(t_1,n_1,t_2,n_3,t_3,n_3;t,m_1) \\
&= 1\{ n_{123}=m_1\} \frac{1}{\langle n_{123} \rangle^2} \widehat{V}_S(n_1,n_2,n_3) \Big( \int_{\max(t_1,t_2,t_3)}^t \sigmaT{s}(n_{123})^2 \ds \Big) 1\{ 0 \leq t_1,t_2,t_3 \leq t\}. 
\end{align*}
Using Lemma \ref{measure_re:lemma_products} and Lemma \ref{appendix:lemma_iterated_vs_multiple}, we obtain that 
\begin{equation}
\Wt[m_4] \Wt[m_5] - \delta_{m_{45}=0} \frac{\rhoT{t}(m_4)^2}{\langle m_4 \rangle^2}  = \calI_2[g(\cdot;t,m_4,m_5)],
\end{equation}
where the symmetric function \( g(\cdot;t,m_4,m_5)\) is given by
\begin{align*}
g(t_4,n_4,t_5,n_5) 
\defe \frac{1}{2} \Big( 1\{ (n_4,n_5)= (m_4,m_5)\} + 1\{ (n_4,n_5)=(m_5,m_4)\} \Big) 1\{ 0 \leq t_4,t_5 \leq t\}. 
\end{align*}
The author believes that inserting indicators such as \( 1\{ (n_4,n_5)= (m_4,m_5)\}  \) is notationally unpleasant, but it allows us to use the multiple stochastic integrals from \cite{Nualart06}  without having to ``reinvent the wheel''.  With this notation, we obtain that 
\begin{align*}
 &\big( V* (\Wt \bWt{[3]})\big)(x) \Wt(x) - \MT \bWt{[3]}(x) \\
&=  \frac{1}{2} \sum_{m_1,m_4,m_5\in \Z^3} e^{i \langle m_{145}, x \rangle} \big( \widehat{V}(m_{14})+\widehat{V}(m_{15})\big) \cdot \calI_3[f(\cdot;t,m_1)] \cdot \calI_2[g(\cdot;t,m_4,m_5)]. 
\end{align*}
Using Lemma \ref{appendix:lemma_product_formula}, we obtain that
\begin{equation}\label{measure_re:eq_so_p4}
\big( V* (\Wt \bWt{[3]})\big)(x) \Wt(x) - \MT \bWt{[3]}(x) = \ctG_5(t,x)+ \ctG_3(t,x)+\ctG_1(t,x),
\end{equation}
where the Gaussian chaoses are defined as 
\begin{align*}
\ctG_5(t,x) &= \sum_{n_1,\hdots ,n_5 \in \Z^3} \frac{\widehat{V}(n_{12}) \widehat{V}(n_{1234})}{\langle n_{123} \rangle^2} e^{i \langle n_{12345}, x \rangle} \int_{[0,t]^5} \Big( \int_{\max(t_1,t_2,t_3)}^t \sigmaT{s}(n_{123})\ds \Big)  \d \W{t_5}{n_5} \hdots \d \W{t_1}{n_1}, \allowdisplaybreaks[4]\\
\ctG_3(t,x) &=  \frac{1}{2} \sum_{n_1,\hdots, n_5 \in \Z^3} \bigg[ \delta_{n_{35}=0} \frac{\widehat{V}_s(n_1,n_2,n_3) }{\langle n_{123}\rangle^2 \langle n_3 \rangle^2}  \Big( \widehat{V}(n_{12})+\widehat{V}(n_{1234}) \Big) e^{i \langle n_{124}, x \rangle}  \\
&\times \int_{[0,t]^3} \Big( \int_0^t \int_{\max(t_1,t_2,t_3)}^t \sigmaT{t_3}(n_3)^2  \sigmaT{s}(n_{123})^2 \ds \d t_3 \Big) \d \W{t_4}{n_4} \d \W{t_2}{n_2} \d \W{t_1}{n_1} \bigg], \allowdisplaybreaks[4] \\
\ctG_1(t,x) &=  \frac{1}{4} \sum_{n_1,\hdots,n_5 \in \Z^3} \bigg[ \delta_{n_{24}=n_{35}=0} \frac{\widehat{V}_s(n_1,n_2,n_3)}{\langle n_{123}\rangle^2 \langle n_2 \rangle^2 \langle n_3 \rangle^2} \Big( \widehat{V}(n_{12})+\widehat{V}(n_{13})\Big) e^{i \langle n_{1}, x \rangle}  \\
&\times \int_{[0,t]}\Big( \int_0^t \int_0^t \int_{\max(t_1,t_2,t_3)}^t \sigmaT{t_2}(n_2)^2\sigmaT{t_3}(n_3)^2  \sigmaT{s}(n_{123})^2 \ds \d t_3  \d t_2 \Big) \d \W{t_1}{n_1} \bigg]. 
\end{align*}
This concludes the algebraic aspects of the proof of \eqref{measure_re:eq_so_III_3}. Starting from  \eqref{measure_re:eq_so_p4}, the analytic estimates are essentially as in the proof of the earlier estimate \eqref{measure_re:eq_so_III_2} and we omit the details. This completes the proof of the lemma. 
\end{proof}

In the construction of the drift measure (Section \ref{section:drift_measure}), we need a renormalization of \( (\langle \nabla \rangle^{-1/2} \Wt )^n \). The term \( \langle \nabla \rangle^{-1/2} \Wt \) has regularity \( 0-\) and hence the \( n\)-th power is almost defined. While we could use iterated stochastic integrals to define the renormalized power, it is notationally convenient to use an equivalent definition through Hermite polynomials. This definition is also closer to the earlier literature in dispersive PDE. We recall that the Hermite polynomials \( \{ H_n(x,\sigma^2)\}_{n\geq 0} \) are defined through the generating function 
\begin{equation*}
e^{tx - \frac{1}{2} \sigma^2 t^2} = \sum_{n=0}^\infty \frac{t^n}{n!} H_n(x,\sigma^2). 
\end{equation*}

\begin{definition}
We define the renormalized \( n\)-th power by 
\begin{equation}
\lcol f^n \rcol \defe H_n\Big( f  , ~ \E \| \langle \nabla \rangle^{-\frac{1}{2}} \Wt  \|_{L^2_x}^2 \Big). 
\end{equation}
\end{definition}
We list two basic properties of the renormalized power in the next lemma.

\begin{lemma}[Stochastic objects IV]\label{measure_re:lemma_high_power}
We have for all \( n \geq 1 \), \( p \geq 1 \),  and \( \epsilon > 0 \) that 
\begin{equation}
\sup_{T\geq 0} \Big( \E \Big[ \| \lcol ( \langle \nabla \rangle^{-\frac{1}{2}} \Wt )^n \rcol \|_{C_x^{-\epsilon}(\T^3)}^p \Big] \Big)^{\frac{1}{p}}  \lesssim_{n,\epsilon} p^{\frac{n}{2}}. 
\end{equation}
Furthermore, we have for all \( f \in H^1_x(\T^3) \) the binomial formula
\begin{equation}
\lcol ( \langle \nabla \rangle^{-\frac{1}{2}} (\Wt +f))^n \rcol = \sum_{k=0}^n { n \choose k}  \lcol ( \langle \nabla \rangle^{-\frac{1}{2}} \Wt )^k \rcol ~ (\langle \nabla \rangle^{-\frac{1}{2}} f )^{n-k}. 
\end{equation}
\end{lemma}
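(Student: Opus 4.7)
\medskip

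\textbf{Proof proposal.} The key observation is that $\langle \nabla \rangle^{-1/2} \Wt(x)$ is, for each fixed $x$, a centered Gaussian whose variance is translation-invariant and coincides precisely with the renormalization parameter $\sigma_T^2 \defe \E \| \langle \nabla \rangle^{-1/2} \Wt \|_{L^2_x}^2 = \sum_{n} \rho_T(n)^2 \langle n \rangle^{-3}$. Consequently $\lcol (\langle \nabla \rangle^{-1/2} \Wt)^n \rcol = H_n( \langle \nabla \rangle^{-1/2} \Wt , \sigma_T^2)$ is a Wiener chaos of homogeneity $n$.

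For the binomial formula, the plan is simply to invoke the classical Hermite identity
\begin{equation*}
H_n(x+y,\sigma^2) = \sum_{k=0}^n \binom{n}{k} H_k(x,\sigma^2) \, y^{n-k},
\end{equation*}
which follows by multiplying the generating function $e^{tx - \sigma^2 t^2/2} e^{ty}$ and comparing coefficients of $t^n$. Applying this pointwise with $x = \langle \nabla \rangle^{-1/2} \Wt(\cdot)$ and $y = \langle \nabla \rangle^{-1/2} f(\cdot)$ and using that the renormalization constant depends only on the Gaussian part $\Wt$ gives the stated identity immediately; no further work is required once part (1) establishes that all the terms are well-defined.

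For the $L^p$ bound, the plan is the same three-step reduction as in Lemma~\ref{measure_re:lemma_stochastic_objects_I}. First, Bernstein's inequality together with a choice of sufficiently large auxiliary Lebesgue exponent reduces the $\cC_x^{-\epsilon}$-norm to an $H_x^{-\epsilon/2}$-norm at the cost of a harmless $N^{-\epsilon/4}$ gain per Littlewood–Paley piece. Second, since $H_n(\langle\nabla\rangle^{-1/2}\Wt, \sigma_T^2)$ lies in the $n$-th Wiener chaos, Gaussian hypercontractivity (Lemma~\ref{prelim:lemma_hypercontractivity}) lets us replace $L^p_\omega$ by $L^2_\omega$ at the expense of a factor $p^{n/2}$, which is exactly the claimed dependence on $p$. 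Third, one is left to show that
\begin{equation*}
\sup_{T \geq 0} \, \E \Big[ \| \lcol (\langle \nabla \rangle^{-1/2} \Wt)^n \rcol \|_{H_x^{-\epsilon}}^2 \Big] \lesssim_{n,\epsilon} 1.
\end{equation*}

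For this last step, the plan is to use the covariance identity $\E[H_n(X,\sigma^2) H_n(Y,\sigma^2)] = n! \, c^n$ for jointly centered Gaussians $X,Y$ with variance $\sigma^2$ and covariance $c$. Specializing to $X = \langle \nabla \rangle^{-1/2} \Wt(x)$ and $Y = \langle \nabla \rangle^{-1/2} \Wt(y)$, the covariance kernel is $K(x-y) = \sum_m \rho_T(m)^2 \langle m \rangle^{-3} e^{i\langle m, x-y\rangle}$, so the Fourier coefficient at $n_0 \in \Z^3$ satisfies
\begin{equation*}
\E \big| \mathcal{F} \lcol (\langle \nabla \rangle^{-1/2} \Wt)^n \rcol(n_0) \big|^2 = n! \sum_{\substack{m_1,\dots,m_n \in \Z^3 \\ m_1 + \cdots + m_n = n_0}} \prod_{j=1}^n \frac{\rho_T(m_j)^2}{\langle m_j \rangle^3}.
\end{equation*}
The $H_x^{-\epsilon}$-norm is then bounded by $n! \sum_{m_1,\dots,m_n} \langle m_1+\cdots+m_n\rangle^{-2\epsilon} \prod_j \langle m_j\rangle^{-3}$, which is finite (and $T$-uniform) since each factor $\langle m_j\rangle^{-3}$ is summable over $\Z^3$ and the extra $\langle \sum m_j\rangle^{-2\epsilon}$ only helps. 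No step is particularly delicate here: the only mild care needed is that the constants from Bernstein and hypercontractivity are tracked so the final $p$-dependence is exactly $p^{n/2}$, and that the covariance identity is applied to the \emph{normalized} Hermite polynomials used in our definition.
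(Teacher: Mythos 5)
The paper itself omits the proof of this lemma (referring to \cite{OT18}), and your overall architecture — identify $\lcol (\langle\nabla\rangle^{-1/2}\Wt)^n\rcol$ as the $n$-th Hermite polynomial of a Gaussian whose variance matches the renormalization constant (here the normalization $\int_{\T^3}1\dx=1$ is what makes $\E\|\langle\nabla\rangle^{-1/2}\Wt\|_{L^2_x}^2$ equal the pointwise variance), deduce the binomial formula from the generating-function identity $H_n(x+y,\sigma^2)=\sum_k\binom{n}{k}H_k(x,\sigma^2)y^{n-k}$, and prove the moment bound by the Bernstein/hypercontractivity/translation-invariance reduction to an $L^2_\omega H_x^{-\epsilon}$ estimate followed by the covariance identity $\E[H_n(X,\sigma^2)H_n(Y,\sigma^2)]=n!\,(\E[XY])^n$ — is exactly the standard route the paper intends, and the $p^{n/2}$ bookkeeping is correct since the object sits in the $n$-th chaos.

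However, your justification of the final counting step is wrong as written. You claim the bound holds because ``each factor $\langle m_j\rangle^{-3}$ is summable over $\Z^3$ and the extra $\langle m_1+\cdots+m_n\rangle^{-2\epsilon}$ only helps.'' In three dimensions $\sum_{m\in\Z^3}\langle m\rangle^{-3}$ diverges logarithmically — this is precisely the statement that $\langle\nabla\rangle^{-1/2}\Wt$ has regularity $0-$ and variance $\sim\log T$, which is the whole reason the Wick renormalization is needed — so summing each factor separately does not close the argument, and indeed without the negative-regularity weight the quantity $\sum_{n_0}\E|\mathcal{F}(\lcol\cdot\rcol)(n_0)|^2=n!\,K_{T,t}(0)^n$ blows up as $T\to\infty$. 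The weight $\langle n_0\rangle^{-2\epsilon}$ is essential and must be used quantitatively. Two standard fixes: (i) iterate the elementary estimate $\sum_{m\in\Z^3}\langle a+m\rangle^{-2\epsilon}\langle m\rangle^{-3}\lesssim_\epsilon\langle a\rangle^{-\epsilon}$ (split into $|m|\leq|a|/2$, $|m|\sim|a|$, $|m|\gg|a|$), performing the $n$ summations one at a time and retaining a positive fractional power of $\epsilon$ at each step, so the last sum converges; or (ii) work in physical space: the covariance kernel $K_{T,t}(z)=\sum_m(\rho^T_t(m))^2\langle m\rangle^{-3}e^{i\langle m,z\rangle}$ satisfies $|K_{T,t}(z)|\lesssim 1+\log(1/|z|)$ uniformly in $T,t$, and since $\E|\mathcal{F}(\lcol\cdot\rcol)(n_0)|^2=n!\,\widehat{K_{T,t}^n}(n_0)\geq0$, the $H^{-\epsilon}_x$-norm squared equals $n!\int_{\T^3}K_{T,t}^n(z)\,G_{2\epsilon}(z)\dz$ with $G_{2\epsilon}(z)\sim|z|^{-3+2\epsilon}$ the kernel of $\langle\nabla\rangle^{-2\epsilon}$, and $\int\log^n(1/|z|)\,|z|^{-3+2\epsilon}\dz<\infty$. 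A minor further slip: the covariance carries $(\rho^T_t(m))^2=\rho_T(m)^2\rho_t(m)^2$ rather than $\rho_T(m)^2$ alone; this is harmless since both are bounded by one, but the uniformity claimed should be in both parameters.
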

Since the proof is standard, we omit the details. For similar arguments, we refer the reader to \cite{OT18}. 

\section{Construction of the Gibbs measure}\label{section:construction}

The goal of this section is to prove Theorem \ref{theorem:tightness}. 
The main ingredient is the Boué-Dupuis formula, which yields a variational formulation of the Laplace transform of \( \tmuT \). Our argument follows earlier work of Barashkov and Gubinelli \cite{BG18}, but the convolution inside the nonlinearity requires additional ingredients (see Section \ref{section:visan} and Section \ref{section:rmt}). 

\subsection{\protect{The variational problem, uniform bounds, and their consequences}} \label{section:variational}

Due to the singularity of the Gibbs measure for \( 0< \beta < 1/2 \), which is the main statement in Theorem \ref{theorem:singularity},  the construction will require one final renormalization. We recall that \( \lambda > 0 \) denotes the coupling constant in the nonlinearity and we let \( \cT \) be a real-valued constant which remains to be chosen. \\

For the rest of this section, we let \( \varphi \colon \cC_t^0 \cC_{x}^{-1/2-\kappa}([0,\infty]\times \R) \rightarrow \R \) be a functional with at most linear growth. 
We denote the (non-renormalized) potential energy by
\begin{equation}\label{measure_var:eq_cV}
\cV(f) \defe \int_{\T^3} (V* f^2)(x) f^2(x) \dx =  \int_{\T^3 \times \T^3} V(x-y) f(y)^2 f(x)^2 \dx \dy. 
\end{equation}
We denote the renormalized version of \( \cV(f) \) by 
\begin{equation}\label{measure_var:eq_renormalized_V}
\lcol \cVT(f) \rcol \overset{def}{=}  \frac{\lambda}{4} \cdot \int_{\T^3} \lcol ( V* f^2) f^2 \rcol \dx + \cT, 
\end{equation}
where \( \lcol ( V* f^2) f^2 \rcol \) is as in Definition \ref{measure_re:definition_renormalization}. To further simplify the notation, we denote for any \( u \colon [0,\infty) \times \T^3 \rightarrow \R \) the space-time \( L^2\)-norm by 
\begin{equation}
\| u \|_{\cH}^2 \defe \int_0^\infty \| u_t \|_{L^2_x(\T^3)}^2 \, \mathrm{d}t. 
\end{equation}
With this notation, we can now state the main estimate of this section. 
\begin{proposition}[Main estimate for the variational problem] \label{measure_var:proposition_main}
If the renormalization constants \( \cT \) are chosen appropriately, we have that
\begin{equation}\label{measure_var:eq_main}
\begin{aligned}
&\E_{\bP} \bigg[ \varphi( W + I[u])+  \lcol \cVT( \Winf  + \Iinf[u] )\rcol  + \frac{1}{2} \| u \|_{\cH}^2 \bigg] \\
&= \E_{\bP} \bigg[ \PsiT (W, I[u]) + \frac{\lambda}{4} \cV(\Iinf(u)) + \frac{1}{2} \| \lP[u] \|_{\cH}^2 \bigg],
\end{aligned}
\end{equation}
where 
\begin{equation}\label{measure_var:eq_lT}
\lt[u] \overset{def}{=} u_t +  \lambda \Jt \lcol (V*(\Wt)^2) \Wt  \rcol 
\end{equation}
and
\begin{equation}\label{measure_var:eq_PsiT}
| \PsiT (W, I[u])| \leq Q_T(W,\varphi,\lambda) + \frac{1}{2} \Big( \frac{\lambda}{4} \cV(\Iinf(u)) + \frac{1}{2} \| \lP[u] \|_{\cH}^2 \Big). 
\end{equation}
Here, \( Q_T(W,\varphi,\lambda) \) satisfies for all \( p \geq 1 \) the estimate 
\( \E[Q_T(W,\varphi,\lambda)^p] \lesssim_p 1 \), where the implicit constant is uniform in \( T \geq 1 \). 
\end{proposition}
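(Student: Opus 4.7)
My plan is to produce an explicit formula for $\PsiT$ by combining the binomial expansion of the renormalized potential with a completing-the-square manipulation of the kinetic energy $\tfrac12\|u\|_\cH^2$, and then to estimate each piece. First I apply the binomial identity \eqref{measure_re:eq_quartic_binomial} with $t=\infty$ and $f=\Iinf[u]$; this writes the quartic part of $\lcol \cVT(\Winf+\Iinf[u])\rcol$ as the sum of the pure-Gaussian quartic $\tfrac{\lambda}{4}\int \lcol(V*(\Winf)^2)(\Winf)^2\rcol \dx$, the resonant cubic-by-linear cross term $\lambda\int \lcol(V*(\Winf)^2)\Winf\rcol \,\Iinf[u]\dx$, three genuinely lower-order mixed cross terms, and the coercive piece $\tfrac{\lambda}{4}\cV(\Iinf[u])$, which is exactly the first coercive quantity appearing on the right-hand side of \eqref{measure_var:eq_main}. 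The pure-Gaussian quartic has mean zero by \eqref{measure_re:eq_potential_derivative}, is bounded in all $L^p(\bP)$ uniformly in $T$ by Lemma \ref{measure_re:lemma_stochastic_objects_I}, and is independent of $u$, so it goes directly into $Q_T(W,\varphi,\lambda)$.

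The heart of the identity is the treatment of the resonant cross term. After unrolling $\Iinf[u]=\int_0^\infty \Js u_s \ds$ and using the self-adjointness of $\Js$ on $L^2_x$, it becomes $\lambda \int_0^\infty \langle u_s,\,\Js \lcol(V*(\Winf)^2)\Winf\rcol \rangle_{L^2_x}\ds$. Set $Z^T_s \defe \lcol(V*(\Ws)^2)\Ws\rcol$; by \eqref{measure_re:eq_renormalization_3} this is a triple It\^o integral, hence an $\cF_s$-martingale vanishing at $s=0$. The $\cF_s$-adaptedness of $u$ and the tower property give $\E_{\bP}\int_0^\infty \langle u_s, \Js(Z^T_\infty - Z^T_s)\rangle \ds = 0$, so after taking expectations the resonant term may be replaced by $\lambda\int_0^\infty \langle u_s, \Js Z^T_s\rangle \ds$. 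Combining with $\tfrac12\|u\|_\cH^2$ and completing the square produces exactly
\[
\tfrac12 \|u\|_\cH^2 + \lambda\int_0^\infty \langle u_s, \Js Z^T_s\rangle\ds = \tfrac12 \|\lP[u]\|_\cH^2 - \tfrac{\lambda^2}{2}\int_0^\infty \|\Js Z^T_s\|_{L^2_x}^2 \ds,
\]
which is the second coercive quantity on the right-hand side of \eqref{measure_var:eq_main}. The final term is a sixth-order Wiener chaos whose expectation diverges as $T\to\infty$ when $0<\beta<1/2$; I \emph{define} $\cT\defe\tfrac{\lambda^2}{2}\E_{\bP}\int_0^\infty \|\Js Z^T_s\|_{L^2_x}^2\ds$ so that only a centered fluctuation remains, and this fluctuation has all moments bounded uniformly in $T$ by a product-formula computation entirely analogous to Lemma \ref{measure_re:lemma_stochastic_objects_III}; it is absorbed into $Q_T$.

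What remains in $\PsiT$ is $\varphi(W+I[u])$, the mean-zero martingale remainder $\lambda\int_0^\infty \langle u_s, \Js(Z^T_\infty-Z^T_s)\rangle \ds$, and the three non-resonant binomial cross terms $\tfrac{\lambda}{2}\int (V*\lcol(\Winf)^2\rcol)\Iinf[u]^2 \dx$, $\lambda\int (V*\Iinf[u]^2)\Iinf[u]\Winf \dx$, and the renormalized mixed square $\lambda\int\bigl[(V*(\Winf\,\Iinf[u]))\Winf\,\Iinf[u] - (\mathcal{M}^{\scriptscriptstyle T}_\infty \Iinf[u])\Iinf[u]\bigr]\dx$. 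Each is a product of a stochastic factor, whose Besov-type norm is controlled in $L^p(\bP)$ by Lemmas \ref{measure_re:lemma_stochastic_objects_I}--\ref{measure_re:lemma_stochastic_objects_II}, and a power of $\Iinf[u]$, whose Sobolev or $L^4$-type norm can be dominated by $\cV(\Iinf[u])^{1/4}$ via Vi\c san's fractional-derivative estimate referenced in the introduction, or by $\|\lP[u]\|_\cH$ via $\|u\|_\cH \le \|\lP[u]\|_\cH + \lambda \bigl(\int_0^\infty \|\Js Z^T_s\|_{L^2}^2\ds\bigr)^{1/2}$ together with the mapping properties of $\IT$. Young's inequality with carefully chosen exponents then absorbs every such contribution into $\tfrac12 \bigl(\tfrac{\lambda}{4}\cV(\Iinf[u]) + \tfrac12\|\lP[u]\|_\cH^2\bigr)$ plus a $T$-uniform random constant, verifying \eqref{measure_var:eq_PsiT}; the $\varphi$-contribution, being at most linear, is handled by the same Young-type split.

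I expect the main obstacle to be the renormalized mixed-square term $\int\bigl[(V*(\Winf \Iinf[u]))\Winf\,\Iinf[u] - (\mathcal{M}^{\scriptscriptstyle T}_\infty\Iinf[u])\Iinf[u]\bigr]\dx$: both $\Winf$ and, for generic $u$, $\Iinf[u]$ are too rough for a naive pointwise multiplication, and only the explicit subtraction of $\mathcal{M}^{\scriptscriptstyle T}_\infty\Iinf[u]$ renders the pairing well-defined. Dominating this term by a small multiple of $\cV(\Iinf[u]) + \|\lP[u]\|_\cH^2$ plus a random constant is precisely the random-matrix estimate deferred to Section \ref{section:rmt}, whose proof will rely on the physical-space representation $\mathcal{M}^{\scriptscriptstyle T}_t[V;N_1,N_2] f = (\Cort V) * f$ from Lemma \ref{measure_re:lemma_physical_MT}, replacing the Fourier-analytic random-matrix approaches common in the random-dispersive literature. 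Once that estimate is in hand, the remaining paraproduct computations are routine.
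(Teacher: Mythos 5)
Your skeleton (binomial expansion, martingale argument for the resonant cubic term, completing the square, a renormalization constant for the divergent square, the random matrix estimate for the renormalized mixed square, Visan's estimate for the cubic-in-drift terms) is the paper's skeleton, but two of your steps fail in the singular regime \( 0<\beta<1/2 \), which is the main case. First, you claim that \( \int_{\T^3}\lcol (V*(\Winf)^2)(\Winf)^2\rcol \dx \) is uniformly bounded in \( L^p(\bP) \) ``by Lemma \ref{measure_re:lemma_stochastic_objects_I}'' and can be absorbed into \( Q_T \). That lemma only concerns the quadratic and cubic objects; the spatial integral of the renormalized quartic is mean zero but its variance is of order \( T^{1-2\beta} \) (this is exactly the content of Lemma \ref{singularity:lemma_GFF}, and the lower-bound computations of Section \ref{section:singularity} show this rate is attained), so for \( \beta<1/2 \) it admits no \( T \)-uniform moment bound and cannot be part of \( Q_T(W,\varphi,\lambda) \). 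The same divergence re-enters through your estimate \( \|u\|_{\cH}\le \|\lP[u]\|_{\cH}+\lambda\big(\int_0^\infty\|\Js \lcol (V*(\Ws)^2)\Ws\rcol\|_{L^2_x}^2\ds\big)^{1/2} \): the expectation of the squared second term is of order \( T^{1-2\beta} \) (this is precisely why \( \cT \) is needed at all), so when you use it to dominate \( \|\Iinf[u]\|_{H^1} \) in the quadratic error terms (e.g.\ after the random matrix estimate), Young's inequality leaves a \( u \)-independent random variable with \( T \)-divergent moments that is dominated neither by \( \cV(\Iinf[u]) \) nor by \( \|\lP[u]\|_{\cH}^2 \), and cannot go into \( Q_T \); centering does not help there, because it appears as an upper bound on a \( u \)-dependent error, not as an exact additive term that could be pushed into \( \cT \).

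The missing ideas are exactly the two devices the paper uses. (i) Since \eqref{measure_var:eq_main} is an identity between expectations, every \( u \)-independent term produced by the expansion (the mean-zero quartic, \( -\tfrac{\lambda^2}{2}\|\Js\lcol (V*(\Ws)^2)\Ws\rcol\|_{L^2_{s,x}}^2 \), and the two further \( u \)-independent quintic/sextic objects) enters only through its expectation; grouping them into \( \mathcal{E}_0 \) as in \eqref{measure_var:eq_E0} and choosing \( \cT=-\E_{\bP}[\mathcal{E}_0] \) removes them exactly, so no fluctuation bound on them is ever needed -- in particular your extra claim about uniform moments of the centered sixth chaos is both unproven and unnecessary. (ii) The remaining error terms must be rewritten via the change of variables \( w=\lP[u] \), i.e.\ \( \Iinf[u]=\Iinf[w]-\lambda\bWinf{[3]} \), so that every \( H^1 \)-type norm appearing is \( \|\Iinf[w]\|_{H^1}\lesssim \|w\|_{\cH}=\|\lP[u]\|_{\cH} \), while the rough factors are the explicit, \( T \)-uniformly bounded objects of Lemma \ref{measure_re:lemma_stochastic_objects_III}; this substitution, not the triangle inequality for \( u \), is what legitimizes the Young-type absorption into the coercive terms. (A last detail you skip: after the substitution the coercive quartic is \( \cV(\Iinf[w]) \), and it still has to be compared with the \( \cV(\Iinf[u]) \) of the statement, which the paper does via the trilinear estimates \eqref{appendix:eq_trilinear_I}--\eqref{appendix:eq_trilinear_II}.) Your treatment of the resonant cubic term by conditioning and completing the square, and your identification of the random matrix estimate as the key input for the renormalized mixed square, agree with the paper.
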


The argument of \( \varphi \) in \eqref{measure_var:eq_main} is not regularized, that is, we are working with \( W \) instead of \( \WP \). This is important to obtain control over \( \muT  \), which is the pushforward of \( \tmu_T \) under \( W_\infty \). 

\begin{remark}
This is a close analog of \cite[Theorem 1]{BG18}. Due to the smoothing effect of the interaction potential \( V \), however, the shifted drift \( \lP[u] \) is simpler. In contrast to the \( \Phi^4_3 \)-model, the difference \( l^T(u) - u \) does not depend on \( u \). 
As is evident from the proof, we have that 
\begin{equation}\label{measure_var:eq_PsiTz}
 \PsiT (W, I[u]) = \varphi(W+I[u]) +  \PsiTz (W, I[u]). 
\end{equation}
This observation will only be needed in Proposition \ref{measure_var:proposition} below. 
\end{remark}

We first record the following proposition, which is a direct consequence of Proposition \ref{measure_var:proposition_main} and the Boué-Dupuis formula.

\begin{proposition}\label{measure_var:proposition} The measures \( \tmuT \) satisfy the following properties:
\begin{enumerate}[(i)]
\item The normalization constants  \( \cZTl \)  satisfy \( \cZTl \sim_\lambda 1 \), i.e., they are bounded away from zero and infinity uniformly in \( T \).  \label{measure_var:item_corollary_i} 
\item If the functional \( \varphi \colon \cC_t^0 \cC_x^{-1/2-\kappa}([0,\infty]\times \T^3) \rightarrow \R \) has at most linear growth,   \label{measure_var:item_corollary_ii}
then
\begin{equation*}
\sup_{T\geq 0 } \E_{\tmu_T}\Big[ \exp\big( - \varphi(W) \big) \Big] \lesssim_\varphi 1. 
\end{equation*}
\item The family of measures \( (\tmuT)_{T\geq0} \) is tight on \( \cC_t^0 \cC_x^{-\frac{1}{2}-\kappa}([0,\infty]\times \T^3)\). \label{measure_var:item_corollary_iii}
\end{enumerate}
\end{proposition}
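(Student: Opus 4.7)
The plan is to deduce all three parts from Proposition \ref{measure_var:proposition_main} via the Boué-Dupuis formula (Theorem \ref{thm:bd_formula}). For (i), I would apply Theorem \ref{thm:bd_formula} to $F(W) = \lcol \cVT(\rho_T(\nabla) W_\infty)\rcol$, which gives
\[
-\log \cZTl \;=\; \inf_{u \in \bH} \E_\bP\Big[ \lcol \cVT(\Winf + \Iinf[u])\rcol + \tfrac{1}{2}\|u\|_\cH^2\Big].
\]
By Proposition \ref{measure_var:proposition_main} with $\varphi = 0$, this equals $\inf_u \E[\PsiTz(W, I[u]) + \tfrac{\lambda}{4}\cV(\Iinf[u]) + \tfrac{1}{2}\|\lP[u]\|_\cH^2]$. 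The bound \eqref{measure_var:eq_PsiT}, combined with positivity of $\cV$ (which uses pointwise positivity of $V$ from Assumption \ref{assumptions:V}(2)) and of the $\cH$-norm, allows me to absorb $\PsiTz$ into the coercive terms with a factor $1/2$ to spare, so the integrand is $\geq -Q_T$ pointwise; taking expectations yields $-\log \cZTl \geq -\E[Q_T] \geq -C$, hence $\cZTl \leq C'$. For the matching lower bound on $\cZTl$, I would substitute the trivial drift $u=0$ in the infimum to get $-\log \cZTl \leq \E[\lcol \cVT(\Winf)\rcol]$; since the Wick-ordered quartic has zero expectation by the design of Definition \ref{measure_re:definition_renormalization}, this reduces to $\cT$, which is bounded uniformly in $T$ by the choice of renormalization.

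For (ii), the same scheme applies to $F(W) = \varphi(W) + \lcol \cVT\rcol$. The linear growth of $\varphi$ ensures the integrability hypotheses \eqref{measure_var:eq_bd_condition} hold, since $W$ has Gaussian tails in $\cC_t^0\cC_x^{-1/2-\kappa}$. Proposition \ref{measure_var:proposition_main} with the general $\varphi$, and the same absorption argument, yield
\[
-\log \E_\bP\big[e^{-\varphi(W) - \lcol \cVT\rcol}\big] \;\geq\; -\E[Q_T(W,\varphi,\lambda)] \;\geq\; -C_\varphi,
\]
uniformly in $T$. Dividing by $\cZTl$, which is bounded below by (i), yields (ii).

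For (iii), the strategy is to upgrade (ii) into exponential moments of $W$ in a norm whose closed balls are compact in $\cC_t^0 \cC_x^{-1/2-\kappa}([0,\infty]\times \T^3)$. A natural candidate is the weighted Hölder norm $\|\cdot\|_X \defe \|\cdot\|_{\cC_t^{\alpha,\eta}\cC_x^{-1/2-\kappa/2}}$ from \eqref{notation:eq_Cweighted} with small $\alpha,\eta>0$; by an Arzelà--Ascoli argument using compactness of the embedding $\cC_x^{-1/2-\kappa/2} \hookrightarrow \cC_x^{-1/2-\kappa}$ together with the time-decay weight $\min(\langle t\rangle,\langle t'\rangle)^\eta$, bounded sets in $X$ are precompact in the target space. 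To invoke (ii), I would apply it to the bounded (hence linear-growth) functional $\varphi_R(W) \defe -\epsilon \min(\|W\|_X, R)$ for small $\epsilon>0$, obtaining $\E_{\tmuT}[\exp(\epsilon \min(\|W\|_X, R))] \lesssim 1$ uniformly in both $T$ and $R$. Monotone convergence as $R\to\infty$ upgrades this to $\E_{\tmuT}[\exp(\epsilon\|W\|_X)] \lesssim 1$, and Markov's inequality then produces arbitrarily large precompact sets of $\tmuT$-measure approaching one.

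The main obstacle is part (i): the absorption of $\PsiTz$ requires Proposition \ref{measure_var:proposition_main} to bound $|\PsiTz|$ by a \emph{strict} fraction of the coercive terms $\cV$ and $\|\lP[u]\|_\cH^2$, so the factor $1/2$ in \eqref{measure_var:eq_PsiT} is exactly what makes the scheme work; any constant $\geq 1$ would prevent absorption. A secondary subtlety in (iii) is that the truncation $\min(\|W\|_X, R)$ is essential to legitimately verify the linear-growth hypothesis of (ii), since $\|W\|_X$ is generally infinite on $\cC_t^0\cC_x^{-1/2-\kappa}$; the truncation--then--monotone convergence ordering is therefore not merely cosmetic.
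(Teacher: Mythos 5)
Your overall architecture (Boué--Dupuis plus Proposition \ref{measure_var:proposition_main}, absorption of $\PsiT$ via the factor $\tfrac12$ in \eqref{measure_var:eq_PsiT}, and tightness via exponential moments of a compactly embedded weighted Hölder norm) matches the paper, and your upper bound $\cZTl\lesssim_\lambda 1$ is correct. The genuine gap is in the lower bound on $\cZTl$ in part (i). You take the trivial drift $u=0$, obtain $-\log\cZTl\le \E_\bP\big[\lcol\cVT(\Winf)\rcol\big]=\cT$ (the Wick-ordered quartic is indeed a mean-zero iterated stochastic integral), and then assert that $\cT$ is bounded uniformly in $T$ ``by the choice of renormalization.'' That is false in the main regime of the paper: $\cT=-\E_\bP[\mathcal{E}_0]$ contains the term $\tfrac{\lambda^2}{2}\E\big\|\Jt\big(\lcol(V\ast(\Wt)^2)\Wt\rcol\big)\big\|_{L^2_{t,x}}^2\sim T^{1-2\beta}$, which diverges for $0<\beta<1/2$; Corollary \ref{measure_var:corollary_cT} only asserts boundedness for $\beta>1/2$, and this divergence is exactly what drives the singularity in Section \ref{section:singularity}. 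So for $0<\beta<1/2$ your bound $-\log\cZTl\le\cT$ is not uniform in $T$ and the argument collapses. The paper instead chooses the drift $u_t=-\lambda\Jt\lcol(V\ast(\Wt)^2)\Wt\rcol$, i.e.\ forces $\lP[u]=0$ and $\Iinf[u]=-\lambda\bWinf{[3]}$; in the identity \eqref{measure_var:eq_main} the divergent part of $\cT$ is cancelled by the Itô-correction term $-\tfrac{\lambda^2}{2}\|\Jt\lcol(V\ast(\Wt)^2)\Wt\rcol\|_{L^2_{t,x}}^2$ produced by completing the square, and what remains is $\E_\bP[\PsiTz]+\tfrac\lambda4\E_\bP[\cV(\lambda\bWinf{[3]})]\lesssim_\lambda1$ by Lemma \ref{measure_re:lemma_stochastic_objects_III}. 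The zero drift simply does not see this cancellation, so your route cannot be repaired without effectively reproducing the paper's choice.

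A secondary issue is in (iii). Applying (ii) as a black box to $\varphi_R(W)=-\epsilon\min(\|W\|_X,R)$ does not give a bound uniform in $R$: as a functional on $\cC_t^0\cC_x^{-1/2-\kappa}$ the only linear-growth bound available for $\varphi_R$ is the constant $\epsilon R$, since the $X$-norm is not dominated by the ambient norm, and the constant in (ii) depends on $\varphi$ through precisely that growth bound. To get the claimed $R$-uniformity one must re-run the variational estimate with $\varphi_R$ and supply the two ingredients the paper isolates: $\E_\bP\big[\|W\|_{\cC_t^{\alpha,\eta}\cC_x^{-(1+\kappa)/2}}\big]\lesssim1$ (Kolmogorov continuity, not merely Gaussian tails in the ambient space) and the deterministic bound $\|I[u]\|_{\cC_t^{\alpha,\eta}\cC_x^{-(1+\kappa)/2}}\lesssim\|u\|_{L^2_{t,x}}$ (Sobolev embedding together with Lemma \ref{appendix:lemma_It}, including the time-Hölder estimate \eqref{appendix:eq_It_2}). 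Your proposal mentions neither, so the absorption of $-\varphi_R(W+I[u])$ into $Q_T+\delta\|u\|_{\cH}^2$ is not justified as written.
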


\begin{proof}[Proof of Proposition \ref{measure_var:proposition}:] 
We first prove \eqref{measure_var:item_corollary_i}. From the definition of \( \mu_T \), we have that 
\begin{equation*}
\cZTl = \E_{\bP} \Big[ \exp( - \lcol \cVT( \Winf) \rcol ) \Big]. 
\end{equation*}
Using the Boué-Dupuis formula and Proposition \ref{measure_var:proposition_main}, we have that 
\begin{align*}
- \log( \cZTl) &= \inf_{u \in \bH} \E_{\bP} \Big[ \lcol \cVT ( \Winf + \Iinf[u] ) \rcol + \frac{1}{2} \| u\|_{\cH}^2 \Big] \\
&= \inf_{u \in \bH} \E_{\bP} \bigg[ \PsiTz (W, I[u]) + \frac{\lambda}{4} \cV(\Iinf(u)) + \frac{1}{2} \| \lP[u] \|_{\cH}^2 \bigg]. 
\end{align*}
From \eqref{measure_var:eq_PsiT}, we directly obtain that 
\begin{equation}\label{measure:var_eq_cT_p1}
- \log( \cZTl ) \geq - C_\lambda. 
\end{equation}
By choosing \( u_t \defe - \lambda \Jt \lcol (V* (\Wt)^2) \Wt \rcol \), which is equivalent to requiring \( \lt[u] = 0 \) and implies \( \It[u] = \bWt{[3]} \), we obtain from Lemma \ref{measure_re:lemma_stochastic_objects_III} that 
\begin{equation}\label{measure:var_eq_cT_p2}
-\log(\cZTl) \lesssim_\lambda 1 + \E_{\bP}  \Big[ \cV( \lambda \bWt{[3]})\Big] \lesssim_\lambda 1. 
\end{equation}
By combining \eqref{measure:var_eq_cT_p1} and \eqref{measure:var_eq_cT_p2}, we obtain that \( \cZTl \sim_\lambda 1\). \\
We now turn to \eqref{measure_var:item_corollary_ii}, which controls the Laplace transform of \( \tmuT \). Using the Boué-Dupuis formula and Proposition \ref{measure_var:proposition_main}, we obtain that 
\begin{align*}
- \log\Big( \E_{\tmu_T}\Big[ \exp\big( - \varphi(W) \big) \Big] \Big) = \log(\cZTl ) +  \inf_{u \in \bH} \E_{\bP} \bigg[ \PsiT (W, I[u]) + \frac{\lambda}{4} \cV(\Iinf(u)) + \frac{1}{2} \| \lP[u] \|_{\cH}^2 \bigg]. 
\end{align*}
The first summand \( \log(\cZTl) \) has already been controlled. The second summand can be controlled using exactly the same estimates. \\
We finally prove \eqref{measure_var:item_corollary_iii}. Let \( \alpha,\eta >0 \) be sufficiently small depending on \( \kappa\). Since the embedding \( \cC_t^{\alpha,\eta} \cC_x^{-\frac{1+\kappa}{2}} \hookrightarrow \cC_t^0 \cC_x^{-\frac{1}{2}-\kappa} \) is compact (see \eqref{notation:eq_Cweighted} for the definition), it suffices to estimate the Laplace transform evaluated at 
\begin{equation}
\varphi(W) = - \| W \|_{ \cC_t^{\alpha,\eta} \cC_x^{-\frac{1+\kappa}{2}}}. 
\end{equation}
While this is not a functional on \(  \cC_t^0 \cC_x^{-\frac{1}{2}-\kappa} \), we can proceed using a minor modification of the previous estimates. Using Proposition \ref{measure_var:proposition_main} and \eqref{measure_var:eq_PsiTz}, it suffices to prove
\begin{equation}
\E_{\bP} \big[ \| W \|_{ \cC_t^{\alpha,\eta} \cC_x^{-\frac{1+\kappa}{2}}}  \big] \lesssim 1  \quad \text{and} \quad  \| I_t[u] \|_{ \cC_t^{\alpha,\eta} \cC_x^{-\frac{1+\kappa}{2}}} \lesssim \| u \|_{L_{t,x}^2}. 
\end{equation}
The first estimate follows from Kolmogorov's continuity theorem (cf. \cite[Theorem 4.3.2]{Stroock11}). The second estimate is deterministic and follows from Sobolev embedding and Lemma \ref{appendix:lemma_It}. 
\end{proof}

Using Proposition \ref{measure_var:proposition}, we easily obtain Theorem \ref{theorem:tightness}.

\begin{proof}[Proof of Theorem \ref{theorem:tightness}:]
The tightness is included in Proposition \ref{measure_var:proposition}. The weak convergence of the sequence $(\mu_N)_{N\geq1}$ follows from tightness and the uniqueness of weak subsequential limits (Proposition \ref{appendix:prop_uniqueness}). 
\end{proof}

We also record the following consequence of the proof of Proposition \ref{measure_var:proposition_main}, which will play an important role in Section \ref{section:singularity}. The proof of this result will be postponed until Section \ref{section:proof_variational}. 

\begin{corollary}[Behavior of $\cT$]\label{measure_var:corollary_cT}
If \( \beta > 1/2 \), then we have for all \( \lambda >0 \) that 
\begin{equation}
\sup_{T\geq 1} |\cT| \lesssim_\lambda 1. 
\end{equation}
\end{corollary}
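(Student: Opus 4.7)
The plan is to trace through the proof of Proposition \ref{measure_var:proposition_main} and identify the explicit formula for the renormalization constant $\cT$, and then to show that this explicit expression is uniformly bounded in $T$ when $\beta > 1/2$.

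The identity \eqref{measure_var:eq_main} is obtained by the standard completion of squares built around the shift $\lt[u] = u_t + \lambda \Jt \lcol (V*(\Wt)^2) \Wt \rcol$. Writing $u_t = \lt[u] - \lambda \Jt \lcol (V*(\Wt)^2) \Wt \rcol$ and expanding $\tfrac12 \| u \|_{\cH}^2$ produces a cross term, which after inserting \eqref{measure_re:eq_potential_derivative} is absorbed (together with the binomial expansion \eqref{measure_re:eq_quartic_binomial}) into the bounded functional $\PsiT$, and a pure square term
\begin{equation*}
\frac{\lambda^2}{2} \int_0^\infty \| \Jt \lcol (V*(\Wt)^2) \Wt \rcol \|_{L^2_x}^2 \, \dt.
\end{equation*}
After taking expectations, the $\lambda^2$ term is a deterministic constant in $T$; the constant $\cT$ on the LHS of \eqref{measure_var:eq_main} is precisely chosen to cancel it (up to a finite remainder controlled by $\PsiT$). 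In other words, the relevant part of $\cT$ is
\begin{equation*}
\cT \;=\; \frac{\lambda^2}{2} \, \E_{\bP}\! \int_0^\infty \| \Jt \lcol (V*(\Wt)^2) \Wt \rcol \|_{L^2_x}^2 \, \dt \;+\; O_\lambda(1),
\end{equation*}
where the $O_\lambda(1)$ term collects contributions that are already uniformly bounded in $T$ by Lemma \ref{measure_re:lemma_stochastic_objects_I} and Lemma \ref{measure_re:lemma_stochastic_objects_III}. So the task reduces to bounding the displayed integral uniformly in $T \geq 1$.

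For this, we use Parseval together with the definition of $\Jt$ to write
\begin{equation*}
\E_{\bP}\! \int_0^\infty \| \Jt \lcol (V*(\Wt)^2) \Wt \rcol \|_{L^2_x}^2 \dt
\;=\; \int_0^\infty \sum_{n\in \Z^3} \frac{\sigma_t^T(n)^2}{\langle n \rangle^2} \, \E_{\bP} \big| \mathcal{F}\big( \lcol (V*(\Wt)^2) \Wt \rcol \big)(n) \big|^2 \dt.
\end{equation*}
Corollary \ref{measure_re:corollary_stochastic_objects_I} yields, for any $0 < \gamma < \min(\beta,1)$, the bound $\E_{\bP} | \mathcal{F}\lcol (V*(\Wt)^2) \Wt \rcol(n) |^2 \lesssim \langle n \rangle^{-2\gamma}$ uniformly in $T,t \geq 0$. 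Exchanging the sum and integral and using $\int_0^\infty \sigma_t^T(n)^2 \dt = \rhoT{\infty}(n)^2 \leq 1$, we obtain
\begin{equation*}
\E_{\bP}\! \int_0^\infty \| \Jt \lcol (V*(\Wt)^2) \Wt \rcol \|_{L^2_x}^2 \dt \;\lesssim\; \sum_{n\in\Z^3} \frac{1}{\langle n \rangle^{2+2\gamma}}.
\end{equation*}

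The last sum converges in three spatial dimensions precisely when $2 + 2\gamma > 3$, i.e.\ when $\gamma > 1/2$. Under the assumption $\beta > 1/2$ we may choose $\gamma \in (1/2, \min(\beta,1))$, which yields the uniform bound $\cT \lesssim_\lambda 1$. The main (very mild) obstacle is the bookkeeping in the first step: one has to verify that no other divergent term slips past the controls of $\PsiT$, so that the expression for $\cT$ really is the one above up to a bounded remainder; this is where the fact that all other stochastic objects appearing in \eqref{measure_re:eq_quartic_binomial} admit uniform-in-$T$ estimates from Lemmas \ref{measure_re:lemma_stochastic_objects_I}–\ref{measure_re:lemma_stochastic_objects_III} is crucial. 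The threshold $\beta = 1/2$ that appears here is exactly the one in Theorem \ref{theorem:singularity}, as expected.
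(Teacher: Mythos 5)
Your proposal is correct and follows essentially the same route as the paper: $\cT=-\E_{\bP}[\mathcal{E}_0]$, the quartic Wick integral has vanishing expectation, and the only contribution whose boundedness needs $\beta>1/2$ is $\frac{\lambda^2}{2}\E_{\bP}\big\|\Jt\lcol(V*(\Wt)^2)\Wt\rcol\big\|_{L_t^2L_x^2}^2$, which you estimate via Corollary \ref{measure_re:corollary_stochastic_objects_I} exactly as the paper does, with the same threshold $\gamma>1/2$. The paper handles the two remaining cubic-object terms in $\mathcal{E}_0$ explicitly, one via Lemma \ref{measure_re:lemma_stochastic_objects_I} together with Lemma \ref{measure_re:lemma_stochastic_objects_III} and one via the random matrix estimate (Proposition \ref{measure_rmt:proposition_estimate}), but your appeal to the uniform-in-$T$ bounds of Lemmas \ref{measure_re:lemma_stochastic_objects_I} and \ref{measure_re:lemma_stochastic_objects_III} suffices for these as well.
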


Proposition \ref{measure_var:proposition_main} is the most challenging part in the construction of the measure and the proof will be distributed over the remainder of this subsection.

\subsection{Visan's estimate and the cubic terms}\label{section:visan}
In the variational problem, the potential energy \( \cV(\Iinf[u]) \) appears with a favorable sign. This is crucial to control the terms in \( {\lcol \cVT(\Winf+\Iinf[u])\rcol}\) which are cubic in \( \Iinf[u]\) and hence cannot be controlled by the quadratic terms \( \| u\|_{L^2}^2 \) or \( \| l^T(u)\|_{L^2}^2 \). In the \( \Phi^4_3\)-model, the potential energy term \( \| \Iinf[u]\|_{L^4}^4 \) is both stronger and easier to handle. While we cannot change the strength of \( \cV(\Iinf[u]) \), Lemma \ref{measure_var:lemma_visan} solves the algebraic difficulties. \\

Due to the assumed lower-bound on \( V \), we first note that 
\begin{equation*}
\| f \|_{L^2_x(\T^3)}^4 = \| f^2 \|_{L^1_x(\T^3)}^2 \lesssim \int_{\T^3 \times \T^3} V(x-y) f(y)^2 f(x)^2 \dx \dy = \cV(f). 
\end{equation*}
Since at high-frequencies the kernel of \( \langle \nabla \rangle^{-\beta} \) essentially behaves like \( |x-y|^{-(3-\beta)} \), we also obtain that 
\begin{equation}\label{measure_var:eq_trivial_potential}
\| \langle \nabla \rangle^{-\frac{\beta}{2}}[f^2] \|_{L^2(\T^3)}^2 = \langle \big( \langle \nabla \rangle^{-\beta} f^2\big) , f^2  \rangle_{L^2_x(\T^3)} \lesssim  \int_{\T^3 \times \T^3} V(x-y) f(y)^2 f(x)^2 \dx \dy  = \cV(f). 
\end{equation}
Unfortunately, the square of \( f \) is inside the integral operator \( \langle \nabla \rangle^{-\frac{\beta}{2}} \), which makes it difficult to use this estimate.  The next lemma yields a much more useful lower bound on \( \cV(f) \). 
\begin{lemma}[Visan's estimate]\label{measure_var:lemma_visan}
Let \( 0 < \beta < 3 \) and \( f\in C^\infty(\T^3) \). Then, it holds that 
\begin{equation}
\| \langle \nabla \rangle^{-\frac{\beta}{4}} f \|_{L^4_x(\T^3)}^4 \lesssim \cV(f). 
\end{equation}
\end{lemma}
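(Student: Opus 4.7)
The plan is to factor the estimate through the intermediate quantity $\|\langle\nabla\rangle^{-\beta/2}[f^2]\|_{L^2_x}^2$ and prove the two-step chain
\begin{equation*}
\|\langle\nabla\rangle^{-\beta/4} f\|_{L^4_x(\T^3)}^4 \;\lesssim\; \|\langle\nabla\rangle^{-\beta/2}[f^2]\|_{L^2_x(\T^3)}^2 \;\lesssim\; \cV(f).
\end{equation*}
The second inequality is already observed as \eqref{measure_var:eq_trivial_potential}: the Bessel kernel $K_\beta$ of $\langle\nabla\rangle^{-\beta}$ on $\T^3$ satisfies $K_\beta(x) \lesssim |x|^{-(3-\beta)}$ near the origin and is bounded elsewhere, so by Assumptions \ref{assumptions:V} one has $K_\beta(x) \lesssim V(x)$ pointwise on $\T^3$; the inequality then follows by pairing $f(y)^2 f(x)^2$ against both kernels.

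The substantive step is therefore the first inequality, which is precisely \cite[(5.17)]{Visan07} (already cited in the statement of the lemma) transplanted from $(\R^3, |\nabla|)$ to $(\T^3, \langle\nabla\rangle)$. To carry this out, I would split $f = \widehat{f}(0) + g$ into its zero mode and its mean-zero part. The zero mode contributes only $|\widehat{f}(0)|^4 \leq \|f\|_{L^2_x}^4$, which is controlled directly by $\cV(f)$ since $V \gtrsim 1$ gives $\cV(f) \gtrsim \|f\|_{L^2_x}^4$. For the mean-zero part $g$, the symbols $\langle n\rangle^{-\beta/4}$ and $|n|^{-\beta/4}$ are comparable on $\Z^3 \setminus \{0\}$, so the estimate reduces to Visan's original argument: a Littlewood-Paley square function characterization of $L^4$, combined with a paraproduct decomposition $g^2 = 2 \sum_N g_{<N} g_N + \sum_N g_N^2$, from which the high-low paraproducts $g_{<N} g_N$ are shown to furnish the leading contribution to $P_N[g^2]$ at frequency $\sim N$ and to match the square-function bound on $\langle\nabla\rangle^{-\beta/4} g$.

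The main obstacle is the technical transplant of Visan's paraproduct argument to the periodic Bessel setting, but no new analytic input is required beyond his work: the periodic Littlewood-Paley theory and fractional paraproduct identities mirror their $\R^d$ counterparts once the zero mode has been peeled off and absorbed into the trivial $\|f\|_{L^2_x}^4 \lesssim \cV(f)$ bound. A cleaner alternative, sidestepping any rewriting of Visan's proof, is to invoke a torus version of the estimate already available in the energy-supercritical NLS literature.
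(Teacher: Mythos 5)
Your proposal is correct and matches the paper's (essentially one-line) argument: the paper proves this lemma simply by citing \cite[(5.17)]{Visan07} as requiring only a ``minor modification,'' with the kernel comparison $\| \langle \nabla \rangle^{-\beta/2}[f^2]\|_{L^2_x}^2 \lesssim \cV(f)$ already recorded in \eqref{measure_var:eq_trivial_potential}, exactly the factorization you use. Your additional remarks on peeling off the zero mode, absorbing it via $\cV(f)\gtrsim \|f\|_{L^2_x}^4$, and the comparability of $\langle n\rangle^{-\beta/4}$ with $|n|^{-\beta/4}$ on nonzero frequencies are precisely the omitted ``minor modification,'' so there is no substantive difference in route.
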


This estimate is a minor modification of \cite[(5.17)]{Visan07} and we omit the details. We now turn to the primary application of Visan's estimate in this work. 

\begin{lemma}[Cubic estimate]\label{measure_var:lemma_cubic}
For any small \( \delta >0\) and any \(  \frac{1+2\delta}{2} < \theta \leq 1  \), it holds that
\begin{equation}
\Big\| \langle \nabla \rangle^{\frac{1}{2}+\delta} \Big( (V* f^2) f \Big) \Big\|_{L^1_x(\T^3)} \lesssim \cV(f)^{\frac{1}{2}} \| f\|_{L^2_x(\T^3)}^{1-\theta} \| f \|_{H^1_x(\T^3)}^{\theta}. 
\end{equation}
\end{lemma}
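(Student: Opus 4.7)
The plan is to analyze $(V*f^2)\, f$ via a Littlewood-Paley / paraproduct decomposition, drawing the factor $\cV(f)^{1/2}$ from Visan's estimate. The starting observation, obtained by unfolding $\cV(f) = \langle V*f^2,\, f^2 \rangle_{L^2}$ and using the multiplier bound $|\widehat V(n)| \lesssim \langle n\rangle^{-\beta}$, is that
\begin{equation*}
\| V * f^2 \|_{H^{\beta/2}(\T^3)} \lesssim \cV(f)^{1/2}.
\end{equation*}
Setting $g \defe V*f^2$, Sobolev embedding on $\T^3$ upgrades this to $\|g\|_{L^{6/(3-\beta)}(\T^3)} \lesssim \cV(f)^{1/2}$.

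Next I would decompose $gf = \sum_{N_1, N_2} P_{N_1} g \cdot P_{N_2} f$, summed over dyadic frequencies. Since the Fourier support of each dyadic piece lies in $\{|n| \lesssim N_1+N_2\}$, applying $\langle\nabla\rangle^{1/2+\delta}$ and Cauchy-Schwarz in $L^1$ gives
\begin{equation*}
\|\langle\nabla\rangle^{1/2+\delta}(P_{N_1} g \cdot P_{N_2} f)\|_{L^1(\T^3)} \lesssim (N_1+N_2)^{1/2+\delta} \|P_{N_1} g\|_{L^2} \|P_{N_2} f\|_{L^2}.
\end{equation*}
In the low-high regime $N_1 \leq N_2$ the derivative effectively lands on $f$ and the double sum factors as $\|g\|_{L^2} \cdot \sum_{N_2} N_2^{1/2+\delta} \|P_{N_2} f\|_{L^2} \lesssim \cV(f)^{1/2} \|f\|_{H^{1/2+\delta+\varepsilon}}$ (the second factor controlled by Cauchy-Schwarz in $N_2$). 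Interpolation $\|f\|_{H^{1/2+\delta+\varepsilon}} \leq \|f\|_{L^2}^{1/2-\delta-\varepsilon} \|f\|_{H^1}^{1/2+\delta+\varepsilon}$ then matches the claimed bound with $\theta = 1/2+\delta+\varepsilon$.

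The high-low regime $N_1 > N_2$ is the delicate piece, since the derivative must be carried by $g = V*f^2$. When $\beta \geq 1+2\delta$, the bound $\|P_{N_1} g\|_{L^2} \lesssim N_1^{-\beta/2} \cV(f)^{1/2}$ closes the sum against $\|f\|_{L^2}$. For $\beta < 1+2\delta$ this naive $L^2 \cdot L^2$ pairing fails at the borderline, and I would instead Hölder-pair at exponents $L^p \cdot L^{p'}$ with $p \in (2, 6/(3-\beta)]$: interpolating the no-decay bound $\|g\|_{L^{6/(3-\beta)}} \lesssim \cV(f)^{1/2}$ (Sobolev) with the decay bound $\|P_{N_1} g\|_{L^2} \lesssim N_1^{-\beta/2} \cV(f)^{1/2}$ via Bernstein's inequality produces the needed strict $N_1$-decay at the interpolated exponent $p$, while on the low-frequency side $L^{p'} \supset L^2$ on $\T^3$ gives $\|P_{N_2} f\|_{L^{p'}} \leq \|f\|_{L^2}$.

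The main obstacle is precisely this small-$\beta$ case of the high-low regime: the paraproduct sum is critically at the edge of convergence, and the argument must preserve the \emph{exact} power $\cV(f)^{1/2}$ throughout the interpolations rather than diluting it into powers of $\|f\|_{L^4}^2$. Once the bound is obtained for some $\theta \in [1/2+\delta,\, 1]$, monotonicity of $\theta \mapsto \|f\|_{L^2}^{1-\theta}\|f\|_{H^1}^\theta$, which follows from $\|f\|_{H^1} \geq \|f\|_{L^2}$, promotes it to every $\theta$ in the stated range $(1+2\delta)/2 < \theta \leq 1$.
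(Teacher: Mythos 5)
Your overall skeleton (Littlewood--Paley decomposition of $(V*f^2)\,f$, Bernstein in $L^1$, derivative counting) is the same as the paper's, and your low-high regime $N_1\le N_2$ is correct: it is essentially the paper's case $N_3\gtrsim M$, with $\|V*f^2\|_{L^2}\lesssim \cV(f)^{1/2}$ playing the role of the paper's $\|P_M(V*f^2)\|_{L^2}\lesssim M^{-\beta/2}\|\langle\nabla\rangle^{-\beta/2}(f^2)\|_{L^2}$, and the closing monotonicity-in-$\theta$ remark is fine. The gap is exactly where you flagged the difficulty, in the high-low regime for $\beta<1+2\delta$, and your proposed fix does not close it. All the information you use about $g=V*f^2$ there descends from $\|g\|_{H^{\beta/2}}\lesssim \cV(f)^{1/2}$, so the available bounds are $\|P_{N_1}g\|_{L^2}\lesssim N_1^{-\beta/2}\cV(f)^{1/2}$ and, by Bernstein or Sobolev, $\|P_{N_1}g\|_{L^p}\lesssim N_1^{3(1/2-1/p)-\beta/2}\cV(f)^{1/2}$ for $2\le p\le 6/(3-\beta)$; interpolating a bound with decay $N_1^{-\beta/2}$ against one with no decay can only produce decay exponents in $[0,\beta/2]$ --- interpolation never yields more decay than the better of its two inputs, so the ``strict $N_1$-decay at the interpolated exponent $p$'' you invoke does not exist. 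Since the low-frequency factor gives no decay either ($\|P_{N_2}f\|_{L^{p'}}\le\|f\|_{L^2}$), each block is, at best, $N_1^{1/2+\delta-\beta/2+3(1/2-1/p)}\cV(f)^{1/2}\|f\|_{L^2}$, and the sum over $N_1$ diverges whenever $\beta\le 1+2\delta$ --- which includes the entire regime $0<\beta<1/2$ that the paper actually needs. Structurally: any argument that sees $V*f^2$ only through $\cV(f)$, i.e.\ through $\|\langle\nabla\rangle^{-\beta/2}(f^2)\|_{L^2}$, can extract at most $\beta/2$ derivatives of smoothing on $g$, which cannot pay for the $1/2+\delta$ derivatives landing on its high-frequency part.

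The missing idea is to open up $g$ in the high-low case: decompose $f^2=\sum_{K_1,K_2}P_{K_1}f\,P_{K_2}f$ \emph{inside} the convolution. The Fourier support constraint forces $\max(K_1,K_2)\gtrsim M$ (the output frequency of $V*f^2$), so the high inner factor, say $P_{K_1}f$ with $K_1\gtrsim M$, supplies both the decay and the interpolated norm via $\|P_{K_1}f\|_{L^2}\lesssim K_1^{-\theta}\|f\|_{L^2}^{1-\theta}\|f\|_{H^1}^{\theta}$; the potential is used at full strength $M^{-\beta}$ (not $M^{-\beta/2}$) through $\|P_M V\|_{L^1}\lesssim M^{-\beta}$ and Young's inequality; and the coercive factor $\cV(f)^{1/2}$ is harvested not from $g$ but as the product of two factors $\|\langle\nabla\rangle^{-\beta/4}f\|_{L^4}$, one from the remaining inner factor $P_{K_2}f$ and one from the outer low-frequency factor, via Visan's estimate (Lemma \ref{measure_var:lemma_visan}), at the price of harmless losses $K_2^{\beta/4}$, $N_2^{\beta/4}$. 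This different allocation of the potential energy is exactly what makes the dyadic sums converge for all $0<\beta<3$; without it, your high-low sum is divergent in the singular regime that matters.
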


\begin{proof} 
We use a Littlewood-Paley decomposition to write
\begin{equation*}
(V*f^2) f = \sum_{M,N_3} P_M \big( V*f^2\big) \cdot P_{N_3} f.  
\end{equation*}
We first estimate the contribution for \( N_3 \gtrsim M \).  We have that
\begin{align*}
&\sum_{\substack{M,N_3\colon  N_3 \gtrsim M}} \big\| \langle \nabla \rangle^{\frac{1}{2}+\delta} \Big( P_M \big( V*f^2\big) \cdot P_{N_3} f \Big) \big\|_{L^1_x} \\
&\lesssim \sum_{\substack{M,N_3 \colon  N_3 \gtrsim M}} N_3^{\frac{1}{2}+\delta}  \| P_M( V * f^2) \|_{L^2_x} \| P_{N_3} f \|_{L^2_x} \\
&\lesssim \Big( \sum_{\substack{M,N_3 \colon  N_3 \gtrsim M}} N_3^{\frac{1}{2}+\delta} M^{-\frac{\beta}{2}} N_3^{-\theta} \Big) \| \langle \nabla \rangle^{-\frac{\beta}{2}} f^2 \|_{L^2_x} \| f\|_{L^2_x}^{1-\theta} \| f \|_{H^1_x}^{\theta} \\
&\lesssim \| \langle \nabla \rangle^{-\frac{\beta}{2}} f^2 \|_{L^2_x} \| f\|_{L^2_x}^{1-\theta} \| f \|_{H^1_x}^{\theta} . 
\end{align*}
Due to \eqref{measure_var:eq_trivial_potential}, this contribution is acceptable. Next, we estimate the contribution of \( N_3 \lesssim M \). We further decompose
\begin{equation*}
f^2 = \sum_{N_1,N_2} P_{N_1} f \cdot P_{N_2} f. 
\end{equation*}
Then, the total contribution can be bounded using Hölder's inequality  and Fourier support considerations by
\begin{align*}
&\sum_{\substack{N_1,N_2,N_3,M\colon \\ N_3 \lesssim M \leq \max(N_1,N_2)}} \Big\| \langle \nabla\rangle^{\frac{1}{2}+\delta} \Big( P_M\big( V * (P_{N_1} f \cdot P_{N_2} f) \big) \cdot P_{N_3} f\Big) \Big\|_{L^1_x} \\
&\lesssim \sum_{\substack{N_1,N_2,N_3,M\colon \\ N_3 \lesssim M \leq \max(N_1,N_2)}}M^{\frac{1}{2}+\delta} \| P_M\big( V  * (P_{N_1} f\cdot P_{N_2} f ) \big) \|_{L_x^{\frac{4}{3}}} \| P_{N_3} f \|_{L^4_x} \allowdisplaybreaks[2]\\
&\lesssim  \sum_{\substack{N_1,N_2,M\colon \\ N_3 \lesssim M \leq\max(N_1,N_2)}}M^{\frac{1}{2}+\delta-\beta} N_3^{\frac{\beta}{4}}   \| P_{N_1} f\cdot P_{N_2} f  \|_{L_x^{\frac{4}{3}}} \|  P_{N_3} \langle \nabla \rangle^{-\frac{\beta}{4}} f \|_{L^4_x}\allowdisplaybreaks[2] \\
&\lesssim  \Big( \sum_{\substack{N_1,N_2,M\colon \\ N_1 \geq M,N_2}} M^{\frac{1}{2}+\delta-\frac{3\beta}{4}} N_1^{-\theta} N_2^{\frac{\beta}{4}} \Big) \| \langle \nabla \rangle^{-\frac{\beta}{4}} f \|_{L^4_x}^2 \| f \|_{L^2_x}^{1-\theta} \| f \|_{H^1_x}^{\theta} \allowdisplaybreaks[2] \\
&\lesssim \| \langle \nabla \rangle^{-\frac{\beta}{4}} f \|_{L^4_x}^2 \| f \|_{L^2_x}^{1-\theta} \| f \|_{H^1_x}^{\theta}.
\end{align*}
In the last line, it is simplest to first perform the sum in \( N_2 \), then in \( N_1 \), and finally in \( M \). 
\end{proof}

\subsection{A random matrix estimate and the quadratic terms}\label{section:rmt}

In the proof of Proposition \ref{measure_var:proposition_main}, we will encounter expressions such as 
\begin{equation}\label{measure_rmt:eq_motivation}
\int_{\T^3} \bigg( \big( V * (\Wt \It[u])\big)(x) \Wt(x) \It[u](x) - ( \MT \It[u])(x) \It[u](x) \bigg) \dx. 
\end{equation}
This term no longer involves an explicit stochastic object, such as \( \lcol (\Wt)^2 \rcol (x) \), at a single point \( x \in \T^3 \). By expanding the convolution, we can capture stochastic cancellations in terms of two spatial variables \( x \in \T^3 \) and \( y\in \T^3\), which has already been studied in Lemma \ref{measure_re:lemma_stochastic_objects_II}. The most natural way to capture stochastic cancellations in \eqref{measure_rmt:eq_motivation}, however, is through random operator bounds. This is the object of the next lemma.

\begin{proposition}[Random matrix estimate]\label{measure_rmt:proposition_estimate}
Let \( \gamma > \max(1-\beta,1/2) \) and let \( 1 \leq r \leq \infty \). We define
\begin{equation*}
\OpT \defe \sup_{ \substack{ f_1, f_2 \colon \\ \| f_1 \|_{\mathbb{W}_x^{\gamma,r}(\T^3)}\leq 1, \\ \| f_2 \|_{\mathbb{W}_x^{\gamma,r^\prime}(\T^3)}\leq 1  .}} \bigg[ \int_{\T^3} V* (\Wt f_1 ) ~ \Wt f_2 \dx - \int_{\T^3} \big( \MT f_1 \big) f_2 \dx \bigg]. 
\end{equation*}
Then, we have for all \( 1 \leq p  <\infty \) that 
\begin{equation}\label{measure_rmt:eq_estimate}
\sup_{T,t\geq 0} \| \OpT \|_{L^p_\omega(\Omega)} \lesssim p. 
\end{equation}
\end{proposition}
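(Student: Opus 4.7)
The plan is to reduce to a Littlewood--Paley decomposition of both the random field $\Wt$ and the test functions $f_1, f_2$, and then to estimate each dyadic block using Gaussian hypercontractivity on the second Wiener chaos. Concretely, I would decompose the first copy of $\Wt$ as $\sum_{K_1} P_{K_1} \Wt$, the second copy as $\sum_{K_2} P_{K_2} \Wt$, and $f_i = \sum_{N_i} P_{N_i} f_i$ for $i = 1,2$. Using Definition \ref{measure_re:definition_MT_decomposition}, the multiplier $\MT$ splits compatibly as $\MT = \sum_{K_1 \sim K_2} \MT[V;K_1,K_2]$, so the Wick counterterm distributes over the dyadic decomposition (and is only retained when $K_1 \sim K_2$, since the expectation $\E[P_{K_1}\Wt(x) P_{K_2}\Wt(y)]$ otherwise vanishes by Fourier support considerations). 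Using the definition of $\mathbb{W}_x^{\gamma, r}$ in terms of Littlewood--Paley blocks, one arrives at
\begin{equation*}
\OpT \le \sum_{K_1, K_2, N_1, N_2} N_1^{-\gamma} N_2^{-\gamma}\, \OpTN,
\end{equation*}
where $\OpTN$ is the same bilinear-form norm with test functions normalized in the $L^r$ (resp. $L^{r'}$) unit balls and supported dyadically at frequency $N_1$ (resp. $N_2$).

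The heart of the argument is a per-block moment bound of the form
\begin{equation*}
\bigl\| \OpTN \bigr\|_{L^p_\omega} \lesssim p \cdot M^{-\epsilon},
\end{equation*}
for some $\epsilon = \epsilon(\beta, \gamma) > 0$ and $M = \max(K_1,K_2,N_1,N_2)$ (possibly up to polylogarithmic losses). For each fixed pair of test functions, the Wick-renormalized bilinear form is an element of the second Wiener chaos generated by the Brownian motions $(B^n)$, so Gaussian hypercontractivity (Lemma \ref{prelim:lemma_hypercontractivity}) bounds its $L^p_\omega$-norm by $p$ times its $L^2_\omega$-norm, which accounts for the single power of $p$ in the final estimate. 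The $L^2_\omega$-norm itself reduces by Wick pairing (equivalently, via the iterated stochastic integral representation in Lemma \ref{measure_re:lemma_products} combined with It\^o's isometry) to a deterministic Fourier-theoretic sum of the form
\begin{equation*}
\sum_{n_1, n_2} \chi_{K_1}(n_1) \chi_{K_2}(n_2) \frac{|\widehat{V}(n_1 + n_2)|^2}{\langle n_1 \rangle^2 \langle n_2 \rangle^2} \cdot (\text{cutoffs on outputs at scales } N_1, N_2),
\end{equation*}
which, together with $|\widehat{V}(k)| \lesssim \langle k\rangle^{-\beta}$ from Assumption \ref{assumptions:V}, yields the desired dyadic gain. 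The passage from the fixed-test-function bound to the supremum over the $L^r$, $L^{r'}$ unit balls is handled by a Schur-type test on the localized random kernel; Bernstein's inequality on each dyadic block is used to convert the relevant $L^r$-operator norms to $L^2$-based Hilbert--Schmidt norms, at the cost of factors polynomial in $N_1, N_2$ that are ultimately absorbed by the gain.

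Summing the per-block estimate against the Sobolev weights $N_1^{-\gamma} N_2^{-\gamma}$, the resulting series converges precisely when $\gamma > \max(1-\beta, 1/2)$: the bound $\gamma > 1/2$ encodes the distributional regularity $\Wt \in \cC_x^{-1/2-\kappa}$, while $\gamma > 1-\beta$ is required to close the sum in the regime where $\beta$ is small and the smoothing effect of convolution with $V$ is weakest.

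The main obstacle is combining the non-Hilbertian norms $\mathbb{W}_x^{\gamma, r}$ (for arbitrary $r \in [1,\infty]$) with the Wiener-chaos moment estimate. Passing between $L^r$ and $L^2$ via Bernstein's inequality on each Littlewood--Paley block loses factors polynomial in the dyadic frequencies, and showing that all such losses are absorbed uniformly in $r$ by the $K_i$- and $N_i$-gains requires careful bookkeeping of exponents. A related subtlety is that the Wick counterterm is present only when $K_1 \sim K_2$, so off-diagonal blocks ($K_1 \not\sim K_2$) must extract their dyadic decay entirely from the decay of $\widehat{V}$ and the stochastic regularity of $\Wt$, without benefiting from any cancellation against $\MT$.
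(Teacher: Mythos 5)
Your dyadic reduction and your observation that the counterterm acts only on diagonal blocks of the two copies of $\Wt$ are consistent with the paper, but the core step of your plan has a genuine gap: Gaussian hypercontractivity (Lemma \ref{prelim:lemma_hypercontractivity}) applies to a \emph{fixed} multiple stochastic integral, whereas the per-block quantity you must bound is a \emph{supremum} of such integrals over the unit balls of $\mathbb{W}^{\gamma,r}_x$ and $\mathbb{W}^{\gamma,r'}_x$ (equivalently of $L^r$, $L^{r'}$ after localization). The claimed moment bound for the block operator norm therefore does not follow from ``fix $f_1,f_2$, apply hypercontractivity, compute the Wick pairing''; one needs either the moment method on the random kernel (the combinatorial route the paper explicitly mentions as an alternative, naturally adapted to $r=2$ only), or a fixed random variable dominating the operator norm. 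Your patch for the latter --- Bernstein plus a Schur/Hilbert--Schmidt bound --- loses factors polynomial in the block frequencies (e.g.\ $N^{3/2}$ from $L^1\to L^2$ Bernstein on a block, plus a square root of the rank from Hilbert--Schmidt), and these are \emph{not} absorbed in exactly the regime where the renormalization is needed: both copies of $\Wt$ at a common high frequency, test functions at low or intermediate frequency, where the only available smallness is $|\widehat V(k)|\lesssim \langle k\rangle^{-\beta}$ with $\beta$ possibly arbitrarily small. This is precisely the ``careful bookkeeping'' your proposal defers, and it is where the argument breaks down.

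The paper avoids the supremum problem entirely by a different route. After the Littlewood--Paley reduction and interpolation to $r\in\{1,\infty\}$, it proves a \emph{pathwise, deterministic} estimate of the form $\OpTN\lesssim (N_1N_2)^{-\delta}(K_1K_2)^{\gamma}\big(1+\|\Wt\|_{\cC_x^{-1/2-\delta}}^2+\sup_{y}\|\lcol(\tau_y P_{N_1}\Wt)P_{N_2}\Wt\rcol\|_{\cC_x^{-1-\delta}}\big)$, so the supremum over test functions is handled by H\"older/Young/Bernstein in physical space, and hypercontractivity enters only through the two explicit stochastic objects of Lemma \ref{measure_re:lemma_stochastic_objects_I} and Lemma \ref{measure_re:lemma_stochastic_objects_II} (the latter also moves the supremum over translations $y$ inside the expectation via Kolmogorov continuity --- another sup-versus-moment exchange your plan has no mechanism for). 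The cancellation against $\MT$ is used only for the high-frequency part $P_{\gtrsim N_1}V$ of the potential, by writing the convolution as an average over translations and recognizing the renormalized object $\lcol(\tau_y P_{N_1}\Wt)P_{N_2}\Wt\rcol$ through the physical-space representation of $\MT$ (Lemma \ref{measure_re:lemma_physical_MT}), while the low-frequency part and the off-diagonal blocks are estimated without cancellation using $\|P_LV\|_{L^1}\lesssim L^{-\beta}$ and Fourier support considerations. To salvage your route you would have to supply the missing moment-method estimate for the block operator norms (and still treat $r\neq 2$); as written, the proposal does not close.
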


\begin{remark}
Aside from Fourier support considerations, the proof below mainly proceeds in physical space. If \( r=2 \), an alternative approach is to view \( \OpTtwo \) as the operator norm of  a random matrix acting on the Fourier coefficients. Using a non-trivial amount of combinatorics, one can then bound \( \OpTtwo \) using the moment method (see also \cite[Proposition 2.8]{DNY20}).  This alternative approach is closer to the methods in the literature on random dispersive equations but more complicated. The estimate for \( r \neq 2 \), which is not needed in this paper, is useful in the study of the stochastic heat equation with Hartree nonlinearity. 
\end{remark}

\begin{proof}
Since this will be important in the proof, we now indicate the dependence of the multiplier on the interaction potential by writing \( \MT[V] \). We use a Littlewood-Paley decomposition of \( \Wt , f_1, \) and \( f_2 \). We then have that  
\begin{align*}
&\int_{\T^3} V* (\Wt f_1 ) ~ \Wt f_2 \dx - \int_{\T^3} \big( \MT[V]f_1 \big) f_2 \dx \\
&= \sum_{K_1,K_2,N_1,N_2} \bigg[ \int_{\T^3} V* (P_{N_1}\Wt \, P_{K_1} f_1 ) ~ P_{N_2}\Wt P_{K_2} f_2 \dx - \int_{\T^3} \big( \MT[V;N_1,N_2] P_{K_1}f_1 \big)P_{K_2} f_2 \dx\bigg].
\end{align*}
To control this sum, we first define a frequency-localized version of \( \OpT \) by 
\begin{equation*}
\begin{aligned}
&\OpTN \\
&\defe \sup_{\substack{f_1,f_2 \colon\\ \| f_1\|_{L^r_x}\leq 1,\\ \|f_2 \|_{L^{r^\prime}_x}\leq 1}}\bigg[ \int_{\T^3} V* (P_{N_1}\Wt \, P_{K_1} f_1 ) ~ P_{N_2}\Wt P_{K_2} f_2 \dx - \int_{\T^3} \big( \MT[V;N_1,N_2] P_{K_1}f_1 \big)P_{K_2} f_2 \dx\bigg].
\end{aligned}
\end{equation*}
We emphasize the change from \( \mathbb{W}_x^{\gamma,r}(\T^3) \) to \( L^r_x(\T^3) \), which simplifies the notation below. By proving the estimate for a slightly smaller \( \gamma \), \eqref{measure_rmt:eq_estimate} reduces to
\begin{equation}
\sup_{T,t\geq 0} \| \OpTN \|_{L^p_\omega(\Omega)} \lesssim p (N_1 N_2)^{-\delta} (K_1 K_2)^\gamma. 
\end{equation}
By using Lemma \ref{measure_re:lemma_stochastic_objects_I} and Lemma \ref{measure_re:lemma_stochastic_objects_II}, it suffices to prove for a small \( \delta >0 \) that 
\begin{equation}
\begin{aligned}
\OpTN &\lesssim (N_1 N_2)^{-\delta} (K_1 K_2)^\gamma \\
&\times \Big( 1 + \| \Wt \|_{\cC_x^{-\frac{1}{2}-\delta}}^2 + \sup_{y\in \T^3} \sup_{N_1,N_2} \| \lcol (\tau_y P_{N_1} \Wt) P_{N_2} \Wt \rcol \|_{\cC^{-1-\delta}_x} \Big). 
\end{aligned}
\end{equation}
By interpolation, we can further reduce to \( r=1 \) or \( r=\infty \). Using the self-adjointness of the convolution with \( V \) and the multiplier \( \MT[V;N_1,N_2] \), it suffices to take \( r=1 \). We now separate the cases \( N_1 \sim N_2 \) and \( N_1 \not \sim N_2 \). \\

\emph{Case 1: \( N_1 \not \sim N_2 \).} This is the easier (but slightly tedious) case and it does not contain any probabilistic resonances. We note that \( \MT[V;N_1,N_2]=0 \) and hence we only need to control the convolution term. From Fourier support considerations, we also see that this term vanishes unless \( \max(K_1,K_2) \gtrsim \max(N_1,N_2) \). While our conditions on \( f_1 \) and \( f_2 \) are not completely symmetric and we already used the self-adjointness to restrict to \( r=1 \), we only treat the case \( K_1 \gtrsim K_2 \). Since our proof only relies on Hölder's inequality and Young's inequality, the case \( K_1 \lesssim K_2 \) can be treated similarly. We now estimate
\begin{align*}
&\bigg| \int_{\T^3} V* (P_{N_1}\Wt \, P_{K_1} f_1 ) ~ P_{N_2}\Wt P_{K_2} f_2 \dx \bigg| \\
&\lesssim \sum_{L\lesssim K_1} \bigg| \int_{\T^3} P_L\Big(V* (P_{N_1}\Wt \, P_{K_1} f_1 ) \Big)  ~ \widetilde{P}_L \Big(P_{N_2}\Wt P_{K_2} f_2\Big) \dx \bigg|\\
&\lesssim \sum_{L\lesssim K_1} \big \| (P_L V) * (P_{N_1}\Wt \, P_{K_1} f_1) \big\|_{L_x^1}  \big \| \widetilde{P}_L (P_{N_2}\Wt P_{K_2} f_2 ) \big\|_{L_x^\infty} \\
&\lesssim \| P_{N_1} \Wt \|_{L_x^\infty} \| f_1 \|_{L_x^1}  \sum_{L\lesssim K_1} \| P_L V \|_{L_x^1}  \big \| \widetilde{P}_L (P_{N_2}\Wt P_{K_2} f_2 ) \big\|_{L_x^\infty} \\
&\lesssim N_1^{\frac{1}{2}+\delta} \| \Wt \|_{\cC^{-\frac{1}{2}-\delta}_x}  \sum_{L\lesssim K_1} L^{-\beta} \big \| \widetilde{P}_L (P_{N_2}\Wt P_{K_2} f_2 ) \big\|_{L_x^\infty} . 
\end{align*}
We now split the last sum into the cases \( L \ll N_2 \) and \( N_2 \lesssim L \lesssim K_1 \). If \( L \ll N_2 \), we only obtain a non-zero contribution when \( N_2 \sim K_2 \). Thus, the corresponding contribution is bounded by 
\begin{align*}
&1\{K_2\sim N_2\} N_1^{\frac{1}{2}+\delta} \| \Wt \|_{\cC^{-\frac{1}{2}-\delta}_x}  \sum_{L\lesssim N_2 } L^{-\beta} \big \| \widetilde{P}_L (P_{N_2}\Wt P_{K_2} f_2 ) \big\|_{L_x^\infty}  \\
&\lesssim 1\{K_2\sim N_2\} N_1^{\frac{1}{2}+\delta} \| \Wt \|_{\cC^{-\frac{1}{2}-\delta}_x} \Big(  \sum_{L\lesssim N_2 } L^{-\beta} \Big) \| f_2 \|_{L_x^\infty} \| P_{N_2} \Wt \|_{L_x^\infty} \\ 
&\lesssim 1\{K_2\sim N_2\} N_1^{\frac{1}{2}+\delta} N_2^{\frac{1}{2}+\delta} \| \Wt \|_{\cC^{-\frac{1}{2}-\delta}_x}^2 \\
&\lesssim (N_1 N_2)^{-\delta} K_1^\gamma K_2^\gamma \| \Wt \|_{\cC^{-\frac{1}{2}-\delta}_x}^2 . 
\end{align*}
In the last line, we also used  \( N_1 \lesssim K_1 \) and \( \gamma > 1/2 \). If \( L \gtrsim N_2 \), we simply estimate 
\begin{align*}
&N_1^{\frac{1}{2}+\delta} \| \Wt \|_{\cC^{-\frac{1}{2}-\delta}_x}  \sum_{N_2 \lesssim L\lesssim K_1} L^{-\beta} \big \| \widetilde{P}_L (P_{N_2}\Wt P_{K_2} f_2 ) \big\|_{L_x^\infty} \\
&\lesssim N_1^{\frac{1}{2}+\delta}  \| \Wt \|_{\cC^{-\frac{1}{2}-\delta}_x} \Big(  \sum_{N_2 \lesssim L\lesssim K_1} L^{-\beta} \Big) \| P_{N_2} \Wt \|_{L_x^\infty} \| P_{K_2} f_2 \|_{L_x^\infty} \\
&\lesssim N_1^{\frac{1}{2}+\delta} N_2^{\frac{1}{2}-\beta+\delta}  \| \Wt \|_{\cC^{-\frac{1}{2}-\delta}_x}^2 \\
&\lesssim (N_1 N_2)^{-\delta} K_1^\gamma   \| \Wt \|_{\cC^{-\frac{1}{2}-\delta}_x}^2,
\end{align*}
provided that \( \gamma > \max( 1-\beta, 1/2 ) \). This completes the estimate in Case 1, i.e., \( N_1 \not \sim N_2 \). \\

\emph{Case 2: \( N_1 \sim N_2 \).} This is the more difficult case. Guided by the uncertainty principle, we decompose the interaction potential by writing  \( V = P_{\ll N_1} V + P_{\gtrsim N_1} V \). Using the linearity of the multiplier \( \MT[V;N_1,N_2] \) in \( V \), we decompose 
\begin{align*}
&\int_{\T^3} V* (P_{N_1}\Wt \, P_{K_1} f_1 ) ~ P_{N_2}\Wt P_{K_2} f_2 \dx - \int_{\T^3} \big( \MT[V;N_1,N_2] P_{K_1}f_1 \big)P_{K_2} f_2 \dx \\
&= \int_{\T^3} (P_{\ll N_1 } V)* (P_{N_1}\Wt \, P_{K_1} f_1 ) ~ P_{N_2}\Wt P_{K_2} f_2 \dx - \int_{\T^3} \big( \MT[P_{\ll N_1} V;N_1,N_2] P_{K_1}f_1 \big)P_{K_2} f_2 \dx \\
&+ \int_{\T^3} (P_{\gtrsim N_1 } V)* (P_{N_1}\Wt \, P_{K_1} f_1 ) ~ P_{N_2}\Wt P_{K_2} f_2 \dx - \int_{\T^3} \big( \MT[P_{\gtrsim N_1} V;N_1,N_2] P_{K_1}f_1 \big)P_{K_2} f_2 \dx.
\end{align*}
We now split the proof into two subcases corresponding to the contributions of \( P_{\ll N_1} V \) and \( P_{\gtrsim N_1} V \). \\

\emph{Case 2.a: \( N_1 \sim N_2\), contribution of \( P_{\ll N_1} V \).} Similar as in Case 1, we do not rely on any cancellation between the convolution term and its renormalization. As a result, we estimates both terms separately. \\
We first estimate the convolution term. Due to the convolution with \( P_{\ll N_1} V \), we only obtain a non-zero contribution if \( N_1 \sim K_1 \). Using \( N_1 \sim N_2 \) in the second inequality below, we obtain that 
\begin{align*}
&\Big|  \int_{\T^3} (P_{\ll N_1 } V)* (P_{N_1}\Wt \, P_{K_1} f_1 ) ~ P_{N_2}\Wt P_{K_2} f_2 \dx \Big| \\
&\lesssim 1\{N_1 \sim K_1\}  \| (P_{\ll N_1 } V)* (P_{N_1}\Wt \, P_{K_1} f_1 ) \|_{L_x^1} \| \widetilde{P}_{\ll N_1} ( P_{N_2}\Wt P_{K_2} f_2  ) \|_{L_x^\infty}  \\ 
&\lesssim 1\{N_1 \sim K_1\}  1\{N_2 \sim K_2\} \| P_{N_1} \Wt \|_{L_x^\infty}  \| f_1 \|_{L_x^1} \| P_{N_2} \Wt \|_{L_x^\infty} \| f_2 \|_{L_x^\infty} \\
&\lesssim 1\{N_1 \sim K_1\}  1\{N_2 \sim K_2\} (N_1 N_2)^{\frac{1}{2}+\delta} \| \Wt \|_{\cC_x^{-\frac{1}{2}-\delta}}^2 \\
&\lesssim (N_1 N_2)^{-\delta} (K_1 K_2)^\gamma  \| \Wt \|_{\cC_x^{-\frac{1}{2}-\delta}}^2 . 
\end{align*}
Second, we turn to the multiplier term. From the definition of \( \MT[P_{\ll N_1} V; N_1,N_2] \) (see Definition \ref{measure_re:definition_MT_decomposition}), we see that the corresponding symbol is supported on frequencies \( |n|\sim N_1 \). As a result, we only obtain a non-zero contribution if \( K_1 \sim K_2 \sim N_1 \). Using Lemma \ref{measure_re:lemma_physical_MT}, Hölder's inequality, Young's inequality, and the trivial estimate $\|\Cort \|_{L_x^\infty} \lesssim N_1$, we obtain
\begin{align*}
&\Big|\int_{\T^3} \big( \MT[P_{\ll N_1} V;N_1,N_2] P_{K_1}f_1 \big)P_{K_2} f_2 \dx \Big|\\
&= 1\{K_1\sim K_2 \sim N_1\} \Big| \int_{\T^3} \Big( ( \Cort P_{\ll N_1} V ) * P_{K_1} f_1 \Big) P_{K_2} f_2 \dx \Big| \\
&\lesssim 1\{K_1\sim K_2 \sim N_1\} \| ( \Cort P_{\ll N_1} V ) * P_{K_1} f_1\|_{L_x^1}  \|  P_{K_2} f_2 \|_{L_x^\infty} \\
&\lesssim 1\{K_1 \sim K_2\sim N_1\} \| \Cort P_{\ll N_1} V\|_{L_x^1} \| f_1\|_{L_x^1} \| f_2 \|_{L_x^\infty} \\
&\lesssim 1\{K_1 \sim K_2 \sim N_1\} \|  \Cort \|_{L_x^\infty}  \| V \|_{L_x^1} \\
&\lesssim 1\{K_1 \sim K_2 \sim N_1\}  N_1 \lesssim (N_1 N_2)^{-\delta} (K_1 K_2)^\gamma. 
\end{align*}
This completes the estimate of the contribution from \( P_{\ll N_1} V \). \\

\emph{Case 2.b: \( N_1 \sim N_2 \), contribution of \( P_{\gg N_1} V \).} The estimate for this case relies on the cancellation between the convolution and multiplier term, i.e., the renormalization. One important ingredient lies in the estimate \( \| P_{\gg N_1} V \|_{L_x^1} \lesssim N_1^{-\beta} \), which yields an important gain. \\
Using the translation operator \( \tau_y \), we rewrite the convolution term as 
\begin{align*}
& \int_{\T^3} (P_{\gtrsim N_1 } V)* (P_{N_1}\Wt \, P_{K_1} f_1 ) ~ P_{N_2}\Wt P_{K_2} f_2 \dx\\
&=\int_{\T^3} P_{\gtrsim N_1} V(y) \bigg[  \int_{\T^3} P_{K_1} f_1(x-y)  P_{K_2} f_2(x)  P_{N_1} \Wt(x-y) P_{N_2} \Wt(x) \dx \bigg] \dy \\
&= \int_{\T^3} P_{\gtrsim N_1} V(y)  \bigg[ \int_{\T^3} \big(\tau_y P_{K_1} f_1\, P_{K_2} f_2 \big)(x) \big( \tau_y P_{N_1} \Wt \, P_{N_2} \Wt\big)(x) \dx \bigg] \dy. 
\end{align*}
Using Lemma \ref{measure_re:lemma_physical_MT}, we obtain that 
\begin{align*}
 &\int_{\T^3} \big( \MT[P_{\gtrsim N_1} V;N_1,N_2] P_{K_1}f_1 \big)P_{K_2} f_2 \dx \\
&= \int_{\T^3} \Big( \big( \Cort P_{\gg N_1} V \big) * P_{K_1} f_1\Big)(x) P_{K_2} f_2(x) \dx \\
&= \int_{\T^3} P_{\gg N_1} V(y) \bigg[ \int_{\T^3} \big(\tau_y P_{K_1} f_1\, P_{K_2} f_2 \big)(x) \Cort(y) \dx \bigg] \dy. 
\end{align*}
By recalling Definition \ref{measure_re:definition_renormalization_translation} and combining both identities, we obtain that
\begin{align*}
 &\int_{\T^3} (P_{\gtrsim N_1 } V)* (P_{N_1}\Wt \, P_{K_1} f_1 ) ~ P_{N_2}\Wt P_{K_2} f_2 \dx- \int_{\T^3} \big( \MT[P_{\gtrsim N_1} V;N_1,N_2] P_{K_1}f_1 \big)P_{K_2} f_2 \dx \\
&= \int_{\T^3} P_{\gg N_1} V(y) \bigg[ \int_{\T^3} \big(\tau_y P_{K_1} f_1\, P_{K_2} f_2 \big)(x) \lcol (\tau_y P_{N_1} \Wt) P_{N_2} \Wt \rcol \hspace{-0.5ex}(x)\dx \bigg] \dy. 
\end{align*}
Using that \( \lcol (\tau_y P_{N_1} \Wt) P_{N_2} \Wt \rcol \hspace{-0.5ex}(x) \) is supported on frequencies \( \lesssim N_1 \), we obtain that 
\begin{align*}
&\bigg| \int_{\T^3} P_{\gg N_1} V(y) \bigg[ \int_{\T^3} \big(\tau_y P_{K_1} f_1\, P_{K_2} f_2 \big)(x) \lcol (\tau_y P_{N_1} \Wt) P_{N_2} \Wt \rcol \hspace{-0.5ex}(x)\dx \bigg] \dy \bigg| \\
&\lesssim \| P_{\gtrsim N_1} V(y) \|_{L_y^1} \sup_{y\in \T^3} \Big( \sum_{L\lesssim N_1} L^{1+\delta} \| P_L\big( (\tau_y P_{K_1} f_1) P_{K_2} f_2\big) \|_{L_x^1} \Big) \sup_{y \in \T^3} \| \lcol (\tau_y P_{N_1} \Wt) P_{N_2} \Wt \rcol \|_{\cC^{-1-\delta}_x}  \\
&\lesssim N_1^{-\beta} \,   \Big( \hspace{-2ex} \sum_{ \substack{L\lesssim N_1 \\ L \lesssim \max(K_1,K_2)}} \hspace{-2ex} L^{1+\delta} \Big)  \,  \| f_1 \|_{L_x^1} \| f_2 \|_{L_x^\infty}  \sup_{y \in \T^3} \| \lcol (\tau_y P_{N_1} \Wt) P_{N_2} \Wt \rcol \|_{\cC^{-1-\delta}_x} \\
&\lesssim (N_1 N_2)^{-\delta} \max(K_1,K_2)^\gamma \sup_{y \in \T^3} \| \lcol (\tau_y P_{N_1} \Wt) P_{N_2} \Wt \rcol \|_{\cC^{-1-\delta}_x} . 
\end{align*}
This completes the estimate of the contribution from \( P_{\gg N_1} V \) and hence the proof of the proposition.
\end{proof}

\subsection{Proof of Proposition \ref{measure_var:proposition_main} and  Corollary \ref{measure_var:corollary_cT}}\label{section:proof_variational}

In this subsection, we reap the benefits of our previous work and prove the main results of this section. 

\begin{proof}[Proof of Proposition \ref{measure_var:proposition_main}:] In this proof, we treat \( Q_T= Q_T(W,\varphi,\lambda) \) like an implicit constant and omit the dependence on \( W,\varphi\), and \( \lambda\). In particular, its precise definition may change throughout the proof.\\
From the quartic binomial formula (Lemma \ref{measure_re:lemma_binomial}), it follows that
\begin{equation*}
\begin{aligned}
 &  \varphi( W + I(u)) + \lcol \cVT( \Winf + \Iinf(u) )\rcol + \frac{1}{2} \| u \|_{L^2}^2  \\
  &= \lambda \int_{\T^3}  \lcol (V*(\Winf)^2) (\Winf) \rcol \Iinf[u] \dx +    \frac{\lambda}{4}\int_{\T^3} (V*(\Iinf[u])^2) (\Iinf[u])^2 \dx 
 + \frac{1}{2} \| u \|_{L^2}^2 \\
&+\frac{\lambda}{4} \int_{\T^3} \lcol (V*(\Winf)^2) (\Winf)^2 \rcol \dx + \cT +  \varphi( W + I(u))+ \frac{\lambda}{2}\int_{\T^3} ( V* \lcol (\Winf)^2 \rcol) (\Iinf[u] )^2 \dx  \\
&+ \lambda \int_{\T^3} \hspace{-0.5ex}\Big[ (V* (\Winf \Iinf[u] )) \Winf\Iinf[u] - (\MT \Iinf[u]  ) \Iinf[u] \Big] \hspace{-0.5ex}\dx + \lambda \int_{\T^3} \hspace{-1ex} (V* (\Iinf[u] )^2) \Iinf[u]  \Winf \dx . 
\end{aligned}
\end{equation*}
We have grouped the terms according to their importance and their degree in \( \Iinf[u] \). The first line consists of the main terms, whereas the second and third line consist of less important terms of increasing degree in \( \Iinf[u] \). We will split them further in  \eqref{measure_var:eq_E0}-\eqref{measure_var:eq_E3} below and introduce notation for the individual terms. \\
Since \( \lcol (V*(\Winf)^2) \Winf \rcol \) has regularity \( \min(- \frac{3}{2}+\beta, -\frac{1}{2})-\) and \( \Iinf[u] \) has regularity \( 1 \), the term 
\begin{equation*}
\lambda \int_{\T^3} \lcol (V*(\Winf)^2) \Winf \rcol \Iinf[u] \dx
\end{equation*}
is potentially unbounded as \( T \rightarrow \infty \). As in \cite{BG18}, we absorb it into the quadratic term \( \frac{1}{2} \|u\|_{L^2}^2 \). To this end, we want to remove the integral in \( \Iinf[u] \) and obtain an expression in the drift \( u \). From Itô's formula, it holds that 
\begin{align*}
&\lambda \int_{\T^3} \lcol (V*(\Winf)^2) \Winf \rcol \Iinf[u] \dx \\
&= \lambda \int_0^T \int_{\T^3} \lcol (V*(\Wt)^2) \Wt \rcol \Jt u_t \dx \mathrm{d}t + \lambda \int_0^T \int_{\T^3} \It[u] \, \mathrm{d}( \lcol (V*(\Wt)^2)\Wt\rcol).
\end{align*}
The second term is a martingale (in the upper limit of integration) and therefore has expectation equal to zero. Together with the self-adjointness of \( J_t \), it follows that
\begin{align*}
&\E_{\bP} \Big[ \lambda \int_{\T^3} \lcol (V*(\Winf)^2) \Winf \rcol \Iinf[u] \dx + \frac{1}{2} \| u \|_{L^2}^2  \Big] \\
&= \E_{\bP} \Big[ \lambda \int_0^T \int_{\T^3} \Jt \Big( \lcol (V*(\Wt)^2) \Wt \rcol \Big) u_t \dx \mathrm{d}t  + \frac{1}{2} \| u \|_{L^2}^2 \Big] \\
&= \E_{\bP}\Big[ \frac{1}{2}\Big \| \lP[u]\Big\|_{L^2}^2 - \frac{\lambda^2}{2} \Big\|  \Jt \Big( \lcol (V*(\Wt)) \Wt \rcol \Big) \Big\|_{L^2}^2 \Big],
\end{align*}
where \( l^T[u] \) is as in \eqref{measure_var:eq_lT}. To simplify the notation, we write 
\begin{equation}
w_t \defe l^T_t[u] = u_t + \lambda \Jt \Big( \lcol(V*(\Wt)^2) \Wt\rcol\Big). 
\end{equation}
With \( \bWt{[3]} \) as in \eqref{measure_re:eq_wt[3]}, it follows that 
\begin{equation}
\It[w] = \It[u] + \lambda \bWt{[3]}.
\end{equation}
By inserting this back into the quartic binomial formula, we obtain that 
\begin{equation}\label{measure_var:eq_identity}
\begin{aligned}
 &\E_{\bP} \big[ \varphi( W + I(u))+  \lcol \cVT( \Winf + \Iinf[u] )\rcol + \frac{1}{2} \| u \|_{\cH}^2 \big] \\
&= \E_{\bP} \big[\mathcal{E}_0 +\cT\big] + \E_{\bP}\big[\mathcal{E}_1 + \mathcal{E}_2 + \mathcal{E}_3\big] +\E_{\bP} \Big[ \frac{\lambda}{4} \int_{\T^3} (V* (\Iinf[w])^2) (\Iinf[w])^2 \dx + \frac{1}{2} \| w \|_{\cH}^2 \Big].  
\end{aligned}
\end{equation}
where the ``error'' terms \( \mathcal{E}_j \), with \( j=0,1,2,3\), are given by
\begin{align}
\mathcal{E}_0 &\defe \frac{\lambda}{4} \int_{\T^3} \lcol (V*(\Winf)^2) (\Winf)^2 \rcol \dx - \frac{\lambda^2}{2} \Big\| \Jt \big( \lcol (V * (\Wt)^2) \Wt \rcol \big) \Big\|_{L_t^2 L_x^2}^2   \label{measure_var:eq_E0}\\
&\hspace{3ex}+ \frac{\lambda^3}{2} \int_{\T^3} (V*\lcol (\Winf)^2 \rcol) (\bWinf{[3]})^2 \dx \notag \\
 &\hspace{3ex}+\lambda^3 \int_{\T^3} \Big( V* (\Winf \bWinf{[3]}) \Winf \bWinf{[3]} - ( \Minf \bWinf{[3]}) \bWinf{[3]} \Big) \dx  ,\notag \\
\mathcal{E}_1 &\defe  \varphi( W + I[u])  -\lambda^2 \int_{\T^3} (V* \lcol (\Winf)^2\rcol ) \bWinf{[3]} \Iinf[w] \dx \label{measure_var:eq_E1} \\
&\hspace{3ex}- 2 \lambda^2 \int_{\T^3} \Big( \big( V* (\Winf \bWinf{[3]})\big) \Winf - \Minf  \bWinf{[3]} \Big) \Iinf[w] \dx  , \notag \\
\mathcal{E}_2 &\defe \lambda \int_{\T^3} \Big( \big( V* (\Winf \Iinf[w])\big) \Winf \Iinf[w] - (\Minf \Iinf[w]) \Iinf[w] \Big) \dx \label{measure_var:eq_E2}  \\
&\hspace{3ex}+ \frac{\lambda}{2} \int_{\T^3} ( V * \lcol (\Winf)^2 \rcol) (\Iinf[w])^2 \dx,  \notag\\
\mathcal{E}_3 &\defe \lambda \int_{\T^3} \big( V * (\Iinf[w]-\lambda \bWinf{[3]})^2 \big) (\Iinf[w]- \lambda \bWinf{[3]}) \Winf \dx \label{measure_var:eq_E3}\\
&\hspace{3ex}+ \frac{\lambda}{4} \int_{\T^3} \Big( (V*(\Iinf[w]-\lambda \bWinf{[3]})^2) (\Iinf[w]-\lambda \bWinf{[3]})^2 - (V* (\Iinf[w])^2) (\Iinf[w])^2) \Big) \dx. \notag
\end{align}

Since \( \mathcal{E}_0\) does not depend on \( w \), we can define 
\begin{equation}\label{measure_var:eq_cT}
\cT \defe -\E_{\bP} \big[ \mathcal{E}_0 \big]. 
\end{equation}
The behavior of \( \cT \) as \( T \rightarrow \infty \)  is irrelevant for the rest of the proof. However, it determines whether the Gibbs measure is singular or absolutely continuous with respect to the Gaussian free field (see Section \ref{section:singularity}). From the estimates \eqref{appendix:eq_trilinear_I} and \eqref{appendix:eq_trilinear_II}, it is easy to see that
\begin{equation*}
 -Q_T + \frac{1}{2} \Big( \frac{\lambda}{4} \cV(\Iinf[u]) + \frac{1}{2} \| w \|_{\cH}^2  \Big)\leq \frac{\lambda}{4} \cV(\Iinf[w]) + \frac{1}{2} \| w \|_{\cH}^2 \leq Q_T + 2 \Big( \frac{\lambda}{4} \cV(\Iinf[u]) + \frac{1}{2} \| w \|_{\cH}^2  \Big).
\end{equation*}
Thus, it suffices to bound the terms in \( \mathcal{E}_1, \mathcal{E}_2, \) and \( \mathcal{E}_3 \)  pointwise by
\begin{equation*}
Q_T +  \frac{1}{8} \Big( \frac{\lambda}{4} \cV(\Iinf[w]) + \frac{1}{2} \| w \|_{\cH}^2 \Big).
\end{equation*}
We treat the individual summands separately. \\
\emph{Contribution of \(\mathcal{E}_1\):} 
For the first summand in \( \mathcal{E}_1 \), the linear growth of \( \varphi \), Sobolev embedding, a minor modification of \eqref{measure_re:eq_so_III_1}, and Lemma \ref{appendix:lemma_It} imply that 
\begin{equation}\label{measure_var:eq_e1_estimate}
\begin{aligned}
&| \varphi( W + I[u] )| \lesssim \| W \|_{\cC_t^0 \cC_x^{-\frac{1}{2}-\kappa}} + \| I[ J_t^T \big( \lcol (V * (\Wt)^2) \Wt \rcol  \big)]  \|_{\cC_t^0 \cC_x^{-\frac{1}{2}-\kappa}} + \| I[w] \|_{\cC_t^0 \cC^{-\frac{1}{2}-\kappa}_x}\\
&\lesssim Q_T + \| I[w] \|_{C_t H_x^1} 
\lesssim \frac{1}{\delta} Q_T  + \delta \| w \|_{\cH}^2. 
\end{aligned}
\end{equation}
For the second summand in \( \mathcal{E}_1\), we have  from Lemma \ref{measure_re:lemma_stochastic_objects_III} that
\begin{equation*}
\lambda \Big| \int_{\T^3} (V* \lcol (\Winf)^2 \rcol) \bWinf{[3]} \Iinf[w] \dx \Big| \lesssim  \lambda \|  (V* \lcol (\Winf)^2 \rcol) \bWinf{[3]}  \|_{C^{-1}_x} \| \Iinf[w] \|_{H^{1}_x} \lesssim \frac{1}{\delta} Q_T + \delta \| w \|_{\cH}^2. 
\end{equation*}
For the third summand in \( \mathcal{E}_1 \), we have from Lemma \ref{measure_re:lemma_stochastic_objects_III} and Lemma \ref{appendix:lemma_It} that
\begin{align*}
 &\lambda^2 \int_{\T^3} \Big( \big( V* (\Winf \bWinf{[3]})\big) \Winf - \Minf \bWinf{[3]} \Big) \Iinf[w] \dx \\
&\lesssim \lambda^2 \| \big( V* (\Winf \bWinf{[3]})\big) \Winf - \Minf \bWinf{[3]} \|_{C^{-1}_x} \| \Iinf[w]\|_{H^{1}_x} \lesssim \frac{1}{\delta} Q_T + \delta \| w \|_{\cH}^2.
\end{align*}
\emph{Contribution of \( \mathcal{E}_2 \):} For the first summand in \( \mathcal{E}_2\),  the random matrix estimate (Proposition \ref{measure_rmt:proposition_estimate}) implies for every \( 0 < \gamma < \min(\beta,\frac{1}{2}) \) that 
\begin{align*}
&\lambda \Big| \int_{\T^3} \Big( \big( V* (\Winf \Iinf[w])\big) \Winf\Iinf[w] - (\Minf \Iinf[w])\Iinf[w] \Big) \dx \Big|
 \lesssim Q_T \| \Iinf[w] \|_{H^{1-\gamma}_x}^2 \\
&\lesssim \frac{1}{\delta} Q_T + \delta \big( \lambda \| \Iinf[w] \|_{L^2_x}^4 + \| \Iinf[w]\|_{H^1_x}^2 \big) \lesssim  \frac{1}{\delta} Q_T + \delta \big( \lambda \cV(\Iinf[w])+ \| \Iinf[w]\|_{H^1_x}^2 \big).
\end{align*}
The second summand in \( \mathcal{E}_2\) can easily be controlled using Lemma \ref{measure_re:lemma_stochastic_objects_I}. \\
\emph{Contribution of \( \mathcal{E}_3 \):} We estimate the first summand in \( \mathcal{E}_3 \) by 
\begin{align*}
&\lambda \Big| \int_{\T^3} \big( V * (\Iinf[w]-\lambda \bWinf{[3]})^2 \big) (\Iinf[w]- \lambda \bWinf{[3]}) \Winf \dx \Big| \\
& \lesssim \lambda \| \Winf \|_{C^{-\frac{1}{2}-\delta}_x} \Big\| \langle \nabla \rangle^{\frac{1}{2}+\delta} \Big(   \big( V * (\Iinf[w]-\lambda \bWinf{[3]})^2 \big) (\Iinf[w]- \lambda \bWinf{[3]}) \Big) \Big\|_{L^1_x}. 
\end{align*} 
In the second factor, we bound the contribution of \( ( V* \Iinf[w]^2) \Iinf[w] \) using Lemma \ref{measure_var:lemma_cubic}. In contrast, the terms containing at least one factor of \( \bWt{[3]} \) can be controlled using Lemma \ref{measure_re:lemma_stochastic_objects_III}, \eqref{appendix:eq_trilinear_I} and \eqref{appendix:eq_trilinear_II}.  This leads to 
\begin{align*}
& \| \Winf \|_{C^{-\frac{1}{2}-\epsilon}_x} \Big\| \langle \nabla \rangle^{\frac{1}{2}+\delta} \Big(   \big( V * (\Iinf[w]-\lambda \bWinf{[3]})^2 \big) (\Iinf[w]- \lambda \bWinf{[3]}) \Big) \Big\|_{L^1_x} \\
&\lesssim \lambda Q_T\Big(1+ \cV(\Iinf[w]])^{\frac{1}{2}} \| \Iinf[w]\|_{L^2}^{1-\theta} \| \Iinf[w]\|_{H^1_x}^{\theta} + \| \Iinf[w]\|_{H^{\frac{1}{2}+2\delta}}^2 \Big) \\
&\lesssim Q_T + \delta \Big( \lambda \cV(\Iinf[w]) + \| w \|_{\cH}^2 \Big). 
\end{align*}
The second summand in \( \mathcal{E}_3 \) can be controlled using the same (or simpler) arguments. 
\end{proof}

Based on the proof of Proposition \ref{measure_var:proposition_main}, we can also determine the behavior as \( T \rightarrow \infty \) of the renormalization constants \( \cT \). In particular, we obtain a short proof of Corollary \ref{measure_var:corollary_cT}. 

\begin{proof}[Proof of Corollary \ref{measure_var:corollary_cT}:]
We  let \( \beta > 1/2 \) and choose any \( 1/2 < \gamma < \min( \beta,1) \). Using the definition of \( \cT \) in \eqref{measure_var:eq_cT}, it remains to control the expectation of \( \mathcal{E}_0 \), which is defined in \eqref{measure_var:eq_E0}. We treat the four terms in \( \mathcal{E}_0 \) separately. \\
The first term has zero expectation by Proposition \ref{measure_re:proposition_renormalization}. 
For the second term, we obtain from Corollary \ref{measure_re:corollary_stochastic_objects_I} that 
\begin{equation*}
\E_{\bP} \bigg[  \Big\| \Jt \big( \lcol (V * (\Wt)^2) \Wt \rcol \big) \Big\|_{L_t^2 L_x^2}^2 \bigg] \lesssim \sum_{n\in \Z^3} \int_0^\infty \frac{\sigmaT{t}(n)^2}{\langle n \rangle^2} \frac{1}{\langle n \rangle^{2\gamma}} \dt \lesssim \sum_{n\in \Z^3} \frac{1}{\langle n \rangle^{2+2\gamma}} \lesssim 1. 
\end{equation*}
For the third term, we obtain from Lemma \ref{measure_re:lemma_stochastic_objects_I} and Lemma \ref{measure_re:lemma_stochastic_objects_III} that 
\begin{equation*}
\bigg| \E_{\bP} \bigg[ \int_{\T^3} (V*\lcol (\Winf)^2 \rcol) (\bWinf{[3]})^2 \dx \bigg] \bigg| \lesssim \E_{\bP} \Big[ \| V*\lcol (\Winf)^2 \|_{\cC_x^{-1/2}} \| \bWinf{[3]} \|_{\cC_x^{\gamma}}^2 \Big] \lesssim 1. 
\end{equation*}
For the fourth term, we obtain from Lemma \ref{measure_re:lemma_stochastic_objects_III} and the random matrix estimate (Proposition \ref{measure_rmt:proposition_estimate}) that
\begin{align*}
\bigg| \E_{\bP} \bigg[ \int_{\T^3} \Big( V* (\Winf \bWinf{[3]}) \Winf \bWinf{[3]} - ( \Minf \bWinf{[3]}) \bWinf{[3]} \Big) \dx  \bigg] \bigg| \lesssim \E_{\bP} \Big[ \operatorname{Op}^T_\infty(\gamma,2)  \| \bWinf{[3]}\|_{H_x^\gamma}^2 \Big] \lesssim 1. 
\end{align*}
This completes the argument. 
\end{proof}

\section{\protect{The reference and drift measures}}\label{section:drift_measure}

In this section, we prove Theorem \ref{theorem:reference}, which contains information regarding the reference measures. In this paper, we will use the reference measure $\nu_\infty$ to prove the singularity of the Gibbs measure (Theorem \ref{theorem:singularity}). In the second part of this series, the reference measures will play an essential role in the probabilistic local well-posedness theory. \\

As in previous sections, we replace the truncation parameter $N$ by $T$. 
Due to its central importance, let us provide an informal description of the terms in the representation of $\nu_T$. The first summand follows the distribution of the Gaussian free field, which has independent Fourier coefficients and regularity \( -1/2-\). The second summand is a cubic Gaussian chaos with regularity \( {\min(1/2 + \beta,1)-} \). Finally, the third summand is a Gaussian chaos of order \( n \) with regularity \( 5/2-\). 

The statement of Theorem \ref{theorem:reference} is concerned with measures on \( \cC_x^{-1/2-\kappa}(\T^3) \). At this point, it should not be surprising to the reader that the proof mostly uses the lifted measures \( \tmuT \) and \( \tmu_\infty \). We will construct a reference measure \( \QT \) for \( \tmuT \), and the reference measure \( \REFT \) will be given by the pushforward of \( \QT \) under \( W_\infty \). Since the main tool in the construction of \( \QT \) is Girsanov's theorem, we call \( \QT \) the drift measure. 
This section is a modification of the arguments in Barashkov and Gubinelli's paper \cite{BG20}. Since \( \lP[u] \) in Proposition \ref{measure_var:proposition_main} is simpler than in the \( \Phi^4_3\)-model, however, we obtain slightly stronger results. For instance, we prove \( L^q\)-bounds for the density \( \DT \) in \eqref{measure_drift:eq_DT}, whereas the analogous density in \cite{BG20} only satisfies ``local'' \( L^q\)-bounds.

\subsection{Construction of the drift measure}

We define the forcing term 
\begin{equation}\label{measure_drift:eq_xi}
\XiT(\WP)_t \defe - \lambda \Jt \Big( \lcol (V * (\Wt)^2 ) \Wt \rcol \Big) + \Jt \langle \nabla \rangle^{-\frac{1}{2}} \lcol (\langle \nabla \rangle^{-\frac{1}{2}} \Wt)^n \rcol,  
\end{equation}
where \( n \) is a large odd integer depending on \( \beta \). The first summand in \eqref{measure_drift:eq_xi} is the main term. The second summand in \eqref{measure_drift:eq_xi} yields necessary coercivity in the proof of Lemma \ref{measure_drift:lemma_no_explosion_P} and Proposition \ref{measure_drift:proposition_Lp}, but can be safely ignored for most of the argument. We define the drift \( \uP \) through the integral equation
\begin{equation}\label{measure_drift:eq_integraleq}
\begin{aligned}
\ut &= \XiT(\WP-\IP[\uP])_t \\
&= - \lambda \Jt \Big(\hspace{-0.8ex} \lcol (V*(\Wt - \It[\uP])^2) (\Wt - \It[\uP]) \rcol \hspace{-0.8ex}\Big) \hspace{-0.3ex} +  \hspace{-0.3ex} \Jt   \hspace{-0.3ex} \langle \nabla \rangle^{-\frac{1}{2}} \lcol \hspace{-0.2ex} \Big( \langle \nabla \rangle^{-\frac{1}{2}} \big( \Wt - \It[\uP]\big)\Big)^{n} \hspace{-0.3ex} \rcol \hspace{-0.4ex}. 
\end{aligned}
\end{equation}
We also define the drift \( u \), which does not contain any regularization in the interaction, by 
\begin{equation}\label{measure_drift:eq_integraleq_u}
u_t= - \lambda J_t \Big( \lcol (V*(W_t - I_t[u])^2) (W_t - I_t[u]) \rcol \Big) \hspace{-0.3ex} +  J_t   \langle \nabla \rangle^{-\frac{1}{2}} \lcol \Big( \langle \nabla \rangle^{-\frac{1}{2}} \big( W_t - I_t[u]\big)\Big)^{n} \rcol . 
\end{equation}
Using the binomial formulas (Lemma \ref{measure_re:lemma_binomial} and Lemma \ref{measure_re:lemma_high_power}), we see that the integral equation has smooth coefficients on every compact subset of \( [0,\infty) \times \T^3 \). As a result, it can be solved locally in time using standard ODE-theory. Due to the polynomial nonlinearity, however, we will need to rule out finite-time blowup. To this end, we introduce  the blow-up time \( T_{\exp}[\uP] \in (0,\infty] \), which we will later show to be infinite almost surely with respect to both \( \bP \) and \( \QT \).  The reason is that the highest-degree term in \eqref{measure_drift:eq_integraleq}, which is given by 
\( - \Jt \langle \nabla \rangle^{-1/2} ( \langle \nabla \rangle^{-1/2}  \It[\uP])^{n} \hspace{-0.3ex}  \), is defocusing. 
We also introduce the stopping time
\begin{equation}
\tauTN \defe \inf \Big \{ t \in [0,\infty) \colon \int_0^t \| \us  \|_{L^2_x}^2 \ds = N \Big\}. 
\end{equation}
From the integral equation, it is clear that \( \ut(\cdot) \) is supported in frequency space on the finite set \( \{ n\in \Z^3\colon \| n\|\lesssim \langle t \rangle \} \). As a result, the \( L_t^2 L_x^2\)-norm can be used  as a blow-up criterion and the solution \( \ut \)  exists for all times \( t \leq \tauTN \), i.e., \( T_{\exp}[\uP]>\tauTN \). We then define the truncated solution by
\begin{equation}
\uNt \defe 1\{t\leq \tauTN\}\, \ut. 
\end{equation}
From the definition of \( \tauTN \), it follows that 
\begin{equation*}
\int_0^\infty \| \uNs \|_{L^2_x}^2 \ds \leq N.
\end{equation*}
Thus, \( \uNP \) satisfies Novikov's condition and we can define the shifted probability measure \( \QTN \) by
\begin{equation}\label{measure_drift:eq_QuN}
 \frac{\d \QTN~}{\d \bP~ }  = \exp\Big( \int_0^\infty \int_{\T^3} \uNs \d B_s - \frac{1}{2} \int_0^\infty \| \uNs \|_{L^2}^2 \ds \Big). 
\end{equation}
Here, the \( L^2_x\)-pairing in the integral \( \int_0^\infty \int_{\T^3} \uNs \d B_s  \) is implicit, i.e., 
\begin{equation*}
\int_0^\infty \int_{\T^3} \uNs \d B_s  = \int_0^\infty \langle \uNs , \d B_s \rangle_{L^2_x(\T^3)} = \sum_{\substack{n_1,n_2\in\Z^3\colon \\ n_1+n_2=0}} \int_0^\infty \widehat{\uNs}(n_1) \, \d B_s^{n_2}. 
\end{equation*}
We emphasize that the stochastic integral  \(  \int_0^\infty \int_{\T^3} \uNs \d B_s \) only depends on the Brownian process \( B \) through the Gaussian process \(  W \). This is important in order to view \( \QTN \) as a measure on \( \cC_t^0 \cC_x^{-1/2-\kappa}([0,\infty]\times \T^3) \) without changing the expression for the density. To make this direct dependence on \( W \) clear, we note that \( \uP \) and hence \( \tauTN \) are functions of \( \WP \), and hence \( W \), directly from their definition. By using the definition of \( \uP \), the self-adjointness of \( \Jt \), and \( \d \Ws = \Js \d B_s \), we obtain that 
\begin{align*}
&\int_0^\infty \int_{\T^3} \uNs \d B_s \\
&= \int_0^\infty \int_{\T^3} \Big( - \lambda \lcol ( V* (\Wt - \It[\uP])^2)  (\Wt - \It[\uP]) \rcol +\langle \nabla \rangle^{-\frac{1}{2}} \lcol \big( \langle \nabla \rangle^{-\frac{1}{2}}  (\Wt - \It[\uP]) \big)^n \rcol \Big) \d \Ws. 
\end{align*}
The expression on the right-hand side clearly is a function of \( \WP \) and hence \( W \). With a slight abuse of notation, we will keep writing the integral with respect to \( \mathrm{d}B_s \), since it is more compact. \\

By Girsanov's theorem, the process 
\begin{equation}
\BtuTN \defe B_t - \int_0^t \uNs \ds
\end{equation}
is a cylindrical Brownian motion under \( \QTN \). In particular, the law of \( \BtuTN \) under \( \QTN \) coincides with the law of \( B_t \) under \( \bP \). As a consequence, the process 
\begin{equation}\label{measure_drift:eq_WtuTN}
\WtuTN \defe W_t  - \int_0^t J_s \uNs \ds = W_t - I_t[\uNP]
\end{equation}
satisfies   
\begin{equation}\label{measure_drift:eq_law_WuN}
\Law_{\QTN}(\WtuTN) = \Law_{\bP}(W). 
\end{equation}
To avoid confusion, let us remark on a technical detail. In the definition \eqref{measure_drift:eq_WtuTN}, the drift \( \uNs \) is supported on frequencies \( |n|\lesssim \langle T \rangle \). The right-hand side of \eqref{measure_drift:eq_WtuTN}, however, does not contain a further frequency projection. In particular, \( W \) and hence \( \WuTN \) contain arbitrarily high frequencies. This is related to the definition of the truncated Gibbs measure \( \muT \), where the density only depends on frequencies \( \lesssim \langle T \rangle \), but whose samples contain arbitrarily high frequencies. Put differently, we regularize the interaction but not the samples themselves. To make notational matters even worse, while \( \WuTN \) contains all frequencies, we will often work with \( \rho_T(\nabla) \WuTN \), which only contains frequencies \( \lesssim \langle T \rangle \). Similar as in Section \ref{section:stochastic_control}, we define the truncated process \( \WtTuTN \) by 
\begin{equation}
\WtTuTN \defe \rho_T(\nabla) \WtuTN . 
\end{equation}
Due to the integral equation \eqref{measure_drift:eq_integraleq}, we have that 
\begin{equation}\label{measure_drift:eq_integraleq_N}
\uNt= 1\{ t \leq \tauTN\} \Big[ -\lambda \Jt \Big( \lcol ( V* (\WtTuTN)^2)\WtTuTN \rcol \Big) + \Jt \langle \nabla \rangle^{-\frac{1}{2}} \lcol \big( \langle \nabla \rangle^{-\frac{1}{2}} \WtTuTN \big)^n \rcol \Big].
\end{equation}
We intend to use \( \QTN \) (and the limit as \( N \rightarrow \infty \)) as a reference measure for \( \tmuT \). Due to \eqref{measure_drift:eq_law_WuN}, the law of \( \WtTuTN\) under \( \QTN \) does not depend on \( N \). In our estimates of \( \uNt \) through the integral equation, it is therefore natural to view \(\WtTuTN\) as given. Under this perspective, the right-hand side of \eqref{measure_drift:eq_integraleq_N} no longer  depends on \( \uP \) and yields an explicit expression for \( \uP \). For comparison, the corresponding equation in the \( \Phi^4_3\)-model (cf. \cite[(14)]{BG20}) is a linear integral equation. 
We now start to estimate the drift \(  \uP \). 

\begin{lemma}\label{measure_drift:lemma_uN_1}
For all \( 1 \leq M \leq N \), all \( S \geq 0 \), and all \( 0 < \gamma < \min(1,\beta) \), it holds that 
\begin{equation}
\E_{\QTN} \Big[ \int_0^{\tau_M \wedge S} \| \us \|_{L^2}^2 \ds \Big] \lesssim \max( S^{1-2\gamma},1). 
\end{equation}
In particular, it holds that 
\begin{equation}
\QTN \big( \tauTM \leq S \big) \lesssim \frac{\max(S^{1-2\gamma},1)}{M}. 
\end{equation}
\end{lemma}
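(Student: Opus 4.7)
The heart of the argument will be to turn the expectation under $\QTN$ into one under $\bP$ via Girsanov. Since $M \leq N$ the stopping times satisfy $\tauTM \leq \tauTN$, so on $[0,\tauTM]$ one has $\us = \uNs$, and the integral equation \eqref{measure_drift:eq_integraleq_N} provides the explicit expression
\begin{equation*}
\us = -\lambda \Js \lcol (V \ast (\WsTuTN)^2) \WsTuTN \rcol + \Js \langle \nabla \rangle^{-\frac{1}{2}} \lcol (\langle \nabla \rangle^{-\frac{1}{2}} \WsTuTN)^n \rcol
\end{equation*}
in terms of the shifted process $\WTuTN$. Because the integrand is non-negative I can freely replace the upper limit $\tauTM \wedge S$ by $S$, then split the two summands using $(a+b)^2 \le 2a^2+2b^2$. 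Since $\Law_{\QTN}(\WTuTN) = \Law_\bP(\WP)$ jointly in $s$, the problem reduces to controlling
\begin{equation*}
(\mathrm{I}) = \E_\bP \int_0^S \big\| \Js \lcol (V \ast \Ws^2) \Ws \rcol \big\|_{L^2_x}^2 \ds \quad\text{and}\quad (\mathrm{II}) = \E_\bP \int_0^S \big\| \Js \langle \nabla \rangle^{-\frac{1}{2}} \lcol (\langle \nabla \rangle^{-\frac{1}{2}} \Ws)^n \rcol \big\|_{L^2_x}^2 \ds.
\end{equation*}

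For $(\mathrm{I})$ I would expand on the Fourier side, insert the pointwise bound $\E |\widehat{\lcol (V \ast \Ws^2) \Ws \rcol}(m)|^2 \lesssim \langle m \rangle^{-2\gamma}$ from Corollary \ref{measure_re:corollary_stochastic_objects_I}, and use the identity $\int_0^S \sigmaT{s}(m)^2 \ds \leq \rho_S(m)^2$. This reduces the estimate to
\begin{equation*}
\sum_{m \in \Z^3} \frac{\rho_S(m)^2}{\langle m \rangle^{2+2\gamma}} \lesssim \max(S^{1-2\gamma}, 1),
\end{equation*}
where the dichotomy reflects three-dimensional volume counting: for $\gamma > \tfrac{1}{2}$ the full sum converges and yields $O(1)$, while for $\gamma < \tfrac{1}{2}$ the cutoff $\rho_S$ localizes to $|m| \lesssim S$ and produces $S^{1-2\gamma}$. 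The second term $(\mathrm{II})$ is the softer of the two: Lemma \ref{measure_re:lemma_high_power} gives spatial regularity $0-$ for $\lcol (\langle \nabla \rangle^{-1/2} \Ws)^n \rcol$ uniformly in $s$ and $T$, and combining this with the smoothing from $\langle \nabla \rangle^{-1/2}$ together with $\int_0^\infty \sigmaT{s}(m)^2 \ds \leq 1$ produces a convergent Fourier sum of the form $\sum_m \langle m \rangle^{-(4-\varepsilon)}$, which is $O(1)$ uniformly in $S$ and $T$.

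The tail bound $\QTN(\tauTM \leq S) \lesssim \max(S^{1-2\gamma}, 1)/M$ then follows from a one-line Markov step: on the event $\{\tauTM \leq S\}$ the definition of $\tauTM$ forces $\int_0^{\tauTM \wedge S} \|\us\|_{L^2_x}^2 \ds = M$, so
\begin{equation*}
M \cdot \QTN(\tauTM \leq S) \leq \E_{\QTN}\Big[\int_0^{\tauTM \wedge S} \|\us\|_{L^2_x}^2 \ds\Big] \lesssim \max(S^{1-2\gamma}, 1).
\end{equation*}
The main subtlety I anticipate lies not in the Fourier analysis, which essentially repackages calculations already done in Section \ref{section:stochastic_objects}, but rather in the careful bookkeeping needed to justify that the Girsanov identity $\Law_{\QTN}(\WTuTN) = \Law_\bP(\WP)$ holds \emph{jointly in time}, so that one is legitimately permitted to substitute $\WsTuTN$ by $\Ws$ inside a time integral. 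That this is legitimate ultimately follows since $\uNP$ is $\cF_t$-adapted and satisfies Novikov's condition by construction of $\tauTN$.
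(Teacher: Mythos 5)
Your proposal follows the paper's proof essentially step for step: rewrite $\us$ on $[0,\tauTM]$ through the integral equation \eqref{measure_drift:eq_integraleq_N}, drop the stopping time by nonnegativity of the integrand, transfer the expectation to $\bP$ via $\Law_{\QTN}(\WTuTN)=\Law_{\bP}(\WP)$, and estimate the cubic and $n$-th order contributions separately; the Chebyshev step for the tail bound is exactly the intended one. For the cubic term your packaging (Fourier coefficients, Corollary \ref{measure_re:corollary_stochastic_objects_I}, and $\int_0^S \sigmaT{s}(m)^2 \ds \lesssim \rho_S(m)$) is equivalent to the paper's, which instead combines the uniform $H^{-3/2+\gamma}$ bound of Lemma \ref{measure_re:lemma_stochastic_objects_I} with the smoothing $\| \Js f\|_{L^2_x} \lesssim \langle s \rangle^{-\gamma} \| f \|_{H^{-3/2+\gamma}_x}$ and then integrates $\langle s\rangle^{-2\gamma}$ in time.

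One caveat concerns your term $(\mathrm{II})$. Writing $f_s \defe \lcol (\langle \nabla \rangle^{-1/2} \Ws)^n \rcol$, the only input you cite is the $\cC_x^{-\epsilon}$ bound of Lemma \ref{measure_re:lemma_high_power}, whose crude per-coefficient consequence is merely $\E |\widehat{f_s}(m)|^2 \lesssim \langle m \rangle^{2\epsilon}$; combined with $\int_0^\infty \sigmaT{s}(m)^2 \ds \leq 1$ and the symbol factor $\langle m \rangle^{-3}$ of $\Js \langle \nabla \rangle^{-1/2}$ squared, this yields $\sum_m \langle m \rangle^{-3+2\epsilon}$, which diverges in $\Z^3$ — not the convergent $\sum_m \langle m\rangle^{-(4-\epsilon)}$ you state. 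The step is easily repaired, and the paper's route is the cleanest fix: keep the time variable and use that $\sigmaT{s}(m)$ is supported on $|m| \sim \langle s \rangle$ with $\sigmaT{s}(m)^2 \lesssim \langle s \rangle^{-1}$, so that $\| \Js \langle \nabla \rangle^{-1/2} f_s \|_{L^2_x}^2 \lesssim \langle s \rangle^{-4+2\epsilon} \| f_s \|_{H^{-\epsilon}_x}^2$ and the $s$-integral is $O(1)$ uniformly in $S,T$. Alternatively, within your own per-coefficient framework one can prove the analogue of Corollary \ref{measure_re:corollary_stochastic_objects_I} for the Wick power, namely $\sup_{T,s} \E |\widehat{f_s}(m)|^2 \lesssim \langle m \rangle^{-3+\epsilon}$, by a direct convolution computation; this is true but is not stated in the paper, so it would need a short additional argument. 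With either repair your proof closes and coincides with the paper's.
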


\begin{proof}
We recall from the definition of the drift measure that
\begin{equation*}
\Law_{\QTN}(\WuTN) = \Law_{\bP}(W) \qquad \text{and} \qquad \Law_{\QTN}(\WTuTN) = \Law_{\bP}(\WP)
\end{equation*}
As a result, we obtain that 
\begin{align*}
&\E_{\QTN} \Big[ \int_0^{\tau_M \wedge S} \| \us \|_{L^2}^2 \ds \Big]\\
&\leq \E_{\bP} \Big[ \int_0^S  \Big\| \lambda \Js \big( \lcol (V*(\Ws)^2) \Ws \rcol \big) + \Js \langle \nabla \rangle^{-\frac{1}{2}} \lcol \big( \langle \nabla \rangle^{-\frac{1}{2}} \Ws\big)^n \rcol \Big\|_{L^2}^2 \ds \Big] \\
&\lesssim \lambda^2  \E_{\bP} \Big[ \int_0^S   \Big\| \lambda \Js \big( \lcol (V*(\Ws)^2) \Ws \rcol \big) \Big\|_{L^2}^2 \ds \Big] + \E \Big[ \int_0^S  \Big \| \Js \langle \nabla \rangle^{-\frac{1}{2}} \lcol \big( \langle \nabla \rangle^{-\frac{1}{2}} \Ws\big)^n \rcol \Big\|_{L^2}^2 \ds \Big].
\end{align*}
For the first summand, we obtain from the definition of \( \Js \) and Lemma \ref{measure_re:lemma_stochastic_objects_I} that
\begin{align*}
\E_{\bP} \Big[ \int_0^S  \Big\| \lambda \Js \big( \lcol (V*(\Ws)^2) \Ws \rcol \big) \Big\|_{L^2}^2 \ds \Big] 
&\lesssim \Big(\int_0^S \langle t \rangle^{-2\gamma} \dt \Big) \sup_{t\geq 0} \E \big[ \big\| \lcol  (V* (\Wt)^2) \Wt \rcol \big\|_{H^{-\frac{3}{2}+\gamma}}^2 \big] \\
&\lesssim \max( S^{1-2\gamma},1). 
\end{align*}
For the second summand, we obtain from Lemma \ref{measure_re:lemma_high_power} that 
\begin{align*}
 \E \Big[ \int_0^S \Big \| \Js \langle \nabla \rangle^{-\frac{1}{2}} \lcol \big( \langle \nabla \rangle^{-\frac{1}{2}} \Ws\big)^n \rcol \Big\|_{L^2}^2 \ds \Big] 
\lesssim \Big ( \int_0^S \langle t \rangle^{-4+2\epsilon} \dt \Big) \sup_{t\geq 0} \E \big[ \big\| \lcol (\langle \nabla \rangle^{-\frac{1}{2}} W_t)^n \rcol \big\|_{H^{-\epsilon}}^2 \big] 
\lesssim 1. 
\end{align*}
This yields the desired estimate. 
\end{proof}

\begin{lemma}\label{measure_drift:lemma_uN_2}
For all \( 1 \leq M \leq N \), \( 1 \leq p <\infty\), and \( \gamma< \min(1/2,\beta)\), it holds that 
\begin{equation*}
\sup_{T,t \geq 0} \Big( \E_{\QTN} \big[ \| I_t[\uMP] \|_{\cC^{\frac{1}{2}+\gamma}_x(\T^3)}^p \big] \Big)^{\frac{1}{p}} \lesssim_p 1 . 
\end{equation*}
Furthermore, we have that for any \( 0<\alpha < 1 \) and \( 0< \eta < 1/2 \) that 
\begin{equation}\label{measure_drift:eq_uN_2_2}
\sup_{T\geq 0} \Big( \E_{\QTN}\big[ \| I[\uMP]\|_{\cC_t^{\alpha,\eta} \cC_x^0 ([0,\infty]\times \T^3)}^p \big] \Big)^{\frac{1}{p}} \lesssim_p 1,
\end{equation}
where the \( \cC_t^{\alpha,\eta} \cC_x^0 \)-norm is as in \eqref{notation:eq_Cweighted}. 
\end{lemma}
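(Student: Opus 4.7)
The plan is to reduce $\QTN$-expectations via Girsanov to $\bP$-expectations of the explicit stochastic objects estimated in Section \ref{section:stochastic_objects}, and then to apply Kolmogorov's continuity theorem for the time regularity.

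For the reduction, since $M\le N$ we have $\uMs = 1\{s\leq \tauTM\}\uNs$, so the integral equation \eqref{measure_drift:eq_integraleq_N} gives
\begin{equation*}
I_t[\uMP] = -\lambda \int_0^{t\wedge \tauTM} \Js^2 \Big( \lcol (V*(\WsTuTN)^2) \WsTuTN\rcol \Big) \ds + \int_0^{t\wedge \tauTM} \Js^2 \langle \nabla\rangle^{-\frac12} \lcol (\langle \nabla\rangle^{-\frac12}\WsTuTN)^n\rcol \ds,
\end{equation*}
with $\tauTM = \inf\{t\ge 0: \int_0^t \|\XiT(\WTuTN)_s\|_{L^2_x}^2\ds = M\}$. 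By \eqref{measure_drift:eq_law_WuN} the law of $\WTuTN$ under $\QTN$ coincides with the law of $\WP$ under $\bP$, so (writing $\widetilde{\tau}_M$ for the corresponding hitting time built from $\WP$) the process $(I_t[\uMP])_{t\ge 0}$ under $\QTN$ has the law of
\begin{equation*}
F_t\defe -\lambda\, G_{t\wedge\widetilde{\tau}_M} + H_{t\wedge\widetilde{\tau}_M}, \qquad G_t\defe \bWt{[3]},\qquad H_t\defe \int_0^t \Js^2\langle\nabla\rangle^{-\frac12}\lcol(\langle\nabla\rangle^{-\frac12}\Ws)^n\rcol\ds
\end{equation*}
under $\bP$. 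All subsequent work is carried out on $F$ under $\bP$.

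For the spatial estimate I dominate $\|F_t\|_{\cC_x^{1/2+\gamma}}$ by $\lambda\sup_{s\in[0,\infty]}\|G_s\|_{\cC_x^{1/2+\gamma}} + \sup_{s\in[0,\infty]}\|H_s\|_{\cC_x^{1/2+\gamma}}$, and I bound both suprema in $L^p_\bP$. For $G$ this follows from Lemma \ref{measure_re:lemma_stochastic_objects_III} after a routine Kolmogorov argument that promotes the pointwise-in-$s$ moment estimate to a supremum-in-$s$ one (the required moment bound on $G_s-G_{s'}$ is a direct variant of the multiple-stochastic-integral computation from that lemma). For $H$ the bound is easier: Lemma \ref{measure_re:lemma_high_power} gives $\|\lcol(\langle\nabla\rangle^{-1/2}\Ws)^n\rcol\|_{L^p_\omega\cC_x^{-\epsilon}}\lesssim_p 1$, while $\Js^2\langle\nabla\rangle^{-1/2}$ supplies roughly $5/2-\epsilon$ derivatives of smoothing after integration in $s$, leaving wide room above $1/2+\gamma$.

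For the time regularity I separately treat short and long increments of $F$. When $|t-t'|\le 1$, the elementary bound $\|\Js g\|_{\cC_x^0}\lesssim \langle s\rangle^{-\gamma}\|g\|_{\cC_x^{-3/2+\gamma}}$ (valid because $\Js$ is Fourier-localized at $|n|\sim \langle s\rangle$), combined with Lemma \ref{measure_re:lemma_stochastic_objects_I} and Lemma \ref{measure_re:lemma_high_power}, yields $(\E\|F_t-F_{t'}\|_{\cC_x^0}^p)^{1/p}\lesssim_p |t-t'|$. When $|t-t'|>1$ I invoke the tail bound $(\E\|G_\infty-G_s\|_{\cC_x^0}^p)^{1/p}\lesssim_p \langle s\rangle^{-(1/2+\gamma)}$, which is immediate from the Fourier localization of $G_\infty-G_s$ to $\{|n|\gtrsim\langle s\rangle\}$ combined with Lemma \ref{measure_re:lemma_stochastic_objects_III} (and an analogous estimate applies to $H$); this produces $(\E\|F_t-F_{t'}\|_{\cC_x^0}^p)^{1/p}\lesssim_p \min(\langle t\rangle,\langle t'\rangle)^{-(1/2+\gamma)}$. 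Combining the two regimes and applying the weighted version of Kolmogorov's continuity theorem (cf.\ \cite[Theorem~4.3.2]{Stroock11}) yields \eqref{measure_drift:eq_uN_2_2} for every $0<\alpha<1$ and every $0<\eta<1/2$, since $1/2+\gamma>1/2$.

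The main obstacle is the Girsanov identity itself: the drift $\uMP$ is defined self-consistently through \eqref{measure_drift:eq_integraleq}, and it is precisely the change of measure \eqref{measure_drift:eq_law_WuN} that disentangles this self-reference and produces the explicit representation of $F_t$ in terms of the already-estimated stochastic objects $G$ and $H$. Once this reduction is in place, everything else is a routine application of the lemmas from Section \ref{section:stochastic_objects} together with Kolmogorov continuity; the only mildly new ingredient is the tail estimate for $G_\infty-G_s$, which is driven entirely by the Fourier support of the increment.
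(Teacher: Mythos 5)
Your Girsanov reduction and the spatial estimate follow essentially the paper's route: transfer to \( \bP \) via \eqref{measure_drift:eq_law_WuN} and then invoke Lemma \ref{measure_re:lemma_stochastic_objects_III}, Lemma \ref{measure_re:lemma_high_power}, and the smoothing of the \( J \)-operators. One presentational caveat: your identity \( \tauTM = \inf\{t\colon \int_0^t \| \XiT(\WTuTN)_s\|_{L^2_x}^2 \ds = M\} \), which is what makes the stopped process a measurable functional of \( \WTuTN \), deserves a line of justification (it uses \( M \leq N \), hence \( \tauTM \leq \tauTN \), and the fact that on \( [0,\tauTN] \) the self-consistent drift coincides with the explicit functional \( \XiT(\WTuTN)_s \)); the paper sidesteps this entirely by bounding \( \| I_t[\uMP] \|_{\cC_x^{1/2+\gamma}} \leq \int_0^t \| J_s \uNs \|_{\cC_x^{1/2+\gamma}} \ds \) pathwise and discarding the indicator functions by positivity before transferring the law, so that no stopping time appears in the transferred functional. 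This part of your argument is correct.

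The genuine gap is in the proof of \eqref{measure_drift:eq_uN_2_2}. The two moment bounds you state, namely \( (\E\|F_t-F_{t'}\|_{\cC_x^0}^p)^{1/p} \lesssim_p |t-t'| \) for \( |t-t'|\leq 1 \) and \( \lesssim_p \min(\langle t\rangle,\langle t'\rangle)^{-(1/2+\gamma)} \) for \( |t-t'|>1 \), do not control the weighted norm \eqref{notation:eq_Cweighted}: for \( t,t' \sim K \) with \( |t-t'|\leq 1 \) the weight contributes \( K^\eta \), and since your short-increment bound carries no decay in \( \min(\langle t\rangle,\langle t'\rangle) \), any Kolmogorov/chaining argument over such pairs yields a local \( \alpha \)-H\"older constant that is uniform in \( K \), so the supremum over all times diverges after multiplication by \( K^\eta \) for every \( \eta>0 \); interpolating your two bounds does not repair this for \( \alpha \) close to \( 1 \) and \( \eta \) close to \( 1/2 \). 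Moreover, the ``elementary bound'' you quote uses a single factor \( \Js \) and only gives decay \( \langle s\rangle^{-\gamma} \), which is useless here (integrating over \( s\sim K \) gives \( K^{1-\gamma} \), and \( \gamma \) may be tiny for small \( \beta \)). The repair is exactly what the paper does: the relevant operator is \( J_s \Js \) (equivalently \( (\Js)^2 \) in your representation of \( G \)), whose symbol is of size \( \langle s\rangle^{-3} \) on \( |n|\sim \langle s\rangle \), so the integrand is \( \lesssim_p \langle s\rangle^{-3/2-\gamma} \) in \( L^p_\omega L^\infty_x \); hence the local H\"older constant on times \( \sim K \) is \( \lesssim_p K^{-1/2-\gamma} \), which beats the weight \( K^\eta \) for every \( \eta<1/2 \), and one concludes by summing over dyadic \( K \) together with a telescoping argument for non-comparable times. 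With this decaying short-increment bound your scheme closes; without it, the conclusion does not follow from the bounds as stated.
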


The proof of Lemma \ref{measure_drift:lemma_uN_2} is easier than its counterpart \cite[(16)]{BG20} in the \( \Phi^4_3\)-model, which requires a Gronwall argument. The second estimate \eqref{measure_drift:eq_uN_2_2} is needed for technical reasons related to tightness, and we encourage the reader to skip its proof on first reading.

\begin{proof}
The argument is similar to the proof of Lemma \ref{measure_drift:lemma_uN_1}. 
From the definition of \( \uMP \) and \( \uNP \), we have that 
\begin{equation}
\uMs = 1 \{ s \leq \tauTM \} \uNs. 
\end{equation}
Thus, we obtain that 
\begin{equation}
\| I_t[\uMP] \|_{\cC_x^{\frac{1}{2}+\gamma}} \leq \int_0^{t \wedge \tauTM} \| J_s \uNs \|_{\cC_x^{\frac{1}{2}+\gamma}} \ds \leq \int_0^t \| J_s \uNs \|_{\cC_x^{\frac{1}{2}+\gamma}} . 
\end{equation}
Using the integral equation \eqref{measure_drift:eq_integraleq} again, we obtain that 
\begin{equation}\label{measure_drift:eq_uN_2_p1}
\begin{aligned}
\| I_t[\uMP] \|_{\cC_x^{\frac{1}{2}+\gamma}}  &\leq \lambda \int_0^t \| J_s \Js \lcol (V*(\WsTuTN)^2 ) \WsTuTN\rcol \|_{\cC_x^{\frac{1}{2}+\gamma}}\ds\\
& + \int_0^t \| J_s \Js \langle \nabla \rangle^{-\frac{1}{2}} \lcol \big( \langle \nabla \rangle^{-\frac{1}{2}} \WsTuTN\big)^n \rcol \|_{\cC_x^{\frac{1}{2}+\gamma}} \ds. 
\end{aligned}
\end{equation}
Using that
\begin{equation*}
\Law_{\QTN}(\WuTN) = \Law_{\bP}(W),
\end{equation*}
we obtain from Lemma \ref{measure_re:lemma_stochastic_objects_III} and Lemma \ref{measure_re:lemma_high_power} that
\begin{align*}
&\Big( \E_{\QTN} \big[ \| I_t[\uMP] \|_{\cC_x^{\frac{1}{2}+\gamma}}^p \big] \Big)^{\frac{1}{p}}\\
&\lesssim \lambda \int_0^t \big( \E_{\bP} \| J_s \Js \lcol (V* (\Ws)^2) \Ws \rcol \|_{\cC_x^{\frac{1}{2}+\gamma}}^p \big)^{\frac{1}{p}} \ds + \int_0^t \big( \E_{\bP} \| J_s \Js \langle \nabla \rangle^{-\frac{1}{2}} \lcol \big( \langle \nabla \rangle^{-\frac{1}{2}} \Ws \big)^n \rcol \|_{\cC_x^{\frac{1}{2}+\gamma}}^p \big)^{\frac{1}{p}} \ds \\
&\lesssim_p \int_0^t \langle  s \rangle^{-1+\gamma-\min(1/2,\beta)+\delta} \ds + \int_0^t \langle s \rangle^{-3+\gamma+\delta} \ds \\
&\lesssim_p 1 .
\end{align*} 
This completes the proof of the first estimate. The second estimate \eqref{measure_drift:eq_uN_2_2} follows from a minor modification of the proof. To simplify the notation, we set 
\begin{equation*}
A(s) \defe \| J_s \Js \lcol (V*(\WsTuTN)^2 ) \WsTuTN\rcol \|_{L_x^\infty} + \| J_s \Js \langle \nabla \rangle^{-\frac{1}{2}} \lcol \big( \langle \nabla \rangle^{-\frac{1}{2}} \WsTuTN\big)^n \rcol \|_{L_x^\infty}
\end{equation*}
 For any \( K \geq 1 \), we have from a similar argument as in \eqref{measure_drift:eq_uN_2_p1} that 
\begin{align*}
\sup_{ \substack{0 \leq t^\prime \leq t \colon \\ t,t^\prime \sim K}} \frac{\| I_t[\uMP] - I_{t^\prime}[\uMP]\|_{L_x^\infty}}{1\wedge |t-t^\prime|^\alpha} 
&\lesssim \sup_{ \substack{0 \leq t^\prime \leq t \colon \\ t,t^\prime \sim K}} \frac{1}{1\wedge |t-t^\prime|^\alpha} \int_{t^\prime}^{t} A(s) \ds \\
&\lesssim \int_{s\sim K} A(s) \ds + \Big( \int_{s \sim K} A(s)^{\frac{1}{1-\alpha}} \ds \Big)^{1-\alpha}. 
\end{align*}
Proceeding as in the first estimate, this implies that 
\begin{equation*}
\Big( \E_{\QTN} \Big[ \Big(\sup_{ \substack{0 \leq t^\prime \leq t \colon \\ t,t^\prime \sim K}} \frac{\| I_t[\uMP] - I_{t^\prime}[\uMP]\|_{L_x^\infty}}{1\wedge |t-t^\prime|^\alpha}  \Big)^p \Big] \Big)^{\frac{1}{p}} \lesssim K^{-\frac{1}{2}-\gamma}. 
\end{equation*}
The desired estimate of the \( \cC_t^{\alpha,\eta} \cC_x^0\)-norm then follows by summing over dyadic scales and using a telescoping series if the times are not comparable. 
\end{proof}
In Lemma \ref{measure_drift:lemma_uN_1} and Lemma \ref{measure_drift:lemma_uN_2}, we controlled the process \( \uP \) with respect to the measures \( \QTN \). Unfortunately, the proof of Proposition \ref{measure_drift:proposition} below also requires the absence of finite-time blowup for \( \uP \) with respect \( \bP \). This is the subject of the next lemma. 

\begin{lemma}\label{measure_drift:lemma_no_explosion_P}
For any \( T \geq 1 \), it holds that \( T_{\exp}[\uP] = \infty ~ \bP\)-almost surely. 
\end{lemma}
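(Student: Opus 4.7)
I argue pathwise. Fix \(T\geq 1\). For \(\bP\)-a.e.\ realization, the renormalized stochastic objects appearing in \eqref{measure_drift:eq_integraleq}---namely \(\lcol (V*(W_t)^2) W_t \rcol\) and \(\lcol (\langle\nabla\rangle^{-1/2} W_t)^k \rcol\) for \(k\le n\)---are continuous in \(t\) with values in \(\cC_x^{-\kappa}\) and remain finite on every compact time interval (Lemma \ref{measure_re:lemma_stochastic_objects_I}, Lemma \ref{measure_re:lemma_high_power}, combined with a Kolmogorov continuity argument). Fixing such a sample, \eqref{measure_drift:eq_integraleq} becomes a deterministic equation whose right-hand side is polynomial of degree \(n\) in \(I_t[u^T]\) (by the binomial formulas Lemma \ref{measure_re:lemma_binomial} and Lemma \ref{measure_re:lemma_high_power}) with continuous-in-\(t\) coefficients. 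Because \(J_t\) has Fourier support in \(\{|n|\lesssim\langle t\rangle\}\), the unknown \(u^T_t\) lies in a finite-dimensional Fourier subspace on each compact interval, so standard ODE theory produces a unique maximal solution, and any blowup manifests as \(\|u^T_t\|_{L^2_x}\to\infty\). It therefore suffices to prove a pathwise a priori bound on \(\|u^T_t\|_{L^2_x}\) valid on every compact interval.

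To produce such a bound I exploit the defocusing leading-order term. Setting \(\psi_t = W_t - I_t[u^T]\), define the energy
\[
\mathcal{E}(t)\defe \frac{1}{n+1}\int_{\T^3}\big(\langle\nabla\rangle^{-1/2} I_t[u^T]\big)^{n+1}\,\mathrm{d}x.
\]
Using \(\partial_t I_t[u^T]=J_t u^T_t\) and expanding the Wick powers of \(\psi_t\) via Lemma \ref{measure_re:lemma_binomial} and Lemma \ref{measure_re:lemma_high_power}, together with the fact that \(n\) is odd, the derivative \(\mathcal{E}'(t)\) splits into a leading coercive contribution
\[
-\Big\| J_t \langle\nabla\rangle^{-1/2}\big(\langle\nabla\rangle^{-1/2} I_t[u^T]\big)^{n}\Big\|_{L^2_x}^{2}\le 0,
\]
coming from the top-degree defocusing nonlinearity, plus a finite linear combination of mixed terms in which at least one factor is a renormalized Wick polynomial in \(W_t\) of order strictly less than \(n\) (pathwise bounded by Lemma \ref{measure_re:lemma_high_power}) and the remaining factor is a power \((\langle\nabla\rangle^{-1/2} I_t[u^T])^{j}\) with \(j<n\); the Hartree contribution \(-\lambda J_t \lcol (V*\psi_t^2)\psi_t\rcol\) produces terms of degree at most three in \(\psi_t\), handled using Visan's estimate (Lemma \ref{measure_var:lemma_visan}) and Lemma \ref{measure_var:lemma_cubic}. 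For \(n=n(\beta)\) chosen sufficiently large, Young's inequality absorbs each mixed term into an arbitrarily small fraction of the coercive contribution, leaving an additive time-continuous random remainder and giving a differential inequality of the form \(\mathcal{E}'(t)\le C(\omega,t)(1+\mathcal{E}(t))\) with \(C(\omega,\cdot)\in L^1_{\mathrm{loc}}([0,\infty))\).

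Grönwall's inequality then yields \(\mathcal{E}\in L^\infty_{\mathrm{loc}}([0,\infty))\) \(\bP\)-a.s., so \(\langle\nabla\rangle^{-1/2} I_t[u^T]\) stays bounded in \(L^{n+1}_x\) on every compact interval. Because \(I_t[u^T]\) is Fourier-supported in \(\{|n|\lesssim \langle t\rangle\}\), this upgrades to a uniform Sobolev bound; substituting back into \eqref{measure_drift:eq_integraleq} together with the pathwise regularity of the stochastic data controls \(\sup_{t\le t_0}\|u^T_t\|_{L^2_x}\) for every \(t_0\), and hence \(T_{\exp}[u^T]=\infty\) \(\bP\)-almost surely.

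The main obstacle is the Hartree term: its degree in \(\psi_t\) is only three, strictly less than the dissipative \(n\)-th power, yet it involves a non-local convolution with the singular potential \(V\) whose regularity degrades as \(\beta\downarrow 0\). Absorbing it into the coercive dissipation requires both Visan's estimate to convert \(V*\psi_t^2\) into an \(L^4\)-type quantity and the freedom to choose \(n\) large in terms of \(\beta\)---which is precisely why \(n=n(\beta)\) was built into the definition \eqref{measure_drift:eq_xi} of the forcing.
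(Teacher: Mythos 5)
Your overall strategy (pathwise reduction to a finite-dimensional ODE, blow-up criterion via the $L^2$-norm, coercivity from the odd defocusing $n$-th power) is the same one the paper has in mind when it defers to \cite[Lemma 5]{BG20}, but the Gr\"onwall step as you formulate it has a genuine gap. With $\mathcal{E}(t)=\frac{1}{n+1}\int_{\T^3}(\langle\nabla\rangle^{-1/2}I^{T}_t[u^{T}])^{n+1}\dx$, the derivative $\mathcal{E}'(t)=\int_{\T^3}(\langle\nabla\rangle^{-1/2}I^{T}_t[u^{T}])^{n}\,\langle\nabla\rangle^{-1/2}J^{T}_t u^{T}_t\dx$ pairs a factor of degree $n$ in $I_t[u^T]$ against $u^T_t$. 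Expanding $\lcol(\langle\nabla\rangle^{-1/2}(W^{T}_t-I^{T}_t[u^T]))^n\rcol$ by the binomial formula, the mixed term with Wick exponent $i\geq 1$ contributes an integrand of total degree $2n-i$ in $I_t[u^T]$, and after Cauchy--Schwarz against your coercive square it leaves a remainder of degree $2(n-i)$. For $i$ small compared to $n$ (already $i=1$) these degrees far exceed $n+1$, so they are \emph{superlinear} in $\mathcal{E}$: no choice of Young exponents produces the claimed inequality $\mathcal{E}'\leq C(\omega,t)(1+\mathcal{E})$, and a bound $\mathcal{E}'\lesssim \mathcal{E}^{\theta}$ with $\theta>1$ does not rule out finite-time blow-up. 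Note also that increasing $n$ makes exactly these terms worse, not better --- they are generated by the auxiliary $n$-th power itself, so "choose $n$ large" only helps with the Hartree contribution. In addition, the quantity $\|J_t\langle\nabla\rangle^{-1/2}(\langle\nabla\rangle^{-1/2}I_t[u^T])^n\|_{L^2_x}^2$ only sees the frequency annulus $|k|\sim\langle t\rangle$ through a strongly smoothing weight, so it is too weak to serve as the absorbing term in the first place.

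The missing ingredient --- and the way \cite[Lemma 5]{BG20} and the paper's own Gr\"onwall argument in Step 2 of the proof of Proposition \ref{measure_drift:proposition_Lp} proceed --- is an integration by parts in time: one moves the time derivative off the high-degree factor and onto the Wick powers, so the mixed contributions become $\int_{\T^3}\lcol(\langle\nabla\rangle^{-1/2}W^{T}_t)^{i}\rcol\,(\langle\nabla\rangle^{-1/2}I^{T}_t[u^T])^{n+1-i}\dx$, of degree $n+1-i\leq n$, which Young's inequality does absorb into $\mathcal{E}$ plus stochastic data (this is the $\Aux$/$\overline{\Aux}$ mechanism). The price is the It\^o correction $\int_0^t\int_{\T^3}(\langle\nabla\rangle^{-1/2}I_s[u^T])^{n+1-i}\,\d\big(\lcol(\langle\nabla\rangle^{-1/2}W^{T}_s)^{i}\rcol\big)$, a martingale which cannot be bounded pathwise inside the Gr\"onwall loop; it is handled by fixing a horizon $S$, stopping at $\tau_{T,N}$, and taking expectations (accepting losses in powers of $S$), after which non-explosion before time $S$ follows a.s.\ by monotone convergence in $N$ and then $S\to\infty$. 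So the lemma is not reached by the purely pathwise differential inequality you propose; the expectation/stopping-time step is essential. A secondary point: Visan's estimate (Lemma \ref{measure_var:lemma_visan}) and Lemma \ref{measure_var:lemma_cubic} are not the relevant tools here, since no sign-favorable potential energy $\cV(I_\infty[u])$ appears in this ODE; the Hartree term is of low degree in $I_t[u^T]$ and is disposed of crudely, with the coercivity coming solely from the auxiliary $n$-th power.
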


The proof of the analogue for the \( \Phi^4_3\)-model (cf. \cite[Lemma 5]{BG20}) extends verbatim to our situation and we omit the minor modifications. To ease the reader's mind, let us briefly explain why the same argument applies here. In most of this section, the most important term in the integral equation \eqref{measure_drift:eq_integraleq} is the first summand. It has the lowest regularity and is closely tied to the interactions in the Hamiltonian. The result of Lemma \ref{measure_drift:lemma_no_explosion_P}, however, is essentially a soft statement. If we fix a time \( S \geq 1 \) and only want to rule out \( T_{\exp}[\uP] \leq S \), the low regularity is inessential and only leads to a loss in powers of \( S \). The main term is then given by the (auxiliary) second summand, which is defocusing and exactly the same as in the \( \Phi^4_3\)-model. 

The next proposition eliminates the stopping time from our drift measures.

\begin{proposition}\label{measure_drift:proposition}
The family of measures \( (\QTN)_{T,N\geq0} \) is tight on \( \cC_t^0 \cC_x^{-1/2-\kappa}([0,\infty]\times \T^3) \). For any fixed \( T \geq 0 \),   the sequence of measures \( (\QTN)_{N\geq 0} \) weakly converges to a measure \( \QT \) as \( N \rightarrow \infty\). For any \( S \geq 0 \), the limiting measure \( \QT \) satisfies 
\begin{equation}\label{measure_drift:eq_T_density}
\frac{\d \QT|_{\mathcal{F}_S}}{\d \bP|_{\mathcal{F}_S}} = \exp \Big( \int_0^S \int_{\T^3}\us  \d B_s - \frac{1}{2} \int_0^S \| \us \|_{L^2}^2 \ds \Big). 
\end{equation}
\end{proposition}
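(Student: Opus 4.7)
The argument mirrors the construction of the drift measure for the $\Phi^4_3$-model in \cite{BG20}, with the three assertions handled in turn.

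\emph{Tightness.} Under $\QTN$ the canonical process decomposes as $W = \WuTN + I[\uNP]$ by \eqref{measure_drift:eq_WtuTN}, and Girsanov yields $\Law_{\QTN}(\WuTN) = \Law_\bP(W)$. The first summand therefore has an $N$-independent law which is tight on $\cC_t^0 \cC_x^{-1/2-\kappa}([0,\infty]\times\T^3)$ by a standard Kolmogorov-continuity argument. The second summand $I[\uNP]$ is controlled in $L^p_\omega \cC_t^{\alpha,\eta}\cC_x^0$ uniformly in $T$ and $N$ by the second half of Lemma \ref{measure_drift:lemma_uN_2}, and the embedding $\cC_t^{\alpha,\eta}\cC_x^0 \hookrightarrow \cC_t^0 \cC_x^{-1/2-\kappa}$ is compact. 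Combining these two facts gives the tightness of $(\QTN)_{T,N\geq 0}$.

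\emph{Pointwise convergence of the densities.} Fix $T \geq 0$ and $S > 0$; denote by $Z_t^{T,N}$ the exponential factor on the right-hand side of \eqref{measure_drift:eq_QuN} evaluated at time $t$, and write $D_t$ for the same expression with $\uNs$ replaced by $\us$. Each $Z_\cdot^{T,N}$ is a genuine $\bP$-martingale (Novikov's condition is trivial by the deterministic bound $\int_0^\infty \|\uNs\|_{L^2}^2 \ds \leq N$), so $\QTN|_{\mathcal{F}_t} = Z_t^{T,N} \cdot \bP|_{\mathcal{F}_t}$. The definition $\uNs = 1\{s \leq \tauTN\} \us$ forces the elementary identity
\begin{equation*}
Z_S^{T,N} \;=\; D_{\tauTN \wedge S}.
\end{equation*}
By Lemma \ref{measure_drift:lemma_no_explosion_P}, $T_{\exp}[\uP] = \infty$ $\bP$-a.s., hence $\tauTN \uparrow \infty$ $\bP$-a.s.\ as $N \to \infty$. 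Since $D_\cdot$ is a continuous nonnegative local $\bP$-martingale, we conclude $Z_S^{T,N} \to D_S$ $\bP$-almost surely.

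\emph{Identification of the limit.} The supermartingale property of $D_\cdot$ gives $\E_\bP[D_S] \leq 1$ a priori. For any weak subsequential limit $\QT$ and any bounded $\mathcal{F}_S$-measurable functional $f$ which is continuous on $\cC_t^0\cC_x^{-1/2-\kappa}$, I combine the $\bP$-a.s.\ convergence $Z_S^{T,N} \to D_S$ with the quantitative decay $\QTN(\tauTN < S) \lesssim_S 1/N$ from Lemma \ref{measure_drift:lemma_uN_1} (applied with $M=N$) together with a truncation in path space to deduce
\begin{equation*}
\int f \,\mathrm{d}\QT \;=\; \lim_{N\to\infty} \E_\bP[f Z_S^{T,N}] \;=\; \E_\bP[f D_S].
\end{equation*}
Taking $f \equiv 1$ forces $\E_\bP[D_S] = 1$, and Scheffé's lemma then promotes $Z_S^{T,N} \to D_S$ to convergence in $L^1(\bP)$. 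The displayed identity determines $\QT|_{\mathcal{F}_S}$ uniquely for every $S$, so $\QT$ itself is uniquely characterised; every weak subsequential limit of $(\QTN)_N$ therefore coincides, and the full sequence converges weakly to $\QT$ with the claimed density \eqref{measure_drift:eq_T_density}.

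\emph{Expected obstacle.} The main technical subtlety is to exclude that $D_\cdot$ is a strict local martingale on $[0,S]$, since the $L^p$-estimates of Lemma \ref{measure_drift:lemma_uN_2} are established under $\QTN$ rather than under $\bP$ and so do not directly constrain the uncut drift $\uP$ on the original probability space. The truncation argument built around the stopping times $\tauTN$ and the mass identity $\QTN(\Omega) = 1$ is exactly the device that bypasses this issue without invoking a Novikov-type condition on $\uP$ itself.
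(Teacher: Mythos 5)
Your proof is correct and follows essentially the same route as the paper: tightness from the decomposition $W = \WuTN + I[\uNP]$ with Kolmogorov continuity and Lemma \ref{measure_drift:lemma_uN_2}, then identification of any Prokhorov subsequential limit through the stopping-time splitting, the bound $\QTN(\tauTN<S)\lesssim_S N^{-1}$ from Lemma \ref{measure_drift:lemma_uN_1}, the non-explosion Lemma \ref{measure_drift:lemma_no_explosion_P}, and monotone convergence, with uniqueness of the density \eqref{measure_drift:eq_T_density} upgrading to convergence of the full sequence. Your phrasing via the $\mathcal{F}_S$-restricted density $Z_S^{T,N}=D_{S\wedge\tauTN}$ is just the conditional-expectation form of the paper's optional-stopping step, and the Scheff\'e upgrade to $L^1$ convergence is a harmless (unneeded) addition.
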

Our argument differs from the proof of \cite[Lemma 7]{BG20}, which is the analog for the \( \Phi^4_3 \)-model. The argument in \cite{BG20} relies on Kolmogorov's extension theorem, whereas we rely on tightness and Prokhorov's theorem. This is important in the proof of Corollary \ref{measure_drift:corollary_Qinf} below, since the measures \( \QT \) are not (completely) consistent. We also believe that this clarifies the mode of convergence. Before we begin with the proof, we state the following corollary.

\begin{corollary}\label{measure_drift:corollary_Qinf}
The measures \( \QT \) weakly convergence to a measure \( \Qinf \) on  \( \cC_t^0 \cC_x^{-1/2-\kappa}([0,\infty]\times \T^3) \) as \( T \rightarrow \infty \). For any \(S \geq 0 \), it holds that 
\begin{equation}\label{measure_drift:eq_inf_density}
\frac{\d \Qinf|_{\mathcal{F}_S}}{\d \bP|_{\mathcal{F}_S}} = \exp \Big( \int_0^S \int_{\T^3} u_s  \d B_s - \frac{1}{2} \int_0^S \| u_s \|_{L^2}^2 \ds \Big),
\end{equation}
where \( u_s \) is as in \eqref{measure_drift:eq_integraleq_u}. 
\end{corollary}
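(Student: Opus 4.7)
The strategy is three-step: establish tightness of $(\QT)_{T\geq 0}$, construct a candidate limit $\Qinf$ from the un-truncated drift equation, and match subsequential weak limits to $\Qinf$ via an $L^1(\bP)$-convergence of Girsanov densities on each filtration $\cF_S$.

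\emph{Step 1 (Tightness and candidate measure).} By Proposition \ref{measure_drift:proposition} the family $(\QTN)_{T,N}$ is tight on $\cC_t^0 \cC_x^{-1/2-\kappa}([0,\infty]\times \T^3)$ and $\QTN \rightharpoonup \QT$ as $N\to\infty$. Applying the Portmanteau theorem to any compact tightness certificate $K$ gives $\QT(K) \geq \limsup_N \QTN(K)$, so $(\QT)_{T\geq 0}$ is itself tight and, by Prokhorov, admits weak subsequential limits. Separately, one shows that the integral equation \eqref{measure_drift:eq_integraleq_u} admits a global $\bP$-a.s.\ solution $u$: finite-time existence is routine by ODE theory in frequency, and global existence follows by the same soft defocusing argument as in Lemma \ref{measure_drift:lemma_no_explosion_P} (the auxiliary $n$-th power term dominates the blow-up criterion and is identical to the truncated case). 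Given global existence of $u$, the process $M_S \defe \exp(\int_0^S u_s\, dB_s - \tfrac12 \int_0^S\|u_s\|_{L^2}^2\, ds)$ is a nonnegative $\bP$-local martingale; a stopping-time approximation together with the $L^2$-bounds inherited from Lemma \ref{measure_drift:lemma_uN_1} (uniformly in $N$, hence in the un-truncated limit) shows it is a true martingale, so the formula \eqref{measure_drift:eq_inf_density} defines a consistent family and, by Kolmogorov extension on the canonical space, a single probability measure $\Qinf$.

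\emph{Step 2 (Reduction to drift convergence).} Let $\mathbb{Q}^*$ be any weak subsequential limit of $(\QT)_T$. To prove $\mathbb{Q}^*=\Qinf$ it suffices to show, for every fixed $S\geq 0$, that $\QT|_{\cF_S}\to\Qinf|_{\cF_S}$ in total variation, which in turn reduces to $L^1(\bP)$-convergence of the Girsanov densities
\begin{equation*}
D^T_S \defe \exp\Big(\int_0^S \uPs\, dB_s - \tfrac12 \int_0^S\|\uPs\|_{L^2}^2\, ds\Big) \longrightarrow \exp\Big(\int_0^S u_s\, dB_s - \tfrac12\int_0^S\|u_s\|_{L^2}^2\, ds\Big).
\end{equation*}
Since all relevant exponentials have uniformly bounded $L^p(\bP)$ norms for some $p>1$ (by the variational bounds of Section \ref{section:variational} transferred through Girsanov, as in the proof of Proposition \ref{measure_drift:proposition}), $L^1(\bP)$-convergence follows from convergence in probability of the exponents; and this in turn follows from
\begin{equation}\label{plan:eq_drift_conv}
\uPs \longrightarrow u_s \quad \text{in } L^2([0,S]\times\T^3,\, d\bP) \text{ as } T\to\infty.
\end{equation}

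\emph{Step 3 (Proving \eqref{plan:eq_drift_conv}).} Subtract the integral equations \eqref{measure_drift:eq_integraleq} and \eqref{measure_drift:eq_integraleq_u} and split the difference into two contributions: a purely stochastic term coming from replacing $\Wt=\rho_T(\nabla)W_t$ by $W_t$ inside the renormalized objects $\lcol (V*\Wt^2)\Wt\rcol$ and $\lcol(\langle\nabla\rangle^{-1/2}\Wt)^n\rcol$, and a perturbative term linear in $\It[\uP]-I_t[u]$ up to higher-order nonlinearities. The stochastic term tends to zero by the convergence assertions of Lemma \ref{measure_re:lemma_stochastic_objects_I}, Lemma \ref{measure_re:lemma_stochastic_objects_III}, and Lemma \ref{measure_re:lemma_high_power}. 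For the perturbative term, the smoothing in $\Js$ (as exploited in the proof of Lemma \ref{measure_drift:lemma_uN_2}) renders the map $u\mapsto $ right-hand-side contractive on $L^2([0,s]\times\T^3)$ for small $s$, and a bootstrap/Gronwall iteration up to the fixed horizon $S$ propagates the smallness.

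\emph{Main obstacle.} Unlike the $\Phi^4_3$-model of \cite{BG20}, where the analogous equation is linear, here \eqref{measure_drift:eq_integraleq} is genuinely nonlinear (the cubic renormalized Hartree term enters self-consistently). The delicate step is therefore the Gronwall/contraction argument producing \eqref{plan:eq_drift_conv} uniformly on $[0,S]$; one must combine the $L^2$-bound of Lemma \ref{measure_drift:lemma_uN_1} to tame the nonlinearity with the smoothing of $\Js$ and the uniform-in-$T$ convergence of the stochastic objects, all while working on the un-stopped process (Lemma \ref{measure_drift:lemma_no_explosion_P} ensures this is legitimate).
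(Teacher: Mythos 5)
There is a genuine gap, and it is precisely the missed observation on which the paper's proof rests. Because of the frequency localization built into the construction, the truncation $\rho_T(\nabla)$ acts as the identity on the processes at small stochastic times: $W_s$ and $I_s[\,\cdot\,]$ only contain frequencies $\lesssim \langle s \rangle$, so $\Ws = W_s$ and $\Is = I_s$ for all $s \leq T/4$, and hence, directly from the integral equations \eqref{measure_drift:eq_integraleq} and \eqref{measure_drift:eq_integraleq_u}, $\us = u_s$ \emph{exactly} for all $s \leq T/4$. Consequently, for $T \geq 4S$ the density in \eqref{measure_drift:eq_T_density} already coincides with the right-hand side of \eqref{measure_drift:eq_inf_density} on $\mathcal{F}_S$, and the corollary follows from tightness of $(\QT)_{T\geq 0}$ together with the fact that \eqref{measure_drift:eq_inf_density} uniquely identifies any weak subsequential limit; no approximation of $u$ by $\uP$ is needed at all. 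Your Steps 2 and 3 replace this exact identity by the claim $\uP \rightarrow u$ in $L^2([0,S]\times\T^3, \d\bP)$, to be proven by a contraction/Gronwall argument for the genuinely nonlinear integral equation — and you yourself flag this as the unproven ``main obstacle.'' A proof that leaves its central analytic step as an acknowledged obstacle is incomplete, and the step is not innocuous: a pathwise contraction under $\bP$ would require quantitative moment bounds on $\uP$ and $u$ under $\bP$ (not under $\QTN$, which is what Lemma \ref{measure_drift:lemma_uN_1} and Lemma \ref{measure_drift:lemma_uN_2} give), and these are nowhere established.

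Two further points in your reduction are also unsubstantiated. First, the uniform-integrability input in Step 2 — ``uniformly bounded $L^p(\bP)$ norms for some $p>1$'' of the Girsanov densities $\d\QT|_{\mathcal{F}_S}/\d\bP|_{\mathcal{F}_S}$ — does not follow from anything in the paper (Proposition \ref{measure_drift:proposition_Lp} bounds a different density, $\d\tmuT/\d\QT$), and higher moments of Girsanov exponentials are a genuinely strong statement; if you insist on this route, Scheff\'e's lemma (nonnegative densities with expectation $1$ converging in probability) would avoid the $L^p$ claim, but only after the martingale property of both densities is established. Second, in Step 1 the true-martingale property of $M_S$ for the untruncated drift again needs a stopping-time argument together with no-explosion of $u$ under $\bP$; this is plausible along the lines of Lemma \ref{measure_drift:lemma_no_explosion_P}, but note that with the key identity above it is automatic, since $u$ agrees with $\uP$ on $[0,T/4]$ for every $T$. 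In short: the result is much softer than your plan suggests, and the missing idea is the exact agreement $\us = u_s$ for $s \leq T/4$.
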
 

\begin{proof}[Proof of Proposition \ref{measure_drift:proposition}:] 
We first prove that the family of measures \( (\QTN)_{T,N\geq 0} \), viewed as measures for \( W \), are tight on \( \cC_t^0 \cC_x^{-1/2-\kappa}([0,\infty]\times \T^3)  \). From \eqref{measure_drift:eq_WtuTN}, we have that 
\begin{equation}\label{measure_drift:eq_W_Wu}
W = \WuTN + I[\uNP].
\end{equation}
Since the law of \( \WuTN \) under \( \QTN \) agrees with the law of \( W \) under \( \bP \), an application of Kolmogorov's continuity theorem (cf. \cite[Theorem 4.3.2]{Stroock11}) yields for any \( p \geq 1 \), \( 0 < \alpha < \frac{1}{2} \), and \( 0 < \eta <\kappa/2 \) that 
\begin{equation*}
\E_{\QTN} \Big[ \| \WuTN \|_{\cC_t^{\alpha,\eta} \cC_x^{-(1+\kappa)/2}}^p\Big]  = \E_{\bP}  \Big[ \| W \|_{\cC_t^{\alpha,\eta} \cC_x^{-(1+\kappa)/2}}^p\Big] \lesssim_{p} 1 . 
\end{equation*}
Together with \eqref{measure_drift:eq_W_Wu} and Lemma \ref{measure_drift:lemma_uN_2}, this implies 
\begin{equation*}
\E_{\QTN} \Big[ \| W \|_{\cC_t^{\alpha,\eta} \cC_x^{-(1+\kappa)/2}}^p\Big] \lesssim_p 1 
\end{equation*}
Since the embedding \( \cC_t^{\alpha,\eta} \cC_x^{-(1+\kappa)/2} \hookrightarrow \cC_t^{0} \cC_x^{-1/2-\kappa} \) is compact, this implies the tightness of the family of measures \( (\QTN)_{T,N\geq 0} \). \\

By Prokhorov's theorem, a subsequence of \( (\QTN)_N \)  weakly converges to a measure \( \QT \).  Once we proved \eqref{measure_drift:eq_T_density}, this can be upgraded to weak convergence of the full sequence, since \eqref{measure_drift:eq_T_density} uniquely identifies the limit. With a slight abuse of notation, we therefore ignore this distinction between a subsequence and the full sequence. \\
Let \( S \geq 0 \) and let \( f\colon \cC_t^0 \cC_x^{-1/2-\kappa}([0,S]\times \T^3) \rightarrow \R \) be continuous, bounded, and nonnegative. We write \( f(W) \) for \( f(W|_{[0,S]}) \). Using the weak convergence of \( \QTN \) to \( \QT \), we have that 
\begin{equation*}
\E_{\QT}[f(W)] = \lim_{N\rightarrow \infty} \E_{\QTN}[f(W)] = \lim_{N\rightarrow \infty} \Big( \E_{\QTN}[ 1\{\tauTN \geq S\} f(W)] + \E_{\QTN}[ 1\{\tauTN< S\} f(W)] \Big). 
\end{equation*}
Using Lemma \ref{measure_drift:lemma_uN_2}, the second term is controlled by 
\begin{equation*}
\E_{\QTN}[ 1\{\tauTN< S\} f(W)] \leq \| f \|_{\infty} \QTN( \tauTN < S) \lesssim \| f\|_\infty \frac{\max(S^{1-2\gamma},1)}{N},
\end{equation*}
which converges to zero as \( N \rightarrow \infty \). Together with the definition of \( \QTN \) and the martingale property of the Girsanov density, this implies 
\begin{align*}
\E_{\QT}[f(W)] &=  \lim_{N\rightarrow \infty} \E_{\QTN}[ 1\{\tauTN \geq S\} f(W)]  \\
&= \lim_{N\rightarrow \infty} \E_{\bP} \Big[ f(W) 1\{ \tauTN \geq S\} \exp\Big( \int_0^{\tauTN} \us \d B_s - \frac{1}{2} \int_0^{\tauTN} \| \us\|_{L^2}^2 \ds \Big) \Big] \\
&= \lim_{N\rightarrow \infty} \E_{\bP} \Big[ f(W)1\{ \tauTN \geq S\} \exp\Big( \int_0^{S} \us \d B_s - \frac{1}{2} \int_0^{S} \| \us\|_{L^2}^2 \ds \Big) \Big]. 
\end{align*}
Using monotone convergence and Lemma \ref{measure_drift:lemma_no_explosion_P}, we obtain
\begin{align*}
 &\lim_{N\rightarrow \infty} \E_{\bP} \Big[ f(W)1\{ \tauTN \geq S\} \exp\Big( \int_0^{S} \us \d B_s - \frac{1}{2} \int_0^{S} \| \us\|_{L^2}^2 \ds \Big) \Big] \\
&=  \E_{\bP} \Big[ f(W)1\{ T_{\exp}[\uP]> S\} \exp\Big( \int_0^{S} \us \d B_s - \frac{1}{2} \int_0^{S} \| \us\|_{L^2}^2 \ds \Big) \Big]\\
&=  \E_{\bP} \Big[ f(W) \exp\Big( \int_0^{S} \us \d B_s - \frac{1}{2} \int_0^{S} \| \us\|_{L^2}^2 \ds \Big) \Big].
\end{align*}
\end{proof}

\begin{proof}[Proof of Corollary \ref{measure_drift:corollary_Qinf}:]
Due to Proposition \ref{measure_drift:proposition}, the family of measures \( (\QT)_{T\geq 0} \) is tight. By Prokhorov's theorem, it follows that a subsequence weakly converges to  a measure \( \Qinf \). Once \eqref{measure_drift:eq_inf_density} is proven, it uniquely identifies the limit \( \Qinf \). With a slight abuse of notation, we therefore assume as before that the whole sequence \( \QT \) converges weakly to \( \Qinf \). \\
Since \( \Wt = W_t \) and \( \It = I_t \) for all \( 0 \leq t \leq T/4\) (by our choice of $\rho$), it follows from the integral equation \eqref{measure_drift:eq_integraleq} that \( \us = u_s \) for all  \( 0 \leq s \leq T/4 \). Using \eqref{measure_drift:eq_T_density}, it follows for all \(S \leq T/4 \) that 
\begin{equation}
\frac{\d \QT|_{\mathcal{F}_S}}{\d \bP|_{\mathcal{F}_S}} = \exp \Big( \int_0^S \int_{\T^3}u_s  \d B_s - \frac{1}{2} \int_0^S \| u_s \|_{L^2}^2 \ds \Big). 
\end{equation}
The corresponding identity \eqref{measure_drift:eq_inf_density} for \( \Qinf \) then follows by taking \( T \rightarrow \infty \). 
\end{proof}

\begin{corollary}\label{measure_drift:corollary_bounds_u}
For any \( T \geq 1 \), \( S \geq 1 \), and any \( 0<\gamma< \min(\beta,1/2) \),  the measure \( \QT \) satisfies the two estimates
\begin{align*}
\E_{\QT} \Big[ \int_0^{S} \| \us \|_{L^2}^2 \ds \Big] &\lesssim \max( S^{1-2\gamma},1), \\
\sup_{t\geq 0} \Big( \E_{\QT} \big[ \| I_t[\uP] \|_{\cC_x^{\frac{1}{2}+\gamma}}^p \big] \Big)^{\frac{1}{p}} &\lesssim_p 1. 
\end{align*}
\end{corollary}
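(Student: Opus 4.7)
The plan is to reduce both estimates to bounds under $\bP$ on the stochastic object $\XiT(W)$ defined in \eqref{measure_drift:eq_xi}, via Girsanov's theorem. Fix $S \geq 1$. By Proposition \ref{measure_drift:proposition}, the Radon--Nikodym derivative $\d \QT|_{\mathcal{F}_S}/\d \bP|_{\mathcal{F}_S}$ is given by the stochastic exponential in \eqref{measure_drift:eq_T_density}, which is a true density of a probability measure. Girsanov's theorem then implies that under $\QT|_{\mathcal{F}_S}$ the process $B_t - \int_0^t \us \ds$ is a Brownian motion on $[0,S]$, and consequently
\[
\widetilde W_t \defe W_t - \It[\uP], \qquad 0 \leq t \leq S,
\]
has the same law under $\QT|_{\mathcal{F}_S}$ as $W$ has under $\bP|_{\mathcal{F}_S}$.

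Reading off the fixed-point equation \eqref{measure_drift:eq_integraleq}, the drift admits the closed-form representation $\us = \XiT(\widetilde W)_s$, in which the right-hand side is a deterministic function of $\widetilde W$ alone. Pushing forward, this yields the key identity
\[
\E_{\QT}\bigl[F(\uP)\bigr] = \E_{\bP}\bigl[F(\XiT(W))\bigr]
\]
for every nonnegative $\mathcal{F}_S$-measurable functional $F$ of the drift on $[0,S]$; the progressive measurability of $\uP$ (together with Lemma \ref{measure_drift:lemma_no_explosion_P}, which guarantees that $\uP$ is defined on all of $[0,S]$ under $\bP$) ensures that all such $F(\uP)$ are in fact $\mathcal{F}_S$-measurable.

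Applying the identity with $F(v) = \int_0^S \|v_s\|_{L^2_x}^2 \ds$ reduces the first estimate to the $\bP$-bound
\[
\E_\bP \Big[\int_0^S \|\XiT(W)_s\|_{L^2_x}^2 \ds\Big] \lesssim \max(S^{1-2\gamma},1),
\]
which is exactly the computation carried out in the proof of Lemma \ref{measure_drift:lemma_uN_1}: splitting $\XiT(W)$ into the cubic and the $n$-th power contributions, and invoking Lemma \ref{measure_re:lemma_stochastic_objects_I} (combined with Corollary \ref{measure_re:corollary_stochastic_objects_I} and the $L^2_s$-bound on $\sigmaT{s}$) and Lemma \ref{measure_re:lemma_high_power} respectively. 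For the second estimate, fix $t \geq 0$ and take $S = t$ in the construction above; the $\cF_t$-measurable quantity $I_t[\uP]$ has the same law under $\QT$ as $\int_0^t J_s \XiT(W)_s \ds$ has under $\bP$. Minkowski's integral inequality in $(s,\omega)$ together with the $\cC_x^{1/2+\gamma}$-bounds on $\Js \XiT(W)_s$ supplied by Lemma \ref{measure_re:lemma_stochastic_objects_III} and Lemma \ref{measure_re:lemma_high_power} produces the required $\lesssim_p 1$, uniformly in $t$, as already demonstrated in the proof of Lemma \ref{measure_drift:lemma_uN_2}.

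There is no genuine obstacle here, since the heavy lifting has been done in Lemmas \ref{measure_drift:lemma_uN_1}--\ref{measure_drift:lemma_uN_2} and Proposition \ref{measure_drift:proposition}; the mild subtlety is the removal of the stopping time $\tauTN$, which previously played the sole role of ensuring Novikov's condition. Because the density identification \eqref{measure_drift:eq_T_density} is already available at the level of $\QT$, no such truncation is needed, and the Girsanov change of variables can be applied to $\uP$ directly on every finite window $[0,S]$.
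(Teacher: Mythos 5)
Your proposal is correct, and it rests on exactly the three ingredients the paper cites (Lemma \ref{measure_drift:lemma_uN_1}, Lemma \ref{measure_drift:lemma_uN_2}, and Proposition \ref{measure_drift:proposition}), but it assembles them differently. The paper's one-line proof is a limiting argument: the bounds of Lemmas \ref{measure_drift:lemma_uN_1}--\ref{measure_drift:lemma_uN_2}, which hold under $\QTN$ uniformly in the truncation parameters, are transported to $\QT$ via the convergence $\QTN \Rightarrow \QT$ and the density identity \eqref{measure_drift:eq_T_density}, in the spirit of the monotone-convergence step over $\{\tauTN \geq S\}$ in the proof of Proposition \ref{measure_drift:proposition}. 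You instead work directly at the level of $\QT$: since \eqref{measure_drift:eq_T_density} exhibits the density process as a nonnegative local martingale of full mass ($\QT$ being a probability measure), it is a true $\bP$-martingale on $[0,S]$, so Girsanov applies without any stopping time, giving $\Law_{\QT}\big((W - I[\uP])|_{[0,S]}\big) = \Law_{\bP}\big(W|_{[0,S]}\big)$; the explicit formula $\us = \XiT\big(\rho_T(\nabla)(W - I[\uP])\big)_s$ then reduces both estimates to the $\bP$-computations already carried out inside Lemmas \ref{measure_drift:lemma_uN_1} and \ref{measure_drift:lemma_uN_2}. This buys a cleaner removal of $\tauTN$, at the price of one point you leave implicit: to know that $\uP$, and hence $W - I[\uP]$, is defined on all of $[0,S]$ $\QT$-almost surely (Lemma \ref{measure_drift:lemma_no_explosion_P} only gives this under $\bP$), you should note that $\{T_{\exp}[\uP] > S\}$ is $\cF_S$-measurable by causality of the integral equation and that $\QT|_{\cF_S} \ll \bP|_{\cF_S}$ by \eqref{measure_drift:eq_T_density}, so the no-blow-up event retains full measure under $\QT$. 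With that remark added, your argument is complete and yields the stated bounds uniformly in $t$ and with the claimed dependence on $S$.
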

The corollary directly follows from Lemma \ref{measure_drift:lemma_uN_1}, Lemma \ref{measure_drift:lemma_uN_2}, and Proposition \ref{measure_drift:proposition}.

\subsection{\protect{Absolutely continuity with respect to the drift measure}}\label{section:absolute_continuity}

We recall the definition of the measure \( \tmuT \) from  \eqref{sc:eq_tmu}, which states that
 \begin{equation}
\frac{\d \tmu_T}{\d \bP} =  \frac{1}{\cZTl}  \exp\Big( - \lcol \cVT( \Winf) \rcol \Big).
\end{equation}
Using Proposition \ref{measure_drift:proposition}, we obtain that 
\begin{equation}\label{measure_drift:eq_DT}
D_T \defe \frac{\d \tmuT}{\d \QT} = \frac{\d \tmu_T}{\d \bP}  \frac{\d \bP}{\d \QT} =  \frac{1}{\cZTl} \exp\Big( - \lcol \cVT(\Winf)\rcol  - \int_0^\infty \int_{\T^3} \ut \, \d B_t + \frac{1}{2} \int_0^\infty \| \ut  \|_{L^2}^2 \dt \Big). 
\end{equation}
Since \( \d B_t = \d B_t^{\uP} + \ut  \d t \), we also obtain that 
\begin{equation}
D_T =  \frac{1}{\cZTl}  \exp\Big( - \lcol \cVT(\Winf)\rcol - \int_0^\infty \int_{\T^3} \ut \, \d B_t^{\uP} - \frac{1}{2} \int_0^\infty \| \ut  \|_{L^2}^2 \dt \Big).
\end{equation}

\begin{proposition}[$L^q$-bounds]\label{measure_drift:proposition_Lp}
If \( n\in \mathbb{N} \) in the definition of \( \uP\) is odd and sufficiently large, there exists a \( q > 1 \) such that 
\begin{equation}\label{measure_drift:eq_Lp}
\sup_{T\geq 0} \E_{\QT} \Big[ |D_T|^q  \Big] \lesssim_{n,q} 1. 
\end{equation}
\end{proposition}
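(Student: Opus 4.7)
The plan is to combine Radon-Nikodym calculus with a Girsanov transformation, and ultimately reduce to a variational bound of the type handled in Section~\ref{section:variational}. Using $D_T = (d\tmuT/d\bP)(d\bP/d\QT)$ together with the change-of-measure identity $\E_{\QT}[\,\cdot\,] = \E_\bP[\,\cdot\,(d\QT/d\bP)]$ and the explicit densities, one computes
\begin{equation*}
\E_{\QT}[D_T^q] = (\cZTl)^{-q}\, \E_\bP\!\left[\exp\!\left(-q\lcol\cVT(\Winf)\rcol - (q-1)\!\int_0^\infty\!\! \uP\,dB + \tfrac{q-1}{2}\!\int_0^\infty\! \|\uP\|_{L^2}^2\,dt\right)\right].
\end{equation*}
By Proposition~\ref{measure_var:proposition}\,(i) the prefactor is harmless, and by Lemma~\ref{measure_drift:lemma_no_explosion_P} the stochastic integral is well defined. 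Introduce the auxiliary Girsanov density $d\mathbb{R}^T/d\bP = \exp(-(q-1)\!\int \uP\,dB - \tfrac{(q-1)^2}{2}\!\int \|\uP\|^2\,dt)$, a true martingale; under $\mathbb{R}^T$ the process $\widetilde B := B + (q-1)\!\int \uP\,ds$ is a Brownian motion whose associated $\widetilde W := \int J_s\,d\widetilde B_s$ has the $\bP$-law of $W$. A direct substitution yields
\begin{equation*}
\E_{\QT}[D_T^q] \lesssim \E_{\mathbb{R}^T}\!\left[\exp\!\left(-q\lcol\cVT(\Winf)\rcol + \tfrac{q(q-1)}{2}\!\int_0^\infty\! \|\uP\|^2\,dt\right)\right].
\end{equation*}

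Under $\mathbb{R}^T$ one has $W = \widetilde W - (q-1) I[\uP]$, and substituting this into the fixed-point relation $\uP = \XiT(W - I[\uP])$ gives $\uP = \XiT(\widetilde W - q I[\uP])$. Setting $V := \widetilde W - qI[\uP]$, so that $\uP = \XiT(V)$ and $V$ solves $V = \widetilde W - qI[\XiT(V)]$, one recognises the defocusing structure of \eqref{measure_drift:eq_integraleq_u} with the leading coefficient $1$ replaced by $q > 0$. Since $n$ is odd, the argument of Lemma~\ref{measure_drift:lemma_no_explosion_P} applies verbatim and $V$ exists globally. After renaming $\widetilde W \mapsto W$, and using the identity $V + I[\XiT(V)] = W - (q-1)I[\XiT(V)]$, the problem reduces to showing that, uniformly in $T$,
\begin{equation*}
\E_\bP\!\left[\exp\!\left(-q\lcol\cVT\bigl(\Winf - (q-1)\Iinf[\XiT(V)]\bigr)\rcol + \tfrac{q(q-1)}{2}\!\int_0^\infty\! \|\XiT(V)\|^2\,dt\right)\right] \lesssim 1.
\end{equation*}

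This inequality will be established via the Bou\'{e}-Dupuis formula (Theorem~\ref{thm:bd_formula}). Introducing a drift $w \in \bH$ and denoting by $V^*$ the shifted solution of $V^* = W + I[w] - qI[\XiT(V^*)]$, the change of variables $w^* := w - (q-1)\XiT(V^*)$ together with the identity $V^* + I[\XiT(V^*)] = W + I[w^*]$ reduces matters to a variational lower bound of the form
\begin{equation*}
\inf_{w \in \bH}\, \E\!\left[q\lcol\cVT(\Winf + \Iinf[w^*])\rcol + \tfrac{1}{2}\|w\|_\cH^2 - \tfrac{q(q-1)}{2}\|\XiT(V^*)\|_\cH^2\right] \geq -C.
\end{equation*}
Expanding $\|w\|_\cH^2 = \|w^* + (q-1)\XiT(V^*)\|_\cH^2$ isolates a coercive piece $q\lcol\cVT(\Winf + \Iinf[w^*])\rcol + \tfrac{1}{2}\|w^*\|_\cH^2$, which can be handled by multiplying the estimates in the proof of Proposition~\ref{measure_var:proposition_main} by $q$, together with a cross term $(q-1)\langle w^*, \XiT(V^*)\rangle_\cH$ and a quadratic correction $-\tfrac{q-1}{2}\|\XiT(V^*)\|_\cH^2$.

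The main obstacle is to absorb the residual $\XiT(V^*)$ contributions, and this is precisely the reason for including the auxiliary $n$-th power term in the definition \eqref{measure_drift:eq_xi}. For $n$ odd, the leading defocusing contribution $-q\int J_s^2 \langle\nabla\rangle^{-1/2}\lcol(\langle\nabla\rangle^{-1/2} V^{*,T}_s)^n\rcol\,ds$ to $V^*$ forces strong coercivity on $\langle\nabla\rangle^{-1/2} V^{*,T}$ in $L^{n+1}_x$-type norms; combined with Visan's estimate (Lemma~\ref{measure_var:lemma_visan}) applied to the potential-energy term and the stochastic-object bounds of Section~\ref{section:stochastic_objects}, this should yield a schematic estimate $\|\XiT(V^*)\|_\cH^2 \leq C + \eta \bigl(\tfrac{\lambda}{4}\cV(\Iinf[w^*]) + \tfrac{1}{2}\|\lP[w^*]\|_\cH^2 + \text{higher-order coercive terms}\bigr)$, with $\eta$ arbitrarily small for $n$ large enough depending on $\beta$. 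Taking $q > 1$ sufficiently close to $1$ then makes the prefactor $q(q-1)$ and the cross term small enough that the residuals can be absorbed into the coercive pieces produced by the (modified) Proposition~\ref{measure_var:proposition_main}, closing the variational lower bound uniformly in $T$ and establishing \eqref{measure_drift:eq_Lp}.
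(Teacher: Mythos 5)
Your first step is essentially correct and, after unwinding, is the same reduction the paper performs: identifying your $w^{*}$ with the paper's $h^{\scriptscriptstyle{T},w}$ and $\XiT(V^{*})$ with $u^{\scriptscriptstyle{T},w}$, your quadratic form $\tfrac12\|w\|_{\cH}^2-\tfrac{q(q-1)}{2}\|\XiT(V^{*})\|_{\cH}^2$ equals $\tfrac12\|w^{*}\|_{\cH}^2+(q-1)\langle w^{*},\XiT(V^{*})\rangle_{\cH}-\tfrac{q-1}{2}\|\XiT(V^{*})\|_{\cH}^2$, whose dangerous part is bounded below by $-\tfrac{q-1}{2}\|w^{*}-\XiT(V^{*})\|_{\cH}^2$; this is exactly the term $-\tfrac{q-1}{2}\|w\|_{\cH}^2$ that the paper obtains by applying the Boué--Dupuis formula directly under $\QT$ with respect to the $\QT$-Brownian motion $B^{u^{\scriptscriptstyle{T}}}$, so your extra Girsanov layer buys nothing new. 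It also creates a small debt: you assert that the density of $\mathbb{R}^{T}$ is a true martingale, but $u^{\scriptscriptstyle{T}}$ is unbounded, Novikov's condition is not automatic, and one would have to repeat the stopping-time construction of Section \ref{section:drift_measure} (Lemmas \ref{measure_drift:lemma_uN_1}, \ref{measure_drift:lemma_no_explosion_P}, Proposition \ref{measure_drift:proposition}) for this new exponential.

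The genuine gap is the final absorption step, which is where the whole difficulty of the proposition lies. The claimed schematic bound $\|\XiT(V^{*})\|_{\cH}^2\leq C+\eta\big(\tfrac{\lambda}{4}\cV(\Iinf[w^{*}])+\tfrac12\|\lP[w^{*}]\|_{\cH}^2+\text{higher-order coercive terms}\big)$ cannot hold in any form available to you: the $n$-th power part of $\XiT(V^{*})$ contributes $\int_0^\infty\|J_s\langle\nabla\rangle^{-1/2}\lcol(\langle\nabla\rangle^{-1/2}(\,\cdot\,+I_s[w^{*}]))^{n}\rcol\|_{L^2_x}^2\,\d s$, a quantity of degree $2n$ in the drift, while the coercive quantities at your disposal have degree $4$ (the potential energy), degree $2$ (the quadratic terms), or at best degree $n+1$ (the $\bW^{-\frac12,n+1}_x$-norm that the defocusing term can be made to generate); no choice of large $n$ or of $q$ close to $1$ repairs a $2n$ versus $n+1$ mismatch, and the time decay of $J_s\langle\nabla\rangle^{-1/2}$ only makes the $s$-integral converge, it does not lower the degree. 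This is precisely why the paper never estimates the $L^2$-norm of the drift directly: in the identity $w_s^2=2(w_s+r^{\scriptscriptstyle{T},w}_s)^2-4w_s\widetilde{r}^{\scriptscriptstyle{T},w}_s-2(r^{\scriptscriptstyle{T},w}_s)^2-w_s^2-4\Aux_s(\tW,w)\,w_s$ the degree-$2n$ term $(r^{\scriptscriptstyle{T},w}_s)^2$ enters with a favorable sign and is simply discarded, while the cross term $\Aux_s(\tW,w)\,w_s$ is converted by Itô integration by parts into the coercive functional $\overline{\Aux}_t(\tW,w)$, which by \cite[Lemma 12]{BG20} dominates $\|I_t[w]\|_{\bW^{-\frac12,n+1}_x}^{n+1}$ up to acceptable errors; combined with Lemma \ref{measure_drift:lemma_tr} and a Gronwall argument in $t$ this gives $\E\int_0^\infty\|w_s\|_{L^2}^2\,\d s\lesssim 1+\E\int_0^\infty\|w_s+r^{\scriptscriptstyle{T},w}_s\|_{L^2}^2\,\d s$, which (after the identification above) is exactly the estimate your route still needs. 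Without this martingale/sign mechanism, or a genuine substitute for it, your argument does not close.
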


\begin{remark}\label{measure_drift:remark_Lp}
We point out two important differences between Proposition \ref{measure_drift:proposition_Lp} and the corresponding result for the \( \Phi^4_3\)-model in \cite[Lemma 9]{BG20}. The first difference is a consequence of working with \( \tmuT \) instead of \( \bmu_T \) as described in Section \ref{section:stochastic_control}. Barashkov and Gubinelli define and bound the density \( D_T \) with respect to the same measure \( \Qinf \) for all \( T \geq 1 \). In contrast, our density is defined with respect to \( \QT \) and we make no statements about the behavior of \( D_T \) with respect to \( \Q^u_S \) for any \( S \neq T \). Since the increments of \( T \mapsto \rho_T(\nabla) W_\infty \) are not independent, such a statement would be especially difficult if \( S \) and \( T \) are close. The second difference is a result of the smoothing effect of the interaction potential \( V \). While the Hartree-nonlinearity allows us to prove the full \( L^q \)-bound \eqref{measure_drift:eq_Lp}, the corresponding result in the \( \Phi^4_3\)-model requires the localizing factor
\( \exp( - \| W_\infty\|_{\cC_x^{-1/2-\epsilon}}^n)\).
\end{remark}

The rest of this subsection is dedicated to the proof of the \( L^q\)-bounds (Proposition \ref{measure_drift:proposition_Lp}). Since we intend to apply the Boué-Dupuis formula to bound the density \( D_T \) in \( L^q(\QT) \), we first study the effect of shifts in \( B^{\uP} \) on the integral equation \eqref{measure_drift:eq_integraleq}.  For any \( w \in \bH \), we define
\begin{align*}
u^{\scriptscriptstyle{T},w}_s&\defe \Xi(\WPTuT+ w)_s \\
&= - \lambda \lcol (V* (\WsTuT + \Is[w])^2) (\WsTuT  + \Is[w])\rcol + \Js \langle \nabla \rangle^{-\frac{1}{2}} \lcol ( \langle \nabla \rangle^{-\frac{1}{2}} (\WsTuT+\Is[w]))^n \rcol. 
\end{align*}

Using the cubic binomial formula (Lemma \ref{measure_re:lemma_binomial}), we obtain that 
\begin{equation}\label{measure_drift:eq_integral_u_r}
u^{\scriptscriptstyle{T},w}_s = - \lambda \Js \lcol (V*(\WsTuT)^2) \WsTuT  \rcol + r_s^{\scriptscriptstyle{T},w}, 
\end{equation}
where the remainder \( r_s^{\scriptscriptstyle{T},w}\) is given by 
\begin{align*}
r_s^{\scriptscriptstyle{T},w}& = - \lambda \Js \Big( (V* \lcol(\WsTuT)^2\rcol) \Is[w]\Big)- 2 \lambda \Js \Big( (V*(\WsTuT \Is[w])) \WsTuT - \Ms \Is[w] \Big)\\
&- 2 \lambda \Js \Big( (V* (\WsTuT \Is[w])) \Is[w] \Big) - \lambda \Js \Big( (V*\Is[w]^2) \tW_s \Big) - \lambda \Js \Big( (V*\Is[w]^2) \Is[w] \Big)\\
& + \Js \langle \nabla \rangle^{-\frac{1}{2}} \lcol ( \langle \nabla \rangle^{-\frac{1}{2}} (\WsTuT+\Is[w]))^n \rcol . 
\end{align*} 
We also define \( \hTw = w + \uTw \). 
We further decompose 
\begin{equation*}
r_s^{\scriptscriptstyle{T},w} = \widetilde{r}_s^{\scriptscriptstyle{T},w} + \Js \langle \nabla \rangle^{-\frac{1}{2}} \lcol ( \langle \nabla \rangle^{-\frac{1}{2}} (\WsTuT+\Is[w]))^n\rcol. 
\end{equation*}
 Before we begin the main argument, we prove the following auxiliary lemma.

\begin{lemma}[Estimate of $\widetilde{r}_t^{\scriptscriptstyle{T},w}$]\label{measure_drift:lemma_tr}
Let \( \epsilon,\delta>0 \) be small absolute constants and let \( n\geq n(\delta,\beta) \) be sufficiently large. Then, we have for all \( t \geq 0 \) that 
\begin{equation}
\langle t \rangle^{1+\delta} \| \widetilde{r}^{\scriptscriptstyle{T},w}_t \|_{L^2_x}^2 \lesssim_{n,\delta,\beta,\lambda} C_\epsilon Q_t(\tW) + \epsilon \Big( \| \It[w] \|_{\bW^{-\frac{1}{2},n+1}_x}^{n+1} + \int_0^t \| w_s \|_{L^2_x}^2 \ds \Big). 
\end{equation}
\end{lemma}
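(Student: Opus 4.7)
I would split $\widetilde{r}_t^{\scriptscriptstyle{T},w}$ into its five constituent terms arising from the cubic binomial expansion (Lemma \ref{measure_re:lemma_binomial}):
\begin{equation*}
\widetilde{r}_t^{\scriptscriptstyle{T},w} = -\lambda\,\Jt\big[\underbrace{(V*\lcol(\tW_t)^2\rcol)\,\It[w]}_{\mathrm{(I)}} + 2\underbrace{\big((V*(\tW_t \It[w]))\tW_t - \MT \It[w]\big)}_{\mathrm{(II)}} + 2\underbrace{(V*(\tW_t\It[w]))\It[w]}_{\mathrm{(III)}} + \underbrace{(V*\It[w]^2)\tW_t}_{\mathrm{(IV)}} + \underbrace{(V*\It[w]^2)\It[w]}_{\mathrm{(V)}}\big],
\end{equation*}
where for brevity $\tW_t \defe \WtTuT$. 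The overall strategy is to exploit two ingredients: first, the smoothing of $\Jt$, whose symbol $\sigma_t^T(n)/\langle n\rangle$ is frequency-localized at $|n|\sim\langle t\rangle$ of size $\lesssim\langle t\rangle^{-3/2}$, giving the operator bound $\langle t\rangle^{1+\delta}\|\Jt f\|_{L^2_x}^2 \lesssim \langle t\rangle^{-2+\delta}\|P_{\sim\langle t\rangle}f\|_{L^2_x}^2$; and second, product and paraproduct estimates combined with the stochastic regularity of $\tW_t$ and its renormalized powers from Lemma \ref{measure_re:lemma_stochastic_objects_I}, Lemma \ref{measure_re:lemma_stochastic_objects_II}, and Proposition \ref{measure_rmt:proposition_estimate}.

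For the terms linear in $\It[w]$, namely (I) and (II), I would place the stochastic factor in a $\cC^s_x$ space and $\It[w]$ in a dual Sobolev space. Concretely, (I) is handled by \eqref{measure_re:eq_so_2} (putting $V*\lcol(\tW_t)^2\rcol$ into $\cC_x^{-1+\beta-\epsilon}$) together with a product estimate into $\It[w]\in H_x^{1-\beta+2\epsilon}$; term (II) is handled directly by the random matrix estimate (Proposition \ref{measure_rmt:proposition_estimate}) applied to the dual pairing, which controls it by $\operatorname{Op}_t^T(\gamma,2)\cdot\|\It[w]\|_{H_x^{1-\gamma}}$. The random prefactors are collected into $Q_t(\tW)$, while Sobolev-interpolation inequalities combined with Lemma \ref{appendix:lemma_It} ($\|\It[w]\|_{H_x^1}\lesssim\|w\|_{L^2_{s,x}([0,t]\times\T^3)}$) let me absorb $\|\It[w]\|$ factors by Young's inequality: one $\epsilon$ portion falls on $\int_0^t\|w_s\|_{L^2}^2\ds$, while the rest is placed on $C_\epsilon Q_t(\tW)$.

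For the nonlinear-in-$w$ contributions (III), (IV), and (V), I would place $\tW_t\in\cC_x^{-1/2-\epsilon}$ via \eqref{measure_re:eq_so_1}. The mixed terms (III) and (IV) are then bilinear/quadratic in $\It[w]$ and can be controlled by Hölder's inequality after using the smoothing of $V*(\cdot)$ (i.e., $V\in \mathbb{W}^{\beta-\epsilon,1}_x$). For the purely cubic term (V), I would invoke Lemma \ref{measure_var:lemma_cubic} (Visan-type) to estimate $\|\langle\nabla\rangle^{1/2+\delta}((V*\It[w]^2)\It[w])\|_{L^1_x}$ in terms of $\cV(\It[w])^{1/2}\|\It[w]\|_{L^2_x}^{1-\theta}\|\It[w]\|_{H^1_x}^{\theta}$, which after the $\Jt$-smoothing and Sobolev embedding yields the required $L^2$ bound.

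\textbf{Main obstacle.} The key difficulty is the purely cubic term (V), since it contains no free stochastic factor to absorb drift norms. The resolution is an interpolation between $\mathbb{W}^{-1/2,n+1}_x$ (the norm allowed to appear with high power on the right-hand side) and $H^1_x$ (controlled by $\int_0^t\|w_s\|_{L^2_x}^2\ds$ through Lemma \ref{appendix:lemma_It}). Choosing $n=n(\delta,\beta)$ large enough so that the embedding $\mathbb{W}^{-1/2,n+1}_x\hookrightarrow H_x^{-1/2-3/(n+1)}$ leaves arbitrarily little H\"older-interpolation weight on $H^1_x$, Young's inequality with conjugate exponents $(n+1)/(n-2)$ and $(n+1)/3$ produces an $\epsilon$ coefficient on $\|\It[w]\|_{\mathbb{W}^{-1/2,n+1}_x}^{n+1}$ and a bounded $C_\epsilon$ coefficient on the $H^1_x$-term, which combines into $\epsilon\int_0^t\|w_s\|_{L^2}^2\ds$. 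This is precisely why the $n$-th power renormalization is included in the forcing $\XiT$ in \eqref{measure_drift:eq_xi}: its coercivity in the full variational problem for $D_T$ is exactly what is needed to absorb the $\mathbb{W}^{-1/2,n+1}_x$-contribution coming from the cubic drift term in $\widetilde{r}_t^{\scriptscriptstyle{T},w}$.
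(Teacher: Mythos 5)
Your decomposition into the five terms and your handling of the terms that are at most quadratic in $\It[w]$ — the $\Jt$-smoothing, placing $V*\lcol(\tW_t)^2\rcol$ and $\tW_t$ in H\"older spaces via Lemma \ref{measure_re:lemma_stochastic_objects_I} and Lemma \ref{measure_re:lemma_stochastic_objects_III}, the random matrix estimate (Proposition \ref{measure_rmt:proposition_estimate}) for the renormalized mixed term, and interpolation plus Young against $\epsilon\big(\|\It[w]\|_{\bW_x^{-1/2,n+1}}^{n+1}+\int_0^t\|w_s\|_{L^2_x}^2\ds\big)$ using Lemma \ref{appendix:lemma_It} — is essentially the paper's argument. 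The gap is your treatment of the purely cubic term (V). In this lemma, unlike in the variational estimate (Proposition \ref{measure_var:proposition_main}) where $\cV(\Iinf[u])$ appears as a coercive term, the admissible right-hand side contains no potential energy; so the factor $\cV(\It[w])^{1/2}$ produced by Visan's estimate (Lemma \ref{measure_var:lemma_cubic}) must itself be converted into the allowed norms, and your ``main obstacle'' paragraph never does this: the interpolation you describe only acts on the $\|\It[w]\|_{L^2}^{1-\theta}\|\It[w]\|_{H^1}^{\theta}$ factor. Moreover the conversion cannot close with your bookkeeping. Using the $L^1$-based output of Visan's estimate forces a Bernstein step $\|P_{\sim\langle t\rangle}F\|_{L^2}\lesssim\langle t\rangle^{3/2}\|F\|_{L^1}$, which spends the $\langle t\rangle^{-3}$ coming from the symbol of $\Jt$; after squaring and multiplying by $\langle t\rangle^{1+\delta}$ you retain only $\langle t\rangle^{-\delta}$. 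But $\cV(\It[w])\lesssim\|\It[w]\|_{L^4_x}^4\lesssim\|\It[w]\|_{\cC_x^{-1/2-\kappa}}^{8/3}\|\It[w]\|_{H^1_x}^{4/3}$ (up to $\kappa$-corrections), and together with the remaining factors the total $H^1_x$-homogeneity is roughly $8/3>2$, so Young's inequality cannot produce the required $\epsilon\|\It[w]\|_{H^1_x}^2$; lowering the $H^1_x$-power via the Fourier-support bound $\|\It[w]\|_{H^1_x}\lesssim\langle t\rangle^{3/2}\|\It[w]\|_{H^{-1/2}_x}$ would cost about a full power of $\langle t\rangle$, which you no longer have.

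The paper closes this term without Visan's estimate precisely in order to keep the full smoothing of $\Jt$: it bounds $\|\Jt((V*\It[w]^2)\It[w])\|_{L^2_x}^2\lesssim\langle t\rangle^{-3}\|\It[w]\|_{L^6_x}^6$ by the symbol bound and H\"older (with $V\in L^1$), then uses Bernstein, the support bound $\|\It[w]\|_{H^1_x}\lesssim\langle t\rangle^{3/2}\|\It[w]\|_{H^{-1/2}_x}$, and interpolation with $\bW_x^{-1/2,4/\delta}$, arriving at $\langle t\rangle^{-3+2\delta}\|\It[w]\|_{\bW_x^{-1/2,4/\delta}}^{4+\delta}\|\It[w]\|_{H^1_x}^{2-\delta}$, whose $H^1_x$-power is strictly below $2$, so Young applies and the $H^1_x$-part is absorbed into $\epsilon\int_0^t\|w_s\|_{L^2_x}^2\ds$ via Lemma \ref{appendix:lemma_It}. (A similar mechanism, together with \cite[Lemma 20]{BG20} for $\|\It[w]\|_{L^4_x}^{4+\delta}$, handles your terms (III) and (IV).) So you should either argue crudely in $L^2$ as above for the cubic term, or keep in mind that Visan's estimate is the right tool in Section \ref{section:construction}, where $\cV$ is coercive, but not in this Gronwall-type lemma.
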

\begin{remark}
We emphasize that the implicit constant does not depend on \( \epsilon \). In the application of Lemma \ref{measure_drift:lemma_tr}, we will choose \( \epsilon >0 \) sufficiently small depending on \( \delta,n,\beta,\lambda\).
\end{remark}
\begin{proof} In the following argument, the implicit constants are allowed to depend on \( n,\delta,\beta, \) and \( \lambda \) but not on \( \epsilon \). 
We estimate the five terms in \( \widetilde{r}_t^{\scriptscriptstyle{T},w} \) separately and do not require any new ingredients. We only rely on Lemma \ref{measure_re:lemma_stochastic_objects_I}, Proposition \ref{measure_rmt:proposition_estimate}, Hölder's inequality, and Bernstein's inequality. \\
For the first term, we have from the definition of \( \Jt \) and Lemma \ref{measure_re:lemma_stochastic_objects_I} that 
\begin{align*}
\Big\| \Jt  \Big( (V* \lcol (\WtTuT)^2\rcol) \It[w] \Big) \Big\|_{L^2_x}^2 
&\lesssim \langle t \rangle^{-1-2\delta}  \Big\|  \Big( (V* \lcol(\WtTuT)^2\rcol) \It[w] \Big) \Big\|_{H^{-1+\delta}_x}^2 \\
&\lesssim \langle t \rangle^{-1-2\delta} \Big\| V *  \lcol(\WtTuT)^2\rcol \Big\|_{\cC_x^{-1+2\delta}}^2 \| \It[w]  \|_{H^{1-\delta}_x}^2  \\
&\lesssim  \langle t \rangle^{-1-2\delta} \Big\| V *  \lcol(\WtTuT)^2\rcol \Big\|_{\cC_x^{-1+2\delta}}^2 \| \It[w]  \|_{\bW^{-\frac{1}{2},n+1}_x}^{2\delta} \| \It[w] \|_{H^1_x}^{2-\delta} \\
&\lesssim  \langle t \rangle^{-1-2\delta} C_\epsilon \, Q_t(\tW) + \langle t \rangle^{-1-2\delta} \epsilon \Big( \| \It[w]  \|_{\bW^{-\frac{1}{2},n+1}_x}^{n+1}  + \| \It[w]\|_{H^1_x}^2 \Big). 
\end{align*}
For the second term, we have from duality and Proposition \ref{measure_rmt:proposition_estimate} for all \( 0 < \gamma < \min(\beta,1) \) that 
\begin{align*}
&\big \| J_t \Big( (V*(\WtTuT\It[w] )) \WtTuT\ - \MT \It[w] \Big) \big\|_{L^2_x}^2 \\
&\leq \langle t \rangle^{-1-2\gamma} \| \Jt  \Big( (V*(\WtTuT\ \It[w] )) \WtTuT\ - \MT \It[w] \Big) \big\|_{H^{-(1-\gamma)}_x}^2 \\
&\lesssim \langle t \rangle^{-1-2\gamma}  Q_t(\tW) \| \It[w] \|_{H^{1-\gamma}_x}^2 \\
&\lesssim  \langle t \rangle^{-1-2\gamma} C_\epsilon \, Q_t(\tW) +  \langle t \rangle^{-1-2\gamma} \epsilon \Big(  \| \It[w]  \|_{\bW^{-\frac{1}{2},n+1}_x}^{n+1}  + \|\It[w] \|_{H^1_x}^2 \Big). 
\end{align*}
For the third term, we estimate
\begin{align*}
\| \Jt  \Big( (V* (\tW_t \It[w] )) \It[w]  \Big) \|_{L^2_x}^2 &\lesssim \langle t \rangle^{-3} \| V * (\tW_t \It[w] )\|_{L^4_x}^2 \| \It[w]  \|_{L^4_x}^2 \\
&\lesssim \langle t \rangle^{-3} \| \tW_t \|_{L^\infty_x}^2 \|\It[w] \|_{L^4_x}^4 \\
&\lesssim \langle t \rangle^{-3+1+2\delta} \| \tW_t \|_{\cC_x^{-\frac{1}{2}-\delta}}^2 \| \It[w]  \|_{L^4_x}^4 \\ 
&\lesssim  C_\epsilon  \langle t \rangle^{-2+2\delta} \, \| \tW_t \|_{\cC_x^{-\frac{1}{2}-\delta}}^{2 \frac{4+\delta}{\delta}}  +  \epsilon \langle t \rangle^{-2+2\delta}  \| \It[w]  \|_{L^4_x}^{4+\delta}  \\ 
&\lesssim C_\epsilon  \langle t \rangle^{-2+2\delta}  Q_t(\tW) + \epsilon \langle t \rangle^{-2+2\delta}\Big(  \| \It[w]  \|_{\bW^{-\frac{1}{2},n+1}_x}^{n+1}  + \|\It[w] ]\|_{H^1_x}^2 \Big). 
\end{align*}
In the last line, we use \cite[Lemma 20]{BG20}. \\
The fourth term can be estimated exactly like the third term. To estimate the fifth term, we only rely on Hölder's  inequality, Bernstein's inequality, and the Fourier support condition of \( \It[w]  \). We have that
\begin{align*}
&\| \Jt  \Big( (V* \It[w] ^2) \It[w] \Big) \|_{L_x^2}^2 
\lesssim \langle t \rangle^{-3} \| (V* \It[w] ^2) \It[w]   \|_{L_x^2}^2 
\lesssim \langle t \rangle^{-3} \| \It[w]  \|_{L_x^6}^6 
\lesssim \langle t \rangle^{-3+\frac{\delta}{2}} \| \It[w]  \|_{L_x^{\frac{6}{6+\delta}}}^6 \\
&\lesssim \langle t\rangle^{-3+\frac{\delta}{2}} \| \It[w]  \|_{\bW_x^{-\frac{1}{2},\frac{4}{\delta}}}^{4} \| \It[w]  \|_{H_x^1}^2 
\lesssim \langle t \rangle^{-3+2\delta}   \| \It[w]  \|_{\bW_x^{-\frac{1}{2},\frac{4}{\delta}}}^{4+\delta} \| \It[w]  \|_{H_x^1}^{2-\delta} \\
&\lesssim C_\epsilon \langle t \rangle^{-3+2\delta}  + \epsilon \langle t \rangle^{-3+2\delta}   \Big( \| \It[w]  \|_{\bW_x^{-\frac{1}{2},n+1}}^{n+1} + \| \It[w] \|_{H_x^1}^{2}\Big). 
\end{align*}
In the second last inequality, we used that \( \| \It[w] \|_{H^1_x} \lesssim \langle t \rangle^{\frac{3}{2}} \| \It[w] \|_{H_x^{-1/2}} \). This completes the estimate of all five terms in \( \widetilde{r}^{\scriptscriptstyle{T},w}_t \) and hence the proof. 
\end{proof}

Equipped with Lemma \ref{measure_drift:lemma_tr}, we can now prove the $L^q$-bound for $D_T$. 

\begin{proof}[Proof of Proposition \ref{measure_drift:proposition_Lp}:]
The proof splits into two steps. \\
\emph{Step 1: Formulation as a variational problem.}
In order to prove the desired estimate \eqref{measure_drift:eq_Lp}, it suffices to obtain a lower bound on
\( -\log \E_{\QT} [ D_T^q ] \). Using the Boué-Dupuis formula, we obtain
\begin{align*}
&-\log \E_{\QT}[ D_T^q]  -q \log(\cZTl) \\
&= -\log \E_{\QT} \bigg[ \exp\bigg( -q \Big( \lcol \cVT(\WinfTuT+\Iinf[u]) \rcol - \int_0^\infty \int_{\T^3} \ut  \, \d B^{\uP}_t - \frac{1}{2} \int_0^\infty \| \ut \|_{L^2}^2 \dt \Big) \bigg) \bigg] \\
&= \inf_{w \in \bH} \E \bigg[ q \bigg(\lcol \cVT(\WinfTuT+ \Iinf[w]+ \Iinf[\uTw])\rcol + \int_0^\infty \int_{\T^3} u_t^{\scriptscriptstyle{T},w} \d B^{\uP}_t + \int_0^\infty \int_{\T^3} u_t^{\scriptscriptstyle{T},w} w_t \dx \dt \\
 &\hspace{10ex}+\frac{1}{2} \int_0^\infty \| u_t^{\scriptscriptstyle{T},w} \|_{L^2}^2 \dt \bigg) 
 + \frac{1}{2} \int_0^\infty \| w_t \|_{L^2}^2 \dt \bigg]. 
\end{align*}
Since \( T\mapsto \int_0^T \int_{\T^3} u_t^{\scriptscriptstyle{T},w}\d B^{\uP}_t \) is a martingale, its expectation vanishes. We now insert the change of variables \( \uTw = \hTw -w  \) into the formula above, and obtain that
\begin{align*}
&-\log \E_{\QT}[ D_T^q]    -q \log(\cZTl) \\
&=\inf_{w \in \cH_a} \E_{\QT}\bigg[ q \Big( \lcol \cVT(\WinfTuT+ \Iinf[\hTw ])\rcol + \frac{1}{2} \int_0^\infty \| h_t^w \|_{L^2}^2 \dt - \frac{1}{2} \int_0^\infty \| w_t \|_{L^2}^2 \dt \Big) 
+ \frac{1}{2} \int_0^\infty \| w_t \|_{L^2}^2 \dt \bigg] \\
&=\inf_{w \in \bH} \E_{\QT} \bigg[ q \Big( \lcol \cVT(\WinfTuT+ \Iinf[\hTw ])\rcol + \frac{1}{2} \int_0^\infty  \| h_t^w \|_{L^2}^2 \dt \Big) - \frac{q-1}{2} \int_0^\infty \| w_t\|_{L^2}^2 \dt  \bigg]. 
\end{align*}
Since we want to obtain a lower bound, the most dangerous term in the expression above is  \( -\frac{q-1}{2} \int_0^\infty \| w_t\|_{L^2}^2 \dt \). Using our previous information about the variational problem (Proposition \ref{measure_var:proposition_main} and Proposition \ref{measure_var:proposition}) and the nonnegativity of \( \cV(\Iinf[\hTw ]) \), we obtain that 
\begin{equation}\label{measure_drift:eq_lp_p1}
-\log \E_{\QT}[ D_T^q] 
\geq -C  + \inf_{w\in \bH} \E_{\QT} \bigg[ \frac{1}{4} \int_0^\infty \| l^T_t(\hTw )\|_{L^2}^2 \dt  - \frac{q-1}{2} \int_0^\infty \| w_t\|_{L^2}^2 \dt \bigg].
\end{equation}
Recalling the definition of \( l_t^T(\hTw ) \) from Proposition \ref{measure_var:proposition_main} and \eqref{measure_drift:eq_integral_u_r}, we obtain that 
\begin{align*}
 l_t^T(\hTw ) &=  \hTw_t + \lambda  \Jt  \lcol (V*(\WtTuT)^2) \tW_t \rcol \\
&=  (u_t^{\scriptscriptstyle{T},w} + w_t) + \Jt  \lcol (V*(\WtTuT)^2) \tW_t \rcol \\
&=  (\rTw_t+ w_t). 
\end{align*}
Together with our previous estimate, this leads to 
\begin{equation*}
-\log \E_{\QT}[ D_T^q] \geq -C + \inf_{w\in \bH} \E_{\QT} \bigg[  \frac{1}{4} \int_0^\infty \| w_t + \rTw_t\|_{L^2}^2 \dt - \frac{q-1}{2} \int_0^\infty \| w_t \|_{L^2}^2 \dt \bigg]. 
\end{equation*}
By choosing \( q \) sufficiently close to one, it only remains to establish 
\begin{equation}
\E \int_0^\infty \| w_t \|_{L^2}^2 \dt \lesssim 1+ \E \int_0^\infty \| w_t + \rTw_t \|_{L^2}^2 \dt. 
\end{equation}
This bound is proven via a Gronwall-type argument. \\

\emph{Step 2: Gronwall-type argument.} 
This step crucially relies on the smoother term in the definition of the drift \eqref{measure_drift:eq_integraleq}. We essentially follow the proof of \cite[Lemma 11]{BG20}. As in \cite{BG20}, we introduce the auxiliary process
\begin{equation}
\Aux_s(\tW,w) = \sum_{i=0}^n {n \choose i} \langle \nabla \rangle^{-\frac{1}{2}} \Js  \Big( \lcol ( \langle \nabla \rangle^{-\frac{1}{2}} \tW_s)^i\rcol (\langle \nabla \rangle^{-\frac{1}{2}} \Is[w] )^{n-i}  \Big). 
\end{equation}
With this notation, it holds that \( \rTw = \widetilde{r}^{\scriptscriptstyle{T},w} + \Aux(\tW,w) \). We then expand 
\begin{equation}\label{measure_drift:eq_identity_w_r}
\begin{aligned}
w_s^2 &= 2 (w_s + \rTw_s)^2 - 4 w_s r_s^{\scriptscriptstyle{T},w} - 2 (r_s^{\scriptscriptstyle{T},w})^2 - w_s^2 \\
&= 2 (w_s+r_s^{\scriptscriptstyle{T},w})^2 -4 w_s \widetilde{r}^{\scriptscriptstyle{T},w}_s - 2 (r_s^w)^2 - w_s^2 - 4 \Aux_s(\tW,w). 
\end{aligned}
\end{equation}
Using Itô's integration by parts formula, we have for all \( s \leq t \) that 
\begin{align*}
&4 \int_0^t \int_{\T^3} \Aux_s(\tW,w) w_s \dx \ds \\
&= 4 \sum_{i=0}^n { n \choose i} \int_0^t \int_{\T^3} \lcol ( \langle \nabla \rangle^{-\frac{1}{2}} \tW_s)^i \rcol \, (\langle \nabla \rangle^{-\frac{1}{2}} \Is[w] )^{n-i} \,  (\langle \nabla \rangle^{-\frac{1}{2}} \Js  w_s) \dx \ds \\
&=4 \sum_{i=0}^n  \frac{1}{n+1-i} { n \choose i} \int_0^t \int_{\T^3} \lcol ( \langle \nabla \rangle^{-\frac{1}{2}} \tW_s)^i\rcol \, \frac{\partial}{\partial s}(\langle \nabla \rangle^{-\frac{1}{2}} \Is[w] )^{n+1-i}\dx \ds \\
&= 4 \sum_{i=0}^n \frac{1}{n+1-i} {n \choose i} \int_{\T^3} \lcol (\langle \nabla \rangle^{-\frac{1}{2}} \tW_t)^{i}\rcol (\langle \nabla \rangle^{-\frac{1}{2}} \It[w])^{n+1-i} \dx \\
&- 4 \sum_{i=0}^n \frac{1}{n+1-i} {n \choose i} \int_0^t \int_{\T^3}  (\langle \nabla \rangle^{-\frac{1}{2}} \Is[w] )^{n+1-i} \d \big( \lcol (\langle \nabla \rangle^{-\frac{1}{2}} \tW_s)^i \rcol \big).
\end{align*}
Due to the martingale property, the second summand has zero expectation. After setting 
\begin{equation}
\overline{\Aux}_t(\tW,w) \defe \sum_{i=0}^n \frac{1}{n+1-i} {n \choose i} \int_{\T^3} \lcol (\langle \nabla \rangle^{-\frac{1}{2}} \tW_t)^{i}\rcol (\langle \nabla \rangle^{-\frac{1}{2}} \It[w] )^{n+1-i} \dx,
\end{equation}
we obtain from \eqref{measure_drift:eq_identity_w_r} that 
\begin{equation}\label{measure_drift:eq_gronwall_1}
\begin{aligned}
&\E \Big[ \int_0^t \| w_s \|_{L^2}^2 \ds + 4 \overline{\Aux}_t(\tW,w) \Big] \\
&= \E \Big[ \int_0^t \big( 2 \| w_s +\rTw_s \|_{L^2}^2 - 4 \langle w_s, \widetilde{r}_s^{\scriptscriptstyle{T},w}\rangle - \| w_s \|_{L^2}^2 - 2 \|r_s^{\scriptscriptstyle{T},w} \|_{L^2}^2 \big) \ds \Big] \\
&\leq \E \Big[ 2 \int_0^t  \| w_s +\rTw_s \|_{L^2}^2 \ds + 4 \int_0^T \| \widetilde{r}_s^{\scriptscriptstyle{T},w}\|_{L^2}^2 \ds \Big]. 
\end{aligned}
\end{equation}
We perform the Gronwall-type argument based on the quantity \( \Phi(t) \), which is defined by
\begin{equation}
\Phi(t)\defe \E \int_0^t \| w_s \|_{L^2}^2 \ds + \| \It[w] \|_{\bW^{-\frac{1}{2},n+1}_x}^{n+1}. 
\end{equation}
By \cite[Lemma 12]{BG20} and \eqref{measure_drift:eq_gronwall_1}, we have that 
\begin{align*}
\Phi(t) \lesssim 1 + \E \bigg[ \int_0^t \| w_s \|_{L^2}^2 \ds + \overline{\Aux}_t(\tW,w) \bigg] \lesssim 1+ \E \bigg[ \int_0^t \| r_s^{\scriptscriptstyle{T},w} + w_s \|_{L^2}^2 \ds + \int_0^t \| \widetilde{r}_s^{\scriptscriptstyle{T},w} \|_{L^2}^2 \ds \bigg]. 
\end{align*}
From Lemma \ref{measure_drift:lemma_tr}, we obtain for \( \epsilon, \delta >0 \) that 
\begin{align*}
\Phi(t) &\lesssim_\delta 1+ \E \bigg[\int_0^t \|r_s^{\scriptscriptstyle{T},w} + w_s \|_{L^2}^2 \ds + C_{\epsilon}  \int_0^t \langle s \rangle^{-1-\delta} Q_s(\bW,\lambda)  \ds \bigg] + \epsilon \int_0^t \langle s \rangle^{-1-\delta} \Phi(s) \ds \\
&\lesssim_\delta C_{\epsilon} + \E \bigg[\int_0^t \|r_s^{\scriptscriptstyle{T},w} + w_s \|_{L^2}^2 \ds\bigg] + \epsilon \sup_{0\leq s\leq t} \Phi(s). 
\end{align*}
By choosing \( \epsilon >0 \) sufficiently small depending on \( \delta\), this implies the desired estimate. 
\end{proof}

\subsection{The reference measure}

Using our construction of the drift measures \( \QT \), we now provide a short proof of Theorem \ref{theorem:reference}. As in the rest of this section, we use the truncation parameter $T$. 

\begin{proof}[Proof of Theorem \ref{theorem:reference}:]
For any $1\leq T\leq \infty$, we define the reference measure $\nu_T$ as 
\begin{equation*}
\nu_T \defe (W_\infty)_\# \QT. 
\end{equation*}
By using the $L^q$-bound (Proposition \ref{measure_drift:proposition_Lp}), we have that for all Borel sets $A\subseteq \cC_x^{-1/2-\kappa}(\T^3)$ that 
\begin{equation*}
\mu_T(A) = \tmu_T(W_\infty \in A) = \E_{\QT} \big[ 1\big\{ W_\infty \in A \big\} \, \DT \big] \leq \Big( \E_{\QT} \big[  \DT^q \big] \Big)^{\frac{1}{q}} \QT(W_\infty \in A)^{1-\frac{1}{q}} \lesssim \nu_T(A)^{1-\frac{1}{q}}. 
\end{equation*}
This proves the first part of Theorem  \ref{theorem:reference}. Regarding the representation of $\nu_T$, which forms the second part of Theorem  \ref{theorem:reference}, we have that 
\begin{align*}
&\nu_T \\
=& \Law_{\QT}(W_\infty) \\
	=& \Law_{\QT}( W^u_\infty + I_\infty[u^{\scriptscriptstyle{T}}])  \\
	=& \Law_{\QT} \Big( W_\infty^u - \lambda  \rho_T(\nabla) \int_0^\infty \hspace{-1ex} J_s^2 \lcol ( V* (W_s^{\scriptscriptstyle{T},u})^2 ) W_s^{\scriptscriptstyle{T},u} \rcol  \hspace{-0.5ex} \ds + \rho_T(\nabla) \int_0^\infty  \hspace{-1ex} \langle \nabla \rangle^{-\frac{1}{2}}  J_s^2
 \lcol \big( \langle \nabla \rangle^{-\frac{1}{2}} W_s^{\scriptscriptstyle{T},u}  \big)^n \rcol  \hspace{-0.5ex} \ds \Big) \\
=& \Law_{\bP} \Big( W_\infty - \lambda \rho_T(\nabla) \int_0^\infty  J_s^2 \lcol ( V* (W_s^{\scriptscriptstyle{T}})^2 ) W_s^{\scriptscriptstyle{T}} \rcol \ds + \rho_T(\nabla) \int_0^\infty \langle \nabla \rangle^{-\frac{1}{2}}  J_s^2
 \lcol \big( \langle \nabla \rangle^{-\frac{1}{2}} W_s^{\scriptscriptstyle{T}}  \big)^n \rcol \ds  \Big). 
\end{align*}
This completes the proof. 
\end{proof}

\section{Singularity}\label{section:singularity}

In this section, we prove Theorem  \ref{theorem:singularity}. The majority of this section deals with the singularity for \( 0 < \beta < 1/2 \). The absolute continuity for \( \beta > 1/2 \) will be deduced from Corollary \ref{measure_var:corollary_cT} and requires no new ingredients. Theorem \ref{theorem:singularity} is important for the motivation of this series of papers, since we provide the first proof of invariance for a Gibbs measure which is singular with respect to the corresponding Gaussian free field. The methods of this section, however, will not be used in the rest of this two-paper series. 

We prove the singularity of the Gibbs measure \( \mu_\infty \) and the Gaussian free field \( \cg \) through the explicit event in Proposition  \ref{singularity:proposition_main}.

\begin{proposition}[Singularity]\label{singularity:proposition_main}
Let \( 0 < \beta < \frac{1}{2} \) and let \( \delta >0 \) be sufficiently small. Then, there exists a (deterministic) sequence \( (S_m)_{m=1}^\infty \subseteq \mathbb{R}_{>0} \) converging to infinity such that 
\begin{equation}\label{singularity:eq_GFF}
\lim_{m\rightarrow \infty} \frac{1}{S_m^{1-2\beta-\delta}} \int_{\T^3} \lcol (V* (\rho_{S_m}(\nabla) \phi)^2) (\rho_{S_m}(\nabla) \phi)^2 \rcol \dx = 0 \quad \cg \text{-a.s.}
\end{equation}
and 
\begin{equation}\label{singularity:eq_Gibbs}
\lim_{m\rightarrow \infty} \frac{1}{S_m^{1-2\beta-\delta}} \int_{\T^3} \lcol (V* (\rho_{S_m}(\nabla) \phi)^2) (\rho_{S_m}(\nabla) \phi)^2 \rcol \dx =  -\infty \quad \mu_\infty\text{-a.s.}
\end{equation}
Here, \( \cg \) is the Gaussian free field, \( \mu_\infty\) is the Gibbs measure, and \( \phi \in \cC_x^{-\frac{1}{2}-\kappa}(\T^3) \) denotes the random element.  
\end{proposition}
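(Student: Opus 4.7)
The plan is to exhibit a single family of random variables
\begin{equation*}
\mathfrak{F}_S(\phi) \defe \int_{\T^3}\lcol (V*(\rho_S(\nabla)\phi)^2)(\rho_S(\nabla)\phi)^2\rcol \dx
\end{equation*}
whose behavior distinguishes \( \cg \) and \( \mu_\infty \) along the geometric sequence \( S_m = 2^m \). I will show that \( \E_\cg[\mathfrak{F}_S] = 0 \) with \( \operatorname{Var}_\cg(\mathfrak{F}_S) = O(S^{1-2\beta+\epsilon}) \), while under the reference measure \( \nu_\infty \) of Theorem \ref{theorem:reference} the expectation diverges like \( -S^{1-2\beta} \) with fluctuations of the same size as under \( \cg \). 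The intermediate threshold \( S^{1-2\beta-\delta} \) then separates the two regimes for any \( 0<\delta<1-2\beta \), and the \( L^q \)-bound in Theorem \ref{theorem:reference} transfers the almost-sure conclusion from \( \nu_\infty \) to \( \mu_\infty \).

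\emph{Part 1 (the Gaussian free field).} Substituting \( \phi = W_\infty \) and applying Proposition \ref{measure_re:proposition_renormalization} with \( T=S \) at \( t=\infty \) gives
\begin{equation*}
\mathfrak{F}_S(W_\infty) = 4\int_0^\infty\int_{\T^3}\lcol (V*(W_s^{\scriptscriptstyle{S}})^2)W_s^{\scriptscriptstyle{S}}\rcol \, \d W_s^{\scriptscriptstyle{S}},
\end{equation*}
a martingale terminal value of mean zero under \( \cg \). By Itô's isometry, \( \int_0^\infty \sigma_s^S(n)^2\ds = \rho_S(n)^2 \), and Corollary \ref{measure_re:corollary_stochastic_objects_I},
\begin{equation*}
\operatorname{Var}_\cg(\mathfrak{F}_S)\lesssim \sum_{n\in\Z^3}\frac{\rho_S(n)^2}{\langle n\rangle^{2+2\gamma}}\lesssim S^{1-2\gamma}
\end{equation*}
for every \( 0<\gamma<\min(\beta,1) \). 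Choosing \( \gamma = \beta - \epsilon/2 \) yields \( \operatorname{Var}_\cg(\mathfrak{F}_S)\lesssim S^{1-2\beta+\epsilon} \). Chebyshev at level \( S_m^{1-2\beta-\delta} \) is summable provided \( 2\delta+\epsilon < 1-2\beta \), so Borel--Cantelli yields \eqref{singularity:eq_GFF}.

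\emph{Part 2 (the Gibbs measure).} Under \( \Qinf \), Corollary \ref{measure_drift:corollary_Qinf} gives \( W_\infty \stackrel{d}{=} W_\infty^u + I_\infty[u] \) with \( W^u\stackrel{d}{=} W \) and \( u \) solving \eqref{measure_drift:eq_integraleq_u}; iterating \eqref{measure_drift:eq_integraleq_u} once yields \( I_\infty[u] = -\lambda \mathbb{W}^{[3]}(W^u) + \mathcal{R} \), where \( \mathbb{W}^{[3]}(W^u) \defe \int_0^\infty J_s^2\lcol (V*(W_s^u)^2)W_s^u\rcol\ds \) and \( \mathcal{R} \) collects the smoother contributions from the \( n \)-th power auxiliary term and the higher-multilinear pieces. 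Applying the quartic binomial formula (Lemma \ref{measure_re:lemma_binomial}) to \( \mathfrak{F}_S(W_\infty^u + I_\infty[u]) \) produces the GFF piece \( \mathfrak{F}_S(W_\infty^u) \) (mean zero, variance \( O(S^{1-2\beta+\epsilon}) \) by Part 1) plus the dominant cross term
\begin{equation*}
\mathcal{X}_S \defe -4\lambda\int_{\T^3}\lcol (V*(W_\infty^{u,S})^2)W_\infty^{u,S}\rcol\cdot \rho_S(\nabla)\mathbb{W}^{[3]}(W^u)\dx, \qquad W_\infty^{u,S} \defe \rho_S(\nabla) W_\infty^u,
\end{equation*}
together with strictly lower-order pieces. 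Orthogonality of iterated stochastic integrals (as in the proof of Corollary \ref{measure_var:corollary_cT}) evaluates
\begin{equation*}
\E_\bP[\mathcal{X}_S] = -4\lambda\sum_{n\in\Z^3}\int_0^\infty \frac{\sigma_s^S(n)^2}{\langle n\rangle^2}\,\E_\bP\big|\mathcal{F}\lcol(V*(W_s^{\scriptscriptstyle{S}})^2)W_s^{\scriptscriptstyle{S}}\rcol(n)\big|^2\ds.
\end{equation*}
The crucial new input is the matching lower bound \( \E_\bP|\mathcal{F}\lcol\cdots\rcol(n)|^2\gtrsim \langle n\rangle^{-2\beta} \) on bulk frequencies \( |n|\lesssim s\wedge S \), which follows by restricting \( \sum_{n_1+n_2+n_3=n}|\widehat{V}(n_{12})|^2\prod_j\rho_s^S(n_j)^2/\langle n_j\rangle^2 \) to the regime \( |n_1|,|n_2|,|n_3|\sim|n| \): roughly \( |n|^6 \) terms of size \( |n|^{-2\beta-6} \) each contribute a total \( \sim|n|^{-2\beta} \). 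This yields \( \E_\bP[\mathcal{X}_S]\leq -c_\beta\lambda\, S^{1-2\beta} \) for \( \beta<1/2 \). The remaining summands and the variance of \( \mathcal{X}_S \) itself are \( O(S^{1-2\beta+\epsilon}) \) by combining Lemma \ref{measure_re:lemma_stochastic_objects_I}, Lemma \ref{measure_re:lemma_stochastic_objects_III}, the random matrix estimate (Proposition \ref{measure_rmt:proposition_estimate}) for the quadratic-in-\( W^u \) pieces, and the drift-regularity Corollary \ref{measure_drift:corollary_bounds_u}. Chebyshev--Borel-Cantelli along \( S_m=2^m \) therefore gives \( \nu_\infty \)-almost surely that \( \mathfrak{F}_{S_m}/S_m^{1-2\beta-\delta}\to -\infty \). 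Finally, for each \( R\in\mathbb{N} \), setting \( A_R^m \defe \{\phi : \mathfrak{F}_{S_m}(\phi)\leq -RS_m^{1-2\beta-\delta}\} \), the bound \( \mu_\infty((A_R^m)^c)\leq C\nu_\infty((A_R^m)^c)^{1-1/q} \) of Theorem \ref{theorem:reference}(1) makes the \( \mu_\infty \)-probabilities summable; Borel--Cantelli followed by intersection over \( R \) yields \eqref{singularity:eq_Gibbs}.

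\emph{Main obstacle.} The decisive input is the sharp two-sided estimate \( \E_\bP|\mathcal{F}\lcol(V*(W_s)^2)W_s\rcol(n)|^2 \asymp \langle n\rangle^{-2\beta} \); combined with the Itô computation above, this produces the gap between the mean scale \( S^{1-2\beta} \) of \( \mathcal{X}_S \) and the standard-deviation scale \( S^{(1-2\beta+\epsilon)/2} \) that the threshold \( S^{1-2\beta-\delta} \) exploits. This gap is strictly positive precisely when \( \beta<1/2 \), which is where the singular regime of Theorem \ref{theorem:singularity} begins. For the absolute-continuity half (\( \beta>1/2 \)) no further work is needed beyond Corollary \ref{measure_var:corollary_cT} and Proposition \ref{measure_drift:proposition_Lp}: the former gives a uniform bound on \( \cT \), and the latter then implies absolute continuity of \( \mu_\infty \) with respect to \( \cg \) directly.
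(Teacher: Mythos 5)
Your Part 1 is essentially the paper's own argument (the Itô representation \eqref{measure_re:eq_potential_derivative} plus the second-moment bound of Lemma \ref{singularity:lemma_GFF}, with Chebyshev--Borel--Cantelli along $S_m=2^m$ replacing the paper's abstract subsequence extraction), and the skeleton of Part 2 also parallels the paper: your identity $I_\infty[u]=-\lambda\,\mathbb{W}^{[3]}(W^u)+\mathcal{R}$ is exactly the reference-measure representation (no ``iteration'' is needed and there are no higher multilinear pieces --- the drift equation \eqref{measure_drift:eq_integraleq_u} is already explicit in $W^u$, so $\mathcal{R}$ is precisely the smooth $n$-th chaos), and your expectation lower bound for the cross term $\mathcal{X}_S$ is the analogue of Lemma \ref{singularity:lemma_expectation} (your frequency-by-frequency claim $\E|\mathcal{F}\lcol (V\ast W_s^2)W_s\rcol(n)|^2\gtrsim\langle n\rangle^{-2\beta}$ needs the same care with the support of $\sigma_s(n)^2$ and the positivity of the symmetrized $\widehat V$ that the paper handles via Lemma \ref{appendix:lemma_well_behaved_truncations}, and your displayed formula for $\E_\bP[\mathcal{X}_S]$ is only a two-sided comparison since the $\rho_S$-truncations in the two factors do not match exactly; both points are repairable).

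The genuine gap is the concentration step. Your route replaces the paper's variational upgrade (Lemma \ref{singularity:lemma_pointwise}, which applies the Bou\'e--Dupuis formula of Theorem \ref{thm:bd_formula} to $e^{-G_S}$ with an added coercive term, so that only the expectation bound and the $A^{S,j}$ estimates of Lemma \ref{singularity:lemma_Aj} are needed) by a second-moment method, and the decisive input $\operatorname{Var}(\mathcal{X}_S)=O(S^{1-2\beta+\epsilon})\ll(\E\,\mathcal{X}_S)^2$ is asserted but not obtainable from the tools you cite: Lemma \ref{measure_re:lemma_stochastic_objects_I}, Lemma \ref{measure_re:lemma_stochastic_objects_III}, Proposition \ref{measure_rmt:proposition_estimate} and Corollary \ref{measure_drift:corollary_bounds_u} only give size (uncentered moment) bounds of the form $|\mathcal{X}_S|\lesssim S^{1-2\beta+\epsilon}\,Q$ with $Q$ having bounded moments, i.e. a second moment at the same scale $S^{2(1-2\beta)+\epsilon}$ as the square of the mean, with no cancellation for the centered part. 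To substantiate the claim you would have to decompose the product of the two cubic chaoses into Wiener chaoses of orders $6,4,2,0$ (e.g. via Lemma \ref{appendix:lemma_product_formula}) and estimate the nonconstant components --- a computation that is plausible (the pairing structure forces the order-two piece to be diagonal) but is nowhere in the paper and constitutes the real work of your approach. Two further bookkeeping points: as written, ``the remaining summands \ldots are $O(S^{1-2\beta+\epsilon})$'' is too weak, since errors of that size would dominate the mean $-cS^{1-2\beta}$; you need them $o(S^{1-2\beta})$ in a sense strong enough for Borel--Cantelli. And for the mixed term you send to Proposition \ref{measure_rmt:proposition_estimate}, note the estimate requires $\gamma>\max(1-\beta,1/2)$ while $I_\infty[u]$ only has regularity $\tfrac12+\beta-$, so for $\beta\le 1/4$ you must exploit the $\rho_S$-truncation and pay $S^{2\gamma-1-2\beta+}\sim S^{1-4\beta+}$, which is acceptable but must be said. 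Your transfer to $\mu_\infty$ via the quantitative $L^q$-bound is fine (indeed plain absolute continuity $\mu_\infty\ll\nu_\infty$, as used in the paper, already suffices).
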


\begin{remark}
In the statement of the proposition, the reader may wish to replace \( \phi \) by \( W_\infty \), \( \cg \) by \( \bP \), and \( \mu_\infty \) by \( \tmu_\infty \). We choose the notation \( \phi \) to emphasize that this is a property of \( \cg \) and \( \mu_\infty \) only and does not rely on the stochastic control perspective. Of course, the stochastic control perspective is heavily used in the proof. 
\end{remark}

To simplify the notation, we define 
\begin{equation}
\bWS{3}{s}  \defe \lcol (V * (\WS{}{s})^2) \WS{}{s} \rcol \qquad \text{and} \qquad 
\bWS{4}{s} \defe \lcol ( V* ( \WS{}{s})^2) (\WS{}{s})^2 \rcol. 
\end{equation}
We note that the dependence on the interaction potential \( V \) is not reflected in this notation. We first study the behavior of the integral of \( \WS{4}{\infty} \) with respect to \( \bP \). This is the easier part of the proof and the statement \eqref{singularity:eq_GFF} follows from the following lemma.

\begin{lemma}[Quartic power under the Gaussian free field]\label{singularity:lemma_GFF}
Let \( 0 < \beta < 1/2 \). Then, we have that 
\begin{equation}
\sup_{S\geq 1} \E_{\bP} \bigg[ \Big( \frac{1}{S^{\frac{1}{2}-\beta}} \int_{\T^3} \bWS{4}{\infty} \dx \Big)^2 \bigg] \lesssim 1. 
\end{equation}
\end{lemma}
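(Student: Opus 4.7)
The plan is to compute $\E_{\bP}|F_S|^2$ directly, where $F_S \defe \int_{\T^3} \bWS{4}{\infty} \dx$, via the It\^o isometry applied to the iterated stochastic integral representation from Proposition \ref{measure_re:proposition_renormalization}. Applying \eqref{measure_re:eq_renormalization_4} with $T=S$ and $t=\infty$ and integrating over $x\in\T^3$ forces the frequency constraint $n_{1234}=0$, so that $F_S$ belongs to the (real) $4$-th Wiener chaos and admits the representation
\begin{equation*}
F_S = \sum_{\substack{n_1,\dots,n_4\in\Z^3\\ n_{1234}=0}}\sum_{\pi\in S_4} \widehat{V}(n_{\pi(1)}+n_{\pi(2)}) \int_{0<t_4<t_3<t_2<t_1<\infty}\hspace{-2ex} d\WS{n_4}{t_4}\, d\WS{n_3}{t_3}\, d\WS{n_2}{t_2}\, d\WS{n_1}{t_1}.
\end{equation*}

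Using the $L^2$-orthogonality of the iterated It\^o integrals (exactly as in the proof of Lemma \ref{measure_re:lemma_stochastic_objects_I}) together with $\int_0^\infty \sigmaS{s}(n)^2\,\ds = \rho_S(n)^2$, and then a Cauchy-Schwarz on the finite permutation sum (which only costs a combinatorial constant), $\E_{\bP}|F_S|^2$ reduces to the deterministic bound
\begin{equation*}
\E_{\bP}|F_S|^2 \lesssim \sum_{\substack{n_1,\dots,n_4\in\Z^3\\ n_{1234}=0}} |\widehat{V}(n_1+n_2)|^2 \prod_{i=1}^4 \frac{\rho_S(n_i)^2}{\langle n_i\rangle^2}.
\end{equation*}
The heart of the proof is then the analytic estimate of this deterministic sum, and I expect the sharp tracking of its divergence rate to be the main difficulty. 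Inserting $|\widehat{V}(n)|\lesssim\langle n\rangle^{-\beta}$ from Assumptions \ref{assumptions:V} and parametrizing by $(n_1,n_2,n_3)$ with $n_4=-n_{123}$, I would iterate the convolution estimate of Lemma \ref{appendix:lemma_sum_estimates}: first, summing in $n_3$ gives $\sum_{n_3}\langle n_3\rangle^{-2}\langle n_{123}\rangle^{-2}\lesssim \langle n_{12}\rangle^{-1}$; second, summing in $n_2$ against $\langle n_{12}\rangle^{-(2\beta+1)}\langle n_2\rangle^{-2}$ yields $\lesssim\langle n_1\rangle^{-2\beta}$, which is legitimate precisely because $0<\beta<1/2$ makes both exponents lie in $(0,3)$ while their sum $2\beta+3$ exceeds $3$.

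Finally, the cutoff $\rho_S$ restricts $|n_1|\lesssim S$, giving a polynomially divergent sum
\begin{equation*}
\sum_{\substack{n_1\in\Z^3\\ |n_1|\lesssim S}} \frac{1}{\langle n_1\rangle^{2+2\beta}} \sim S^{1-2\beta},
\end{equation*}
whose exponent exactly matches the square of the normalization $S^{1/2-\beta}$; this is the step that reflects the critical value $\beta=1/2$. The constraint $n_{1234}=0$, a direct consequence of integration over $\T^3$, is essential: without the cancellations built into the renormalized power the sum would diverge much faster, and the hard part will be to ensure that every single convolution bound extracts the full $\langle\cdot\rangle^{-\beta}$ decay of $\widehat{V}$, since any slack would yield a sub-sharp rate $S^{1-2\beta'}$ with $\beta'<\beta$, which would defeat the normalization needed for Proposition \ref{singularity:proposition_main}.
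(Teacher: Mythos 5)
Your proposal is correct and follows essentially the same route as the paper: the stochastic integral representation of $\int_{\T^3} \lcol (V*(\WS{}{\infty})^2)(\WS{}{\infty})^2\rcol \dx$ from Proposition \ref{measure_re:proposition_renormalization}, orthogonality of the iterated It\^o integrals, the bound $|\widehat{V}(n)|\lesssim \langle n\rangle^{-\beta}$, and then the same successive summations in $n_3$, $n_2$, $n_1$ yielding $\sum_{|n_1|\lesssim S}\langle n_1\rangle^{-2-2\beta}\sim S^{1-2\beta}$ for $\beta<1/2$. The only cosmetic difference is that you invoke Lemma \ref{appendix:lemma_sum_estimates} and Cauchy--Schwarz on the permutation sum explicitly, where the paper carries out the same convolution bounds directly.
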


\begin{proof}
From Proposition \ref{measure_re:proposition_renormalization}, we obtain that 
\begin{equation*}
\int_{\T^3} \bWS{4}{\infty} \dx = \sum_{\substack{n_1,n_2,n_3,n_4\in\Z^3 \colon\\ n_{1234}=0 } }  \Big( \sum_{\pi\in S_4} \widehat{V}(n_{\pi(1)}+n_{\pi(2)}) \Big) \int_0^\infty \int_0^{s_1} \int_0^{s_2} \int_0^{s_3} \d \WS{n_4}{s_4} \d \WS{n_3}{s_3}\d \WS{n_2}{s_2}\d \WS{n_1}{s_1}. 
\end{equation*}
Since the iterated stochastic integrals are uncorrelated, we obtain that 
\begin{align*}
&\E_{\bP} \bigg[ \Big( \int_{\T^3} \bWS{4}{\infty} \dx \Big)^2 \bigg] \\
&\lesssim \sum_{\substack{n_1,n_2,n_3,n_4\in\Z^3 \colon\\ n_{1234}=0 } }   \Big( \sum_{\pi\in S_4} \widehat{V}(n_{\pi(1)}+n_{\pi(2)}) \Big)^2 \prod_{j=1}^4 \frac{\rhoS{s}(n_j)^2}{\langle n_j \rangle^2} \\
&\lesssim \sum_{\substack{n_1,n_2,n_3,n_4\in\Z^3 \colon\\ n_{1234}=0 } }   \langle n_{12} \rangle^{-2\beta} \prod_{j=1}^4 \frac{\rhoS{s}(n_j)^2}{\langle n_j \rangle^2} \\
&\lesssim \sum_{n_1,n_2,n_3 \in \Z^3 } \langle n_{123} \rangle^{-2} \langle n_{12} \rangle^{-2\beta} \prod_{j=1}^3  \frac{\rhoS{s}(n_j)^2}{\langle n_j \rangle^2}.
\end{align*}
It now only remains to estimate the sum. Provided that \( \beta < 1/2 \), we first sum in \( n_3 \), then \( n_2 \), and finally \( n_1 \) to obtain 
\begin{align*}
\sum_{n_1,n_2,n_3 \in \Z^3 } \hspace{-2ex} \langle n_{123} \rangle^{-2} \langle n_{12} \rangle^{-2\beta} \prod_{j=1}^3  \frac{\rhoS{s}(n_j)^2}{\langle n_j \rangle^2} 
\lesssim \sum_{n_1,n_2\in \Z^3 }\hspace{-1ex}   \langle n_{12} \rangle^{-1-2\beta} \prod_{j=1}^2  \frac{\rhoS{s}(n_j)^2}{\langle n_j \rangle^2} 
\lesssim \sum_{n_1 \in \Z^3} \frac{\rhoS{s}(n_1)^2}{\langle n_1 \rangle^{2+2\beta}} 
\lesssim S^{1-2\beta}. 
\end{align*}
\end{proof}
We now begin our study of the integral \(  \int_{\T^3} \bWS{4}{\infty} \dx  \) under \( \Qinf \). Naturally, we would like to replace (most) occurrences of \( \WS{}{} \) by \( \WSu{}{} \), since the law of \( \WSu{}{} \) under \( \Qinf  \) is explicit. This is the objective of our first (algebraic) lemma. 

\begin{lemma}\label{singularity:lemma_representation}
For any \( S \geq 1 \), it holds that 
\begin{align}
 \int_{\T^3} \bWS{4}{\infty} \dx 
 =& - 4 \lambda \int_0^\infty \int_{\T^3} ( \JS_s \bWSu{3}{s} )  \cdot J_s \bWu{3}{s} \dx \ds \label{singularity:eq_representation_1}\\
&+ 4 \int_0^\infty \int_{\T^3} (\JS_s \bWSu{3}{s} ) \d B^u_s 
- 4 \lambda \sum_{j=1}^3 \int_0^\infty \int_{\T^3} \AS{j}{s}[u] \cdot J_s \bWu{3}{s} \dx \ds  \label{singularity:eq_representation_2}\\
& +  4 \sum_{j=1}^3 \int_0^\infty \int_{\T^3} \AS{j}{s}[u]  \d B^u_s 
+ 4 \int_0^\infty \int_{\T^3} \bWS{3}{s} \Big( J_s \langle \nabla \rangle^{-\frac{1}{2}} \lcol \big( \langle \nabla \rangle^{-\frac{1}{2}} \Wu{}{s} \big)^n \rcol \Big) \dx \ds, \label{singularity:eq_representation_3}
\end{align}
where 
\begin{align}
\AS{1}{s}[u] &\defe \JS_s \Big( ( V * \lcol (\WSu{}{s})^2 \rcol ) \IS_s[u] \Big) + 2 \JS_s \Big( V* ( \WSu{}{s} \IS_s[u]) \cdot \WSu{}{s} -   \MS_s \IS_s[u]  \Big), \label{singularity:eq_A1} \\
\AS{2}{s}[u] &= \JS_s \Big( (V* (\IS_s[u])^2 ) \WSu{}{s} \Big) + 2 \JS_s \Big( (V*( \WSu{}{s} \IS_s[u] )) \IS_s[u] \Big) , \label{singularity:eq_A2}\\
\AS{3}{s}[u] &= \JS_s \Big( (V* \IS_s[u]^2 ) \IS_s[u] \Big). \label{singularity:eq_A3}
\end{align}
\end{lemma}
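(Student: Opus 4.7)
The starting point is the identity \eqref{measure_re:eq_potential_derivative} from Proposition \ref{measure_re:proposition_renormalization}, specialized to truncation parameter $S$, which gives
\[
\int_{\T^3} \bWS{4}{\infty}\,\dx \;=\; 4\int_0^\infty \int_{\T^3} \bWS{3}{s}\,\d \WS{}{s}.
\]
Since $\WS{}{s} = \rho_S(\nabla)W_s$, we have $\d\WS{}{s} = \JS_s\,\d B_s$, and Girsanov's theorem (via Corollary \ref{measure_drift:corollary_Qinf}) rewrites this increment under $\Qinf$ as $\d \WS{}{s} = \JS_s\,\d B^u_s + \JS_s u_s\,\ds$. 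Using the self-adjointness of the real Fourier multiplier $\JS_s$, the integral identity becomes
\[
\int_{\T^3} \bWS{4}{\infty}\,\dx
\;=\; 4\int_0^\infty \int_{\T^3} (\JS_s \bWS{3}{s})\,\d B^u_s
\;+\; 4\int_0^\infty \int_{\T^3} (\JS_s \bWS{3}{s})\, u_s\,\dx\,\ds.
\]

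The next step is to transfer the dependence on $W$ to a dependence on $\Wu{}{s}$. Because $\WS{}{s} = \WSu{}{s} + \IS_s[u]$, the cubic binomial formula \eqref{measure_re:eq_cubic_binomial} expands
\[
\bWS{3}{s} \;=\; \bWSu{3}{s} \;+\; \text{(terms mixed in }\WSu{}{s}\text{ and }\IS_s[u]\text{)}.
\]
Applying $\JS_s$ and grouping the six remaining summands by their degree in $\IS_s[u]$ (one, two, and three, respectively) produces exactly the operators $\AS{1}{s}[u]$, $\AS{2}{s}[u]$, and $\AS{3}{s}[u]$ as defined in \eqref{singularity:eq_A1}--\eqref{singularity:eq_A3}; in particular the linear-in-$\IS_s[u]$ terms combine the $(V*\lcol(\WSu{}{s})^2\rcol)\IS_s[u]$ contribution with the renormalized $(V*(\WSu{}{s}\IS_s[u]))\WSu{}{s}-\MS_s \IS_s[u]$, whose $\MS_s$ is precisely the renormalization multiplier at truncation $S$. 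Substituting this decomposition into the $\d B^u_s$-integral immediately yields the second term of \eqref{singularity:eq_representation_2} together with the first term of \eqref{singularity:eq_representation_3}.

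For the drift integral $\int (\JS_s \bWS{3}{s}) u_s\,\dx\,\ds$, we insert the defining equation \eqref{measure_drift:eq_integraleq_u} for the drift, namely
\[
u_s \;=\; -\lambda\, J_s\,\bWu{3}{s} \;+\; J_s\langle\nabla\rangle^{-\frac{1}{2}}\lcol\bigl(\langle\nabla\rangle^{-\frac{1}{2}} \Wu{}{s}\bigr)^n \rcol.
\]
The first term of $u_s$, paired with the decomposition $\JS_s\bWS{3}{s} = \JS_s \bWSu{3}{s}+\sum_j \AS{j}{s}[u]$, produces the cross term $-4\lambda \int (\JS_s\bWSu{3}{s})(J_s\bWu{3}{s})\,\dx\,\ds$ appearing in \eqref{singularity:eq_representation_1}, and the three analogous terms involving $\AS{j}{s}[u]$ listed in \eqref{singularity:eq_representation_2}. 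The high-power coercive piece of $u_s$, paired with $\JS_s \bWS{3}{s}$ (without further splitting) and then moved by self-adjointness of $\JS_s$, contributes the final summand in \eqref{singularity:eq_representation_3}. Collecting all pieces yields the identity claimed in the lemma.

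The proof is essentially algebraic bookkeeping built on two ingredients established earlier—the potential-energy identity \eqref{measure_re:eq_potential_derivative} and the cubic binomial formula \eqref{measure_re:eq_cubic_binomial}—combined with Girsanov's theorem. The main subtlety, which is what requires care rather than new ideas, is to keep track of the various truncations (the outer $\JS_s$ versus the inner $J_s$, and the truncated multiplier $\MS_s$ versus the untruncated one) and to verify that each term in the expansion falls into one of the four ``buckets'' $\bWSu{3}{s}$, $\AS{1}{s}[u]$, $\AS{2}{s}[u]$, $\AS{3}{s}[u]$. Beyond this bookkeeping, no additional analytic input is needed for the identity itself; all analytic work is deferred to the estimation of each individual piece in the singularity argument that follows.
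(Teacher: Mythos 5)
Your proposal is correct and follows essentially the same route as the paper's proof: start from \eqref{measure_re:eq_potential_derivative}, decompose $\d \WS{}{s} = \JS_s\, \d B^u_s + \JS_s u_s \, \ds$ via Girsanov, insert the drift equation \eqref{measure_drift:eq_integraleq_u} using self-adjointness of $\JS_s$, and split $\JS_s \bWS{3}{s} = \JS_s \bWSu{3}{s} + \sum_{j=1}^3 \AS{j}{s}[u]$ via the cubic binomial formula, leaving the high-power term unsplit. The only nitpick is cosmetic (your labeling of which displayed line a given term lands in), not mathematical.
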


\begin{proof}
Using \eqref{measure_re:eq_potential_derivative} from Proposition \ref{measure_re:proposition_renormalization} together with the integral equation for \( u \), i.e. \eqref{measure_drift:eq_integraleq_u}, we obtain that 
\begin{align}
 \int_{\T^3} \bWS{4}{\infty} \dx   
=& 4 \int_0^\infty \int_{\T^3} \bWS{3}{s} \d \WS{}{s}  \notag \\
=& 4 \int_0^\infty \int_{\T^3} \bWS{3}{s} (\JS_s u_s) \dx \ds + 4 \int_0^\infty \int_{\T^3} \bWS{3}{s} \d \WSu{}{s}  \notag \\
=& - 4 \lambda \int_0^\infty \int_{\T^3} ( \JS_s \bWS{3}{s}) ( J_s \bWu{3}{s} ) \dx \ds  + 4 \int_0^\infty \int_{\T^3} ( \JS_s \bWS{3}{s} ) \d B^u_s   \label{singularity:eq_representation_p1}\\
+& 4 \int_0^\infty \int_{\T^3} ( \JS_s \bWS{3}{s} ) ( J_s \langle \nabla\rangle^{-\frac{1}{2}} \lcol ( \langle \nabla \rangle^{-\frac{1}{2}} W^u_s )^n \rcol \dx \ds  \notag
\end{align}
From the cubic binomial formula \eqref{measure_re:eq_cubic_binomial} and the definition of \( \AS{j}{s}\), it follows that 
\begin{equation*}
\JS_s \bWS{3}{s} = \JS_s \bWSu{3}{s} + \sum_{j=1}^3 \AS{j}{s}[u]. 
\end{equation*}
Inserting this into \eqref{singularity:eq_representation_p1} leads to the desired identity. 
\end{proof}

We begin by studying the right-hand side of \eqref{singularity:eq_representation_1}, which is the main term. Our first lemma controls the expectation, which  will  be upgraded to a pointwise estimate later.
\begin{lemma}\label{singularity:lemma_expectation} If \( 0 < \beta < 1/2 \) and \( S \geq 1 \) is sufficiently large, then 
\begin{equation}
\E_{\Qinf} \Big[ \int_0^\infty \int_{\T^3} ( \JS_s \bWSu{3}{s} ) \cdot J_s \bWu{3}{s} \dx \ds  \Big] \gtrsim S^{1-2 \beta}. 
\end{equation}
\end{lemma}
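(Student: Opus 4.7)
The plan is to transfer the expectation from $\Qinf$ to $\bP$ via Girsanov, expand the integrand using the iterated stochastic integral representation of the cubic object, and extract a lower bound by a mode count on the dyadic block where all relevant frequencies have size comparable to $S$.

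First I would observe that $\int_0^\infty \int_{\T^3} (\JS_s \bWSu{3}{s})(J_s \bWu{3}{s}) \dx \ds$ is a functional of the process $W^u$ alone. By Corollary~\ref{measure_drift:corollary_Qinf} combined with Girsanov's theorem, the law of $W^u$ under $\Qinf$ coincides with the law of $W$ under $\bP$, and hence
\begin{equation}\label{singularity:eq_plan_transfer}
\E_{\Qinf}\Big[\int_0^\infty \int_{\T^3} (\JS_s \bWSu{3}{s})(J_s \bWu{3}{s}) \dx \ds\Big] = \E_{\bP}\Big[\int_0^\infty \int_{\T^3} (\JS_s \bWS{3}{s})(J_s \mathbb{W}^{3}_s) \dx \ds\Big],
\end{equation}
where $\mathbb{W}^{3}_s \defe \lcol (V\ast W_s^2) W_s \rcol$ is the non-truncated cubic object obtained from Proposition~\ref{measure_re:proposition_renormalization} with $T=\infty$.

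To evaluate the right-hand side of \eqref{singularity:eq_plan_transfer} I would substitute the iterated stochastic integral representations of $\bWS{3}{s}$ and $\mathbb{W}^{3}_s$. The operators $\JS_s$ and $J_s$ act via the symbols $\rho_S(n_{123})\sigma_s(n_{123})/\langle n_{123}\rangle$ and $\sigma_s(n'_{123})/\langle n'_{123}\rangle$; integration in $x$ over $\T^3$ enforces $n_{123}+n'_{123}=0$; and It\^o's isometry for order-three iterated integrals forces pairings $n_j' = -n_{\sigma(j)}$ for some $\sigma \in S_3$. Since $\widehat{V}$ is real and even, after absorbing all combinatorial factors from the three symmetrizations and carrying out the ordered time integrations, the expectation equals
\begin{equation}\label{singularity:eq_plan_sum}
c \sum_{n_1,n_2,n_3 \in \Z^3} \rho_S(n_{123})\, \Phi(n_1,n_2,n_3)^2 \, K_S(n_1,n_2,n_3),
\end{equation}
where $\Phi(n_1,n_2,n_3) = \sum_{\pi\in S_3}\widehat{V}(n_{\pi(1)}+n_{\pi(2)})$, $c > 0$ is an absolute constant, and $K_S \geq 0$ collects $\prod_{j=1}^3 \rho_S(n_j)\langle n_j\rangle^{-2}$, $\langle n_{123}\rangle^{-2}$ and a non-negative time weight proportional to $\int_0^\infty \sigma_s(n_{123})^2\prod_j\rho_s(n_j)^2 \ds$. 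Crucially, \eqref{singularity:eq_plan_sum} is a sum of non-negative terms.

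For the lower bound I would restrict to the dyadic block $|n_1|,|n_2|,|n_3|\sim S$ with $|n_{123}|\sim S$, on which all the $\rho_S$ factors are $\sim 1$ and the time integral is of order one. From the Riesz-kernel behaviour $V(x) = c_\beta |x|^{-(3-\beta)}$ for small $|x|$ and the smoothness of $V$ away from the origin (Assumptions~\ref{assumptions:V}), a standard argument yields $\widehat{V}(k) \gtrsim \langle k\rangle^{-\beta}$ for $|k|$ sufficiently large, so the three summands defining $\Phi$ have the same (positive) sign on the block and $\Phi(n_1,n_2,n_3)^2 \gtrsim \widehat{V}(n_{12})^2 \gtrsim S^{-2\beta}$. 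Counting the $\sim S^9$ modes in the block with weight $\langle n_{123}\rangle^{-2}\prod_{j=1}^3\langle n_j\rangle^{-2}\sim S^{-8}$ times $\Phi^2\sim S^{-2\beta}$ gives the desired lower bound $\gtrsim S^{9-8-2\beta}=S^{1-2\beta}$. The main obstacle is executing the It\^o-isometry computation carefully --- tracking the $(3!)^3$ permutations arising from $\pi,\pi',\sigma$ together with the ordered time integrals --- to confirm both the non-negativity of \eqref{singularity:eq_plan_sum} and the pointwise bound $\Phi^2\gtrsim\widehat{V}(n_{12})^2$; the hypothesis $\beta<1/2$ then enters precisely through $1-2\beta>0$, ensuring that dyadic contributions grow with scale so that the $S^{1-2\beta}$ lower bound is saturated at the frequency scale~$S$ and is not absorbed by smaller scales.
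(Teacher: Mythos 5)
Your proposal is correct and follows essentially the same route as the paper: transfer the expectation from $\Qinf$ to $\bP$ via the Girsanov identity, apply It\^o's isometry to the iterated-integral representation to obtain a manifestly non-negative frequency sum weighted by the squared symmetrized potential, invoke the asymptotics $\widehat{V}(n)=c_\beta\langle n\rangle^{-\beta}+O(\langle n\rangle^{-\beta-1})$ (Lemma \ref{appendix:lemma_asymptotics}), and conclude by counting $\sim S^9$ modes at scale $S$ with weight $S^{-8-2\beta}$. The only difference is bookkeeping: the paper splits off a $\langle n_{12}\rangle^{-1-2\beta}$ minor term bounded by $O(1)$ and then restricts to the explicit region $\|n_j-Se_j\|\leq S/20$, where Lemma \ref{appendix:lemma_well_behaved_truncations} guarantees the $\rho_S$-factors and the time integral are bounded below, whereas you restrict to a comparable block first and use positivity of $\widehat V$ at large frequencies; your block must be chosen so that $\|n_{123}\|$ and the pairwise sums are suitably large (as in the paper's choice) for those lower bounds to hold, but this does not change the substance.
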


\begin{proof}
Since the law of \( \Wu{}{} \) under \( \Qinf\) coincides with the law of \( W \) under \( \bP \), 
it holds that 
\begin{equation}
\E_{\Qinf} \Big[ \int_0^\infty \int_{\T^3} ( \JS_s \bWSu{3}{s} ) \cdot J_s \bWu{3}{s} \dx \ds  \Big] = 
\E_{\bP} \Big[ \int_0^\infty \int_{\T^3} ( \JS_s \bWS{3}{s} ) \cdot J_s \bW^{3}_{s} \dx \ds  \Big].
\end{equation}
The rest of the proof consists of a tedious but direct calculation. Using the real-valuedness of \( W \) and the stochastic integral representation \eqref{measure_re:eq_renormalization_3}, we have that 
\begin{align*}
&\int_{\T^3} ( \JS_s \bWS{3}{s} ) \cdot J_s \bW^{3}_{s} \dx \\
=&\int_{\T^3} ( \JS_s \bWS{3}{s} ) \cdot \overline{J_s \bW^{3}_{s}} \dx \\
=& \sum_{n\in \Z^3} \frac{\sigmaS{s}(n) \sigma_s(n)}{\langle n \rangle^2} \sum_{ \substack{n_1,n_2,n_3 \in \Z^3, \\ m_1,m_2,m_3 \in \Z^3 \\ n_{123}=m_{123}=n }} 
 \bigg[ \Big( \sum_{\pi\in S_3} \widehat{V}(n_{\pi(1)}+n_{\pi(2)}) \Big)\Big( \sum_{\tau \in S_3} \widehat{V}(m_{\tau(1)}+m_{\tau(2)}) \Big) \\
&\times \Big( \int_0^s \int_0^{s_1} \int_0^{s_2} \d \WS{n_3}{s_3} \d \WS{n_2}{s_2} \d \WS{n_1}{s_1} \Big)  \overline{\Big( \int_0^s \int_0^{s_1} \int_0^{s_2} \d W^{\scriptscriptstyle{T},s_3}_{m_3} \d W^{\scriptscriptstyle{T},s_2}_{m_2}\d W^{\scriptscriptstyle{T},s_1}_{m_1}\Big)} \bigg]. 
\end{align*}
Taking expectations, we only obtain a non-trivial contribution for \( (n_1,n_2,n_3)=(m_1,m_2,m_3) \), and it follows that 
\begin{align*}
& \E_{\bP} \Big[ \int_{\T^3} ( \JS_s \bWS{3}{s} ) \cdot J_s \bW^{3}_{s} \dx \Big] \\
=& \sum_{n_1,n_2,n_3\in \Z^3} \bigg[ \frac{\sigmaS{s}(n_{123}) \sigma_s(n_{123})}{\langle n_{123} \rangle^2} \Big( \sum_{\pi\in S_3} \widehat{V}(n_{\pi(1)}+n_{\pi(2)}) \Big)^2 \Big(\prod_{j=1}^3  \frac{1}{\langle n_j \rangle^2} \Big) \\
&\times \int_0^s \int_0^{s_1} \int_0^{s_2} \Big( \prod_{j=1}^3 \big( \sigma_{s_j}(n_j) \sigmaS{s_j}(n_j)\big)  \Big) \d s_3 \d s_2 \d s_1 \bigg] \\
=& \frac{1}{6} \hspace{-0.5ex} \sum_{n_1,n_2,n_3\in \Z^3} \hspace{-0.5ex}   \bigg[ \frac{\sigmaS{s}(n_{123}) \sigma_s(n_{123})}{\langle n_{123} \rangle^2} \Big( \sum_{\pi\in S_3} \widehat{V}(n_{\pi(1)}+n_{\pi(2)}) \Big)^2 \Big(\prod_{j=1}^3  \frac{1}{\langle n_j \rangle^2} \Big) 
\Big( \prod_{j=1}^3 \int_0^s \sigma_{s_j}(n_j) \sigmaS{s_j}(n_j) \d s_j \Big) \bigg]. 
\end{align*}
By recalling that \( \sigmaS{s} = \rho_S \cdot \sigma_s \), integrating in \( s \), using Lemma \ref{appendix:lemma_asymptotics}, and symmetry considerations, we obtain that 
\begin{align*}
& \E_{\bP} \Big[ \int_0^\infty \int_{\T^3} ( \JS_s \bWS{3}{s} ) \cdot J_s \bW^{3}_{s} \dx ds \Big] \\
&= \frac{1}{6} \sum_{n_1,n_2,n_3\in \Z^3}   \frac{\rho_S(n_{123}) }{\langle n_{123} \rangle^2} \Big( \sum_{\pi\in S_3} \widehat{V}(n_{\pi(1)}+n_{\pi(2)}) \Big)^2 \Big(\prod_{j=1}^3  \frac{\rho_S(n_j)}{\langle n_j \rangle^2} \Big) 
\int_0^\infty \sigma_s(n_{123})^2 \Big( \prod_{j=1}^3 \rho_s(n_j)^2\Big) \ds  \\
&\geq c \sum_{n_1,n_2,n_3\in \Z^3}   \frac{\rho_S(n_{123}) }{\langle n_{123} \rangle^2} \frac{1}{\langle n_{12} \rangle^{2\beta}}  \Big(\prod_{j=1}^3  \frac{\rho_S(n_j)}{\langle n_j \rangle^2} \Big) 
\int_0^\infty \sigma_s(n_{123})^2 \Big( \prod_{j=1}^3 \rho_s(n_j)^2\Big) \ds \\
&- C \sum_{n_1,n_2,n_3\in \Z^3}   \frac{\rho_S(n_{123}) }{\langle n_{123} \rangle^2} \frac{1}{\langle n_{12} \rangle^{1+2\beta}}  \Big(\prod_{j=1}^3  \frac{\rho_S(n_j)}{\langle n_j \rangle^2} \Big) 
\int_0^\infty \sigma_s(n_{123})^2 \Big( \prod_{j=1}^3 \rho_s(n_j)^2\Big) \ds,
\end{align*}
where \( c,C>0 \) are small and large constants depending only on $V$. The only difference between the two terms lies in the power of \( \langle n_{12} \rangle \). The minor term can easily be estimated from above by 
\begin{align*}
&\sum_{n_1,n_2,n_3\in \Z^3}   \frac{\rho_S(n_{123}) }{\langle n_{123} \rangle^2} \frac{1}{\langle n_{12} \rangle^{1+2\beta}}  \Big(\prod_{j=1}^3  \frac{\rho_S(n_j)}{\langle n_j \rangle^2} \Big) 
\int_0^\infty \sigma_s(n_{123})^2 \Big( \prod_{j=1}^3 \rho_s(n_j)^2\Big) \ds \\
&\lesssim \sum_{n_1,n_2,n_3\in \Z^3} \frac{1}{\langle n_{123} \rangle^2 \langle n_{12} \rangle^{1+2\beta} \langle n_1 \rangle^2 \langle n_2 \rangle^2 \langle n_3 \rangle^2}  \\
&\lesssim 1. 
\end{align*}
Using Lemma \ref{appendix:lemma_well_behaved_truncations}, the main term can be estimated from below by 
\begin{align*}
& \sum_{n_1,n_2,n_3\in \Z^3}   \frac{\rho_S(n_{123}) }{\langle n_{123} \rangle^2} \frac{1}{\langle n_{12} \rangle^{2\beta}}  \Big(\prod_{j=1}^3  \frac{\rho_S(n_j)}{\langle n_j \rangle^2} \Big) 
\int_0^\infty \sigma_s(n_{123})^2 \Big( \prod_{j=1}^3 \rho_s(n_j)^2\Big) \ds  \\
&\gtrsim  \sum_{ \substack{n_1,n_2,n_3\in \Z^3\colon \\ |n_j - S e_j |\leq S/20} } \frac{1}{\langle n_{123} \rangle^2 \langle n_{12} \rangle^{2\beta} \langle n_1 \rangle^2 \langle n_2 \rangle^2 \langle n_3 \rangle^2} \\
&\gtrsim S^{-8-2\beta}  \# \{ (n_1,n_2,n_3) \in (\Z^3)^3 \colon |n_j - S e_j |\leq S/20 \text{ for } j=1,2,3 \} \\
&\gtrsim S^{1-2\beta}. 
\end{align*}
 This completes the proof of the lemma. 
\end{proof}

Before we can upgrade Lemma \ref{singularity:lemma_expectation}, we need the following estimate of the \( \AS{j}{} \). 
\begin{lemma}\label{singularity:lemma_Aj}
Let \( 0 < \beta < 1/2 \), let \( \delta>0 \) sufficiently small, and let \( k \geq 1 \) be sufficiently large depending on \( \beta \). For any \( v \colon \R_{>0} \times \T^3 \rightarrow \R \) and any \( j=1,2,3\), it then holds that 
\begin{equation}\label{singularity:eq_Aj}
\| \AS{j}{s}[v] \|_{L_x^2 }^2 \lesssim \langle s \rangle^{-1-2\beta+20\delta} \Big( Q_s(\bWu{}{})  + \| \IS_s[v] \|_{\cC_x^{-\frac{1}{2}-\delta}}^k + \| \IS_s[v] \|_{H_x^1}^2\Big).
\end{equation}
\end{lemma}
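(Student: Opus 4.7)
\medskip

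The plan is to estimate each of $\AS{1}{s}[v]$, $\AS{2}{s}[v]$, $\AS{3}{s}[v]$ separately using a common strategy: exploit the smoothing of $\JS_s$ via the inequality
\[
\| \JS_s g \|_{L^2_x}^2 \lesssim \langle s \rangle^{-3+2\alpha} \| g \|_{H^{-\alpha}_x}^2 \qquad (\alpha \geq 0),
\]
which follows from $\sigmaS{s}(n)^2 \langle n \rangle^{-2} \lesssim \langle s \rangle^{-3}$ on the effective support $|n| \sim \langle s \rangle$ combined with Bernstein. Choosing $\alpha = 1-\beta+10\delta$ produces precisely the prefactor $\langle s \rangle^{-1-2\beta+20\delta}$ demanded by \eqref{singularity:eq_Aj}, so the entire proof reduces to showing that each integrand placed inside $\JS_s$ has an $H^{-\alpha}_x$-norm squared that is bounded by the right-hand side of \eqref{singularity:eq_Aj} without the $\langle s \rangle$-weight.

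For $\AS{1}{s}[v]$, the first summand $\JS_s\bigl((V*\lcol (\WSu{}{s})^2 \rcol)\IS_s[v]\bigr)$ is controlled by a standard fractional product estimate: since Lemma \ref{measure_re:lemma_stochastic_objects_I} puts $V*\lcol (\WSu{}{s})^2\rcol$ in $\cC_x^{-1+\beta-\epsilon}$ and $\IS_s[v] \in H_x^1$, their product lies in $H_x^{-1+\beta-\epsilon}$, which is absorbed into $H_x^{-\alpha}$ provided $\epsilon < 10\delta$. For the second summand, I invoke the random matrix estimate (Proposition \ref{measure_rmt:proposition_estimate}) with $r=r'=2$ and any $\gamma > \max(1-\beta,1/2)$, obtaining
\[
\bigl\| V*(\WSu{}{s}\IS_s[v])\cdot\WSu{}{s} - \MS_s \IS_s[v] \bigr\|_{H^{-(1-\gamma)}_x} \lesssim Q_s(\bWu{}{}) \| \IS_s[v] \|_{H^{1-\gamma}_x}.
\]
Since $1-\gamma < \beta$ and $1-\gamma < 1/2$, this again sits inside $H_x^{-\alpha}$ with room to spare.

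For $\AS{2}{s}[v]$, both summands are cubic with exactly one factor of $\WSu{}{s}$; bounding $\WSu{}{s}$ in $\cC_x^{-1/2-\delta}$ via Lemma \ref{measure_re:lemma_stochastic_objects_I} and using a fractional product rule, it suffices to estimate a quantity of the form $\|\langle\nabla\rangle^{1/2+\delta}((V*\IS_s[v]^2)\IS_s[v])\|_{L^1_x}$, which is precisely the object bounded by the cubic estimate (Lemma \ref{measure_var:lemma_cubic}). Replacing $\cV(\IS_s[v])$ by a power of $\|\IS_s[v]\|_{H^1_x}$ via the trivial bound $\cV(f) \lesssim \|f\|_{L^4_x}^4$ and Sobolev embedding, then interpolating the resulting sub-critical Sobolev norm between $\cC_x^{-1/2-\delta}$ and $H^1_x$, produces the low-regularity factor $\|\IS_s[v]\|_{\cC_x^{-1/2-\delta}}^k$ together with $\|\IS_s[v]\|_{H^1_x}^2$ for $k$ large depending on $\beta$. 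The summand $\AS{3}{s}[v]$ is purely deterministic: one bounds $\|(V*\IS_s[v]^2)\IS_s[v]\|_{H^{-\alpha}_x}$ using Young's inequality ($V \in L^1_x$) and the same interpolation trick on the sixth power that arises from Sobolev.

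The main obstacle is the sharp bookkeeping of exponents to hit the precise target $\langle s \rangle^{-1-2\beta+20\delta}$: any slack in the application of Proposition \ref{measure_rmt:proposition_estimate} to the renormalized piece of $\AS{1}{s}[v]$ would forfeit the decisive $\beta$-gain that distinguishes \eqref{singularity:eq_Aj} from the cruder bound of Lemma \ref{measure_drift:lemma_tr}. This sharpness is what makes the output useful when \eqref{singularity:eq_Aj} is subsequently upgraded to a pointwise bound and compared against the main quantitative lower bound $\gtrsim S^{1-2\beta}$ from Lemma \ref{singularity:lemma_expectation}; the $20\delta$ surplus must remain strictly smaller than the gap implicit in that lower bound.
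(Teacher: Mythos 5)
Your overall architecture coincides with the paper's (use the smoothing of \( \JS_s \) to generate \( \langle s\rangle^{-1-2\beta+20\delta} \), then estimate the integrands via the stochastic bounds, the random matrix estimate, and interpolation plus Young), but two of the individual steps are genuinely wrong as written. First, the application of Proposition \ref{measure_rmt:proposition_estimate} is misstated: at exponent \( \gamma>\max(1-\beta,1/2) \) it gives, by duality, \( \| V*(\WSu{}{s}\IS_s[v])\,\WSu{}{s}-\MS_s\IS_s[v]\|_{H^{-\gamma}_x}\lesssim Q_s(\bWu{}{})\,\|\IS_s[v]\|_{H^{\gamma}_x} \), i.e.\ output in \( H^{-\gamma} \) and input in \( H^{\gamma} \); the inequality you wrote, with \( H^{-(1-\gamma)} \) and \( H^{1-\gamma} \), is the estimate at exponent \( 1-\gamma<\min(\beta,1/2) \), which the hypothesis of the proposition excludes. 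This one is repairable (take \( \gamma=1-\beta+\delta\le\alpha \), so \( H^{-\gamma}\hookrightarrow H^{-\alpha} \), and interpolate \( \|\IS_s[v]\|_{H^{1-\beta+\delta}_x} \)). Second, and more seriously, your treatment of \( \AS{2}{s}[v] \) is a type mismatch: its summands \( (V*(\IS_s[v])^2)\WSu{}{s} \) and \( (V*(\WSu{}{s}\IS_s[v]))\IS_s[v] \) are quadratic in \( \IS_s[v] \), and what must be bounded is the \( L^2_x \)-norm of a function, whereas Lemma \ref{measure_var:lemma_cubic} controls \( \|\langle\nabla\rangle^{1/2+\delta}((V*f^2)f)\|_{L^1_x} \), a cubic-in-\( f \) quantity designed for the scalar pairing \( \int_{\T^3}(V*f^2)f\,\Winf\dx \) in Section \ref{section:proof_variational}, and its output involves \( \cV(f)^{1/2} \). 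It says nothing about the product of \( V*(\IS_s[v])^2 \) with the rough factor \( \WSu{}{s} \), and it cannot touch the second summand, where \( \WSu{}{s} \) sits inside the convolution; the correct tools here are the trilinear estimates \eqref{appendix:eq_trilinear_III}--\eqref{appendix:eq_trilinear_IV}, which is the route the paper takes.

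There is also a quantitative gap you gloss over, and it is exactly where the condition ``\( k \) large depending on \( \beta \)'' comes from. For \( \AS{1}{s}[v] \) you keep \( \|\IS_s[v]\|_{H^1_x} \) at full power two multiplied by an unbounded random prefactor; a product \( Q_s(\bWu{}{})\,\|\IS_s[v]\|_{H^1_x}^2 \) cannot be absorbed into the additive right-hand side of \eqref{singularity:eq_Aj}. One must first lower the Sobolev exponent using the smoothing of \( V \) on the deterministic factor, e.g.\ \( \|\IS_s[v]\|_{H^{1-\beta}_x}^2\le\|\IS_s[v]\|_{H^{-1}_x}^{\beta}\|\IS_s[v]\|_{H^1_x}^{2-\beta} \), and only then apply Young's inequality, which produces the \( \|\IS_s[v]\|_{\cC_x^{-1/2-\delta}}^k \) term with \( k\gtrsim\beta^{-1} \). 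Similarly, for \( \AS{3}{s}[v] \), reducing to \( \|\IS_s[v]\|_{L^6_x}^6 \) by Sobolev and then ``interpolating'' is not the elementary \( L^4 \)-interpolation of \eqref{appendix:eq_L4}: controlling \( L^6 \) by \( \cC_x^{-1/2-\delta} \) and \( H^1_x \) is exactly borderline (it needs an improved Sobolev inequality, with the \( \delta \)-loss compensated by Bernstein on frequencies \( \lesssim\langle s\rangle \)); the paper avoids this by using \eqref{appendix:eq_trilinear_IV}, which places only one factor in \( H^{1+4\delta}_x \) and the remaining two in \( \cC_x^{-1/2-\delta} \).
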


\begin{remark}
As is clear from the proof, this estimate can be slightly refined. Ignoring \( \delta\)-losses, the worst power \( \langle s \rangle^{-1-2\beta} \) only occurs with \( \| \IS_s[v]\|_{H_x^{1-\beta}}^2 \) instead of \( \| \IS_s[v]\|_{H_x^{1}}^2 \). However, \eqref{singularity:eq_Aj} is sufficient for our purposes. 
\end{remark}

\begin{proof}
We treat the estimates for \( j=1,2,3 \) separately. We first estimate \( \AS{1}{s} \), which consists of two terms. For the first summand, we have that 
\begin{align*}
& \Big\| \JS_s \Big( ( V * \lcol (\WSu{}{s})^2 \rcol ) \IS_s[v] \Big)  \Big\|_{L_x^2}^2 \\
&\lesssim \langle s \rangle^{-1-2\beta+4\delta}  \Big\|  \Big( ( V * \lcol (\WSu{}{s})^2 \rcol ) \IS_s[v] \Big) \Big \|_{H_x^{-1+\beta-2\delta}}^2 \\
&\lesssim \langle s \rangle^{-1-2\beta+4\delta} \|  V * \lcol (\WSu{}{s})^2 \rcol  \|_{\cC_x^{-1+\beta-\delta}}^2 \| \IS_s[v] \|_{H_x^{1-\beta}}^2  \\
&\lesssim \langle s \rangle^{-1-2\beta+4\delta} \|  V * \lcol (\WSu{}{s})^2 \rcol  \|_{\cC_x^{-1+\beta-\delta}}^2 \| \IS_s[v] \|_{H_x^{-1}}^{\beta} \| \IS_s[v] \|_{H_x^{1}}^{2-\beta}. 
\end{align*}
Provided that \( k \gtrsim \beta^{-1} \), the desired statement follows from Young's inequality. The estimate for the second summand is similar, except that in the second inequality above we use the random matrix estimate (Proposition \ref{measure_rmt:proposition_estimate}) instead of Hölder's inequality. \\
Next, we estimate \( \AS{2}{s} \). Let \( \eta >0 \) remain to be chosen. Using \eqref{appendix:eq_trilinear_IV} from Lemma \ref{appendix:lemma_trilinear}, we can control the first term in \( \AS{2}{s} \) by 
\begin{align*}
&\Big\| \JS_s \Big( (V* (\IS_s[v])^2 ) \WSu{}{s} \Big)  \Big\|_{L_x^2}^2 \\
&\lesssim \langle s \rangle^{-2+4\delta} \Big \| \langle \nabla \rangle^{-\frac{1}{2}-2\delta}  \Big( (V* (\IS_s[v])^2 ) \WSu{}{s} \Big) \Big\|_{L_x^2}^2 \allowdisplaybreaks[4]\\
&\lesssim \langle s \rangle^{-2+4\delta} \| \WSu{}{s} \|_{\cC_x^{-\frac{1}{2}-\delta}}^2 \| \IS_s[v] \|_{\cC_x^{-\frac{1}{2}-\delta}}^2 \| \IS_s[v] \|_{H_x^{1+4\delta}}^2  \allowdisplaybreaks[4]\\
&\lesssim \langle s \rangle^{-2+12\delta} \| \WSu{}{s} \|_{\cC_x^{-\frac{1}{2}-\delta}}^2 \| \IS_s[v] \|_{\cC_x^{-\frac{1}{2}-\delta}}^2 \| \IS_s[v] \|_{H_x^{1}}^2  \allowdisplaybreaks[4] \\
&\lesssim  \langle s \rangle^{-2+12 \delta + 8 \eta} \| \WSu{}{s} \|_{\cC_x^{-\frac{1}{2}-\delta}}^2   \| \IS_s[v] \|_{\cC_x^{-\frac{1}{2}-\delta}}^{2+\eta}  \| \IS_s[v] \|_{H_x^{1}}^{2-\eta} \\
&\lesssim   \langle s \rangle^{-2+12 \delta + 8 \eta} \Big( \| \WSu{}{s} \|_{\cC_x^{-\frac{1}{2}-\delta}}^{\frac{8}{\eta}} +  \| \IS_s[v] \|_{\cC_x^{-\frac{1}{2}-\delta}}^{\frac{4(2+\eta)}{\eta}}+  \| \IS_s[v] \|_{H_x^{1}}^{2} \Big). 
\end{align*}
After choosing \( \eta = 10 k^{-1} \), the desired estimate follows provided  that \( k \gtrsim (1/2-\beta)^{-1} \). The only difference in the estimate of the second term in \( \AS{2}{s} \) is that we use \eqref{appendix:eq_trilinear_III} instead of \eqref{appendix:eq_trilinear_IV}. \\
We now turn to the estimate of \( \AS{3}{s} \). Arguing exactly as in our estimate for \( \AS{2}{s} \), we obtain that 
\begin{align*}
\Big\| \JS_s \Big( (V* (\IS_s[v])^2 ) \IS_s[v] \Big)  \Big\|_{L_x^2}^2 
\lesssim  \langle s \rangle^{-2+12 \delta + 8 \eta}    \| \IS_s[v] \|_{\cC_x^{-\frac{1}{2}-\delta}}^{4+\eta}  \| \IS_s[v] \|_{H_x^{1}}^{2-\eta}.
\end{align*}
Using Young's inequality, this contribution is acceptable. 
\end{proof} 

We are now ready to upgrade our bound on the expectation from Lemma \ref{singularity:lemma_expectation} into a pointwise statement. The main tool will be the Boué-Dupuis formula. 

\begin{lemma}\label{singularity:lemma_pointwise}
For any \( \delta >0 \), there exists a sequence \( (S_m)_{m=1}^\infty \) converging to infinity such that 
\begin{equation}
\lim_{m\rightarrow \infty} \frac{1}{S_m^{1-2\beta-\delta}} \int_0^\infty \int_{\T^3} \Big( \JSm_s \WSmu{3}{s} \Big) \Big( J_s \Wu{3}{s} \Big) \dx \ds = \infty \qquad \Qinf\text{-a.s.} 
\end{equation}
\end{lemma}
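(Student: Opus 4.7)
The strategy is to deduce the $\Qinf$-almost-sure divergence from an exponential-moment estimate under $\bP$, obtained via the Boué--Dupuis formula and the pointwise bound on the error terms $\AS{j}{s}[v]$ provided by Lemma \ref{singularity:lemma_Aj}. Since $\Law_{\Qinf}(W^u) = \Law_{\bP}(W)$, the random variable
$$X_S \defe \int_0^\infty \int_{\T^3} (\JS_s \WSu{3}{s})(J_s \Wu{3}{s}) \dx \ds$$
under $\Qinf$ has the same distribution as the analogous object $X_S[W]$ obtained by replacing $W^u$ by $W$, under $\bP$. It therefore suffices to prove the corresponding $\bP$-a.s.\ divergence of $X_S[W]/S^{1-2\beta-\delta}$ along a subsequence $S_m \to \infty$. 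I would apply Markov's inequality to $\exp(-\lambda_S X_S[W]/S^{1-2\beta-\delta})$ for a suitable parameter $\lambda_S>0$ and then use Borel--Cantelli along $S_m = 2^m$ once a superlinear lower bound on the log-exponential-moment has been established.

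The key step is to show
$$\Phi_S(\lambda_S) \defe -\log \E_{\bP}\bigl[\exp(-\lambda_S X_S[W]/S^{1-2\beta-\delta})\bigr] \gtrsim S^{\delta/2}.$$
By the Boué--Dupuis formula (Theorem \ref{thm:bd_formula}),
$$\Phi_S(\lambda_S) = \inf_{v\in\bH} \E_{\bP}\Bigl[ \frac{\lambda_S}{S^{1-2\beta-\delta}} X_S[W+I[v]] + \tfrac12\|v\|_{\cH}^2\Bigr].$$
Expanding $\JS_s \bWS{3}{s}[W+I[v]]$ and $J_s \bWp{3}{s}[W+I[v]]$ via the cubic binomial formula (Lemma \ref{measure_re:lemma_binomial}), each splits as the unshifted stochastic object plus a sum of error terms $\AS{j}{s}[v]$ (and their $S=\infty$ analogues, call them $A^{(\infty)}_{j,s}[v]$), so that $X_S[W+I[v]] = X_S[W] + (\text{terms linear and quadratic in the } A\text{'s})$. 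The contribution from the unshifted $X_S[W]$ is bounded below by Lemma \ref{singularity:lemma_expectation}, yielding $\gtrsim \lambda_S S^{\delta}$.

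The mixed terms are controlled by Cauchy--Schwarz in $x$ and $s$ together with Lemma \ref{singularity:lemma_Aj}, which gives
$$\|\AS{j}{s}[v]\|_{L^2_x}^2 \lesssim \langle s\rangle^{-1-2\beta+C\delta}\bigl(Q_s(\bW^u) + \|\IS_s[v]\|_{\cC_x^{-\frac12-\delta}}^k + \|\IS_s[v]\|_{H_x^1}^2\bigr),$$
and likewise for $A^{(\infty)}_{j,s}[v]$, while the unshifted factor $J_s \bWp{3}{s}$ has $L^2_s L^2_x$-norm with finite $\bP$-moments by Corollary \ref{measure_re:corollary_stochastic_objects_I}. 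The bound $\|\IS_s[v]\|_{H^1_x} \lesssim \|v\|_{\cH}$ and a weighted Young inequality then allow the linear-in-$A$ and quadratic-in-$A$ mixed terms to be absorbed into $\tfrac12\|v\|_{\cH}^2$, provided the prefactor $\lambda_S/S^{1-2\beta-\delta}$ is small enough; the natural choice $\lambda_S = S^{-\delta/2}$ produces the desired gain of $S^{\delta/2}$ with room to spare.

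The principal technical obstacle is the genuinely degree-$6$ contribution $\int \AS{3}{s}[v]\, A^{(\infty)}_{3,s}[v]\, \dx\, \ds$, which is sextic in $v$ and so cannot be absorbed into the quadratic cost by a single application of Young's inequality. Its control requires a Gronwall-type argument patterned after the proof of Proposition \ref{measure_drift:proposition_Lp}, in which Visan's estimate (Lemma \ref{measure_var:lemma_cubic}) converts the nonlocal sextic form into a coercive $L^4$-type expression in $v$, and the smallness of $\lambda_S/S^{1-2\beta-\delta}$ is exploited to close the absorption. The technical heart of the proof therefore lies in carefully balancing $\lambda_S$ against the sextic $v$-coefficient and the $\langle s\rangle$-weights inherited from Lemma \ref{singularity:lemma_Aj}.
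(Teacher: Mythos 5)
Your overall architecture (exponential moment, Boué--Dupuis, cubic binomial expansion, Lemma \ref{singularity:lemma_expectation} for the main term, Lemma \ref{singularity:lemma_Aj} for the errors, and then a Borel--Cantelli extraction of the subsequence) matches the paper, and your initial switch from \( (\Wu{}{},\Qinf) \) to \( (W,\bP) \) is legitimate, since the quantity in the lemma is a functional of \( \Wu{}{} \) alone. The gap is in the absorption step. Lemma \ref{singularity:lemma_Aj} bounds \( \| \AS{j}{s}[v]\|_{L^2_x}^2 \) by \( \langle s \rangle^{-1-2\beta+20\delta}\big( Q_s + \| \IS_s[v]\|_{\cC_x^{-1/2-\delta}}^k + \| \IS_s[v]\|_{H^1_x}^2 \big) \) with a \emph{large} power \( k \); the \( H^1 \)-part and the \( Q_s \)-part are fine, but the term \( \sup_s \| \IS_s[v]\|_{\cC_x^{-1/2-\delta}}^k \) cannot be absorbed into \( \tfrac12 \| v\|_{\cH}^2 \) no matter how small the prefactor \( \lambda_S / S^{1-2\beta-\delta} \) is: a fixed positive constant times a degree-\(k\) quantity in \( v \) eventually dominates the quadratic cost, so your lower bound on the variational problem does not close. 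Your proposed repair for the sextic \( \AS{3}{s}[v]\cdot A^3_s[v] \) contribution via Visan's estimate and a Gronwall argument as in Proposition \ref{measure_drift:proposition_Lp} does not work here, because this variational functional contains no coercive potential term: Visan's estimate lower-bounds \( \cV(f)=\int (V\ast f^2) f^2 \dx \), which is simply absent (the exponent is the product of two distinct cubic objects and has no sign), and the Gronwall argument of Proposition \ref{measure_drift:proposition_Lp} relied on the sign-definite auxiliary term \( \overline{\Aux} \) built into the drift, which likewise has no counterpart in your functional.

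The paper's resolution is different and simpler: it does not exponentiate the quantity of interest alone, but the augmented functional \( G_S \), which adds the \( S \)-independent, almost surely finite coercive term \( \sup_{s} \| \Wu{}{s} \|_{\cC_x^{-1/2-\delta}}^k \) to the exponent. After the Boué--Dupuis shift this produces (up to a harmless constant) the coercive quantity \( \sup_s \| I_s[v]\|_{\cC_x^{-1/2-\delta}}^k \) on the good side of the inequality, which is exactly what is needed to absorb the degree-\( k \) contributions from Lemma \ref{singularity:lemma_Aj} — including the sextic term, which is estimated crudely by \( \| \AS{i}{s}[v]\|_{L^2_x}^2 + \| A^j_s[v]\|_{L^2_x}^2 \) and fed back into that lemma. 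Since the added term does not depend on \( S \), showing \( \E_{\Qinf}[e^{-G_S}] \rightarrow 0 \) still yields the divergence of the original quantity along a subsequence. If you want to keep your Markov/Borel--Cantelli packaging, you should apply it to \( G_S \) (or include the same auxiliary coercive term in your exponent); as written, the key moment bound \( \Phi_S(\lambda_S) \gtrsim S^{\delta/2} \) is not established.
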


\begin{proof}
Let \( k \geq 1 \) remain to be chosen. We define the auxiliary function 
\begin{equation}
G_S = \frac{1}{S^{1-2\beta-\delta}} \int_0^\infty \int_{\T^3} \Big( \JS_s \WSu{3}{s} \Big) \Big( J_s \Wu{3}{s} \Big) \dx \ds + \sup_{0\leq s < \infty} \| \Wu{}{s} \|_{\cC_x^{-\frac{1}{2}-\delta}(\T^3)}^k. 
\end{equation}
We will now show that 
\begin{equation}
\lim_{S\rightarrow \infty} \E_{\Qinf} \Big[ e^{-G_S}\Big] =0 ,
\end{equation}
which implies the desired result. We could switch from \( (\Wu{}{},\Qinf) \) to \( (W,\bP) \), which we have done several times above. Since the \( \AS{j}{s} \) in \eqref{singularity:eq_A1}-\eqref{singularity:eq_A3} are defined in terms of \( \Wu{}{} \), however, we decided not to change the measure.  \\
We define \( A^j_s \) similar as in \eqref{singularity:eq_A1}-\eqref{singularity:eq_A3}, but with \( \JS_s \) replaced by \( J_s \), \( \IS_s \) replaced by \( I_s \), and \( \WSu{}{} \) replaced by \( \Wu{}{} \). Since all our estimates for \( \AS{j}{} \) were uniform in \(S \geq 1 \), they also hold for \( A^j \). Using the Boué-Dupuis formula (Theorem \ref{thm:bd_formula}) and the cubic binomial formula,
we have that 
\begin{align}
-& \log \E_{\Qinf} \Big[ e^{-G_S} \Big] \notag \\
=& \inf_{v\in \bH}  \E_{\Q^u} \bigg[ \frac{1}{S^{1-2\beta-\delta}} \int_0^\infty \int_{\T^3} \bigg( \JS_s \Big( \lcol (V* (\WSu{}{s}+ \IS_s[v])^2 ) (\WSu{}{s}+\IS_s[v]) \rcol \Big)   \notag \\
\times& J_s \Big( \hspace{-0.5ex} \lcol (V* (\Wu{}{s}+ I_s[v])^2 ) (\Wu{}{s}+I_s[v]) \rcol \Big) \bigg) \dx \ds + \sup_{0\leq s < \infty} \| \Wu{}{s} + I_s[v] \|_{\cC_x^{-\frac{1}{2}-\delta}}^k + \frac{1}{2} \| v\|_{L_s^2 L_x^2}^2 \bigg]  \notag \\
=& \E_{\Qinf} \bigg[ \frac{1}{S^{1-2\beta-\delta}} \int_0^\infty \int_{T^3}  \JS_s \Big( \lcol (V* (\WSu{}{s})^2 )\WSu{}{s} \rcol \Big) J_s \Big( \lcol (V* (\Wu{}{s})^2 ) \Wu{}{s} \rcol \Big)  \dx \ds \bigg]  \label{singularity:eq_ptw_p1}\\
+& \inf_{v\in \bH} \E_{\Qinf} \bigg[  \sup_{0\leq s < \infty} \| \Wu{}{s} + I_s[v] \|_{\cC_x^{-\frac{1}{2}-\delta}}^k + \frac{1}{2} \int_0^\infty \| v_s \|_{L_x^2}^2 \ds   \label{singularity:eq_ptw_p2}\\
+& \frac{1}{S^{1-2\beta-\delta}} \sum_{j=1}^3 \int_0^\infty \int_{\T^3} (\JS_s \bWSu{3}{s}) A^j_s[v]\dx \ds  + \frac{1}{S^{1-2\beta-\delta}} \sum_{j=1}^3 \int_0^\infty \int_{\T^3} (J_s \bWu{3}{s}) \AS{j}{s}[v]\dx \ds  \label{singularity:eq_ptw_p3}\\
+&\frac{1}{S^{1-2\beta-\delta}} \sum_{i,j=1}^3 \int_0^\infty \int_{\T^3} \AS{i}{s}[v] A^j_s[v] \dx \ds \bigg].  \label{singularity:eq_ptw_p4}
\end{align}
The main term is given by \eqref{singularity:eq_ptw_p1}. By Lemma \ref{singularity:lemma_expectation}, we see that \eqref{singularity:eq_ptw_p1} converges to infinity as \( S \rightarrow \infty \). Thus, it remains to obtain a lower bound on the variational problem in \eqref{singularity:eq_ptw_p2}-\eqref{singularity:eq_ptw_p4}. The terms in \eqref{singularity:eq_ptw_p2} are nonnegative and help with the lower bound. In contrast, the terms in \eqref{singularity:eq_ptw_p3} and \eqref{singularity:eq_ptw_p4} are viewed as errors and will be estimated in absolute value. \\
Regarding \eqref{singularity:eq_ptw_p2}, we briefly note that 
\begin{equation*}
\E_{\Q^u}\bigg[  \sup_{0\leq s < \infty} \| \Wu{}{s} + I_s[v] \|_{\cC_x^{-\frac{1}{2}-\delta}}^k  \bigg] \geq \frac{1}{2} \E_{\Qinf}\bigg[  \sup_{0\leq s < \infty} \| I_s[v] \|_{\cC_x^{-\frac{1}{2}-\delta}}^k  \bigg] - C. 
\end{equation*}
In the estimates below, we will often use that \( \AS{j}{s}[v] = 0 \) for all \( s \gg S \). We begin with the first term in  \eqref{singularity:eq_ptw_p3}. We have that 
\begin{align}
&\Big|  \frac{1}{S^{1-2\beta-\delta}} \int_0^\infty \int_{\T^3} (\JS_s \bWSu{3}{s}) A^j_s[v]\dx \dt  \Big|  \notag \\
&\leq  \frac{1}{S^{1-2\beta-\delta}} \int_0^\infty 1\{ s\lesssim S\} \langle s \rangle^{-\frac{1}{2}} \| \JS_s \bWSu{3}{s}\|_{L^2}^2 \ds + \frac{1}{S^{1-2\beta-\delta}} \int_0^\infty 1\{ s\lesssim S\} \langle s \rangle^{\frac{1}{2}} \| A^j_s[v] \|_{L^2}^2 \ds.
\label{singularity:eq_ptw_p5}
\end{align}
For the first term in \eqref{singularity:eq_ptw_p5}, we obtain from Lemma \ref{measure_re:lemma_stochastic_objects_III}   that 
\begin{equation}\label{singularity:eq_ptw_p6}
\begin{aligned}
&\E_{\Qinf} \bigg[  \frac{1}{S^{1-2\beta-\delta}} \int_0^\infty 1\{ s\lesssim S\} \langle s \rangle^{-\frac{1}{2}} \| \JS_s \bWSu{3}{s}\|_{L^2}^2 \ds\bigg] \\
&\lesssim \frac{1}{S^{1-2\beta-\delta}} \int_0^\infty 1\{ s\lesssim S\} \langle s \rangle^{-\frac{1}{2}-2\beta+2\delta}  \E_{\Q^u} \Big[ \| \bWSu{3}{s}\|_{H_x^{-\frac{3}{2}+\beta-\delta}}^2 \Big] \ds \\
&\lesssim \frac{1}{S^{1-2\beta-\delta}} \int_0^\infty 1\{ s\lesssim S\} \langle s \rangle^{-\frac{1}{2}-2\beta+2\delta}  \ds  \\
&\lesssim 1. 
\end{aligned}
\end{equation}
For the second term in \eqref{singularity:eq_ptw_p5}, we obtain from Lemma \ref{singularity:lemma_Aj} that
\begin{equation}\label{singularity:eq_ptw_p7}
\begin{aligned}
&\E_{\Qinf} \bigg[ \frac{1}{S^{1-2\beta-\delta}} \int_0^\infty 1\{ s\lesssim S\} \langle s \rangle^{\frac{1}{2}} \| A^j_s[v] \|_{L^2}^2 \ds \bigg] \\
&\lesssim \frac{1}{S^{1-2\beta-\delta}} \int_0^\infty 1\{ s \lesssim S \} \langle s \rangle^{-\frac{1}{2}-2\beta} \E_{\Qinf} \Big[ Q_s(\Wu{}{}) \Big] ds  \\
&+ \frac{1}{S^{1-2\beta-\delta}} \E_{\Q^u} \bigg[ \int_0^\infty 1\{s \lesssim S\} \langle s \rangle^{-\frac{1}{2}-2\beta} \Big( \| I_s[v]\|_{\cC_x^{-\frac{1}{2}-\delta}}^k + \| I_s[v] \|_{H_x^1}^2 \Big) \ds \bigg]  \\
&\lesssim 1 + S^\delta \max( S^{-\frac{1}{2}}, S^{2\beta-1}) \E_{\Qinf} \bigg[ \sup_{0\leq s <\infty}  \Big( \| I_s[v]\|_{\cC_x^{-\frac{1}{2}-\delta}}^k + \| I_s[v] \|_{H_x^1}^2 \Big)  \bigg]  \\
&\lesssim 1+ S^\delta \max( S^{-\frac{1}{2}}, S^{2\beta-1}) \E_{\Qinf} \bigg[ \sup_{0\leq s <\infty}   \| I_s[v]\|_{\cC_x^{-\frac{1}{2}-\delta}}^k + \| v\|_{L_s^2 L_x^2}^2 \bigg] . 
\end{aligned}
\end{equation}
In the last line, we also Lemma \ref{appendix:lemma_It}. Since \( S \rightarrow \infty \), this contribution can be absorbed in the coercive term \eqref{singularity:eq_ptw_p5}. The estimate of the second summand in \eqref{singularity:eq_ptw_p3} is exactly the same.\\

Regarding the error terms in \eqref{singularity:eq_ptw_p4}, we have that 
\begin{align*}
&\Big| \frac{1}{S^{1-2\beta-\delta}} \sum_{i,j=1}^3 \int_0^\infty \int_{\T^3} \AS{i}{s}[v] A^j_s[v] \dx \ds\Big| \\
&\lesssim  \frac{1}{S^{1-2\beta-\delta}} \sum_{j=1}^3 \int_0^\infty 1\{ s\lesssim S \} \Big(  \|\AS{i}{s}[v] \|_{L_x^2}^2 + \| A^j_s[v] \|_{L_x^2}^2 \Big) \dx \ds.
\end{align*}
The right-hand side can now be controlled using the same (or simpler) estimates as for the second summand in \eqref{singularity:eq_ptw_p5}. This completes the proof. 
\end{proof}

Essentially the same estimates as in the previous proof can also be used to control  the minor terms in \eqref{singularity:eq_representation_2} and \eqref{singularity:eq_representation_3}. We record them in the following lemma. 

\begin{lemma}\label{singularity:lemma_minor}
Let \( 0 < \beta <1/2 \), let \( \delta >0 \) and let \(j=1,2,3\). Then, it holds that 
\begin{align}
\lim_{S\rightarrow \infty} \E_{\Qinf} \bigg[ \bigg( \frac{1}{S^{\frac{1}{2}-\beta+\delta}} \int_0^\infty \int_{\T^3} \JS_s \WSu{3}{s} \d B^u_s \bigg)^2 \bigg] &=0, \label{singularity:eq_minor_1}\\
\lim_{S\rightarrow \infty} \E_{\Qinf} \bigg[  \frac{1}{\max( S^{1-3\beta+\delta},1)} \bigg|  \int_0^\infty \int_{\T^3} \AS{j}{s}[u] \cdot J_s \bWu{3}{s} \dx \ds \bigg| \bigg]  &=0,\label{singularity:eq_minor_2}\\
\lim_{S\rightarrow \infty} \E_{\Qinf} \bigg[ \bigg( \frac{1}{\max(S^{\frac{1}{2}-2\beta+\delta},1)} \int_0^\infty \int_{\T^3} \AS{j}{s}[u] \d B^u_s \bigg)^2 \bigg] &=0, \label{singularity:eq_minor_3} \\
\lim_{S\rightarrow \infty}  \E_{\Qinf} \bigg[  \frac{1}{S^\delta} \bigg|  \int_0^\infty \int_{\T^3} ( \JS_s \bWS{3}{s} ) ( J_s \langle \nabla\rangle^{-\frac{1}{2}} \lcol ( \langle \nabla \rangle^{-\frac{1}{2}} \Wu{}{s})^n \rcol  ) \dx \ds \bigg| \bigg]&=0.  \label{singularity:eq_minor_4}
\end{align}
\end{lemma}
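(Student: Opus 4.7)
My plan is to break the four estimates into two groups: the stochastic integrals \eqref{singularity:eq_minor_1} and \eqref{singularity:eq_minor_3}, to which It\^{o}'s isometry applies directly, and the deterministic cross-terms \eqref{singularity:eq_minor_2} and \eqref{singularity:eq_minor_4}, which I will treat by Cauchy--Schwarz in $s$ after passing the expectation inside the integral. The key structural observation is that $\JS_s = \rho_S(\nabla) \sigma_s(\nabla)/\langle \nabla\rangle$ is supported (in time) on $s \lesssim S$, so that all integrals are effectively cut off at $s \sim S$. The analytic inputs are Corollary \ref{measure_re:corollary_stochastic_objects_I}, Lemma \ref{measure_re:lemma_stochastic_objects_III}, and Lemma \ref{measure_re:lemma_high_power}, together with the refined drift estimate in Lemma \ref{singularity:lemma_Aj} and the moment bounds from Corollary \ref{measure_drift:corollary_bounds_u}. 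Throughout, I will apply Lemma \ref{singularity:lemma_Aj} with an auxiliary parameter $\delta_0 \ll \delta$ to ensure that the accumulated $O(\delta_0)$ losses never overwhelm the stated exponents.

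For \eqref{singularity:eq_minor_1}, since $\WSu{}{}$ is adapted to the filtration of the drift-adjusted Brownian motion $B^u$, It\^{o}'s isometry together with the identity $\Law_{\Qinf}(W^u) = \Law_{\bP}(W)$ yields
\begin{equation*}
\E_{\Qinf}\Big[ \Big(\int_0^\infty \int_{\T^3} \JS_s \WSu{3}{s} \, \d B^u_s\Big)^2 \Big] = \E_{\bP} \int_0^\infty \|\JS_s \bWS{3}{s}\|_{L^2_x}^2 \, \ds.
\end{equation*}
A direct Fourier computation using Corollary \ref{measure_re:corollary_stochastic_objects_I} and $\int_0^\infty \sigma_s(n)^2 \ds \lesssim 1$ bounds the right-hand side by $\sum_{n} \rho_S(n)^2 \langle n\rangle^{-2-2\gamma} \lesssim S^{1-2\gamma}$ for any $\gamma < \min(1,\beta)$. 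Choosing $\gamma$ sufficiently close to $\beta$, this is $o(S^{1-2\beta+2\delta})$, proving \eqref{singularity:eq_minor_1}.

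For \eqref{singularity:eq_minor_2} and \eqref{singularity:eq_minor_3}, I apply Lemma \ref{singularity:lemma_Aj} with $v = u$ and take expectations. Using $\Law_{\Qinf}(W^u) = \Law_{\bP}(W)$, the moment bound $\E_{\Qinf}\|I_s[u]\|_{\cC_x^{1/2+\gamma}}^k \lesssim 1$, and the elementary inequality $\|\IS_s[u]\|_{H^1_x}^2 \leq \|I_s[u]\|_{H^1_x}^2 \lesssim \int_0^s \|u_\tau\|_{L^2_x}^2 \d\tau$ combined with Corollary \ref{measure_drift:corollary_bounds_u}, I obtain
\begin{equation*}
\E_{\Qinf} \|\AS{j}{s}[u]\|_{L^2_x}^2 \lesssim \langle s\rangle^{-1-2\beta+20\delta_0} \max(\langle s\rangle^{1-2\gamma}, 1).
\end{equation*}
For \eqref{singularity:eq_minor_3}, It\^{o}'s isometry gives $\E_{\Qinf}[(\int \AS{j}{s}[u]\, \d B^u_s)^2] = \E\int \|\AS{j}{s}[u]\|_{L^2_x}^2 \ds \lesssim \max(S^{1-4\beta+O(\delta_0)},1)$, which divided by $\max(S^{1-4\beta+2\delta},1)$ tends to zero. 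For \eqref{singularity:eq_minor_2}, Cauchy--Schwarz in $s$ together with the symbol bound $(\E\|J_s \bWu{3}{s}\|_{L^2_x}^2)^{1/2} \lesssim \langle s\rangle^{-\gamma}$ (obtained by combining $\sigma_s(n)^2/\langle n\rangle^2 \lesssim \langle s\rangle^{-2\gamma}\langle n\rangle^{2\gamma-2}$ on the support $|n|\sim s$ with Corollary \ref{measure_re:corollary_stochastic_objects_I}) produces $\int_0^{CS} \langle s\rangle^{-3\beta+O(\delta_0)} \ds \lesssim \max(S^{1-3\beta+O(\delta_0)},1)$, which divided by $\max(S^{1-3\beta+\delta},1)$ tends to zero.

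For \eqref{singularity:eq_minor_4}, I first use the decomposition $\JS_s \bWS{3}{s} = \JS_s \bWSu{3}{s} + \sum_{j=1}^3 \AS{j}{s}[u]$ established in the proof of Lemma \ref{singularity:lemma_representation}. The contribution of $\JS_s \bWSu{3}{s}$ is estimated by Cauchy--Schwarz together with $(\E\|\JS_s \bWSu{3}{s}\|_{L^2_x}^2)^{1/2} \lesssim \langle s\rangle^{-\beta+\delta_0}$ (identical to the computation for \eqref{singularity:eq_minor_1}) and, from Lemma \ref{measure_re:lemma_high_power} combined with the symbol bound for $J_s\langle \nabla\rangle^{-1/2}$, the estimate $(\E\|J_s\langle\nabla\rangle^{-1/2} \lcol (\langle\nabla\rangle^{-1/2} W^u_s)^n \rcol\|_{L^2_x}^2)^{1/2} \lesssim \langle s\rangle^{-2+\delta_0}$. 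The resulting integrand is $\langle s\rangle^{-2-\beta+O(\delta_0)}$, integrable uniformly in $S$. The contributions from the $\AS{j}{s}[u]$ terms are handled the same way and are even more integrable. Hence \eqref{singularity:eq_minor_4} is $O(1)$ uniformly in $S$, and dividing by $S^\delta$ yields the claim. The main technical obstacle throughout is the careful bookkeeping of the small $\delta$-losses; in particular, the $20\delta$ factor in Lemma \ref{singularity:lemma_Aj} forces one to apply that lemma at scale $\delta_0 \leq \delta/50$, after which each estimate is essentially a matter of power counting.
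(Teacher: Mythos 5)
Your proposal is correct and is essentially the paper's own argument: It\^{o}'s isometry together with the identity $\Law_{\Qinf}(W^{u})=\Law_{\bP}(W)$ for \eqref{singularity:eq_minor_1} and \eqref{singularity:eq_minor_3}, the bound $\E_{\Qinf}\big[\|\AS{j}{s}[u]\|_{L_x^2}^2\big]\lesssim \langle s\rangle^{-4\beta+O(\delta_0)}$ obtained from Lemma \ref{singularity:lemma_Aj} and Corollary \ref{measure_drift:corollary_bounds_u}, Cauchy--Schwarz in $s$ (the paper's weighted AM--GM, which is equivalent) for the cross term \eqref{singularity:eq_minor_2}, and the decomposition $\JS_s \bWS{3}{s}=\JS_s\bWSu{3}{s}+\sum_{j}\AS{j}{s}[u]$ followed by power counting for \eqref{singularity:eq_minor_4}, all using that the integrands vanish for $s\gg S$. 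Two small points, neither changing the verdict: in your parenthetical justification of $(\E\|J_s\bWu{3}{s}\|_{L_x^2}^2)^{1/2}\lesssim\langle s\rangle^{-\gamma}$ the symbol bound must keep the factor $\sigma_s(n)^2\lesssim\langle s\rangle^{-1}$, i.e.\ it should read $\sigma_s(n)^2/\langle n\rangle^2\lesssim\langle s\rangle^{-2\gamma}\langle n\rangle^{2\gamma-3}$ on the shell $|n|\sim\langle s\rangle$ (with $\langle n\rangle^{2\gamma-2}$ as written, summing over the shell only yields $\langle s\rangle^{1/2-\gamma}$, which would spoil the count for \eqref{singularity:eq_minor_2}), and, exactly as in the paper's own display, the upper bounds for \eqref{singularity:eq_minor_2} and \eqref{singularity:eq_minor_3} give only $O(1)$ rather than $o(1)$ in the regime where the maximum in the denominator equals one, which is harmless for the application in Proposition \ref{singularity:proposition_main}.
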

\begin{proof}
We begin with the proof of \eqref{singularity:eq_minor_1}. Using Itô's isometry, we have that 
\begin{align*}
 \E_{\Q^u} \bigg[ \bigg( \frac{1}{S^{\frac{1}{2}-\beta+\delta}} \int_0^\infty \int_{\T^3} \JS_s \WSu{3}{s} \d B^u_s \bigg)^2 \bigg]   = \frac{1}{S^{1-2\beta+2\delta}} \int_0^\infty  \E_{\Qinf} \Big[ \| \JS_s \WSu{3}{s}  \|_{L^2_x}^2 \Big] \ds . 
\end{align*}
Arguing essentially as in \eqref{singularity:eq_ptw_p6}, we obtain that 
\begin{equation*}
 \frac{1}{S^{1-2\beta+2\delta}} \int_0^\infty  \E_{\Qinf} \Big[ \| \JS_s \WSu{3}{s}  \|_{L^2_x}^2 \Big] \ds \lesssim  \frac{1}{S^{1-2\beta+2\delta}} \int_0^\infty 1\{ s \lesssim S \} \langle s\rangle^{-2\beta+\delta} \ds \lesssim S^{-\delta},
\end{equation*}
which yields \eqref{singularity:eq_minor_1}.  \\

We now turn to \eqref{singularity:eq_minor_2}. Using Lemma \ref{singularity:lemma_Aj} and Corollary \ref{measure_drift:corollary_bounds_u}, we have for all \( \epsilon >0 \) that 
\begin{equation}\label{singularity:eq_minor_p1}
\E_{\Qinf} \Big[ \| \AS{j}{s}[u] \|_{L^2_x}^2 \Big] \lesssim \langle s \rangle^{-1-2\beta+20\epsilon} ( 1+ \big( \langle s \rangle^{1 - (\frac{1}{2}+\beta)+\epsilon}\big)^2 ) \lesssim \langle s \rangle^{-4\beta+40\epsilon}. 
\end{equation}
Using Lemma \ref{measure_re:lemma_stochastic_objects_I} and \eqref{singularity:eq_minor_p1}, we obtain that 
\begin{align*}
&\E_{\Qinf} \bigg[  \bigg|  \int_0^\infty \int_{\T^3} \AS{j}{s}[u] \cdot J_s \bWu{3}{s} \dx \ds \bigg| \bigg] \\
&\lesssim \E_{\Qinf} \bigg[ \int_0^\infty 1\{ s \lesssim S\} \langle s \rangle^{-\beta} \| J_s \bWu{3}{s} \|_{L_x^2}^2 \ds \bigg]  + \E_{\Qinf} \bigg[ \int_0^\infty 1\{ s\lesssim S\} \langle s\rangle^{\beta} \| \AS{j}{s}[u] \|_{L^2_x}^2 \ds \bigg] \\
&\lesssim \int_0^\infty 1\{ s \lesssim S \} \langle s \rangle^{-3\beta+40\epsilon} \ds \lesssim S^{-\frac{\delta}{2}} \max(1,S^{1-3\beta+\delta}). 
\end{align*}
Next, we prove \eqref{singularity:eq_minor_2}. Using Itô's isometry and \eqref{singularity:eq_minor_p1}, we have that 
\begin{align*}
\E_{\Qinf} \bigg[ \bigg(   \int_0^\infty \int_{\T^3} \AS{j}{s}[u] \d B^u_s \bigg)^2 \bigg]  
&\lesssim \E_{\Qinf}  \bigg[ \int_0^\infty 1\{ s\lesssim S\}  \| \AS{j}{s}[u] \|_{L^2_x}^2 \ds \bigg] \\
&\lesssim \int_0^\infty 1\{ s \lesssim S \} \langle s \rangle^{-4\beta+40\epsilon} \ds \\
&\lesssim S^{-\delta} \max( S^{\frac{1}{2}-2\beta+\delta}, 1)^2. 
\end{align*}
Finally, we turn to \eqref{singularity:eq_minor_4}, which is the most regular term. We first recall the algebraic identity \( \JS_s \bWS{3}{s} = \JS \bWSu{3}{s} + \sum_{j=1}^3 \AS{j}{s}[u] \). Then, Lemma \ref{measure_re:lemma_stochastic_objects_I} and \eqref{singularity:eq_minor_p1} yield 
\begin{equation}\label{singularity:eq_minor_p2}
\E_{\Qinf} \Big[ \| \JS_s \bWS{3}{s} \|_{L_x^2}^2 \Big] \lesssim \langle s \rangle^{-2\beta +2\epsilon}. 
\end{equation}
From Lemma \ref{measure_re:lemma_high_power}, we have that 
\begin{equation}\label{singularity:eq_minor_p3}
\E_{\Qinf} \Big[ \| J_s \langle \nabla \rangle^{-\frac{1}{2}} \lcol \big( \langle \nabla \rangle^{-\frac{1}{2}} \Wu{}{s} \big)^n \rcol \|_{L_x^2}^2 \Big] \lesssim \langle s \rangle^{-4+2\epsilon}. 
\end{equation}
By combining \eqref{singularity:eq_minor_p2} and \eqref{singularity:eq_minor_p3}, we obtain
\begin{align*}
  &\E_{\Qinf} \bigg[  \bigg|  \int_0^\infty \int_{\T^3} ( \JS_s \bWS{3}{s} ) ( J_s \langle \nabla\rangle^{-\frac{1}{2}} \lcol ( \langle \nabla \rangle^{-\frac{1}{2}} \Wu{}{s})^n \rcol  ) \dx \ds \bigg| \bigg] \\
&\lesssim \E_{\Qinf} \bigg[ \int_0^\infty 1\{ s \lesssim S\}   \langle s \rangle^{-1} \| \JS_s \bWS{3}{s} \|_{L_x^2}^2 ds \bigg] + \E_{\Qinf}  \bigg[ \int_0^\infty \langle s \rangle \| J_s \langle \nabla \rangle^{-\frac{1}{2}} \lcol \big( \langle \nabla \rangle^{-\frac{1}{2}} \Wu{}{s} \big)^n \rcol \|_{L_x^2}^2 \ds \bigg] \\
&\lesssim \int_0^\infty \Big( \langle s \rangle^{-1-2\beta+2\epsilon} + \langle s \rangle^{-3+\epsilon} \Big) \ds \lesssim 1. 
\end{align*}
\end{proof}

We are now ready to prove the main result of this section. 

\begin{proof}[Proof of Proposition \ref{singularity:proposition_main}:] 
We recall from Lemma \ref{singularity:lemma_representation} that
\begin{equation}\label{singularity:eq_p1}
 \frac{1}{S^{1-2\beta-\delta}} \int_{\T^3} \bWS{4}{\infty} \dx  = -  \frac{4\lambda}{S^{1-2\beta-\delta}} \int_0^\infty \int_{\T^3} ( \JS_s \bWSu{3}{s} )  \cdot J_s \bWu{3}{s} \dx \ds + \mathcal{R}^{\scriptscriptstyle{S}}(\bWu{}{},u), 
\end{equation}
where the remainder \( \mathcal{R}(\bWu{}{},u) \) contains the terms from \eqref{singularity:eq_representation_2} and \eqref{singularity:eq_representation_3} with an additional \( S^{-1+2\beta+\delta}\). By Lemma  \ref{singularity:lemma_pointwise}, there exists a deterministic sequence \( S_m \) such that the first summand in \eqref{singularity:eq_p1} converges to \( - \infty \) almost surely with respect to \( \Qinf \). Since \( 0<\beta<1/2 \), we have that  
\begin{equation*}
1-2\beta > \max\Big( \frac{1}{2} -\beta, 1 - 3\beta, \frac{1}{2}-2\beta, 0 \Big) .
\end{equation*}

Using Lemma  \ref{singularity:lemma_minor}, this implies that the remainder \(  \mathcal{R}^{\scriptscriptstyle{S}}(\bWu{}{},u) \) converges to zero in \( L^1(\Qinf) \). By passing to a subsequence if necessary, we can assume that \( \mathcal{R}^{\scriptscriptstyle{S_m}}(\bWu{}{},u) \) converges to zero almost surely with respect to \( \Qinf \). Using \eqref{singularity:eq_p1}, this implies that 
\begin{equation*}
\lim_{m\rightarrow \infty}\frac{1}{S_m^{1-2\beta-\delta}}  \int_{\T^3} \mathbb{W}^{\scriptscriptstyle{S_m},4}_\infty \dx = -\infty \qquad \Qinf\text{-a.s.}
\end{equation*}
Using \( \beta < 1/2 \) and  Lemma \ref{singularity:lemma_GFF}, the integral \(  S^{-1+2\beta+\delta} \int_{\T^3} \bWS{4}{\infty} \dx \)  converges to zero in \( L^2(\bP) \). By passing to another subsequence if necessary, we obtain that 
\begin{equation*}
\lim_{m\rightarrow \infty}\frac{1}{S_m^{1-2\beta-\delta}}  \int_{\T^3} \mathbb{W}^{\scriptscriptstyle{S_m},4}_\infty \dx = 0 \qquad \bP\text{-a.s.}
\end{equation*}
Since \( \mu_\infty \) is absolutely continuous with respect to \( \nu_\infty= (W_\infty)_{\#} \Qinf \) and  \( \cg= \Law_{\bP}(W_\infty)  \), this implies \eqref{singularity:eq_GFF} and \eqref{singularity:eq_Gibbs}. 
\end{proof}

Equipped with Corollary \ref{measure_var:corollary_cT} and Proposition \ref{singularity:proposition_main}, we now provide a short proof of Theorem \ref{theorem:singularity}. 

\begin{proof}[Proof of Theorem \ref{theorem:singularity}:]
If \( 0< \beta < 1/2 \), then the mutual singularity of the Gibbs measure \( \mu_\infty \) and the Gaussian free field \( \cg \) directly follows from Proposition \ref{singularity:proposition_main}.\\
If \( \beta > 1/2 \),  we claim that for all \( p \geq 1 \) that
\begin{equation}\label{singularity:eq_Lp}
\frac{\d \muT}{\d \cg} \in L^p(\cg) 
\end{equation}
with uniform bounds in \( T\geq 1 \). Since \( \mu_T \) converges weakly to \( \mu_\infty \), this implies the absolute continuity \( \mu_\infty\ll \cg \). \\
In order to prove the claim, we recall that \( \muT = (W_\infty)_\# \tmuT \ \) and \( \cg = (W_\infty)_\# \bP \). Furthermore, we see from \eqref{sc:eq_tmu} that the density $\mathrm{d}\tmuT/\mathrm{d}\mathbb{P}$ is a function of $W_\infty$. As a result, we obtain for all $p \geq 1$ that
\begin{equation*}
\int \Big( \frac{\mathrm{d}\tmuT}{\mathrm{d}\mathbb{P}}\Big)^p \mathrm{d}\mathbb{P} = \int  \Big( \frac{\mathrm{d}\muT}{\mathrm{d}\cg}\Big)^p \mathrm{d}\cg 
\end{equation*}
Thus, it suffices to bound the density \( \d \tmuT/ \d \bP \) in  \( L^p(\bP) \).  From the definition of \( \tmuT \) (Definition \ref{sc:definition_tmu}) and the definition of the renormalized potential energy in \eqref{measure_var:eq_renormalized_V}, we have that 
\begin{align*}
\Big( \frac{\d \tmuT}{\d \bP} \Big)^p &=  \frac{1}{\big( \cZTl\big)^p} \exp\Big( - p \lcol \cVT( \Winf) \rcol \Big) \\
&=  \frac{1}{\big( \cZTl\big)^p} \exp\Big( - \frac{\lambda p}{4} \int_{\T^3} \lcol (V* (\Winf)^2) (\Winf)^2 \rcol \dx - p \cT\Big) \\
&= \frac{\cZ^{\scriptscriptstyle{T},p\lambda}}{\big( \cZTl\big)^p} \exp( c^{\scriptscriptstyle{T}}_{p\lambda} -  p \cT)  \cdot  \frac{1}{\cZ^{\scriptscriptstyle{T},p\lambda}} \exp\Big( -  \lcol \cV^{\scriptscriptstyle{T},p\lambda}( \Winf) \rcol \Big).
\end{align*}
The first two factors are uniformly bounded in \( T \) by Proposition \ref{measure_var:proposition} and Corollary \ref{measure_var:corollary_cT}. The last factor is uniformly bounded in \( L^1(\bP) \)  for all \( T \geq 1 \) since we only replaced the coupling constant \( \lambda\) by \( p \lambda \). This completes the proof of the claim \eqref{singularity:eq_Lp}. 

\end{proof}

\begin{appendix}

\section{Probability theory}

In this section we recall two concepts from probability theory, namely, Gaussian hypercontractivity and multiple stochastic integrals.

\subsection{Gaussian hypercontractivity}
In several places of this paper, we reduced probabilistic \( L^p\)-bounds to probabilistic \( L^2\)-bounds using Gaussian hypercontractivity, which is closely related to logarithmic Sobolev embeddings. In the dispersive PDE community, among others, the resulting estimates are known as Wiener chaos estimates. A version of the following lemma can be found in \cite[Theorem I.22]{Simon74}, \cite[Theorem 1.4.1]{Nualart06}, and most papers on random dispersive PDE.

\begin{lemma}\label{prelim:lemma_hypercontractivity}
Let \( k \geq 1 \) and let \( f\colon ( \R_{>0} \times \Z^3)^k \rightarrow \mathbb{C} \) be deterministic, bounded,  and measurable. For any \( t \geq 0 \), define the random variable 
\begin{equation}
X_t = \sum_{n_1,\hdots,n_k \in \Z^3}  \int_0^{t} \int_0^{t_1} \hdots \int_0^{t_{k-1}} f(t_1,n_1,\hdots,t_k,n_k) \d W_{t_k}^{n_k} \d W_{t_{k-1}}^{n_{k-1}} \hdots \d W_{t_{1}}^{n_1}. 
\end{equation} 
Then, it holds for all \( p \geq 2\) that 
\begin{equation}
\| X_t \|_{L^p(\Omega)} \leq (p-1)^{\frac{k}{2}} \| X_t \|_{L^2(\Omega)}. 
\end{equation}
\end{lemma}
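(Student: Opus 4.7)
The plan is to recognize $X_t$ as an element of the $k$-th homogeneous Wiener chaos of the underlying isonormal Gaussian process and then invoke Nelson's classical hypercontractivity inequality. The lemma is a standard fact (it appears verbatim in \cite[Theorem 1.4.1]{Nualart06}), so the main work is conceptual bookkeeping rather than new estimates.

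First I would reduce the complex setup to a real isonormal Gaussian process. Since $dW_s^n = \sigma_s(n) \langle n \rangle^{-1} \, dB_s^n$ with the convention $B_s^{-n} = \overline{B_s^n}$, I can absorb the deterministic factor $\sigma_s(n) \langle n \rangle^{-1}$ into the kernel $f$. After splitting $\mathbb{Z}^3 = \{0\} \sqcup \Lambda \sqcup (-\Lambda)$ and writing each complex Brownian motion $B_s^n$ (for $n \in \Lambda$) in terms of two independent real Brownian motions, the family $\{B_s^n\}$ becomes a real isonormal Gaussian process on the Hilbert space $\mathcal{H} = L^2(\mathbb{R}_{>0} \times \mathbb{Z}^3)$ (modulo the Hermitian symmetry constraint, which just restricts to a closed real subspace).

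Second I would identify $X_t$ with a multiple stochastic integral. Using the correspondence between iterated It\^o integrals and multiple stochastic integrals (see Section \ref{section:multiple_stochastic_integrals}, or \cite[Proposition 1.1.4]{Nualart06}), the truncated indicator $1\{0 \leq t_k \leq t_{k-1} \leq \cdots \leq t_1 \leq t\} f(t_1,n_1,\ldots,t_k,n_k)$ can be symmetrized to produce a kernel $\tilde f \in L^2((\mathbb{R}_{>0} \times \mathbb{Z}^3)^k)$ with $X_t = \mathcal{I}_k[\tilde f]$. In particular $X_t$ belongs to the $k$-th Wiener chaos $\mathcal{H}_k$.

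Finally I would apply Nelson's hypercontractivity inequality for the Ornstein--Uhlenbeck semigroup $(T_r)_{r \geq 0}$: for $p \geq 2$ and $e^{-2r} \leq (p-1)^{-1}$, the operator $T_r$ maps $L^2(\Omega) \to L^p(\Omega)$ with norm at most one. Since $T_r$ acts on $\mathcal{H}_k$ as multiplication by $e^{-kr}$, choosing $r$ so that $e^{-2r} = (p-1)^{-1}$ gives
\begin{equation*}
\|X_t\|_{L^p(\Omega)} = e^{kr} \|T_r X_t\|_{L^p(\Omega)} \leq e^{kr} \|X_t\|_{L^2(\Omega)} = (p-1)^{k/2} \|X_t\|_{L^2(\Omega)}.
\end{equation*}
There is no real obstacle here; the only mild annoyance is the complex/symmetry bookkeeping in the reduction to a standard real isonormal Gaussian process, but this is purely notational.
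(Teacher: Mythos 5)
Your argument is correct and is exactly the standard one the paper relies on: the paper gives no proof of this lemma itself, citing \cite[Theorem I.22]{Simon74} and \cite[Theorem 1.4.1]{Nualart06}, and those proofs proceed precisely as you do, by identifying the iterated It\^o integral with a multiple stochastic integral in the $k$-th Wiener chaos and then applying Nelson's hypercontractivity for the Ornstein--Uhlenbeck semigroup. The complex/Hermitian bookkeeping you flag is indeed purely notational, so there is nothing to add.
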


\subsection{Multiple stochastic integrals}\label{section:multiple_stochastic_integrals}
This section is based on \cite[Section 1.1]{Nualart06} and we refer the reader to this excellent book for more details. Of particular importance to us is \cite[Example 1.1.2]{Nualart06}, which discuss the specific case of  a \( d \)-dimensional Brownian motion. \\

We identify \( \WP \) with a Gaussian process on \( \calH = L^2( \R_{>0} \times \Z^3, \dt \otimes \d n )\), where \( \dt \) is the Lebesgue measure and \( \d n \) is the counting measure. For any \( h \in \calH \), we define
\begin{equation}
\WP[h]= \sum_{n\in \Z^3} \int_0^\infty h(t,n) \d \Wt[n]. 
\end{equation}
For any \( h,h^\prime \in \calH \), we have that 
\begin{equation}\label{appendix:eq_covariance}
\E\Big[ \WP[h] \WP[h^\prime] \Big] = \sum_{n\in\Z^3} \int_0^\infty h(t,n) h^\prime(t,-n) \frac{\sigmaT{t}(n)^2}{\langle n\rangle^2} \dt. 
\end{equation}

Since we did not include a complex conjugate in the left-hand side of \eqref{appendix:eq_covariance}, we note that this does not yield a positive-definite bilinear form. We also did not include the weight \( \rhoT{t}(n)^2/\langle n \rangle^2\) in the definition of \( \calH \). Thus, the ``covariance'' in \eqref{appendix:eq_covariance} does not coincide with the inner product on \( \calH \) and instead is only dominated by it. As is clear from \cite[Section 1.1]{Nualart06}, this only requires minor modifications in both the arguments and formulas. \\

For any \( k \geq 1 \) and any function \( f \in \calH_k = L^2\big( ( \R_{>0} \times \Z^3)^k, \bigotimes_{j=1}^k ( \dt \otimes \d n)\big) \), the multiple stochastic integral 
\begin{equation}\label{appendix:eq_multiple_stochastic_integral}
\calI_k[f] = \sum_{n_1,\hdots,n_k \in \Z^3} \int_0^\infty \hdots \int_0^\infty f(t_1,n_1,\hdots,t_k,n_k) \d \W{t_k}{n_k} \hdots \d \W{t_1}{n_1}
\end{equation}
can be defined as in \cite[Section 1.1.2]{Nualart06}. If \( f \) is symmetric in the pairs \( (t_1,n_1), (t_2,n_2), \hdots, (t_k,n_k) \), we can relate the multiple stochastic integral to an iterated stochastic integral.

\begin{lemma}\label{appendix:lemma_iterated_vs_multiple}
Let \( k \geq 1 \) and let \( f \in \calH_k \) be symmetric. Then, it holds that 
\begin{equation}
\calI_k[f] = k! \sum_{n_1,\hdots,n_k \in \Z^3} \int_0^\infty \int_0^{t_1} \hdots \int_0^{t_{k-1}}f(t_1,n_1,\hdots,t_k,n_k) \d \W{t_k}{n_k} \hdots \d \W{t_1}{n_1},
\end{equation}
where the right-hand side is understood as an iterated Itô integral. 
\end{lemma}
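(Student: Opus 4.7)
The plan is to verify the identity on a dense subclass of simple symmetric functions and then pass to the limit in \( \calH_k \). Recall that in the construction of \( \calI_k \) (see \cite[Section 1.1.2]{Nualart06}), one starts with step functions of the form
\begin{equation*}
f = \sum_{i_1,\ldots,i_k} c_{i_1,\ldots,i_k}\, \mathbf{1}_{A_{i_1}}\otimes \cdots \otimes \mathbf{1}_{A_{i_k}},
\end{equation*}
where the \( A_i \subseteq \R_{>0}\times \Z^3 \) are pairwise disjoint (each of the form \( [s_i,t_i)\times \{n_i\} \)) and \( c_{i_1,\ldots,i_k}=0 \) whenever any two indices coincide. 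On such \( f \), by definition
\begin{equation*}
\calI_k[f] = \sum_{i_1,\ldots,i_k} c_{i_1,\ldots,i_k}\, \WP[\mathbf{1}_{A_{i_1}}]\cdots \WP[\mathbf{1}_{A_{i_k}}].
\end{equation*}

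First I would assume \( f \) is symmetric in its \( k \) arguments. Grouping the \( k! \) orderings of a given unordered \( k \)-tuple of distinct indices, and using \( c_{i_1,\ldots,i_k}=c_{i_{\pi(1)},\ldots,i_{\pi(k)}} \) for every \( \pi\in S_k \), I rewrite
\begin{equation*}
\calI_k[f] = k! \sum_{i_1<i_2<\cdots<i_k} c_{i_1,\ldots,i_k}\, \WP[\mathbf{1}_{A_{i_1}}]\cdots \WP[\mathbf{1}_{A_{i_k}}],
\end{equation*}
where ``\(<\)'' refers to any fixed ordering of the atoms \( A_i \); the most useful choice is the ordering by the left endpoint of the time-interval (after refining the partition so that all time-intervals are either equal or disjoint, which does not change \( f \)).

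Next I would identify the right-hand side with \( k! \) times the iterated Itô integral
\begin{equation*}
\sum_{n_1,\ldots,n_k}\int_0^\infty\int_0^{t_1}\cdots\int_0^{t_{k-1}} f(t_1,n_1,\ldots,t_k,n_k)\, \mathrm{d}W^{n_k}_{t_k}\cdots \mathrm{d}W^{n_1}_{t_1}.
\end{equation*}
With the partition refined so that time-intervals of distinct atoms are either equal or disjoint, and using that atoms with the same time-interval but different spatial index \( n \) carry independent increments of \( \WP \), the innermost Itô integral against \( \mathrm{d}W^{n_k}_{t_k} \) over \( t_k\in[0,t_{k-1}) \) collapses on step functions to a sum of the corresponding increments \( \WP[\mathbf{1}_{A_{i_k}}] \); proceeding outwards by induction on \( k \) produces exactly the ordered sum above. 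The point is that the Itô integrand is deterministic at each step, so the \( \mathcal{F}_t \)-adaptedness required for Itô integration is automatic and each layer reduces to a sum over atoms strictly below the previous ones.

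Finally I would pass to the limit. For the left-hand side, the space of symmetric step functions of the above form is dense in the symmetric subspace of \( \calH_k \), and \( \calI_k \) is continuous from this subspace to \( L^2(\Omega) \). For the right-hand side, the iterated Itô integral is also continuous in \( f \) in the \( \calH_k \)-norm by the Itô isometry applied recursively (using \( \sigmaT{t}(n)^2/\langle n\rangle^2 \leq 1 \), together with the fact that the inner products in \eqref{appendix:eq_covariance} are dominated by the \( \calH \)-inner product). Both sides then extend continuously to all symmetric \( f \in \calH_k \), giving the claimed identity. The main obstacle, which is essentially bookkeeping, is keeping track of the diagonal-exclusion convention in the definition of \( \calI_k \) while grouping the \( k! \) permutations; this is handled by the refinement of the partition that makes all time-intervals either equal or disjoint.
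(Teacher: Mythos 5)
Your overall strategy (verify the identity on elementary off-diagonal step functions, use symmetry to group the \(k!\) orderings, identify each ordered product of increments with an iterated It\^o integral, and close by density together with the recursive It\^o isometry) is exactly the textbook proof of the identity the paper cites, namely (1.27) in Nualart; the paper itself gives no argument beyond that citation, so your proposal is the natural reconstruction. The density and continuity step is fine, and your remark that the covariance \eqref{appendix:eq_covariance} is dominated by the \(\calH\)-inner product is precisely what makes the recursive isometry bound work.

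There is, however, a genuine gap at the step where you collapse the innermost integrals: the claim that ``atoms with the same time-interval but different spatial index \(n\) carry independent increments'' is false in this setting, because \(W^{-n}_t=\overline{W^n_t}\), so the increments of \(W^n\) and \(W^{-n}\) over the same time interval are correlated. This is exactly the point of the paper's warning in Section \ref{section:multiple_stochastic_integrals} that the pairing \eqref{appendix:eq_covariance} is not the \(\calH\)-inner product and that Nualart's arguments require ``minor modifications.'' Concretely, for \(A=I\times\{n\}\) and \(B=I\times\{-n\}\) the product \(\WP[\mathbf{1}_A]\,\WP[\mathbf{1}_B]\) has nonzero mean (equal to \(\int_I \sigma_t^T(n)^2\langle n\rangle^{-2}\,\mathrm{d}t\)), whereas every iterated It\^o integral of order \(k\geq 1\) is a mean-zero martingale; by the It\^o product formula the two differ exactly by this covariation term. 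Hence the identity on step functions, as you argue it, fails for such configurations unless the elementary class on which \(\calI_k\) is defined excludes them, i.e.\ unless the off-diagonal condition is strengthened so that coefficients vanish whenever two atoms have overlapping time supports and frequencies \(n_i=-n_j\) (such functions are still dense among symmetric elements of \(\calH_k\), so your limiting argument survives unchanged). Alternatively, one can sidestep the issue entirely by rewriting everything in terms of the independent real and imaginary parts of the Brownian motions \((B^n)_n\), applying Nualart's real-valued theory verbatim, and translating back to the complex exponential basis. Either repair is routine, but as written the independence claim, and with it the induction identifying products of increments with iterated integrals, does not go through for the conjugate pairs.
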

This lemma follows from \cite[(1.27)]{Nualart06} and the discussion below it. The primary reason for working with multiple stochastic integrals instead of iterated stochastic integrals is the simpler representation of their products. In order to state the product  formula in Lemma \ref{appendix:lemma_product_formula} below, we need one further definition.

\begin{definition}[Contraction]\label{appendix:definition_contraction}
Let \( k, l \geq 1 \) and let \( f \in \calH_k \) and \( g \in \calH_l \) be symmetric. For any \( 0 \leq r \leq \min(k,l) \), we define the contraction of \( r \) indices by 
\begin{align*}
&(f \otimes_r g )(t_1,n_1,\hdots, t_{k+l-2r}, n_{k+l-2r}) \\
\defe&\sum_{m_1,\hdots,m_r \in \Z^3} \int_0^\infty \hdots \int_0^\infty \bigg[ f(t_1,n_1,\hdots,t_{k-r},n_{k-r},s_1,m_1,\hdots,s_r,m_r)  \\
&\times g(t_{k+1-r},n_{k+1-r},\hdots,t_{k+l-2r},n_{k+l-2r},s_1,-m_1,\hdots,s_r,-m_r) \prod_{j=1}^k \frac{\sigmaT{s_j}(m_j)^2}{\langle m_j \rangle^2} \bigg]\d s_r \hdots \d s_1. 
\end{align*}
\end{definition}
The reader should note the relationship to the covariance \eqref{appendix:eq_covariance}. If \( f,g \in \calH = \calH_1 \), then 
\begin{equation*}
\E \Big[ \WP[f] \WP[g] \Big] = f \otimes_1 g. 
\end{equation*}
A slight modification of \cite[Proposition 1.1.3]{Nualart06} then yields the following result. 

\begin{lemma}[Product formula]\label{appendix:lemma_product_formula}
For any \( k, l \geq 1 \) and any symmetric \( f \in \calH_k \) and \( g \in \calH_l \), it holds that 
\begin{equation}
\calI_k[f]  \cdot \calI_l[g] = \sum_{r=0}^{\min(k,l)} r! {k \choose r}  {l \choose r} \calI_{k+l-2r}[ f \otimes_r g]. 
\end{equation}
\end{lemma}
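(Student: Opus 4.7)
The plan is to mimic the proof of \cite[Proposition 1.1.3]{Nualart06} almost verbatim, with the only substantive modification being the bookkeeping for our twisted covariance structure. The underlying Gaussian process $\WP$ satisfies $\E[\WP[h]\WP[h^\prime]] = \sum_n \int_0^\infty h(t,n) h^\prime(t,-n)\,\sigma^T_t(n)^2/\langle n\rangle^2\,\mathrm{d}t$, and the appearance of $-n$ (rather than complex conjugation) is precisely what forces the sign $-m_j$ in Definition \ref{appendix:definition_contraction}. Once this is carefully tracked, the combinatorics of the proof proceed as in the real-valued case.

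First I would reduce to a dense subclass of elementary symmetric functions. Namely, let $\mathcal{E}_k \subset \calH_k$ be the symmetrizations of indicator functions of products $A_1 \times \cdots \times A_k$, where $A_j \subset \R_{>0} \times \Z^3$ are pairwise disjoint sets on which $f$ is constant, and similarly for $g$. By standard arguments (as in \cite[Section 1.1.2]{Nualart06}) these are dense in the symmetric subspace of $\calH_k$, and both sides of the claimed identity are continuous bilinear maps $\calH_k \times \calH_l \to L^2(\Omega)$; the continuity of the left-hand side follows from the hypercontractive bound in Lemma \ref{prelim:lemma_hypercontractivity}, and the continuity of each summand on the right is a direct consequence of the $L^2$-isometry $\|\calI_m[h]\|_{L^2(\Omega)}^2 \lesssim m! \|h\|_{\calH_m}^2$. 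Hence it suffices to prove the identity on $\mathcal{E}_k \times \mathcal{E}_l$.

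Second, on elementary tensors the multiple stochastic integrals reduce to monomials in increments of the form $\xi_A \defe \sum_{n} \int 1_A(t,n)\,\d\Wt[n]$, which are jointly Gaussian with covariance $\E[\xi_A \xi_B] = \sum_n \int 1_A(t,n) 1_B(t,-n)\sigma^T_t(n)^2/\langle n\rangle^2\,\mathrm{d}t$. For disjoint supports this covariance vanishes unless one support contains the ``reflection'' of the other. Repeated application of Itô's product formula (or, equivalently, the Hermite polynomial identity $H_p(\xi)H_q(\xi) = \sum_{r} r!\binom{p}{r}\binom{q}{r} H_{p+q-2r}(\xi)$ when $\xi$ is a standard Gaussian) expresses the product $\calI_k[f]\cdot\calI_l[g]$ as a sum indexed by the number $r$ of indices that get ``paired''. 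The combinatorial coefficient $r!\binom{k}{r}\binom{l}{r}$ counts the number of ways to choose $r$ arguments of $f$, $r$ arguments of $g$, and a bijection between them; each such pairing contributes exactly one factor of the covariance, producing the contraction $f \otimes_r g$ after summing and symmetrizing.

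The main obstacle, and the only place where the proof genuinely departs from the one in \cite{Nualart06}, is to verify that the reflection $n \mapsto -n$ arising from the reality condition $\W{t}{-n} = \overline{\W{t}{n}}$ threads correctly through both the definition of the contraction and the pairing computation. Concretely, when a variable $(s,m)$ from $f$ is paired with a variable $(s,m')$ from $g$, the pairing forces $m' = -m$ and contributes the weight $\sigma^T_s(m)^2/\langle m\rangle^2$, which is exactly how $\otimes_r$ is defined. Once this sign convention is checked on $\mathcal{E}_k \times \mathcal{E}_l$ — a finite combinatorial verification — the density argument from the first step closes the proof.

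\end{appendix}
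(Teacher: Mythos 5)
Your proposal is correct and takes essentially the same route as the paper: the paper offers no detailed argument, stating only that the lemma follows from ``a slight modification'' of \cite[Proposition 1.1.3]{Nualart06} because the covariance in \eqref{appendix:eq_covariance} is merely dominated by the inner product on \( \calH \), and your sketch (density of elementary functions, hypercontractivity/\(L^2\)-isometry for continuity of both sides, and the pairing computation forcing \( m' = -m \) with weight \( \sigmaT{s}(m)^2/\langle m \rangle^2 \), matching Definition \ref{appendix:definition_contraction}) is precisely that modification spelled out.
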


\section{Auxiliary analytic estimates}

In this section, we record several auxiliary results, which have been placed here to not interrupt the flow of the argument.

\subsubsection*{Harmonic analysis} 

We record a non-stationary phase argument and several standard trilinear product estimates. 
\begin{lemma}[Asymptotics of $\widehat{V}$]\label{appendix:lemma_asymptotics}
There exists a constant \( c=c_\beta \in \R \) such that 
\begin{equation}\label{appendix:eq_Vhat}
\Big|\widehat{V}(n)- \frac{c_\beta}{\langle n\rangle^\beta} \Big| \lesssim \frac{1}{\langle n \rangle^{\beta+1}}. 
\end{equation}
\end{lemma}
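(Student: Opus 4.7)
The strategy is to isolate the singular part of $V$ at the origin and compare its torus Fourier coefficients to the continuous Fourier transform of $c_\beta|x|^{-(3-\beta)}$ on $\mathbb{R}^3$, whose distributional Fourier transform is an explicit constant multiple of $|\xi|^{-\beta}$.

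First, let $\chi \in C_c^\infty(\mathbb{R}^3)$ satisfy $\chi(x)=1$ for $\|x\|\leq 1/10$, $\chi(x)=0$ for $\|x\|\geq 1/5$, and $0 \leq \chi \leq 1$. Write $V=V_1+V_2$, where $V_1(x) \defe c_\beta |x|^{-(3-\beta)}\chi(x)$ and $V_2 \defe V-V_1$. By Assumption \ref{assumptions:V}, on the support of $\chi$ the function $V$ coincides with $c_\beta|x|^{-(3-\beta)}$, and outside the support of $\chi$ the function $V$ is smooth. Consequently $V_2 \in C^\infty(\T^3)$, so for any $N\geq 1$ we have $|\widehat{V_2}(n)|\lesssim_N \langle n\rangle^{-N}$, which is acceptable.

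It remains to analyze $\widehat{V_1}(n)$. Since $V_1$ is supported in $\{\|x\|\leq 1/5\}$, I identify it with a compactly supported function on $\R^3$ and compute the torus coefficient as a rescaled $\R^3$ Fourier transform. Using the standard identity that the distributional Fourier transform of $|x|^{-(3-\beta)}$ on $\R^3$ equals $\gamma_\beta |\xi|^{-\beta}$ for an explicit $\gamma_\beta>0$, one obtains, up to an absolute normalization constant,
\begin{equation*}
\widehat{V_1}(n) \,=\, c_\beta \gamma_\beta \int_{\R^3} |n-\xi|^{-\beta}\, \widehat{\chi}(\xi)\, \d \xi.
\end{equation*}
Because $\widehat{\chi}$ is Schwartz with $\int \widehat{\chi}\, \d \xi = \chi(0)=1$, this should be close to $c_\beta \gamma_\beta |n|^{-\beta}$.

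To quantify the error, split the $\xi$-integral into $\|\xi\|\leq |n|/2$ and $\|\xi\|\geq |n|/2$. On the first region the mean value theorem applied to $t\mapsto |n-t\xi|^{-\beta}$ gives $\bigl||n-\xi|^{-\beta}-|n|^{-\beta}\bigr|\lesssim \|\xi\|\, |n|^{-\beta-1}$, and integrating against the Schwartz function $\widehat{\chi}$ contributes $O(|n|^{-\beta-1})$. On the second region, I use the rapid decay bound $|\widehat{\chi}(\xi)|\lesssim_N (1+\|\xi\|)^{-N}$; the contribution of the term $|n|^{-\beta}$ is trivially $O(|n|^{-N})$, while $|n-\xi|^{-\beta}\widehat{\chi}(\xi)$ is integrated by further splitting into $\|\xi\|\sim |n|$ (using $\beta<3$ so that $|n-\xi|^{-\beta}$ is locally integrable, bounded by $|n|^{3-\beta}\cdot |n|^{-N}$) and $\|\xi\|\gg |n|$ (where $|n-\xi|\sim \|\xi\|$). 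Choosing $N$ large makes this error much smaller than $|n|^{-\beta-1}$. Combining gives $\widehat{V_1}(n) = c_\beta \gamma_\beta |n|^{-\beta} + O(|n|^{-\beta-1})$ for $|n|\geq 1$. Finally, comparing $|n|^{-\beta}$ with $\langle n\rangle^{-\beta}$ costs only $O(\langle n\rangle^{-\beta-2})$, and the case $|n|\leq 1$ is trivial, yielding \eqref{appendix:eq_Vhat} with $c_\beta$ redefined to absorb the constants.

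The only delicate point is the region $\|\xi\|\geq |n|/2$, where one must use $\beta<3$ to tame the local singularity of $|n-\xi|^{-\beta}$ near $\xi=n$; everything else is routine.
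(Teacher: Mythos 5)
Your proof is correct, and it rests on the same two pillars as the paper's argument: splitting $V$ into the exact power-law singularity localized near the origin plus a smooth remainder (whose Fourier coefficients decay rapidly), and the explicit Euclidean Fourier transform $\cF_{\R^3}\big[|x|^{-(3-\beta)}\big] = \gamma_\beta |\xi|^{-\beta}$. Where you differ is in how the localized singular piece is compared with the homogeneous kernel: the paper writes the cutoff as $1$ minus a sum of dyadic bump functions and estimates each dyadic tail term in physical space by a non-stationary phase argument, which yields the stronger expansion $\widehat{V}(n) = c_\beta \|n\|_2^{-\beta} 1\{n\neq 0\} + \mathcal{O}_M(\langle n\rangle^{-M})$, whereas you pass to the Fourier side, represent the coefficient as the convolution of $|\cdot|^{-\beta}$ with the Schwartz function $\widehat{\chi}$ evaluated at $n$, extract the main term via $\int \widehat{\chi} = \chi(0)$, and control the smearing error by the mean value theorem on $\|\xi\|\leq |n|/2$ together with rapid decay of $\widehat{\chi}$ on the complement. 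Your route only gives an $\mathcal{O}(\langle n\rangle^{-\beta-1})$ error (the convolution smearing genuinely is of that size), but that is precisely what the lemma asserts, and the paper itself remarks that the sharper $\mathcal{O}_M(\langle n\rangle^{-M})$ refinement is not needed. The points you flag---absorbing the torus/Euclidean normalization constants into $c_\beta$, using $\beta<3$ for local integrability of $|n-\xi|^{-\beta}$ near $\xi=n$, and the trivial comparison of $\|n\|^{-\beta}$ with $\langle n\rangle^{-\beta}$---are all handled correctly, so the argument goes through.
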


\begin{remark}
On the Euclidean space \( \R^3 \), instead of the periodic torus \( \T^3\), the Fourier transform of \( |x|^{\beta-3} \) is given exactly by \( c_\beta |\xi|^{-\beta} \). At high frequencies, the Fourier transform \( \widehat{V} \) is determined by the singularities of \( V \), and hence the difference between \( \R^3 \) and \( \T^3 \) should not be essential. In fact, a more precise description of the asymptotics of \( \widehat{V} \) is given by \( c_\beta |n|^{-\beta} 1\{ n \neq 0\} + \mathcal{O}_M( \langle n \rangle^{-M} ) \), but it is easier to work with \eqref{appendix:eq_Vhat}. 
\end{remark}
\begin{proof}
We denote by \( \cF_{\R^3} \) the Fourier transform on \( \R^3 \) given by
\begin{equation*}
\cF_{\R^3} f(\xi) = \int_{\R^3} f(x) e^{-i \langle \xi , x \rangle} \dx. 
\end{equation*}
Let \( \{ \chi_N\}_{N\geq 1} \) be as in \eqref{notation:eq_chi}, which we naturally extend from \( \Z^3 \) to \( \R^3 \). Because we require additional room, we define for any \( x \in \T^3 \) and \( N \geq 1 \) the function
\begin{equation}
\widetilde{\chi}_N(x)\defe \chi_N(100 x). 
\end{equation}
Let \( n \in \Z^3 \backslash \{ 0 \} \). Using the assumptions on the interaction potential \(V \), we obtain that 
\begin{align*}
\widehat{V}(n) &= \int_{\T^3} V(x) e^{-i \langle n,x\rangle} \dx \\
&= \int_{\T^3} V(x)\widetilde{\chi}_1(x) e^{-i \langle n , x \rangle} \dx + \int_{\T^3} V(x) (1-\widetilde{\chi}_1(x)) e^{-i \langle n  , x \rangle} \dx  \\
&= \int_{\R^3} |x|^{-(3-\beta)} \widetilde{\chi}_1(x) e^{-i \langle n, x \rangle} \dx + \int_{\T^3} V(x) (1-\widetilde{\chi}_1(x)) e^{-i \langle n  , x \rangle} \dx \\
&= \cF_{\R^3}\big[ |x|^{-(3-\beta)}\big](\xi) - \sum_{N\geq 2} \int_{\R^3} |x|^{-(3-\beta)} \widetilde{\chi}_N(x) e^{-i \langle n , x \rangle} \dx + \int_{\T^3} V(x) (1-\widetilde{\chi}_1(x)) e^{-i \langle n  , x \rangle} \dx . 
\end{align*}
The first summand is given exactly by \( c_\beta \| n\|_2^{-\beta} \). A non-stationary phase argument for the second and third term shows that they are bounded by \( \mathcal{O}_M( \langle n \rangle^{-M}) \) for all \( M \geq 1\). This implies that 
\begin{equation*}
\widehat{V}(n)  = c_\beta \| n \|_2^{-\beta} 1\{ n \neq 0\} + \mathcal{O}_M( \langle n \rangle^{-M}). 
\end{equation*}
Since \( \| n \|_2^{-\beta} = \langle n \rangle^{-\beta} + \mathcal{O}( \langle n \rangle^{-1-\beta} ) \), this leads to \eqref{appendix:eq_Vhat}. 
\end{proof}

The following estimates are used in the paper to control several minor error terms. 
\begin{lemma}[Trilinear estimates]\label{appendix:lemma_trilinear}
For any sufficiently small \( \delta >0\), we have for all \( f,g,h\in C_x^\infty(\T^3) \) the estimates
\begin{align}
\Big\| \langle \nabla \rangle^{\frac{1}{2}+\delta} \Big( (V*(fg)) \, h \Big) \Big\|_{L^1_x}  
&\lesssim \| f \|_{H^{\frac{1}{2}+2\delta}_x} \| g \|_{H^{\frac{1}{2}+2\delta}_x} \| h \|_{\cC^{\frac{1}{2}+2\delta}_x} \label{appendix:eq_trilinear_I}, \\
\Big\| \langle \nabla \rangle^{\frac{1}{2}+\delta} \Big( (V*(fg)) \, h \Big) \Big\|_{L^1_x} 
&\lesssim    \| f \|_{\cC^{\frac{1}{2}+2\delta}_x} \| g \|_{H^{\frac{1}{2}+2\delta}_x} \| h \|_{H^{\frac{1}{2}+2\delta}_x} \label{appendix:eq_trilinear_II}, \\
\Big \| \langle \nabla \rangle^{-\frac{1}{2}-2\delta} \Big( (V*(fg)) \, h \Big) \Big\|_{L^2_x} 
&\lesssim \| f \|_{\cC_x^{-\frac{1}{2}-\delta}}  \Big( \| g \|_{\cC_x^{-\frac{1}{2}-\delta}} \| h \|_{H_x^{1+4\delta}} +  \| g \|_{H_x^{1+4\delta}} \| h \|_{\cC_x^{-\frac{1}{2}-\delta}} \Big) \label{appendix:eq_trilinear_III} \\
\Big \| \langle \nabla \rangle^{-\frac{1}{2}-2\delta} \Big( (V*(fg)) \, h \Big) \Big\|_{L^2_x} 
&\lesssim \Big( \| f \|_{\cC_x^{-\frac{1}{2}-\delta}} \| g \|_{H_x^{1+4\delta}} +  \| f \|_{H_x^{1+4\delta}} \| g \|_{\cC_x^{-\frac{1}{2}-\delta}} \Big) \| h \|_{\cC_x^{-\frac{1}{2}-\delta}}   \label{appendix:eq_trilinear_IV}. 
\end{align}
\end{lemma}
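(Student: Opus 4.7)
All four estimates are proved by the same general scheme: expand each of the factors $f,g,h$ dyadically, bound the resulting frequency-localized piece $\big(V*(P_{N_1}f\,P_{N_2}g)\big)\,P_{N_3}h$ using H\"older's inequality together with Young's convolution inequality (using $V\in L^1(\T^3)$, whence $u\mapsto V*u$ is bounded on every $L^p_x$ uniformly), and then sum the dyadic pieces using the conversions $\|P_Nu\|_{L^\infty_x}\leq N^{-s}\|u\|_{\cC^s_x}$ and $\|P_Nu\|_{L^2_x}\leq N^{-s}\|u\|_{H^s_x}$ (interpreted with the appropriate sign of $s$). Bernstein's inequality provides the auxiliary conversion $\|P_Nu\|_{L^q_x}\lesssim N^{3(1/p-1/q)}\|P_Nu\|_{L^p_x}$ for $p\leq q$.

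For \eqref{appendix:eq_trilinear_I} and \eqref{appendix:eq_trilinear_II}, the first observation is that $\big(V*(P_{N_1}f\,P_{N_2}g)\big)\,P_{N_3}h$ is Fourier-supported at frequencies $\lesssim N_{\max}:=\max(N_1,N_2,N_3)$, so $\langle\nabla\rangle^{1/2+\delta}$ contributes at most a factor $N_{\max}^{1/2+\delta}$. Distributing the three factors in $L^2_x\times L^2_x\times L^\infty_x$ (for \eqref{appendix:eq_trilinear_I}) or $L^\infty_x\times L^2_x\times L^2_x$ (for \eqref{appendix:eq_trilinear_II}) and converting to the norms on the right-hand side yields a dyadic exponent of the form $N_{\max}^{1/2+\delta}\prod_i N_i^{-(1/2+2\delta)}$. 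Splitting into the three sub-cases $N_i=N_{\max}$ (for $i=1,2,3$), the two non-maximal factors yield convergent dyadic sums while the maximal factor retains an extra $N_{\max}^{-\delta}$, so the total sum is finite.

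For \eqref{appendix:eq_trilinear_III} and \eqref{appendix:eq_trilinear_IV}, I will use duality: $\|\langle\nabla\rangle^{-1/2-2\delta}G\|_{L^2_x}=\sup_{\|\phi\|_{H^{1/2+2\delta}_x}\leq 1}\int G\phi\,\mathrm{d}x$. Since $V$ is real and even, self-adjointness gives
\begin{equation*}
\int (V*(fg))\,h\,\phi\,\mathrm{d}x=\int fg\cdot V*(h\phi)\,\mathrm{d}x,
\end{equation*}
so it suffices to bound the latter quadrilinear form. Expanding all four functions $f,g,h,\phi$ dyadically and applying H\"older, I place the two factors controlled in $\cC_x^{-1/2-\delta}$ in $L^\infty_x$ (each contributing $N_i^{1/2+\delta}$), while the factor controlled in $H^{1+4\delta}_x$ and the dual factor $\phi$ are placed in $L^2_x$ (contributing $N_i^{-(1+4\delta)}$ and $N_i^{-(1/2+2\delta)}$ respectively). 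A case analysis on which of the four dyadic scales is maximal, combined with the Fourier-support restriction induced by $V*$, then yields a convergent dyadic sum in each case. The two summands appearing on the right-hand side of \eqref{appendix:eq_trilinear_III} (resp.\ \eqref{appendix:eq_trilinear_IV}) correspond precisely to the choice of whether $N_3\geq N_2$ or $N_3\leq N_2$ (resp.\ $N_3\geq N_1$ or $N_3\leq N_1$); in each regime the factor with the larger dyadic scale must be taken in $H^{1+4\delta}_x$.

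The main obstacle is not any single estimate but the bookkeeping in \eqref{appendix:eq_trilinear_III}--\eqref{appendix:eq_trilinear_IV}: a naive placement of both $\cC^{-1/2-\delta}_x$-controlled factors in $L^\infty_x$ simultaneously does not yield a summable dyadic exponent. One must instead correlate the choice of norm for the ``flexible'' factor $g$ or $f$ with the relative sizes of the corresponding dyadic frequencies, which is exactly why the right-hand sides of \eqref{appendix:eq_trilinear_III} and \eqref{appendix:eq_trilinear_IV} are sums of two terms rather than single products. The resulting computation is routine but tedious and uses only $V\in L^1(\T^3)$; no smoothing of $V$ (i.e.\ no dependence on $\beta$) enters into the bounds, consistent with the fact that $\beta$ does not appear on the right-hand sides.
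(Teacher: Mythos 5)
Your scheme is correct and is essentially the argument the paper has in mind: the printed "proof" simply states that the estimates follow from a paraproduct (dyadic) decomposition plus H\"older, with the built-in $\delta$-room handling all summability issues and no use of the smoothing of $V$ beyond $V\in L^1_x$ — exactly what you do, with the duality step $\int (V*(fg))h\phi = \int fg\, V*(h\phi)$ being a harmless reformulation for \eqref{appendix:eq_trilinear_III}--\eqref{appendix:eq_trilinear_IV}.

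One slip worth fixing: your case split for \eqref{appendix:eq_trilinear_IV} is mis-indexed. There the flexible pair is $(f,g)$ (with dyadic scales $N_1,N_2$), since $h$ only ever carries the $\cC_x^{-1/2-\delta}$ norm, so the dichotomy must be $N_1\geq N_2$ versus $N_1\leq N_2$, with the larger of the two placed in $H_x^{1+4\delta}$; comparing $N_3$ with $N_1$ as written does not determine the placement and, taken literally (e.g.\ putting $g$ in $H_x^{1+4\delta}$ whenever $N_3\geq N_1$), fails in the configuration $N_1\sim N_3\gg N_2,N_4$, where the dyadic exponent becomes $N_1^{1/2+\delta}N_2^{-(1+4\delta)}N_3^{1/2+\delta}N_4^{-(1/2+2\delta)}\sim N_1^{1+2\delta}$ and the sum diverges. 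With the split on $N_1$ versus $N_2$ your own guiding principle ("the larger flexible frequency goes in $H_x^{1+4\delta}$") gives summable exponents in all regimes, matching your verified treatment of \eqref{appendix:eq_trilinear_III}.
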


These estimates are essentially an easier version of the fractional product formula. They can be proven using a paraproduct decomposition and Hölder's inequality and we omit the details. We always included \( \delta\)-loss on the right-hand side of \eqref{appendix:eq_trilinear_I}, so we can avoid all summability or endpoint issues. We also never rely on the smoothing effect of the interaction potential \( V \). 

\subsubsection*{The integral operator and truncations}

We now record two properties related to the integral operator \( I_t \) and the associated frequency truncations \( \rho \) and \( \sigma \). 
\begin{lemma}[{\cite[Lemma 2]{BG18}}]\label{appendix:lemma_It}
For any space-time function \( u\colon [0,\infty)\times \T^3 \rightarrow \R \) and any \( \delta>0\), it holds that 
\begin{equation}\label{appendix:eq_It_1}
\sup_{T,t\geq 0} \| \It[u] \|_{H^1_x(\T^3)} \lesssim \| u\|_{L_t^2 L_x^2([0,\infty)\times \T^3)} 
\end{equation}
and 
\begin{equation}\label{appendix:eq_It_2}
\sup_{T,t,s\geq 0} \| \Is[u] - \It[u] \|_{H^{1-\delta}_x(\T^3)}^2 \lesssim \min(s,t)^{-2\delta}   \min(1,|t-s|) \| u\|_{L_t^2 L_x^2([0,\infty)\times \T^3)}^2. 
\end{equation}
\end{lemma}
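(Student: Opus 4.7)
The estimates are most transparent on the Fourier side. Since $J_s^T$ acts as multiplication by $\sigma_s^T(n)/\langle n\rangle$, we have
\begin{equation*}
\widehat{I_t^T[u]}(n) = \int_0^t \frac{\sigma_s^T(n)}{\langle n\rangle} \widehat{u_s}(n)\,ds,
\end{equation*}
so by Cauchy--Schwarz in $s$,
\begin{equation*}
\langle n \rangle^{2} \bigl|\widehat{I_t^T[u]}(n)\bigr|^2 \leq \Bigl(\int_0^t \sigma_s^T(n)^2\,ds\Bigr)\Bigl(\int_0^t |\widehat{u_s}(n)|^2\,ds\Bigr).
\end{equation*}
The point is to control the first factor using the definition $\sigma_s(n)^2 = \tfrac{d}{ds}\rho_s(n)$.

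For \eqref{appendix:eq_It_1}, I use
\begin{equation*}
\int_0^t \sigma_s^T(n)^2\,ds = \rho_T(n)^2 \int_0^t \tfrac{d}{ds}\rho_s(n)\,ds \leq \rho_T(n)^2 \rho_t(n) \leq 1,
\end{equation*}
which after summing over $n$ yields $\|I_t^T[u]\|_{H^1_x}^2 \leq \|u\|_{L^2_{t,x}}^2$.

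For \eqref{appendix:eq_It_2}, assume without loss of generality that $s \leq t$. The same Cauchy--Schwarz argument applied to $I_t^T[u] - I_s^T[u] = \int_s^t J_\tau^T u_\tau\,d\tau$ gives
\begin{equation*}
\|I_t^T[u]-I_s^T[u]\|_{H^{1-\delta}_x}^2 \leq \sum_n \langle n\rangle^{-2\delta} \Bigl(\int_s^t \sigma_\tau^T(n)^2\,d\tau\Bigr)\Bigl(\int_s^t |\widehat{u_\tau}(n)|^2\,d\tau\Bigr).
\end{equation*}
The key step is a pointwise analysis of the time integral. A direct computation yields $\sigma_\tau(n)^2 = \frac{\|n\|\tau}{\langle\tau\rangle^3}|\rho'(\|n\|/\langle\tau\rangle)|$, so by the support of $\rho'$ this vanishes unless $\langle\tau\rangle \sim \|n\|$; in that regime $\sigma_\tau(n)^2 \lesssim 1/\langle n\rangle$. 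Combined with the trivial bound $\int_s^t \sigma_\tau(n)^2\,d\tau \leq 1$ from part (i), this gives
\begin{equation*}
\int_s^t \sigma_\tau^T(n)^2\,d\tau \lesssim \min\!\Bigl(1, \tfrac{|t-s|}{\langle n\rangle}\Bigr) \leq \min(1,|t-s|).
\end{equation*}
Moreover, the support condition $\langle\tau\rangle \sim \|n\|$ for some $\tau \in [s,t]$ forces $\|n\| \gtrsim \langle s\rangle$ (this is where we use $s = \min(s,t)$), so on the effective range of summation $\langle n\rangle^{-2\delta} \lesssim \langle s\rangle^{-2\delta}$.

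The main subtlety is exactly this last observation: one loses the gain $\langle n\rangle^{-2\delta}$ uniformly, and it must be recovered from the localization of $\sigma_\tau(n)$ in frequency. Putting the estimates together and summing over $n$ gives
\begin{equation*}
\|I_t^T[u]-I_s^T[u]\|_{H^{1-\delta}_x}^2 \lesssim \langle s\rangle^{-2\delta}\min(1,|t-s|)\|u\|_{L^2_{t,x}}^2,
\end{equation*}
which implies \eqref{appendix:eq_It_2} (the bound $\min(s,t)^{-2\delta}$ being trivially weaker when $\min(s,t) \leq 1$ and equivalent to $\langle\min(s,t)\rangle^{-2\delta}$ for $\min(s,t) \geq 1$).
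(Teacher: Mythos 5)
Your proof is correct, and it is in fact more self-contained than the paper's. The paper disposes of the lemma by citing \cite[Lemma 2]{BG18} for both the uniform $H^1_x$ bound and the modulus-of-continuity bound $\| \Is[u]-\It[u]\|_{H^1_x}^2 \lesssim \min(1,|t-s|)\|u\|_{L^2_{t,x}}^2$, and then adds a single observation: since $\Is[u]-\It[u]$ is supported on frequencies $\gtrsim \min(s,t)$, one may trade $\delta$ derivatives for the factor $\min(s,t)^{-\delta}$. You instead reprove the cited ingredients directly via Plancherel, Cauchy--Schwarz in time, the identity $\int \sigma_\tau(n)^2 \, \d\tau = \rho_t(n)-\rho_s(n) \leq 1$, and the pointwise bound $\sigma_\tau(n)^2 \lesssim \langle n\rangle^{-1}$ on the support $\langle \tau\rangle \sim \|n\|_2$; your frequency-localization step (the support of $\sigma_\tau(\cdot)$ forcing $\|n\|_2 \gtrsim \langle \min(s,t)\rangle$ in the effective sum) is exactly the same mechanism as the paper's support observation, just implemented inside the sum rather than as a multiplier statement. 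The payoff of your route is that it needs no external reference and yields the slightly stronger factor $\langle \min(s,t)\rangle^{-2\delta}$ (and even $\min(1,|t-s|/\langle n\rangle)$ before summation), which, as you note, implies the stated bound; the paper's route is shorter but rests on the black box from \cite{BG18}. No gaps.
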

\begin{proof}
The first estimate \eqref{appendix:eq_It_1} follows directly from \cite[Lemma 2]{BG18}. Since \( \Is[u] - \It[u] \) is supported on frequencies \( \gtrsim \min(s,t) \), we have that 
\begin{equation*}
 \| \Is[u] - \It[u] \|_{H^{1-\delta}_x(\T^3)} \lesssim \min(t,s)^{-\delta}  \| \Is[u] - \It[u] \|_{H^{1}_x(\T^3)}. 
\end{equation*}
The rest of the statement then again follows from \cite[Lemma 2]{BG18}. 
\end{proof}
The result in \cite{BG18} is only stated for \( I_t \) instead of \( \It \), but the same argument applies. 

\begin{lemma}[Well-behaved truncations]\label{appendix:lemma_well_behaved_truncations}
If \( S \geq 1 \) and \( n_1,n_2,n_3 \in \Z^3 \) satisfy \( \| n_j - S e_j \|_2 \leq S/20 \) for all \( j=1,2,3\), where \( e_j \) is the \( j\)-th canonical basis vector, then
\begin{equation}
\rho_S(n_{123})  \Big( \prod_{j=1}^3 \rho_S(n_j) \Big) \int_0^\infty \sigma_s(n_{123})^2 \Big( \prod_{j=1}^3 \rho_s(n_j)^2 \Big) \ds \gtrsim 1. 
\end{equation}
\end{lemma}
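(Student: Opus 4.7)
The plan is to show that each factor $\rho_S(n_j)$ and $\rho_S(n_{123})$ is bounded below by an absolute positive constant, and then to bound the integral from below by restricting to a well-chosen sub-interval of $s$ in which all the integrand factors are simultaneously comparable to explicit powers of $S$.

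First, I would record the elementary geometric consequences of the hypothesis. The bound $\| n_j - S e_j \|_2 \leq S/20$ gives $\|n_j\|_2 \in [19 S/20, 21 S/20]$, and since $e_1, e_2, e_3$ are orthogonal, $\| n_{123} - S(e_1 + e_2 + e_3)\|_2 \leq 3 S/20$, so $\|n_{123}\|_2 \in [\sqrt{3}\, S - 3S/20,\, \sqrt{3}\, S + 3S/20] \subset [1.58\, S,\, 1.89\, S]$. Since $S \geq 1$ we have $S/\langle S\rangle \leq 1$, so each ratio $\|n_j\|_2/\langle S\rangle$ and $\|n_{123}\|_2/\langle S\rangle$ lies in $[0,2)$. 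From the assumptions on $\rho$ (namely $\rho(y)=1$ on $[0,1/4]$, $\rho$ non-increasing, and $\rho(y) \gtrsim 1$ on $[1/2,2]$) one has $\rho(y) \gtrsim 1$ for all $y \in [0,2]$. Consequently $\rho_S(n_j) \gtrsim 1$ and $\rho_S(n_{123}) \gtrsim 1$, which handles the prefactor.

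Next, I would bound the integral from below by restricting to a compact range $s \in [c_1 S, c_2 S]$ chosen so that $\|n_{123}\|_2/\langle s\rangle \in [1/2, 2]$. For instance $c_1 = 1$ and $c_2 = 3$ work because $\|n_{123}\|_2/\langle s\rangle \in [1.58\, S/\langle 3S\rangle,\, 1.89\, S/\langle S\rangle]$, which is contained in $[1/2, 2]$ for all $S \geq 1$. On this range, differentiating $\rho_s(n_{123}) = \rho(\|n_{123}\|_2/\langle s \rangle)$ and using $-\rho'(y) \gtrsim 1$ for $y \in [1/2, 2]$ yields
\begin{equation*}
\sigma_s(n_{123})^2 = \frac{d}{ds} \rho_s(n_{123}) = -\rho'\!\Big(\tfrac{\|n_{123}\|_2}{\langle s\rangle}\Big)\cdot \|n_{123}\|_2 \cdot \frac{s}{\langle s\rangle^3} \gtrsim \frac{S \cdot S}{S^3} = \frac{1}{S}
\end{equation*}
uniformly for $s \sim S$. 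Simultaneously, for these $s$ the ratios $\|n_j\|_2/\langle s\rangle$ lie in a fixed sub-interval of $[0,2)$, so $\rho_s(n_j) \gtrsim 1$ for $j=1,2,3$.

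Combining these bounds, the integral is at least
\begin{equation*}
\int_S^{3S} \sigma_s(n_{123})^2 \prod_{j=1}^3 \rho_s(n_j)^2 \, ds \gtrsim \int_S^{3S} \frac{ds}{S} \gtrsim 1,
\end{equation*}
which together with the lower bounds on the $\rho_S$ factors finishes the proof. The only mild subtlety is the verification that $\sqrt{3}+3/20 < 2$, which is exactly why the three canonical directions $Se_j$ produce a sum whose size remains within the plateau region of $\rho_S$; this is the structural reason the estimate works and is the only place where a numerical check intervenes.
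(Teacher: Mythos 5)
Your proposal is correct and follows essentially the same route as the paper: the prefactor is handled by noting all ratios stay below $2$ so the non-increasing cutoff is bounded below, and the integral is bounded below by restricting to $s\sim S$, where $\sigma_s(n_{123})^2\gtrsim \langle s\rangle^{-1}$ (via $-\rho'\gtrsim 1$ on $[1/2,2]$) and $\rho_s(n_j)\gtrsim 1$, yielding a contribution of size $S^{-1}\cdot S\gtrsim 1$. The only delicate point, which you correctly flag, is that the window $[S,3S]$ keeps $\|n_{123}\|_2/\langle s\rangle\geq 1/2$ only by a razor-thin margin at $S=1$ (about $0.5003$), so the numerical check is genuinely needed but does go through.
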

While the proof is a bit technical and depends on the precise regions in the definition of \( \rho \), this lemma should not be taken too seriously. 
\begin{proof}
We recall the lower bound \( \min(\rho(y),-\rho^\prime(y)) \gtrsim 1 \) for all \( 1/2 \leq y \leq 2 \) from the definition of \( \rho \). 
From the assumptions, we directly obtain that 
\begin{equation*}
|\| n_{123}\|_2 - \sqrt{3} S | \leq \frac{3}{20} S. 
\end{equation*}
In particular, we obtain that \( 3/2 \cdot S \leq \| n_{123}\|_2 \leq 19/20 \cdot S \). Since \( 19/20 \cdot S \leq  \|n_j \|_2 \leq 21/20 \cdot S \) for all \( j=1,2,3 \), it follows that 
\begin{equation*}
\rho_S(n_{123})  \Big( \prod_{j=1}^3 \rho_S(n_j) \Big) \gtrsim 1. 
\end{equation*}
We estimate the integral by
\begin{align*}
&\int_0^\infty \sigma_s(n_{123})^2 \Big( \prod_{j=1}^3 \rho_s(n_j)^2 \Big) \ds \\
&\gtrsim \int_0^\infty \langle s \rangle^{-1}  1\Big\{ \frac{\langle s \rangle}{2} \leq \| n_{123}\| \leq 2 \langle s \rangle \Big\} \Big( \prod_{j=1}^3 1\Big\{ \| n_j \|_2 \leq 2 \langle s \rangle\Big\}  \Big) \ds \\
&\gtrsim  S^{-1} \Big( \int_0^\infty 1\Big\{ \frac{1}{2} \max( \|n_1 \|,\|n_2\|,\|n_3\|,\|n_{123}\|) \leq s \leq 2 \|n_{123}\| \Big\} \ds  - 2 \Big) \\
&= S^{-1} \Big( \frac{3}{2} \|n_{123}\|_2 - 2 \Big) \\
&\gtrsim 1,
\end{align*}
where we used that \( S \geq 1 \). 
\end{proof}

\subsubsection*{A basic counting estimate}

The following estimate has been used to control stochastic objects (see Lemma \ref{measure_re:lemma_stochastic_objects_III}). 

\begin{lemma}\label{appendix:lemma_sum_estimates}
Let \( v,w \in \Z^3 \) and let \( \alpha,\beta>0 \) satisfy \( 1 < \alpha +\beta < 3 \). Then, 
\begin{equation}\label{appendix:eq_sum_estimates}
\sum_{n \in \Z^3} \frac{1}{\langle n + v \rangle^{\alpha} \langle n + w \rangle^{\beta} \langle n \rangle^2} \lesssim \min( \langle v \rangle, \langle w \rangle)^{1-\alpha-\beta}. 
\end{equation}
\end{lemma}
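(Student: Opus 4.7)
By the symmetry of the two factors I may assume without loss of generality that \( \langle v \rangle \leq \langle w \rangle \) and set \( V \defe \langle v \rangle \). The plan is to show the sum is \( \lesssim V^{1-\alpha-\beta} \) by reducing to the corresponding integral and exploiting a scaling argument. Since the summand is essentially constant on unit cubes (up to a constant depending only on \( \alpha,\beta \)), standard integral comparison gives
\begin{equation*}
\sum_{n\in\Z^3} \frac{1}{\langle n+v\rangle^\alpha \langle n+w\rangle^\beta \langle n\rangle^2} \lesssim \int_{\R^3} \frac{dn}{\langle n+v\rangle^\alpha \langle n+w\rangle^\beta \langle n\rangle^2}.
\end{equation*}

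The main case is \( V \geq 2 \), where I would perform the substitution \( n = Vm \). Writing \( \widehat v = v/V \) (so \( |\widehat v|\sim 1 \)) and \( \widehat w = w/V \) (so \( |\widehat w|\geq 1\)), the integral bound becomes \( V^{1-\alpha-\beta} \) times
\begin{equation*}
\int_{\R^3} \frac{dm}{(|m+\widehat v| + V^{-1})^{\alpha}\,(|m+\widehat w| + V^{-1})^{\beta}\,(|m|+V^{-1})^{2}}.
\end{equation*}
Since \( V^{-1} \leq 1/2 \), I would bound this integral uniformly in \( \widehat v,\widehat w \) by checking integrability at each of the three potentially singular points and at infinity: the local singularities at \( -\widehat v,-\widehat w,0 \) have orders \( \alpha,\beta,2 \), all strictly less than \( 3 \) by hypothesis (in particular \( \alpha, \beta < \alpha + \beta < 3 \)); and the decay at infinity is \( |m|^{-\alpha-\beta-2} \), which is integrable precisely because \( \alpha+\beta > 1 \). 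As \( |\widehat w|\to\infty \), the third singularity recedes, so the bound is uniform.

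The remaining case is \( V < 2 \), where the target reduces to the \( O(1) \) bound. Here \( \langle n+v\rangle \sim \langle n\rangle \) for \( |n| \geq 10 \), so up to a bounded contribution from a ball of radius \( O(1) \), the integral is dominated by \( \int \langle n\rangle^{-\alpha-2}\langle n+w\rangle^{-\beta}\,dn \), which is handled by the standard two-point convolution estimate (splitting cases \( \alpha \lessgtr 1 \); a harmless \( \log \) loss at the boundary \( \alpha=1 \) is absorbed by replacing \( \alpha \) with \( \alpha-\varepsilon \)), giving an \( O(1) \) bound in both cases.

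The only delicate point is uniformity in \( \widehat w \) in the scaled integral, but this is essentially free because the pole at \( -\widehat w \) carries an integrable exponent and moving it to infinity only improves matters; no random-matrix or harmonic-analytic input is required beyond bookkeeping.
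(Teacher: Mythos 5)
Your overall strategy -- replace the sum by the corresponding integral and rescale by $V=\langle v\rangle$ so that the factor $V^{1-\alpha-\beta}$ comes out of the Jacobian -- is sound and genuinely different from the paper's argument. The paper first decouples the two shifted weights by the elementary pointwise bound $\langle n+v\rangle^{-\alpha}\langle n+w\rangle^{-\beta}\lesssim \langle n+v\rangle^{-\alpha-\beta}+\langle n+w\rangle^{-\alpha-\beta}$ (since $1-\alpha-\beta<0$, either single-shift bound is $\leq \min(\langle v\rangle,\langle w\rangle)^{1-\alpha-\beta}$), and then estimates the two-weight sum $\sum_n \langle n+v\rangle^{-\alpha-\beta}\langle n\rangle^{-2}$ by splitting into $|n|\ll |v|$, $|n|\sim|v|$, $|n|\gg|v|$. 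That one-line decoupling removes exactly the difficulty that your route has to face head-on: a bound on a three-pole integral that must be uniform as the poles merge or run off to infinity.

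That uniform bound is where your write-up has a real gap. After the substitution you must control $\int_{\R^3}(|m+\widehat v|+V^{-1})^{-\alpha}(|m+\widehat w|+V^{-1})^{-\beta}(|m|+V^{-1})^{-2}\,\mathrm{d}m$ uniformly in $V\geq 2$ and in $\widehat w$ with $|\widehat w|\gtrsim 1$ arbitrarily large. Checking local integrability at the three poles and decay at infinity only proves finiteness for each fixed $(\widehat v,\widehat w)$; it does not give uniformity, and the asserted decay $|m|^{-\alpha-\beta-2}$ holds only for $|m|\gg|\widehat w|$. In the intermediate region $1\lesssim|m|\lesssim|\widehat w|$ the integrand is only of size $|\widehat w|^{-\beta}|m|^{-2-\alpha}$, which for $\alpha\leq 1$ is not integrable at infinity by itself; one must cap the region at $|m|\lesssim|\widehat w|$ and use the prefactor, obtaining $|\widehat w|^{1-\alpha-\beta}\lesssim 1$ -- this is precisely where $\alpha+\beta>1$ enters for large $\widehat w$, and it is not covered by ``the pole recedes, so matters only improve.'' Likewise, when $\widehat w$ is close to $\widehat v$ the two poles merge, and the relevant check is that the combined order $\alpha+\beta$ is $<3$, not that $\alpha$ and $\beta$ are separately $<3$ (the pole at $0$ never collides with the others, since $|\widehat v|,|\widehat w|\geq \sqrt{3}/2$ when $V\geq 2$, so that part is fine). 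Both issues are repairable by routine bookkeeping -- most quickly by applying the paper's pointwise decoupling inside your rescaled integral, which reduces everything to two-pole integrals of the form $\int(|m+a|+V^{-1})^{-\alpha-\beta}(|m|+V^{-1})^{-2}\,\mathrm{d}m$ with $|a|\gtrsim1$, bounded uniformly since $1<\alpha+\beta<3$ -- but as written the central uniformity claim is asserted rather than proved. The $V<2$ case and the sum-to-integral comparison are fine.
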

\begin{remark}
The estimate \eqref{appendix:eq_sum_estimates} is not sharp if \( v \) and \( w \) have different magnitudes. For our purposes, however, \eqref{appendix:eq_sum_estimates} will be sufficient. 
\end{remark} 

\begin{proof}[Proof of Lemma \ref{appendix:lemma_sum_estimates}:] 
Using Young's inequality, we have that 
\begin{equation}
 \frac{1}{\langle n + v \rangle^{\alpha} \langle n + w \rangle^{\beta}} \lesssim \frac{1}{\langle n + v \rangle^{\alpha+\beta}} + \frac{1}{\langle n + w \rangle^{\alpha+\beta}}. 
\end{equation}
Using this inequality, the estimate \eqref{appendix:eq_sum_estimates} reduces to 
\begin{equation*}
\sum_{n \in \Z^3} \frac{1}{\langle n + v \rangle^{\alpha+\beta} \langle n \rangle^2} \lesssim \langle v \rangle^{1-\alpha-\beta}. 
\end{equation*}
This can  easily be proven by decomposing the sum into the regions \( |n|\ll |v|\), \(|n|\sim |v|\) and \( |n|\gg |v|\). 
\end{proof}

\section{Uniqueness of weak subsequential limits}\label{section:uniquess}

In this section, we sketch the proof of the uniqueness of weak subsequential limits of $(\mu_T)_{T\geq1}$, which has been obtained in \cite[Proposition 6.6]{OOT20}. For the convenience of the reader, we present the argument from \cite{OOT20} in our notation. 

\begin{proposition}\label{appendix:prop_uniqueness}
The limit 
\begin{equation}\label{appendix:eq_uniqueness}
\lim_{T\rightarrow \infty} \int \mathrm{d}\mu_T(\phi) \exp(-f(\phi)) 
\end{equation}
exists for all Lipschitz functions $f\colon  \cC_x^{-1/2-\kappa}(\T^3)\rightarrow \bR$. In particular, weak subsequential limits of $(\mu_T)_{T\geq 1}$ are unique. 
\end{proposition}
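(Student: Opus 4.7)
The plan is to establish convergence of the Laplace transform via the Boué--Dupuis variational formula together with the reformulation from Proposition \ref{measure_var:proposition_main}. First, I would write
\begin{equation*}
\int e^{-f(\phi)} \dmu_T(\phi) = \frac{\E_\bP\big[\exp(-f(\Winf) - \lcol \cVT(\rho_T(\nabla)\Winf)\rcol)\big]}{\E_\bP\big[\exp(-\lcol \cVT(\rho_T(\nabla)\Winf)\rcol)\big]},
\end{equation*}
take negative logarithms, and apply the Boué--Dupuis formula to both numerator and denominator. The identity \eqref{measure_var:eq_main}, together with the change of variables $w_t = \lt[u]$, then yields
\begin{equation*}
-\log \int e^{-f} \dmu_T = \inf_{w\in\bH} \Phi^T_f(w) - \inf_{w\in\bH} \Phi^T_0(w),
\end{equation*}
where $\Phi^T_\varphi(w) \defe \E_\bP\big[\PsiT(W,I[u(w)]) + \tfrac{\lambda}{4}\cV(\Iinf[w]) + \tfrac12 \|w\|_\cH^2 \big]$ and $u(w) = w - \lambda\Jt\lcol (V\ast \Wt^2)\Wt\rcol$. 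The $T$-dependent renormalizations (the $\mathcal{E}_0^T$ and $\cT$ contributions) cancel in the subtraction, and all the $f$-dependence is confined to the Lipschitz term $f(W+I[u(w)])$ inside $\mathcal{E}_1^T$, cf.\ \eqref{measure_var:eq_PsiTz}.

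The core of the argument is passing $T\to\infty$ inside both infima. For each fixed $w \in \bH$, the convergence statements built into Lemma \ref{measure_re:lemma_stochastic_objects_I} (and the analogues for $\bWt{[3]}$, $(V\ast \lcol\Wt^2\rcol)\bWt{[3]}$, and $(V\ast(\Wt\bWt{[3]}))\Wt - \MT \bWt{[3]}$ derivable from the multiple-stochastic-integral representations in the proof of Lemma \ref{measure_re:lemma_stochastic_objects_III}) imply $\mathcal{E}_j^T(w)\to \mathcal{E}_j^\infty(w)$ in $L^1(\bP)$; the Lipschitz bound on $f$ handles the $\varphi$-piece via $|f(W+I[u])-f(W+I[u'])| \lesssim \|I[u-u']\|_{\cC_x^{-1/2-\kappa}}$. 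The uniform coercivity built into the proof of Proposition \ref{measure_var:proposition_main}, namely
\begin{equation*}
\Phi^T_\varphi(w) \geq -C_\varphi + \tfrac{1}{8}\Big(\tfrac{\lambda}{4}\cV(\Iinf[w]) + \tfrac{1}{2}\|w\|_\cH^2\Big)
\end{equation*}
with $C_\varphi$ independent of $T$, combined with the upper bound $\inf_w \Phi^T_\varphi \lesssim_\varphi 1$ obtained by choosing $u = -\lambda \Jt\lcol(V\ast \Wt^2)\Wt\rcol$ (i.e.\ $w=0$) and invoking Lemma \ref{measure_re:lemma_stochastic_objects_III}, forces any near-minimizing sequence $w^T_\star$ to satisfy $\E\|w^T_\star\|_\cH^2 \lesssim 1$ uniformly in $T$.

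A standard $\Gamma$-convergence argument then gives $\inf \Phi^T_\varphi \to \inf \Phi^\infty_\varphi$: the equicoercivity and weak compactness in $\cH$ of $\{w^T_\star\}$, together with lower-semicontinuity of $\Phi^\infty_\varphi$, furnish the $\liminf$-inequality, while using any minimizing sequence $(w^\infty_k)$ for $\Phi^\infty_\varphi$ as a family of fixed test drifts in $\Phi^T_\varphi$ and taking $T\to\infty$ before $k\to\infty$ furnishes the matching $\limsup$-inequality. Applying this with $\varphi = f$ and $\varphi = 0$ and subtracting proves the existence of \eqref{appendix:eq_uniqueness}. Since Lipschitz functionals separate probability measures on the Polish space $\cC_x^{-1/2-\kappa}(\T^3)$, and $(\mu_T)_{T\geq 1}$ is tight by Theorem \ref{theorem:tightness}, the limit identifies every weak subsequential limit, yielding both uniqueness and convergence of the full sequence.

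The main obstacle is the uniform-in-$w$ convergence of the nonlinear error terms on bounded subsets of $\bH$, most delicately the random-matrix term
\begin{equation*}
\int_{\T^3} \Big(V\ast(\Winf \Iinf[w])\cdot \Winf - \Minf \Iinf[w]\Big) \Iinf[w] \dx
\end{equation*}
from $\mathcal{E}_2^T$. Its stability as $T\to\infty$ is not directly stated in Proposition \ref{measure_rmt:proposition_estimate} and requires extracting the joint convergence of $(\Winf, \Minf, \bWinf{[3]})$ under $\bP$ from the individual $L^p_\omega$-convergences, using the algebraic representations of Section \ref{section:stochastic_objects} together with a Cauchy-in-$T$ refinement of the operator bound $\OpTtwo$.
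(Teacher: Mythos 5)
Your overall strategy (Boué--Dupuis reformulation, subtracting the $f=0$ problem, passing $T\to\infty$ in the variational functionals) differs from the paper, which instead proves a \emph{quantitative} Cauchy-in-$T$ estimate for the regularized log-Laplace transforms, but as written your argument has two genuine gaps. The first is the appeal to a ``standard $\Gamma$-convergence argument.'' In this setting the drifts are adapted random processes, the functionals $\Phi^T_\varphi$ are not convex in $w$ (they contain the cubic term $\int (V\ast \Iinf[w]^2)\Iinf[w]\,\Winf\,dx$, the random-matrix term, and $\cV$ itself, all coupled to rough Gaussian data), and weak compactness of near-minimizers in $L^2_\omega\cH$ does not by itself give the $\liminf$-inequality: weak convergence in $\omega$-integrated spaces yields neither almost-sure convergence of $\Iinf[w^T_\star]$ nor lower semicontinuity of the non-convex error terms, and preserving progressive measurability under the limit is itself an issue. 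This is exactly the machinery Barashkov--Gubinelli had to build for $\Phi^4_3$; it cannot be dismissed as standard, and you yourself concede in the last paragraph that the stability of the random-matrix term as $T\to\infty$ (a ``Cauchy-in-$T$ refinement of $\operatorname{Op}$'') is not available from the results you cite. The paper avoids all of this by never extracting limits of minimizers: it compares the functionals at two truncations $T,S$ \emph{uniformly over admissible drifts} with an explicit rate $\min(S,T)^{-\eta}$.

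The second, more concrete gap is that your coercivity budget is too weak to close the comparison of the potential-energy terms, and you never introduce the device the paper needs for precisely this point. Comparing $\cV(\IT_\infty[w])$ with $\cV(\IS_\infty[w])$ (or with $\cV(I_\infty[w])$ in your limsup/liminf steps) costs, after the $L^4$-interpolation of Step 2.b of the paper's proof, a factor $\| I_\infty[w]\|_{\cC_x^{-1/2-\kappa}}^{8(1-\kappa)/(3+4\kappa)}\, \| I_\infty[w]\|_{H^1_x}^{4(1+6\kappa)/(3+4\kappa)}$; only the $H^1$-power (being $<2$) is absorbed by $\tfrac12\|w\|_\cH^2$, while the high power of the $\cC_x^{-1/2-\kappa}$-norm is \emph{not} controlled by $\E\|w^T_\star\|^2_\cH\lesssim 1$ nor by $\cV$ itself. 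This is why the paper first inserts the regularizing factor $\exp\big(-\epsilon\|W_\infty\|_{\cC_x^{-1/2-\kappa}}^k\big)$, whose variational counterpart contributes the extra coercive term $\epsilon\|I_\infty[w]+W_\infty\|_{\cC_x^{-1/2-\kappa}}^k$, and only afterwards removes the regularization using the uniform exponential moments of Proposition \ref{measure_var:proposition}; the concluding remark of the appendix states explicitly that this factor is needed exactly for $\cV(\IT_\infty[w])-\cV(\IS_\infty[w])$. Your functionals carry no analogue of this term, so even granting a $\Gamma$-convergence framework, the uniform-over-near-minimizers estimates required in both directions of your argument cannot be established from the coercivity you state. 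To repair the proof you would either have to import the paper's $\epsilon$-regularization (and then you are essentially reproducing its quantitative argument), or supply a genuinely new compactness/lsc mechanism for the stochastic variational problem, which is a substantial missing piece rather than a routine step.
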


\begin{remark}
The only reason why Proposition \ref{appendix:prop_uniqueness} does not (immediately) yield the weak convergence of $(\mu_T)_{T\geq1}$ is that we do not prove that the limit in   \eqref{appendix:eq_uniqueness} corresponds to the Laplace transform of a limiting measure. As described in the proof of Theorem \ref{theorem:tightness}, this part follows from Prokhorov's theorem. 
\end{remark}

As was observed in \cite{OOT20}, Proposition \ref{appendix:prop_uniqueness} follows essentially from the same estimates as in the proof of uniform bounds on the variational problem (Proposition \ref{measure_var:proposition_main}). 

\begin{proof}
We recall from \eqref{sc:eq_def_muT} that 
\begin{equation}
\dmu_T(\phi) =  \frac{1}{\cZTl} \exp \Big( - \lcol \cVT(\rho_T(\nabla)\phi)\rcol \Big) \, \mathrm{d}\big( (W_\infty)_\# \bP \big)(\phi). 
\end{equation}
We now split the proof into two steps. \\

\emph{Step 1: Reduction.} Let $k\geq 1$ be a large integer. In this step, we reduce the existence of the limit in \eqref{appendix:eq_uniqueness} to the existence of the limit 
\begin{equation}\label{appendix:eq_regularized_limit}
\lim_{T\rightarrow \infty} \E_{\bP}\Big[ \exp\Big(-f(W_\infty) - \lcol \cVT(\Winf{})\rcol - \epsilon \| W_\infty \|_{\cC_x^{-1/2-\kappa}}^k \Big)\Big]
\end{equation}
for all Lipschitz functions  $f\colon  \cC_x^{-1/2-\kappa}(\T^3)\rightarrow \bR$ and all $\epsilon>0$. To this end, we first note that 
\begin{equation*}
1- \exp(-\epsilon x^k) \leq \epsilon x^k \leq k!  \epsilon \exp(x)
\end{equation*}
for all $x \geq 0$. Using Proposition \ref{measure_var:proposition}, this implies
\begin{align*}
&\Big|   \E_{\bP}\Big[ \exp\Big(-f(W_\infty) - \lcol \cVT(\Winf{})\rcol \Big)\Big] 
-  \E_{\bP}\Big[ \exp\Big(-f(W_\infty) - \lcol \cVT(\Winf{})\rcol - \epsilon \| W_\infty \|_{\cC_x^{-1/2-\kappa}}^k \Big)\Big] \Big|  \\
&=  \E_{\bP}\Big[ \Big( 1- \exp\Big( - \epsilon \| W_\infty \|_{\cC_x^{-1/2-\kappa}}^k \Big)\Big)  \ \exp\Big(-f(W_\infty) - \lcol \cVT(\rho_T(\nabla)W_\infty)\rcol \Big) \Big] \\ 
&\lesssim_k  \epsilon \cdot \E_{\bP}\Big[ \exp\Big(\| W_\infty \|_{\cC_x^{-1/2-\epsilon}}-f(W_\infty) - \lcol \cVT(\rho_T(\nabla)W_\infty)\rcol \Big)\Big]  \\
&\lesssim_{k,\lambda,f} \epsilon. 
\end{align*}
Thus, the existence of the limit in \eqref{appendix:eq_regularized_limit} implies the existence of the limit 
\begin{equation}\label{appendix:eq_P_limit}
\lim_{T\rightarrow \infty} \E_{\bP}\Big[ \exp\Big(-f(W_\infty) - \lcol \cVT(\Winf{})\rcol \Big)\Big]
\end{equation}
for all Lipschitz functions  $f\colon  \cC_x^{-1/2-\kappa}(\T^3)\rightarrow \bR$. By setting $f\equiv 0$, we see that \eqref{appendix:eq_P_limit} implies the convergence of the normalization constants $\cZTl$ as $T\rightarrow \infty$. Since \eqref{appendix:eq_uniqueness} and \eqref{appendix:eq_P_limit} only differ by a factor of $\cZTl$, we obtain that the limit in \eqref{appendix:eq_uniqueness} exists. \\
\emph{Step 2: Existence of the regularized limit \eqref{appendix:eq_regularized_limit}.}  Using the Boué-Dupuis formula (Theorem \ref{thm:bd_formula}) and arguing as in the derivation of \eqref{measure_var:eq_identity}, we have that 
\begin{equation}\label{appendix:eq_variational}
\begin{aligned}
 &- \log \Big( \E_{\bP}\Big[ \exp\Big(-f(W_\infty) - \lcol \cVT(\Winf{})\rcol - \epsilon \| W_\infty \|_{\cC_x^{-1/2-\kappa}}^k \Big)\Big] \Big) \\
 &= \inf_{ w\in \bH} \E_{\bP} \Big[ \mathcal{E}_1^{\scriptscriptstyle{T}}[w] +  \mathcal{E}_2^{\scriptscriptstyle{T}}[w] +  \mathcal{E}_3^{\scriptscriptstyle{T}}[w]  
 + \frac{\lambda}{4} \cV(\IT_\infty[w]) + \frac{1}{2} \| w\|_{L_{t,x}^2}^2 + \epsilon \| I_\infty[w] + W_\infty \|_{\cC_x^{-1/2-\kappa}}^k \Big].  
\end{aligned} 
\end{equation}
Here, $\cV$ is as in \eqref{measure_var:eq_cV} and  $ \mathcal{E}_1^{\scriptscriptstyle{T}}[w],  \mathcal{E}_2^{\scriptscriptstyle{T}}[w]$, and 
 $\mathcal{E}_3^{\scriptscriptstyle{T}}[w]$ are as in \eqref{measure_var:eq_E1}-\eqref{measure_var:eq_E3}, but the term $\varphi(W+I[u])$ in \eqref{measure_var:eq_E1} is replaced by 
 \begin{equation*}
 f(W_\infty + I_\infty[ \Jt \lcol ( V \ast (\Wt)^2  ) \Wt \rcol ] + I_\infty[w] ). 
 \end{equation*}
 In contrast to \eqref{measure_var:eq_E1}-\eqref{measure_var:eq_E3}, we also reflect the dependence on $T$ and $w$ in our notation. To avoid confusion, we also recall that the term $\E_{\bP} \big[\mathcal{E}_0 +\cT\big]$ in \eqref{measure_var:eq_identity} vanishes due to our choice of $\cT$. Our estimates in the proof of Proposition \ref{measure_var:proposition} show that the infimum in \eqref{appendix:eq_variational} can be take over $w\in \bH$ satisfying the additional bound
 \begin{equation*}
 \E_{\bP}\Big[  \frac{\lambda}{4} \cV(\IT_\infty[w]) + \frac{1}{2} \| w\|_{L_{t,x}^2}^2 + \epsilon \| I_\infty[w]  \|_{\cC_x^{-1/2-\kappa}}^k \Big] \lesssim_{\lambda} 1. 
 \end{equation*} 
 In order to conclude the existence of the limit \eqref{appendix:eq_regularized_limit}, it therefore suffices to prove for all $T,S\geq 1$ the estimate
 \begin{equation}\label{appendix:eq_variational_estimate}
 \begin{aligned}
 &\sum_{j=1}^3 \Big| \E_{\bP} \Big[ \mathcal{E}_j^{\scriptscriptstyle{T}}[w] - \mathcal{E}_j^{\scriptscriptstyle{S}}[w] \Big] \Big| 
 + \Big|  \E_{\bP} \Big[  \cV(\IT_\infty[w]) - \cV(\IS_\infty[w]) \Big] \Big| \\
 &\lesssim_{\lambda,\epsilon,k} \min(S,T)^{-\eta} \bigg( 1+  \E_{\bP} \Big[   \frac{1}{2} \| w\|_{L_{t,x}^2}^2 + \epsilon \| I_\infty[w]  \|_{\cC_x^{-1/2-\kappa}}^k \Big] \bigg),
 \end{aligned}
 \end{equation}
 where $\eta>0$ is sufficiently small.  We only present the estimate \eqref{appendix:eq_variational_estimate} for
  $\mathcal{E}_1^{\scriptscriptstyle{T}}[w] - \mathcal{E}_1^{\scriptscriptstyle{S}}[w]$ and $\cV(\IT_\infty[w]) - \cV(\IS_\infty[w])$, since the remaing estimates are similar. \\
  
  \emph{Step 2.a: Estimate of  $\mathcal{E}_1^{\scriptscriptstyle{T}}[w] - \mathcal{E}_1^{\scriptscriptstyle{S}}[w]$.} 
  For the convenience of the reader, we recall that 
  \begin{equation}\label{appendix:eq_E1}
  \begin{aligned}
  \mathcal{E}^{\scriptscriptstyle{T}}_1[w]  &\defe  f(W_\infty + I_\infty[ \Jt \lcol ( V \ast (\Wt)^2  ) \Wt \rcol ] + I_\infty[w] )   -\lambda^2 \int_{\T^3} (V* \lcol (\Winf)^2\rcol ) \bWinf{[3]} \Iinf[w] \dx \\ 
&\hspace{3ex}- 2 \lambda^2 \int_{\T^3} \Big( \big( V* (\Winf \bWinf{[3]})\big) \Winf - \Minf  \bWinf{[3]} \Big) \Iinf[w] \dx  . 
\end{aligned}
\end{equation}
We estimate the contributions of the three terms separately. For the first summand in \eqref{appendix:eq_E1}, we have that 
\begin{align*}
&\big| f(W_\infty + I_\infty[ \Jt \lcol ( V \ast (\Wt)^2  ) \Wt \rcol ] + I_\infty[w] ) - f(W_\infty + I_\infty[ \JS_t \lcol ( V \ast (\WS{}{t})^2  ) \WS{}{t} \rcol ] + I_\infty[w] ) \big| \\
&\leq \operatorname{Lip}(f) \big\| I_\infty[ \Jt \lcol ( V \ast (\Wt)^2  ) \Wt \rcol ]  - I_\infty[ \JS_t \lcol ( V \ast (\WS{}{t})^2  ) \WS{}{t} \rcol ]  \big\|_{\cC_x^{-1/2-\kappa}}. 
\end{align*} 
The desired estimate then follows from a minor modification of \eqref{measure_re:eq_so_III_1}. \\
We now turn to the second summand in \eqref{appendix:eq_E1}. First, we note for any $\gamma>0$ that 
\begin{equation}\label{appendix:eq_I_gain}
\| \IT_\infty[w] - \IS_\infty[w] \|_{H^{1-\gamma}_x} = \| (\rho_T(\nabla) - \rho_S(\nabla)) I_\infty[w] \|_{H^{1-\gamma}} \lesssim \min(S,T)^{-\gamma} \| I_\infty[w]\|_{H^1_x}. 
\end{equation}
Now, we let $0<\gamma<\gamma^\prime<\min(1/2,\beta)$. Using \eqref{appendix:eq_I_gain}, we obtain that 
\begin{align*}
&\Big|  \int_{\T^3} (V* \lcol (\Winf)^2\rcol ) \bWinf{[3]} \Iinf[w] \dx - \int_{\T^3} (V* \lcol (\WS{}{\infty})^2\rcol ) \mathbb{W}_\infty^{\scriptscriptstyle{S},[3]} \IS_\infty[w] \dx \Big| \allowdisplaybreaks[4]\\
&\lesssim \Big\|  (V* \lcol (\Winf)^2\rcol ) \bWinf{[3]} -(V* \lcol (\WS{}{\infty})^2\rcol ) \mathbb{W}_\infty^{\scriptscriptstyle{S},[3]} \Big\|_{\cC_x^{-1+\gamma^\prime}}
\| \Iinf[w] \|_{H_x^{1-\gamma}} \\
&+  \Big\| (V* \lcol (\WS{}{\infty})^2\rcol ) \mathbb{W}_\infty^{\scriptscriptstyle{S},[3]} \Big\|_{\cC_x^{-1+\gamma^\prime}} \| \IT_\infty[w] - \IS_\infty[w] \|_{H_x^{1-\gamma}} \allowdisplaybreaks[4]\\
&\lesssim \bigg( \Big\|  (V* \lcol (\Winf)^2\rcol ) \bWinf{[3]} -(V* \lcol (\WS{}{\infty})^2\rcol ) \mathbb{W}_\infty^{\scriptscriptstyle{S},[3]} \Big\|_{\cC_x^{-1+\gamma^\prime}} 
+ \min(S,T)^{-\gamma} \Big\| (V* \lcol (\WS{}{\infty})^2\rcol ) \mathbb{W}_\infty^{\scriptscriptstyle{S},[3]} \Big\|_{\cC_x^{-1+\gamma^\prime}} \bigg) \\
&\times  \| I_\infty[w] \|_{H_x^1}. 
\end{align*}
The desired estimate then follows from a minor modification of \eqref{measure_re:eq_so_III_1} and Lemma \ref{appendix:lemma_It}. The estimate of the third term in \eqref{appendix:eq_E1} is similar and we omit the details. \\

\emph{Step 2.b: Estimate of $\cV(\IT_\infty[w]) - \cV(\IS_\infty[w])$.} Using Hölder's inequality and interpolation, it holds that 
\begin{equation}\label{appendix:eq_L4}
\| \varphi \|_{L_x^4(\T^3)} \lesssim \| \varphi \|_{\cC_x^{-1/2-\kappa}(\T^3)}^{\frac{2(1-\kappa)}{3+4\kappa}}  \| \varphi \|_{H_x^{1-\kappa}(\T^3)}^{\frac{1+6\kappa}{3+4\kappa}}. 
\end{equation}
Using Hölder's inequality, \eqref{appendix:eq_L4}, and $\IT_t= \rho_T(\nabla) I_t$, we obtain that 
\begin{align*}
&\Big|\cV(\IT_\infty[w]) - \cV(\IS_\infty[w])\Big| \\
&\lesssim \big( \| \IT_\infty[w] \|_{L^4_x} +  \| \IS_\infty[w] \|_{L^4_x}\big)^3 \| \IT_\infty[w]- \IS_\infty[w] \|_{L^4_x} \\
&\lesssim \| I_\infty[w] \|_{\cC_x^{-1/2-\kappa}}^{4\cdot \frac{2(1-\kappa)}{3+4\kappa}} \| I_\infty[w] \|_{H_x^1}^{3 \cdot \frac{1+6\kappa}{3+4\kappa}}
 \| \IT_\infty[w] - \IS_\infty[w]  \|_{H_x^{1-\kappa}}^{\frac{1+6\kappa}{3+4\kappa}} \\
 &\lesssim \min(S,T)^{-\frac{(1+6\kappa)\kappa}{3+4\kappa}} \| I_\infty[w] \|_{\cC_x^{-1/2-\kappa}}^{4\cdot \frac{2(1-\kappa)}{3+4\kappa}} \| I_\infty[w] \|_{H_x^1}^{4 \cdot \frac{1+6\kappa}{3+4\kappa}}. 
\end{align*}
Since $4 (1+6\kappa)/(3+4\kappa)<2$, the desired estimate follows from Lemma \ref{appendix:lemma_It} and Young's inequality.  
\end{proof}
\end{appendix}

\begin{remark}
As seen in the proof of Proposition \ref{appendix:prop_uniqueness}, the regularizing factor $\exp\big(- \epsilon \| W_\infty \|_{\cC_x^{-1/2-\kappa}}^k\big)$ in \eqref{appendix:eq_regularized_limit} is needed to estimate
$\cV(\IT_\infty[w]) - \cV(\IS_\infty[w])$. 
\end{remark}

\bibliography{construction_library}

\newcommand{\etalchar}[1]{$^{#1}$}
\begin{thebibliography}{NORBS12}

\bibitem[AK17]{AK17}
S. {Albeverio} and S. {Kusuoka}.
\newblock {The invariant measure and the flow associated to the $\Phi
  ^4_3$-quantum field model}.
\newblock arXiv:1711.07108, November 2017.

\bibitem[BCG{\etalchar{+}}78]{BCG78}
G. Benfatto, M. Cassandro, G. Gallavotti, F. Nicol\`o, E. Olivieri, E.
  Presutti, and E. Scacciatelli.
\newblock Some probabilistic techniques in field theory.
\newblock {\em Comm. Math. Phys.}, 59(2):143--166, 1978.

\bibitem[BD98]{BD98}
M. Bou\'{e} and P. Dupuis.
\newblock A variational representation for certain functionals of {B}rownian
  motion.
\newblock {\em Ann. Probab.}, 26(4):1641--1659, 1998.

\bibitem[BFS83]{BDF83}
D.~C. Brydges, J. Fr\"{o}hlich, and A.~D. Sokal.
\newblock A new proof of the existence and nontriviality of the continuum
  {$\varphi ^{4}_{2}$} and {$\varphi ^{4}_{3}$} quantum field theories.
\newblock {\em Comm. Math. Phys.}, 91(2):141--186, 1983.

\bibitem[BG18]{BG18}
N. {Barashkov} and M. {Gubinelli}.
\newblock {A variational method for $\Phi^4_3$}.
\newblock arXiv:1805.10814, May 2018.

\bibitem[BG20]{BG20}
N. {Barashkov} and M. {Gubinelli}.
\newblock {The $\Phi^4_3$ measure via Girsanov's theorem}.
\newblock arXiv:2004.01513, April 2020.

\bibitem[Bou94]{Bourgain94}
J. Bourgain.
\newblock Periodic nonlinear {S}chr\"{o}dinger equation and invariant measures.
\newblock {\em Comm. Math. Phys.}, 166(1):1--26, 1994.

\bibitem[Bou96]{Bourgain96}
J. Bourgain.
\newblock Invariant measures for the {$2$}{D}-defocusing nonlinear
  {S}chr\"{o}dinger equation.
\newblock {\em Comm. Math. Phys.}, 176(2):421--445, 1996.

\bibitem[Bou97]{Bourgain97}
J. Bourgain.
\newblock Invariant measures for the {G}ross-{P}iatevskii equation.
\newblock {\em J. Math. Pures Appl. (9)}, 76(8):649--702, 1997.

\bibitem[{Bri}18]{B18}
B. {Bringmann}.
\newblock {Almost sure local well-posedness for a derivative nonlinear wave
  equation}.
\newblock arXiv:1809.00220, September 2018.

\bibitem[{Bri}20]{Bringmann20}
B. {Bringmann}.
\newblock {Invariant Gibbs measures for the three-dimensional wave equation
  with a Hartree nonlinearity II: Dynamics}.
\newblock arXiv:2009.04616, September 2020.

\bibitem[CC18]{CC18}
R. Catellier and K. Chouk.
\newblock Paracontrolled distributions and the 3-dimensional stochastic
  quantization equation.
\newblock {\em Ann. Probab.}, 46(5):2621--2679, 2018.

\bibitem[DNY19]{DNY19}
Y. {Deng}, A.~R. {Nahmod}, and H. {Yue}.
\newblock {Invariant Gibbs measures and global strong solutions for nonlinear
  Schr{\"o}dinger equations in dimension two}.
\newblock arXiv:1910.08492, October 2019.

\bibitem[DNY20]{DNY20}
Y. {Deng}, A.~R. {Nahmod}, and H. {Yue}.
\newblock {Random tensors, propagation of randomness, and nonlinear dispersive
  equations}.
\newblock arXiv:2006.09285, June 2020.

\bibitem[DPD03]{DPD03}
G. Da~Prato and A. Debussche.
\newblock Strong solutions to the stochastic quantization equations.
\newblock {\em Ann. Probab.}, 31(4):1900--1916, 2003.

\bibitem[DPZ92]{PZ92}
G. Da~Prato and J. Zabczyk.
\newblock {\em Stochastic equations in infinite dimensions}, volume~44 of {\em
  Encyclopedia of Mathematics and its Applications}.
\newblock Cambridge University Press, Cambridge, 1992.

\bibitem[FO76]{FO76}
J.~S. Feldman and K. Osterwalder.
\newblock The {W}ightman axioms and the mass gap for weakly coupled {$(\Phi
  ^{4})_{3}$} quantum field theories.
\newblock {\em Ann. Physics}, 97(1):80--135, 1976.

\bibitem[Fol08]{Folland08}
G.~B. Folland.
\newblock {\em Quantum field theory}, volume 149 of {\em Mathematical Surveys
  and Monographs}.
\newblock American Mathematical Society, Providence, RI, 2008.
\newblock A tourist guide for mathematicians.

\bibitem[GH19]{GH19}
M. Gubinelli and M. Hofmanov\'{a}.
\newblock Global solutions to elliptic and parabolic {$\Phi^4$} models in
  {E}uclidean space.
\newblock {\em Comm. Math. Phys.}, 368(3):1201--1266, 2019.

\bibitem[GIP15]{GIP15}
M. Gubinelli, P. Imkeller, and N. Perkowski.
\newblock Paracontrolled distributions and singular {PDE}s.
\newblock {\em Forum Math. Pi}, 3:e6, 75, 2015.

\bibitem[GJ87]{GJ87}
J. Glimm and A. Jaffe.
\newblock {\em Quantum physics}.
\newblock Springer-Verlag, New York, second edition, 1987.
\newblock A functional integral point of view.

\bibitem[GKO18]{GKO18a}
M. Gubinelli, H. Koch, and T. Oh.
\newblock Paracontrolled approach to the three-dimensional stochastic nonlinear
  wave equation with quadratic nonlinearity.
\newblock arXiv:1811.07808, 2018.

\bibitem[GP18]{GP18}
M. Gubinelli and N. Perkowski.
\newblock An introduction to singular {SPDE}s.
\newblock In {\em Stochastic partial differential equations and related
  fields}, volume 229 of {\em Springer Proc. Math. Stat.}, pages 69--99.
  Springer, Cham, 2018.

\bibitem[Hai]{HairerNote}
M. Hairer.
\newblock $\phi^4$ is orthogonal to {GFF}.
\newblock {\em Private Communication}.

\bibitem[Hai14]{Hairer14}
M. Hairer.
\newblock A theory of regularity structures.
\newblock {\em Invent. Math.}, 198(2):269--504, 2014.

\bibitem[HM18]{HM18}
M. Hairer and K. Matetski.
\newblock Discretisations of rough stochastic {PDE}s.
\newblock {\em Ann. Probab.}, 46(3):1651--1709, 2018.

\bibitem[Iwa87]{I85}
K. Iwata.
\newblock An infinite-dimensional stochastic differential equation with state
  space {$C({\bf R})$}.
\newblock {\em Probab. Theory Related Fields}, 74(1):141--159, 1987.

\bibitem[MS76]{MS76}
J. Magnen and R. S\'{e}n\'{e}or.
\newblock The infinite volume limit of the {$\phi ^{4}_{3}$} model.
\newblock {\em Ann. Inst. H. Poincar\'{e} Sect. A (N.S.)}, 24(2):95--159, 1976.

\bibitem[MW17]{MW17}
J.-C. Mourrat and H. Weber.
\newblock The dynamic {$\Phi^4_3$} model comes down from infinity.
\newblock {\em Comm. Math. Phys.}, 356(3):673--753, 2017.

\bibitem[MWX17]{MWX17}
J.-C. Mourrat, H. Weber, and W. Xu.
\newblock Construction of {$\Phi^4_3$} diagrams for pedestrians.
\newblock In {\em From particle systems to partial differential equations},
  volume 209 of {\em Springer Proc. Math. Stat.}, pages 1--46. Springer, Cham,
  2017.

\bibitem[Nel66]{Nelson66}
E. Nelson.
\newblock Derivation of the schr\"odinger equation from newtonian mechanics.
\newblock {\em Phys. Rev.}, 150:1079--1085, Oct 1966.

\bibitem[Nel67]{Nelson67}
E. Nelson.
\newblock {\em Dynamical theories of {B}rownian motion}.
\newblock Princeton University Press, Princeton, N.J., 1967.

\bibitem[NORBS12]{NORS12}
A.~R. Nahmod, T. Oh, L. Rey-Bellet, and G. Staffilani.
\newblock Invariant weighted {W}iener measures and almost sure global
  well-posedness for the periodic derivative {NLS}.
\newblock {\em J. Eur. Math. Soc. (JEMS)}, 14(4):1275--1330, 2012.

\bibitem[Nua06]{Nualart06}
D. Nualart.
\newblock {\em The {M}alliavin calculus and related topics}.
\newblock Probability and its Applications (New York). Springer-Verlag, Berlin,
  second edition, 2006.

\bibitem[OOT20]{OOT20}
T. {Oh}, M. {Okamoto}, and L. {Tolomeo}.
\newblock {Focusing $\Phi^4_3$-model with a Hartree-type nonlinearity}.
\newblock arXiv:2009.03251, September 2020.

\bibitem[OT18]{OT18}
T. Oh and L. Thomann.
\newblock A pedestrian approach to the invariant {G}ibbs measures for the
  2-{$d$} defocusing nonlinear {S}chr\"{o}dinger equations.
\newblock {\em Stoch. Partial Differ. Equ. Anal. Comput.}, 6(3):397--445, 2018.

\bibitem[Par77]{P77}
Y.~M. Park.
\newblock Convergence of lattice approximations and infinite volume limit in
  the {$(\lambda \phi ^{4}-\sigma \phi ^{2}-\tau \phi )_{3}$} field theory.
\newblock {\em J. Mathematical Phys.}, 18(3):354--366, 1977.

\bibitem[PW81]{PW81}
G. Parisi and Y.-s. Wu.
\newblock {Perturbation Theory Without Gauge Fixing}.
\newblock {\em Sci. Sin.}, 24:483, 1981.

\bibitem[Sim74]{Simon74}
B. Simon.
\newblock {\em The {$P(\phi )_{2}$} {E}uclidean (quantum) field theory}.
\newblock Princeton University Press, Princeton, N.J., 1974.
\newblock Princeton Series in Physics.

\bibitem[Str11]{Stroock11}
D.~W. Stroock.
\newblock {\em Probability theory: An analytic view}.
\newblock Cambridge University Press, Cambridge, second edition, 2011.

\bibitem[Vis07]{Visan07}
M. Visan.
\newblock The defocusing energy-critical nonlinear {S}chr\"{o}dinger equation
  in higher dimensions.
\newblock {\em Duke Math. J.}, 138(2):281--374, 2007.

\bibitem[Wat89]{W89}
H. Watanabe.
\newblock Block spin approach to {$\phi^4_3$} field theory.
\newblock {\em J. Statist. Phys.}, 54(1-2):171--190, 1989.

\end{thebibliography}
\bibliographystyle{myalpha}
\end{document}